\newtheorem{theorem}{Theorem}[section]
\newtheorem{corollary}[theorem]{Corollary}
\newtheorem{lemma}[theorem]{Lemma}
\newtheorem{proposition}[theorem]{Proposition}
\newtheorem{definition}[theorem]{Definition}
\newcommand{\matr}[4]{
\left( \begin{array}{cc} #1 & #2 \\ #3 & #4 \end{array} \right)}
\newcommand{\vect}[2]{
\left( \begin{array}{c} #1 \\ #2 \end{array} \right)}
\newcommand{\fect}[2]{
\left[ \begin{array}{c} #1 \\ #2 \end{array} \right]}
\newcommand{\RR}{\mathbb{R}}
\newcommand{\QQ}{\mathbb{Q}}
\newcommand{\ZZ}{\mathbb{Z}}
\newcommand{\NN}{\mathbb{N}}
\newcommand{\T}{{\bf T}}
\newcommand{\EE}{\mathcal{E}}
\newcommand{\EKU}{\mathcal{E}_{KU}}
\newcommand{\EB}{\mathcal{E}_{B}}
\newcommand{\ku}{qumterval\ }
\newcommand{\cfe}{continued fraction expansion\ }
\DeclareMathOperator*{\var}{Var}
\title{Continued fractions with $SL(2, \mathbb{Z})$-branches: combinatorics and entropy} 
\author{Carlo Carminati, Stefano Isola, Giulio Tiozzo}
\address[Carlo Carminati]{Dipartimento di Matematica \\
Universit\`a di Pisa \\ Largo Bruno Pontecorvo 5, 56127 Pisa, Italy}
\email[Carlo Carminati]{carminat@dm.unipi.it}
\address[Stefano Isola]{Dipartimento di Matematica e Informatica\\
Università di Camerino\\
via Madonna delle Carceri, 62032 Camerino, Italy}
\email[Stefano Isola]{stefano.isola@unicam.it}
\address[Giulio Tiozzo]{ICERM \\
Brown University \\
121 South Main St, Providence RI 02903, USA}
\email[Giulio Tiozzo]{Giulio\_Tiozzo@brown.edu}
\begin{document}

\begin{abstract}

We study the dynamics of a family $K_\alpha$ of discontinuous interval
maps whose (infinitely many) branches are M\"obius transformations in $SL(2, \mathbb{Z})$, 
and which arise as the critical-line case of the family of $(a, b)$\emph{-continued fractions}.

We provide an explicit construction of the bifurcation locus $\EKU$
for this family, showing it is parametrized by Farey words and it has Hausdorff dimension zero.
As a consequence, we prove that the metric entropy of $K_\alpha$ 
is analytic outside the bifurcation set but not differentiable at points of $\EKU$, 
and that the entropy is monotone as a function of the parameter.

Finally, we prove that  the bifurcation set is combinatorially isomorphic
to the main cardioid in the Mandelbrot set, providing one more entry to 
the dictionary developed by the authors between continued fractions and complex dynamics.

%We study the regularity properties of the
%map $\alpha \mapsto h(\alpha)$, where $h(\alpha)$ is the entropy of
%$K_\alpha$.

%We prove that $h$ is monotone on $[0,(\sqrt{5}-1)/2]$ and
%it is smooth outside a Cantor set $\EKU$. 
%This family can be seen as the borderline case of $(a,b)$-continued fractions 
%(see \cite{KU1}) 
%corresponding to  parameters ranging on the critical line $b-a=1$.

%These maps, also known as (a, b)-continued fractions,
% have been introduced by S. Katok and I. Ugarcovici (following D. Zagier).

%As a consequence, we prove that the bifurcation set has Hausdorff dimension zero, and 
%moreover it is 

%As a consequence, we prove that the entropy is a monotone function of the parameter.

%We consider the family $K_\alpha$ of continued fractions with
%$SL(2,\mathbb{Z})$-branches, which generalize the Gauss map 
%generating the usual continued fraction algorithm. 

\end{abstract}

\maketitle
 
%A double cover of the Gauss map is the first return map on a %Poincar\'e 
\section{Introduction}

%cross-section of the geodesic flow

It is well-known that the usual continued fraction algorithm is encoded by the dynamics of the Gauss map
$G(x) := \frac{1}{x} - \lfloor \frac{1}{x} \rfloor$; 
moreover, the Gauss map is known to be related, via a Poincar\'e section, to the geodesic flow on the 
modular surface $\mathbb{H}/SL(2,\mathbb{Z})$. 
In greater generality, the modular group $SL(2, \mathbb{Z})$ is generated by the 
transformations ${\bf S}x := -1/x$ and ${\bf T}x := x + 1$, and several different 
continued fraction algorithms have been constructed by applying the generators according 
to different rules.

In particular, for each $\alpha$ we can construct an interval map $K_\alpha$ by fixing a ``fundamental interval'' 
$[\alpha-1, \alpha)$, and at each step applying the inversion ${\bf  S}$ followed by as many translations ${\bf T}$ 
as are needed to come back 
to the fundamental domain. 
Thus, for each $\alpha\in(0,1)$, we have the interval map 
$K_\alpha :[\alpha -1, \alpha] \to [\alpha -1, \alpha]$ defined by $K_\alpha (0)=0$ and 
$$ K_\alpha (x)= -\frac{1}{x} - c_\alpha(x)$$
where $c_\alpha(x) \in \mathbb{Z}$ is chosen so that the result lies in $[\alpha-1, \alpha)$.
%in formulas, $c_\alpha(x):= \left \lfloor -\frac{1}{x} +1-\alpha \right \rfloor$.
Each $K_\alpha$ determines a continued fraction expansion of type 
$$x = -\frac{1}{c_1 - \frac{1}{c_2 - \frac{1}{c_3 -\dots}}}$$
with coefficients $c_n := c_\alpha(K_\alpha^{n-1}(x))$.
Similarly to the Gauss map, each $K_\alpha$ has infinitely many expanding branches and 
%Indeed, every $K_\alpha$ has 
a unique absolutely continuous invariant measure $\mu_\alpha$.

In recent years, S. Katok and I. Ugarcovici, following a suggestion of D. Zagier, 
defined the two-dimensional family $f_{a,b}$ of \emph{$(a, b)$-continued fraction transformations}
and studied their dynamics and natural extensions \cite{KU1, KU2}. 
The maps $K_\alpha$ are the first return maps of $f_{\alpha-1, \alpha}$ on the interval 
$[\alpha-1, \alpha)$ and, as it will be explained, they capture all the essential dynamical features.
%In particular, they found out that the attractor has a simple structure as a union of rectangles 
%as long as the parameters do not lie on the critical line, and that in this case
%The main topic of this paper will be the following family of transformations, which we will call 
%\emph{continued fractions with $SL(2, \mathbb{Z})$ branches}. 

The family $K_\alpha$ interpolates between other well-known continued fraction 
algorithms: in particular, for $\alpha = 1/2$ one gets the \emph{continued fraction 
to the nearest integer} going back to Hurwitz \cite{Hu}, while for $\alpha = 1$ one gets the \emph{backward continued fraction}
which is related to the reduction theory of quadratic forms \cite{Zagier, Kat}. 

The definition of $K_\alpha$ is very similar to the definition of the \emph{$\alpha$-continued fraction 
transformations} $T_\alpha$ introduced by Nakada \cite{N} and subsequently studied by several authors
\cite{LuMa, Kra, NN, CT, BCIT, KSS, CT3, AS}. 
In this paper we shall use techniques similar to the ones in \cite{CT} to study the $K_\alpha$: 
as we shall see in greater detail, this will also highlight the substantial differences
in the combinatoral structures of the respective bifurcation sets.
In particular, we shall see that the bifurcation set is canonically isomorphic to the set of external rays landing 
on the main cardioid of the Mandelbrot set (while the bifurcation set for the $\alpha$-continued fractions 
was shown to be isomorphic to the real slice of the Mandelbrot set \cite{BCIT}). 

From a dynamical systems perspective, we shall be interested in studying the variation of the dynamics of 
$K_\alpha$ as a function of the parameter. As we shall see, there exist infinitely many islands of ``stability'', 
and each of them  corresponds to a \emph{Farey word} (see section \ref{S:farey}). 
%The essential ingredient to prove Theorem \ref{T:main} is the construction 
%of the bifurcation set $\EKU$, which will be defined in terms of \emph{Farey words} .
Namely, to each Farey word $w$ we shall associate an open interval $J_w \subseteq [0, 1]$
called \emph{quadratic maximal interval}, or \emph{qumterval} for short (see section \ref{S:qumtervals}); the \emph{bifurcation set} 
$\EKU$ is defined as the complement of all such intervals: 
$$\EKU := [0, 1] \setminus \bigcup_{w \in FW} J_w.$$
%of the connected components of $[0,1]\setminus \EKU$; on each such
%One essential ingredient to prove both results is the precise description
%of the connected components of $[0,1]\setminus \EKU$; 
The set $\EKU$ is homeomorphic to a Cantor set and has Hausdorff dimension zero (Proposition \ref{Hdim}).
We shall prove that on each $J_w$ we have the following matching between the orbits of $\alpha$ and $\alpha-1$; namely, 
there exist integers $m_0$ and $m_1$ (which depend only on $J_w$) such that 
\begin{equation}\label{eq:matching0}
K_\alpha^{m_0+1}(\alpha-1)=K_\alpha^{m_1+1}(\alpha)
\end{equation}
for all $\alpha \in J_w$. 

One way to study the bifurcations of the family $K_\alpha$ is by considering its entropy, in the spirit of 
\cite{LuMa}.
%Indeed, every $K_\alpha$ has a unique absolutely continuous invariant measure $\mu_\alpha$, 
Indeed, let us define $h(\alpha)$ to be the metric entropy of the map $K_\alpha$ with respect to the 
measure $\mu_\alpha$. 
We shall prove that the set $\EKU$ is 
%the bifurcation set in terms of entropy: indeed, it is 
precisely the set of 
parameters for which the entropy function is not smooth:

\begin{theorem}\label{T:more}
The function $\alpha \mapsto h(\alpha)$ 
\begin{enumerate}
\item is analytic on $[0,1]\setminus \EKU$;
\item is not differentiable (and not locally Lipschitz) at any $\alpha \in \EKU$.
\end{enumerate}
\end{theorem}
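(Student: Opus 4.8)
The plan is to derive both parts of Theorem \ref{T:more} from the matching relation \eqref{eq:matching0} together with an explicit formula for the entropy in terms of the invariant measures, using the classical strategy that goes back to \cite{LuMa} and was developed in \cite{CT, BCIT}.

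First I would establish a closed formula for $h(\alpha)$. By Rokhlin's formula, $h(\alpha) = \int_{\alpha-1}^{\alpha} \log |K_\alpha'(x)| \, d\mu_\alpha(x)$, and since each branch is a M\"obius transformation one has $\log|K_\alpha'(x)| = -2\log|x|$, so the dependence on $\alpha$ enters only through the density of $\mu_\alpha$ and the endpoints. The key classical fact (which I would lift from the natural-extension picture of \cite{KU1, KU2}, exactly as in \cite{N, CT}) is that $\mu_\alpha$ is the projection of an explicit invariant measure on a two-dimensional domain $\Omega_\alpha$, and that as $\alpha$ crosses a \ku $J_w$ the domains $\Omega_\alpha$ change only by cutting and pasting finitely many pieces dictated by the matching \eqref{eq:matching0}. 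Concretely, the plan is: on each $J_w$, rewrite $h(\alpha)$ as an expression involving $\log$ of the orbit points $K_\alpha^{j}(\alpha)$ and $K_\alpha^{j}(\alpha-1)$ for $j$ up to $m_0, m_1$; since these orbit points are M\"obius images of $\alpha$ (resp.\ $\alpha-1$) with integer coefficients \emph{constant on $J_w$}, they depend analytically — indeed algebraically — on $\alpha$, and hence so does $h$ on the interior of $J_w$. Combined with the fact that the plateaux $\overline{J_w}$ cover $[0,1]\setminus\EKU$, this gives part (1). (One must also check analyticity doesn't break at common endpoints of adjacent qumtervals, which are themselves in $\EKU$, so they are excluded from the statement.)

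For part (2), the strategy is to show that near a point $\alpha_0 \in \EKU$ the entropy picks up contributions from infinitely many matching intervals $J_w$ accumulating at $\alpha_0$, and that the \emph{one-sided derivatives} of the analytic pieces on these $J_w$ do not converge to a common finite limit — in fact the difference quotients blow up. The mechanism is the following: the relevant Farey words $w$ approximating $\alpha_0$ have lengths $|w|\to\infty$, so the matching exponents $m_0(w), m_1(w)$ tend to infinity, and the M\"obius maps computing the orbit points have entries growing like the continuants of $w$; differentiating $h$ on $J_w$ produces a factor comparable to (a power of) the length of $J_w^{-1}$ or of these continuants, which is unbounded as $w$ ranges over words converging to $\alpha_0$. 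Making this precise requires (a) a lower bound on $|h'|$ on $J_w$ of the form $|h'(\alpha)| \geq C \cdot \ell(J_w)^{-\eta}$ for some $\eta>0$, and (b) a matching upper bound $\ell(J_w) \leq (\text{something})^{-|w|}$ from the Cantor/Hausdorff-dimension-zero structure of $\EKU$ (Proposition \ref{Hdim}). Together these force $h$ to fail the Lipschitz condition at $\alpha_0$: there are points arbitrarily close to $\alpha_0$ where the slope of $h$ is arbitrarily large, hence $h$ is not locally Lipschitz and a fortiori not differentiable there.

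The main obstacle I anticipate is step (2): controlling the derivative of the entropy on a qumterval uniformly enough to get the blow-up. This needs an honest computation of $\frac{d}{d\alpha} h(\alpha)$ on $J_w$ — differentiating the integral formula under the integral sign requires knowing how $\mu_\alpha$ and its support vary, which in turn rests on the cut-and-paste description of $\Omega_\alpha$ — and then extracting the dominant term as $|w|\to\infty$. A clean way to organize this is to differentiate the matching identity \eqref{eq:matching0} itself with respect to $\alpha$ and feed the result into the derivative of $h$; the telescoping structure of the continued-fraction orbit should make the leading behaviour transparent, at the cost of some bookkeeping with continuants. A secondary, more routine obstacle is verifying the analyticity in (1) is genuinely real-analytic (not merely smooth): this follows because the density of $\mu_\alpha$, after the matching, is a finite sum of functions of the form $\frac{1}{x+p_i(\alpha)}$ with $p_i$ algebraic in $\alpha$, but the normalization constant (the total mass, which is what makes it a \emph{probability} measure) must be checked to be real-analytic and nonvanishing on $J_w$ as well.
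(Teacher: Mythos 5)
Your part (1) is essentially the paper's argument and is fine: on each qumterval $J_w$ the symbolic data is frozen, so the vertical sides of the Katok--Ugarcovici attractor $\Delta_\alpha$ are independent of $\alpha$ while the horizontal sides (the orbit points $K_\alpha^j(\alpha)$, $K_\alpha^j(\alpha-1)$) are M\"obius functions of $\alpha$ with fixed integer coefficients; analyticity of $h$ then follows from the identity $h(\alpha)\int_{\Delta_\alpha}(1+xy)^{-2}\,dx\,dy=\pi^2/3$.

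Part (2), however, rests on an estimate that is false. You propose to show $|h'(\alpha)|\geq C\,\ell(J_w)^{-\eta}$ on $J_w$ for some $\eta>0$, attributing the blow-up to the growth of the continuants of $w$. But Proposition \ref{L:NNformula} gives the exact increment $h(\alpha)-h(\alpha')=(|w|_0-|w|_1)\,\mu_\alpha([\alpha',\alpha])\,h(\alpha')$, and the density $\rho_\alpha$ is bounded \emph{above and below} uniformly for $\alpha$ in a compact subset of $(0,1)$ (below because $\Delta_\alpha\supset[0,1/3]\times[\alpha-1,\alpha]$; above because $\Delta_\alpha$ stays in a region where $1+xy$ is bounded away from $0$ and $A_\alpha$ is bounded below). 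Hence $|h'|\asymp\bigl||w|_0-|w|_1\bigr|$ on $J_w$, which grows only linearly in $|w|$, i.e.\ like $\log\bigl(1/\ell(J_w)\bigr)$ --- never like a positive power of $\ell(J_w)^{-1}$ or of the continuants. The divergence is purely combinatorial: what must be shown is that $\bigl||w|_0-|w|_1\bigr|=|w|\,(1-2\rho(w))\to\infty$ as $J_w\to\alpha_0$, which is Lemma \ref{L:difference} and requires $\rho(w)\not\to 1/2$; consequently $\alpha_0=g^2$ and $\alpha_0=g$ must be handled separately (there one compares the vanishing derivative on $J_{01}$ with the uniform bound $|h'|\geq C\bigl||w|_0-|w|_1\bigr|\geq C>0$ on the other side). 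Two further points are missing. First, ``arbitrarily large slopes at nearby points'' gives failure of the local Lipschitz condition but does \emph{not} by itself give non-differentiability at $\alpha_0$ (compare $x^2\sin(x^{-2})$ at $0$); your ``a fortiori'' runs in the wrong direction. The pointwise statement requires bounding the difference quotient anchored at $\alpha_0$: one sums $\var_{J_w}h\geq C\bigl||w|_0-|w|_1\bigr|\,|J_w|$ over all qumtervals contained in $[\alpha_0,\alpha']$ and uses that $\EKU$ is Lebesgue-null (Proposition \ref{Hdim}), so that $\sum|J_w|=|\alpha'-\alpha_0|$ and $|h(\alpha')-h(\alpha_0)|\geq C\inf_{J_w\subseteq[\alpha_0,\alpha']}\bigl||w|_0-|w|_1\bigr|\cdot|\alpha'-\alpha_0|$, with the infimum diverging. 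Monotonicity of $h$ on the relevant side (Theorem \ref{T:main}) is what lets the variation be converted into the increment. Second, your plan to ``differentiate the matching identity with respect to $\alpha$'' is not the route that produces the formula: the derivative of $h$ is obtained by the return-time/ergodic-theorem comparison of Proposition \ref{L:NNformula}, not by differentiating an identity in $PSL(2,\ZZ)$, whose two sides are constant matrices on $J_w$ and carry no $\alpha$-dependence to differentiate.
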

Thus, as the parameter $\alpha$ varies, the dynamics of $K_\alpha$ goes through 
infinitely many stable regimes, one for each connected component of the complement of $\EKU$.
We shall prove, however, that the entropy function is globally monotone across the bifurcations.
%However, our main result will be proving that the entropy is globally monotone. If we let $g := \frac{\sqrt{5}-1}{2}$ 
In order to state the theorem, let us note that the graph of the entropy function is symmetric 
with respect to the transformation $\alpha \mapsto 1-\alpha$, because $K_\alpha$ and $K_{1-\alpha}$ are measurably conjugate 
(see equation \eqref{eq:ssimmetria}). Moreover, it is not hard by an explicit computation  
to see that the entropy is constant (equal to $\frac{\pi^2}{6 \log (1+g)}$) on the interval $[g^2, g]$, where 
$g := \frac{\sqrt{5}-1}{2}$ is the golden mean (so $g^2  = 1-g = \frac{3-\sqrt{5}}{2}$).

The main theorem is the following monotonicity result for $h$:

\begin{theorem}\label{T:main}
The function $\alpha \mapsto h(\alpha)$ is strictly monotone increasing on $[0,g^2]$, constant on $[g^2,g]$ and 
strictly monotone decreasing on $[g,1]$.
\end{theorem}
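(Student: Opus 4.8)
The plan is to pass to the natural extension and reduce the statement to a monotonicity property of the area of its domain. Recall that $|K_\alpha'(x)| = 1/x^2$, so by Rokhlin's formula $h(\alpha) = \int \log (1/x^2)\, d\mu_\alpha$. The map $K_\alpha$ admits a natural extension $\widehat K_\alpha$ acting on a planar domain $\Omega_\alpha \subset \RR^2$ (essentially the domain computed by Katok--Ugarcovici for $f_{\alpha-1,\alpha}$, cut down to the return set $[\alpha-1,\alpha)$), on which the $SL(2,\ZZ)$-invariant measure $\frac{dx\,dy}{(x-y)^2}$ is $\widehat K_\alpha$-invariant and projects onto $\mu_\alpha$. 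As in the case of the $\alpha$-continued fractions, the integral $\int_{\Omega_\alpha} \log(1/x^2)\, \frac{dx\,dy}{(x-y)^2}$ turns out to be independent of $\alpha$, so Rokhlin's formula yields
\begin{equation}\label{eq:entropydomain}
h(\alpha) = \frac{C}{\mu(\Omega_\alpha)},
\end{equation}
for a universal constant $C>0$, where $\mu(\Omega_\alpha)$ is the $\frac{dx\,dy}{(x-y)^2}$-area of $\Omega_\alpha$. By the symmetry \eqref{eq:ssimmetria} one has $\mu(\Omega_{1-\alpha}) = \mu(\Omega_\alpha)$, and since $h$ (hence $\mu(\Omega_\alpha)$) is constant on $[g^2,g]$, the theorem is equivalent via \eqref{eq:entropydomain} to the single assertion that $\alpha \mapsto \mu(\Omega_\alpha)$ is continuous on $[0,1]$ and strictly decreasing on $[0,g^2]$.

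Next I would analyse $\mu(\Omega_\alpha)$ on a single qumterval $J_w = (\alpha^-,\alpha^+) \subseteq (0,g^2)$. On $J_w$ the matching identity \eqref{eq:matching0} says that the $K_\alpha$-orbits of $\alpha-1$ and of $\alpha$ coalesce after $m_0+1$, resp.\ $m_1+1$, steps, with $m_0, m_1$ constant on $J_w$. This makes the dependence of $\Omega_\alpha$ on $\alpha$ completely explicit: as $\alpha$ increases across $J_w$, the domain loses the ``staircase'' of $m_1$ quadrilaterals swept out by the first iterates of $\alpha$ and gains the staircase of $m_0$ quadrilaterals swept out by the first iterates of $\alpha-1$, the matching identifying the two staircases below the coalescence level. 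Integrating $\frac{dx\,dy}{(x-y)^2}$ over these finitely many explicit regions one obtains a closed formula for $\mu(\Omega_\alpha)$ on $J_w$ whose derivative has the sign of $m_0-m_1$. The combinatorial description of the qumtervals by Farey words (sections \ref{S:qumtervals} and \ref{S:farey}) then enters: one checks that every qumterval contained in $(0,g^2)$ has $m_1 > m_0$ (symmetrically $m_1 < m_0$ on $(g,1)$, and $m_0 = m_1$ for the central qumterval producing $[g^2,g]$), whence $\mu(\Omega_\alpha)$ is strictly decreasing on each qumterval of $(0,g^2)$.

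I would then establish continuity of $\alpha \mapsto \mu(\Omega_\alpha)$ on all of $[0,1]$: on $[0,1]\setminus\EKU$ this is Theorem \ref{T:more}(1), and at $\alpha_0 \in \EKU$ the orbits of $\alpha_0$ and $\alpha_0-1$ stay in the interiors of their cylinders, so for nearby $\alpha$ the boundary data of $\Omega_\alpha$ vary continuously and, using the uniform exponential shrinking of cylinders to control the tails, $\mu(\Omega_\alpha \,\triangle\, \Omega_{\alpha_0}) \to 0$. The genuinely delicate point — the step I expect to be the main obstacle — is to upgrade ``continuous, and strictly decreasing on each interval of the dense open set $[0,g^2]\setminus\EKU$'' to ``strictly decreasing on $[0,g^2]$''. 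This does \emph{not} follow from soft real analysis (there are continuous functions strictly monotone on each component of a dense open set yet not globally monotone), so a combinatorial input is needed. One route: show the endpoints of qumtervals are dense in $\EKU$, that $\mu(\Omega_{\alpha_0})$ is the one-sided limit of $\mu(\Omega_\alpha)$ along such endpoints for every $\alpha_0\in\EKU$, and that for $\alpha<\beta$ in $[0,g^2]$ the quantity $\mu(\Omega_\cdot)$ drops by a positive (summable) amount as one crosses each of the countably many Farey-ordered qumtervals between $\alpha$ and $\beta$, so the accumulated strict decrease survives passage to the limit across $\EKU$; here the zero Hausdorff dimension of $\EKU$ (Proposition \ref{Hdim}) and the growth of $m_0,m_1$ along Farey words are used.

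A cleaner alternative, if it can be pushed through for this family, is to prove outright that $\Omega_\beta \subseteq \Omega_\alpha$ up to a $\mu$-null set whenever $\alpha < \beta \le g^2$: nestedness of the natural-extension domains makes monotonicity of $\mu(\Omega_\alpha)$ immediate, and strictness follows since $\Omega_\alpha \setminus \Omega_\beta$ contains a non-degenerate quadrilateral as soon as $[\alpha,\beta]$ meets a non-central qumterval. Either way, once $\mu(\Omega_\alpha)$ is shown continuous and strictly decreasing on $[0,g^2]$, equation \eqref{eq:entropydomain} gives that $h$ is strictly increasing there, the symmetry $\alpha \mapsto 1-\alpha$ gives strict decrease on $[g,1]$, and constancy on $[g^2,g]$ has already been recorded, completing the proof.
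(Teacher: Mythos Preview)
Your overall plan---reducing to the area $A_\alpha := \mu(\Omega_\alpha)$ of the natural-extension domain via $h(\alpha) \cdot A_\alpha = \pi^2/3$, proving local monotonicity of $A_\alpha$ on each qumterval from the matching, and then upgrading to global monotonicity---is reasonable, and you have correctly identified the genuine difficulty. But there are two concrete gaps, and one sign slip.

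\textbf{Sign slip.} For $J_w \subset (0,g^2)$ one has $w \in FW_0$, which by definition means $|w|_0 > |w|_1$, i.e.\ $m_0 > m_1$ (not $m_1 > m_0$). Your conclusion is nonetheless in the right direction, so this is cosmetic, but the intermediate sentence is false as written.

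\textbf{First gap: continuity of $A_\alpha$ across $\EKU$.} You propose to get continuity at $\alpha_0 \in \EKU$ by saying ``the boundary data of $\Omega_\alpha$ vary continuously''. But this is exactly where the finite-rectangular-structure description breaks down: by the Katok--Ugarcovici theorem the attractor need not have finite rectangular structure at parameters in $\tilde{\mathcal E}$, whose projection is essentially $\EKU$. So the explicit bookkeeping you rely on is unavailable precisely at those points. The paper does \emph{not} obtain continuity of $h$ from the natural extension; it imports H\"older continuity of $h$ as a separate analytic input (Theorem~\ref{TT:tiozzo}), proven by adapting the argument of \cite{T}.

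\textbf{Second gap: the local-to-global step.} Your route (a) (``the accumulated strict decrease survives passage to the limit across $\EKU$'') is too vague: summability of the drops over qumtervals is automatic once you know $h$ has bounded variation, but that alone does \emph{not} exclude a Cantor-function-type singular increasing part supported on $\EKU$. Your route (b) (nestedness $\Omega_\beta \subseteq \Omega_\alpha$) is attractive but almost certainly false: the vertical boundary levels of $\Delta_\alpha$ are constant only within a qumterval and jump discontinuously between qumtervals (they are fixed points of different M\"obius maps, cf.\ Proposition~\ref{P:maledetta}), so there is no reason to expect containment across distinct qumtervals.

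\textbf{What the paper actually does.} The paper handles the global step by writing $h = h_r + h_s$ with $h_r(t) := \sum_w \var_{J_w \cap [0,t]} h$ and proving $h_s \equiv 0$. The key inputs are: (i) $h$ is $\eta$-H\"older for every $\eta < 1/2$ (Theorem~\ref{TT:tiozzo}); (ii) $\EKU$ has upper box dimension $0$ (Proposition~\ref{Hdim}); (iii) a fractal-geometric lemma (Lemma~\ref{L:bdim}) then gives that $h_r$ is also H\"older; (iv) hence $h_s$ is H\"older and locally constant off $\EKU$, so by Lemma~\ref{L:extend} (a H\"older function locally constant off a set of smaller Hausdorff dimension is constant) $h_s \equiv 0$. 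This is precisely the mechanism that kills the ``devil's staircase'' scenario you worried about; your sketch mentions zero Hausdorff dimension but never brings in the H\"older continuity, which is the indispensable other half of the argument.
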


%undergoes infinitely many bifurcations, and the combinatorial types 
%of the dynamics (

%Monotonicity and smoothness of $h(\alpha)$ on each $J_w$ shall follow from the matching.

%The complement of all stability domains is a Cantor set which we will call \emph{bifurcation set}
%and denote by $\EKU$. 

%, hence the graph of the entropy 
%function is symmetric with respect to reflection through $\alpha = 1/2$.

Note that Theorem \ref{T:main} highlights a major difference with the $\alpha$-continued fraction case, where the 
entropy is not monotone \cite{NN} in any neighbourhood of $\alpha = 0$, and actually 
the set of parameters where the entropy is locally non-monotone has Hausdorff dimension $1$ \cite{CT3}.
For the $K_\alpha$, the study of the metric entropy was introduced by Katok and Ugarcovici in \cite{KU1}, \cite{KU2}, 
who gave an algorithm to produce the natural extension for any given element in the complement of $\EKU$; 
as a consequence, they computed the entropy in some particular cases. The present work gives a global approach 
which makes it possible to study the entropy as a function of the parameter. 

Condition \eqref{eq:matching0} was introduced in \cite{KU1}, where it is called {\em cycle property}, 
and it is also completely analogous to the {\em  matching
  condition} used by Nakada and Natsui \cite{NN} to study the family $(T_\alpha)$. 

Finally, we shall prove (Proposition \ref{P:limit}) that the entropy tends to $0$ 
as $\alpha \to 0^+$, and there its modulus of continuity is of order $\frac{1}{|\log \alpha|}$
(which is the same behaviour as in the case of $\alpha$-continued fractions).

%Since our approach to prove monotonicity is the same as in \cite{NN}, we shall use this latter terminology (and notation).

%The graph displays a phase transition %%cc usare altra terminologia?
%at $\alpha = g := \frac{\sqrt{5}-1}{2}$ the golden mean.

%%%
\begin{figure}
\includegraphics[scale=0.5]{./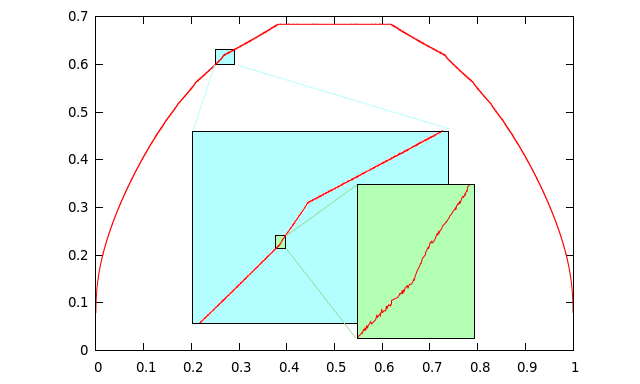} 
\caption{The entropy of $K_\alpha$ as a function of $\alpha$, and a sequence of zooms around a parameter
in the bifurcation set $\EKU$. Note the slope is increasing in each zoom, due to the fact that the entropy is not locally Lipschitz 
at points of $\EKU$ (Theorem \ref{T:more}). However, the entropy is globally monotone on $[0, \frac{3-\sqrt{5}}{2}]$, 
as stated in Theorem \ref{T:main}.}
\label{fig:zoom} 
\end{figure}

%The family of transformations $\{K_\alpha\}_{\alpha \in (0,1)}$ is
%closely connected with other dynamical systems: on one hand it turns out to be a borderline case of $(a,b)$-continued fraction
%transformations recently studied by \cite{KU1, KU2}, on the other hand
%the family $\{K_\alpha\}$ is very similar to the family $\{T_\alpha\}$
%of $\alpha$-continued fractions originally introduced by \cite{N}.

%The following result shows that even on the interval $[0,g^2]$ the qualitative
%behaviour of the entropy of $K_\alpha$ must be different from that of
%$T_\alpha$:
%\begin{theorem}\label{T:main}
%The function $\alpha \mapsto h(\alpha)$ is strictly monotone increasing on $[0,g^2]$, constant on $[g^2,g]$ and 
%strictly monotone decreasing on $[g,1]$.
 %% introdurre g!!!
%\end{theorem}

%%% spostare  ???

%In the last section we shall finally prove that the set $\EKU$ is the bifurcation locus for the entropy $h$, meaning that $h\in C^\infty([0,1]\setminus \EKU)$, but $h$ is not differentiable at (almost) all point of $\beta \in\EKU$: indeed $\limsup_{x\to \beta} \frac{|h(x)-h(\beta)|}{|x-\beta|}=+\infty$ for all $\beta \in \EKU\setminus\{g^2,g\}$.

 %%cc aggiustare!
\subsection{Connection with the main cardioid in the Mandelbrot set}

The fact that each connected component of the complement of $\EKU$ is naturally labelled by a Farey word
can be used to draw an unexpected connection between the combinatorial structure of $\EKU$ and the 
Mandelbrot set. 

Recall the \emph{main cardioid} of the Mandelbrot set is the set of parameters $c \in \mathbb{C}$
for which the map $f_c(z) := z^2+c$ has an attractive or indifferent fixed point.
The exterior of the Mandelbrot set admits a canonical uniformization map, and to each angle 
$\theta \in \mathbb{R}/\mathbb{Z}$ there corresponds an associated \emph{external ray} $R(\theta)$. Let us denote 
$\Omega$ to be the set of angles $\theta$ for which the ray $R(\theta)$ lands on the main cardioid.

Recall Minkowski's \emph{question mark function} $Q : [0, 1] \to [0, 1]$ is a homeomorphism of the 
interval which is defined by converting the continued fraction 
expansion of a number into a binary expansion. More precisely, if $x = [0; a_1, a_2, a_3, \dots]$ is 
the usual continued fraction expansion of $x$, then we define 
$$Q(x) := 0.\underbrace{0\dots0}_{a_1 - 1}\underbrace{1\dots1}_{a_2} \underbrace{0\dots0}_{a_3}\dots$$
We shall prove that Minkowski's function induces the following correspondence.
\begin{theorem} \label{T:mandel}
Minkowski's question mark function $Q(x)$ maps homeomorphically the bifurcation set $\EKU$ onto the
set $\Omega$ of external angles of rays landing on the main cardioid of the Mandelbrot set. In formulas, 
we have 
$$Q(\EKU) = \Omega.$$
\end{theorem}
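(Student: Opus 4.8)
The plan is to exploit the fact that both $\EKU$ and $\Omega$ are totally disconnected sets whose complements are countable disjoint unions of open intervals naturally indexed by $\QQ\cap(0,1)$, and to show that $Q$ identifies the two indexings interval by interval. On the continued fraction side: recall from Section~\ref{S:farey} that Farey words are in bijection with $\QQ\cap(0,1)$, via the Christoffel (mechanical) word $w_{p/q}$ attached to $p/q$, with concatenation rule $w_{(p+p')/(q+q')}=w_{p/q}\,w_{p'/q'}$ for Farey neighbours $p/q,p'/q'$. Writing $J_{p/q}:=J_{w_{p/q}}$, we then have $\EKU=[0,1]\setminus\bigsqcup_{p/q\in\QQ\cap(0,1)}J_{p/q}$, a disjoint union, so that the complementary intervals of $\EKU$ in $[0,1]$ are precisely the $J_{p/q}$. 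First I would invoke the explicit description of each \ku in terms of its generating Farey word (Section~\ref{S:qumtervals}): the endpoints $\alpha^-_{p/q}<\alpha^+_{p/q}$ of $J_{p/q}$ are quadratic irrationals whose continued fraction expansions are eventually periodic, with periodic block read off from $w_{p/q}$. The anchoring example is $\alpha^-_{1/2}=g^2=[0;2,\overline 1]$ and $\alpha^+_{1/2}=g=[0;\overline 1]$, the endpoints of the central \ku $(g^2,g)$.

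Next I would recall, on the Mandelbrot side, the classical description of the rays landing on the main cardioid due to Douady and Hubbard (see also Atela, Goldberg--Milnor, Schleicher): the main cardioid is a Jordan curve, $\Omega$ is closed, and its complement in $\RR/\ZZ$ is the disjoint union $\bigsqcup_{p/q}I_{p/q}$ of the $p/q$-wakes, the endpoints $\theta^-_{p/q}<\theta^+_{p/q}$ of $I_{p/q}$ being the angles of the two external rays landing at the root of the satellite hyperbolic component of internal angle $p/q$; these angles have purely periodic binary expansions of period $q$, given by the lower and upper mechanical sequences of slope $p/q$, with $\theta^+_{p/q}-\theta^-_{p/q}=\frac{1}{2^q-1}$. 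Here the basic example is $I_{1/2}=(1/3,2/3)=(0.\overline{01}_2,\,0.\overline{10}_2)$, and $0\in\Omega$ is the cusp angle. In particular the complementary intervals of $\Omega$ in $\RR/\ZZ$ are exactly the $I_{p/q}$.

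The heart of the argument is then to prove that $Q$ maps the endpoints of $J_{p/q}$ to the endpoints of $I_{p/q}$, i.e. $Q(\alpha^-_{p/q})=\theta^-_{p/q}$ and $Q(\alpha^+_{p/q})=\theta^+_{p/q}$ for every $p/q$. Since $Q$ is by definition the map converting a continued fraction expansion into a binary expansion, this is a purely word-combinatorial assertion: $Q$ intertwines the continued-fraction (cutting-sequence) presentation of Christoffel words with their mechanical-sequence (rotation) presentation. I would prove it by induction along the Stern--Brocot tree: the base case is $p/q=1/2$ (together with the trivial words at $0/1$ and $1/1$), where $Q([0;2,\overline 1])=0.\overline{01}_2=1/3$ and $Q([0;\overline 1])=0.\overline{10}_2=2/3$ match $\theta^\pm_{1/2}$; and the inductive step is that the two elementary Farey moves generating the words $w_{p/q}$ on the continued-fraction side correspond, under $Q$, to the two mediant moves on the dyadic side, which is the functoriality expected from the fact that $Q$ conjugates the Stern--Brocot structure on continued fractions to the Farey structure on binary expansions. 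The hard part will be the bookkeeping in this step: controlling the leading digits and the eventual pre-period of the continued fractions of $\alpha^\pm_{p/q}$, checking that after applying $Q$ they are absorbed into a purely periodic binary word of the correct period $q$ (as already visible in $g^2=[0;2,\overline 1]\mapsto 0.\overline{01}_2$), and confirming that the left endpoint goes to $\theta^-$ and the right endpoint to $\theta^+$.

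Granting this, the conclusion is formal. The map $Q:[0,1]\to[0,1]$ is an increasing homeomorphism with $Q(0)=0$ and $Q(1)=1$, the two endpoints being identified with the cusp angle $0\in\Omega$, so $Q$ descends to a homeomorphism of $\RR/\ZZ$. By the previous steps it carries each open interval $J_{p/q}$ homeomorphically onto $I_{p/q}$, and $p/q\mapsto(J_{p/q},I_{p/q})$ is a bijection between the set of all complementary intervals of $\EKU$ and the set of all complementary intervals of $\Omega$. A homeomorphism carrying the complementary intervals of one compact set bijectively onto those of another carries the first compact set onto the second; hence $Q(\EKU)=\Omega$. (The same symmetry computation shows the involution $\alpha\mapsto 1-\alpha$ of $\EKU$ corresponds under $Q$ to $\theta\mapsto -\theta$ on $\Omega$, matching the complex-conjugation symmetry of the Mandelbrot set.)
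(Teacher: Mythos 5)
Your overall strategy --- match the complementary intervals of $\EKU$ (the qumtervals $J_{p/q}$) with the complementary intervals of $\Omega$ (the $p/q$-wakes) endpoint by endpoint under $Q$, then conclude formally --- is sound and is in fact close in spirit to what the paper does. But the paper routes everything through the binary bifurcation set $\EB$: by construction $\EKU=\phi(\EB)$ with $Q\circ\phi=2\,\cdot$ (equation \eqref{MinkInv}), so the theorem reduces to the single identity $\Omega=2\EB$, which is Proposition \ref{P:cardioid}(3). Your proposal instead re-derives this correspondence directly, and the derivation has a genuine gap: the central combinatorial claim $Q(\alpha^{\pm}_{p/q})=\theta^{\pm}_{p/q}$ is verified only for $p/q=1/2$, and the inductive step along the Stern--Brocot tree is not carried out --- you explicitly defer it as ``bookkeeping.'' That bookkeeping is the actual content of the theorem; without it the proof is a plan, not a proof.

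Moreover, the input you take from the Mandelbrot side is stated imprecisely in a way that would derail the induction. The two angles bounding the $p/q$-wake are \emph{not} in general the lower and upper mechanical sequences of slope $p/q$: in the paper's notation they are $\theta^-=0.\overline{\tau({}^t w)}$ and $\theta^+=0.\overline{\tau w}$, i.e.\ specific cyclic shifts of the mechanical words, and identifying \emph{which} shifts is exactly the characteristic-arc argument in the proof of Proposition \ref{P:cardioid}(2) (one must locate the gap of the rotation set $C(w)$ containing the critical point and push it forward to the critical value). Your base case $p/q=1/2$ is misleading here because for $w=01$ the shift happens to reproduce the mechanical words themselves; already for $p/q=1/3$ one has $w=001$, the wake is $(1/7,2/7)=(0.\overline{001},0.\overline{010})$, and the upper mechanical word $0.\overline{100}=4/7$ is a ray landing at the $\alpha$-fixed point in the dynamical plane but not a boundary angle of the wake. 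Finally, even granting the rational-angle correspondence, you should say a word about passing to closures (the paper invokes \cite{Hub}, Corollary 4.4, to know that $\Omega$ is the closure of its rational angles); your ``formal conclusion'' via complementary intervals works, but only after one knows that the wakes are precisely the complementary intervals of $\Omega$ and that their total length is $1$, which is part of what must be proved or correctly cited.
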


The connection may seem incidental, but it is an instance 
of a more general correspondence
%ties in with several other similar connections 
discovered by the authors in recent years. 
Indeed, the Minkowski map provides an explicit dictionary between sets of numbers defined using continued fractions 
and sets of external angles for certain fractals arising in complex dynamics. 
More precisely, the question mark function:

\begin{enumerate}
 \item maps homeomorphically the \emph{bifurcation set for 
$\alpha$-continued fractions} onto the set of external rays landing on the \emph{real slice} of the boundary of the Mandelbrot set
(see \cite{BCIT} and \cite{thesis}, Theorem 1.1);
\item maps the sets of \emph{numbers of generalized bounded type} defined in \cite{CT2} to the sets of external rays landing 
on the real slice of the boundary of Julia sets for real quadratic polynomials (\cite{thesis}, Theorem 1.4);
\item
conjugates the tuning operators defined by Douady and Hubbard for the real quadratic family to 
tuning operators corresponding to renormalization schemes for the $\alpha$-continued fractions \cite{CT3}.
\end{enumerate}
For an introduction and more details about such correspondence we refer to 
one of the authors' thesis \cite{thesis}. The dictionary proves to be especially useful 
to derive results about families of continued fractions using the large body of
information known about the combinatorics of the quadratic family;
moreover, it can also be used to obtain new results about the quadratic family and the Mandelbrot set 
from the combinatorics of continued fractions (e.g. \cite{thesis}, Theorem 1.6).

Being  intimately connected to the structure of $\QQ$, Farey words
play a distinguished role in several other dynamical, combinatorial or
algebraic problems. To list just a few, we mention: kneading sequences for Lorentz maps \cite{HS, LaMo}, the coding of cutting sequences on the flat torus
\cite{HM} as well as on the hyperbolic one-punctured torus (\cite{KS}, pg. 726-727); the Markov spectrum 
(in particular the Cohn tree, see \cite{Bo}, pg. 201); primitive elements in rank two free groups \cite{GK};
the Burrows-Wheeler transform \cite{MRS}; digital convexity \cite{BLPR}. For more information we also refer 
to the survey \cite{Be} or the books \cite{Fogg} and \cite{BLRS}. %%cc

%It is worth pointing out that, also in this case, the Minkowski
%questionmark function works as a bridge between continued fraction
%algorithms an complex dynamics. Indeed, it is a bijection between the
%bifurcation set $\EE$ of the family $T_\alpha$ and the angles of rays
%landing on the real section of the boundary of the Mandelbrot set.

%%cc cambiare completamente?
\subsection{Behaviour of $(a,b)$-continued fractions on the critical line.}
%\subsection{Connections with $(a,b)$-contined fractions.}
We conclude the introduction by explaining in more detail the results 
of \cite{KU1, KU2} and how they relate to the present paper. 
For further details, see also section \ref{S:NatExt}.
%will briefly explain why the one parameter family $\{K_\alpha\}$ is
%relevant for the theory of $(a,b)$-continued fractions studied by
%Katok and Ugarcovici in  \cite{KU1, KU2}.

%Here we will be somewhat vague: the  reader
% interested in more details can find them in Section \ref{S:NatExt}.

In  \cite{KU1}, Katok and Ugarcovici consider the two parameter family of continued fraction algorithms induced by the maps
%Let us consider the map $f_{a,b}:\bar \RR \to \bar \RR$ defined by
\begin{equation}\label{eq:slow}
f_{a,b}(x):=\left\{
\begin{array}{ll}
x + 1 & \textup{if }x<a\\
-1/x & \textup{if }a\leq x<b\\
x -1 & \textup{if }b\leq x
\end{array}
\right.
\end{equation}
where 
%${\bf S} x := -1/x$,  ${\bf T}x:=x+1$, and 
the parameters $(a,b)$ range in a closed subset $\mathcal{P}$ of the plane.
The segment 
$$C:=\{(a,b) \ : \ b-a=1, \ b\in [0,1]\}$$
is a piece of the boundary of $\mathcal{P}$, and the first return map of  $f_{b-1,b}$ on the interval
$[b-1,b)$ coincides with the map $K_b$ we are going to study (these maps are also 
mentioned in \cite{KU1} under the name ``Gauss-like maps'' and denoted $\hat{f}_{b-1, b}$).

Katok and Ugarcovici also consider a closely related family
$(F_{a,b})_{(a,b) \in \mathcal{P}}$ of maps of the plane: each
$F_{a,b}$ has an attractor $D_{a,b}\subset \RR^2$ such that $F_{a,b}$
restricted to $D_{a,b}$ is invertible and it is a geometric
realization of the natural extension of $f_{a,b}$.  They also show
that for most parameters in $\mathcal{P}$ the attractor $D_{a,b}$ has
{\it finite rectangular structure}, meaning that it is a finite union
of rectangles. 
Moreover, all exceptions to this property belong to a Cantor
set $\tilde{\mathcal{E}}$ which is contained in the critical line $C$ and whose 1-dimensional Lebesgue measure is zero.

It turns out that the set $\EKU$ we are considering is just the projection of the set $\tilde{\mathcal{E}}$
onto the second coordinate, up to a countable set.
%$$\EKU=\{b\ : \ (b-1,b)\in\tilde{\mathcal{E}} \}.$$
Making explicit the structure of $\EKU$ allows us to prove that it is not just a zero measure set, but it also has zero Hausdorff dimension.

\subsection*{Structure of the paper}
In section \ref{S:farey} we shall start defining Farey words and 
establishing the properties which are needed to describe the combinatorial dynamics of the $(a,b)$-continued fractions.
Moreover, we define the \emph{binary bifurcation set} $\EB$ and show how it is related to the main cardioid 
of the Mandelbrot set. 

In section \ref{S:rcf} we then recall basic facts about continued fractions and then define 
the \emph{runlength map} $RL$ which passes from binary expansions to continued fraction expansions; then we shall see
how the properties of Farey words translate into the properties of the \emph{qumtervals} which we shall define.
Then, we shall apply all these properties to the case of $(a, b)$-continued fractions; in section \ref{S:matching} 
we determine the combinatorial dynamics of the orbits of $\alpha$ and $\alpha-1$, thus proving that the 
matching condition holds on each qumterval. 

Then, in section \ref{S:entropy} we shall draw consequences for the entropy
function, proving Theorem \ref{T:main}: indeed, we prove that the Cantor set $\EKU$ 
%consists of two steps: first one can
%prove that $h_{|[0,g^2]}$ is locally monotone increasing on the
%complement of a Cantor set $\EKU$ which 
has Hausdorff dimension zero (Proposition \ref{Hdim}) and $h$ is H\"older continuous, so it can be extended to a monotone function 
across the Cantor set. 
Finally, in section \ref{S:NatExt} we shall combine the 
previous properties with the construction of the attractors given in \cite{KU1} to prove Theorem \ref{T:more}.
For the sake of readability, the proofs of some technical lemmas will be postponed to the appendix.

%When $\alpha$ ranges on the interval $[0,g^2]$, the values $h(\alpha)$
%and $h_N(\alpha)$ are very close, in particular both entropy functions tend to 
%$0$ as $t\to 0^+$ and have the same asymptotic behaviour
%(see Proposition \ref{P:limit}). 
% Nevertheless the qualitative behaviour of $h$ and $h_N$ is quite different. 

%On the other hand the function  $h_N$ is
%not monotone on interval $(0, \delta)$ for any $\delta>0$ (see
%\cite{NN, CT2}).

%and the graph of  $\lim_{t \to 0} h(t)= \lim_{t\to 1} h(t)=0$. 

%\includegraphics[scale=0.3]{./KU.jpg} 
%\includegraphics[scale=0.3]{./NN.jpg} 

%\includegraphics[scale=0.3]{./NNvsKU2.jpg} 

%The picture is analogue to the logistic family, where there are infinitely many 
%stability domains in parameter space, and  
%(however, in that case the set of bifurcation parameters has positive measure)

%% citare la congettura CMPT riguardo regolarità entropia fuori da \EE?

%%%cc  
%\medskip

\subsection*{Acknowledgements}
We wish to thank H. Bruin, P. Majer, C.G. Moreira and I. Ugarcovici for useful conversations.
The pictures of the Mandelbrot and Julia sets are drawn with the software {\tt mandel} of W. Jung.

\section{Farey words and dynamics} \label{S:farey}

%In a nutshell, Farey words are finite binary words labelled by
%rational numbers: the Farey word of label $p/q$ is the period of the
%itinerary of $0$ under the map $x \mapsto \{x+p/q\}$ with respect to
%the partition $P_0:=[0,1-p/q[$
%$P_1:=[1-p/q,1[$.

%In this section we shall illustrate 

%\section{The fairy world of Farey words}

We shall start by constructing the set of Farey words and establishing the properties which are needed
in the rest of the paper. Many of these results appear in various sources, for instance in the books
\cite{Be, BLRS, Fogg, Lot}. For the convenience of the reader, and in order to set up the notation for the rest of the paper, 
we shall give a fairly self-contained treatment.

\subsection{Alphabets and orderings}
An \emph{alphabet} $\mathcal{A}$ will be a finite set of symbols, which we shall call 
\emph{digits}. 
Given an alphabet $\mathcal{A}$, we shall denote as $\mathcal{A}^n$ the set of words of length $n$, 
as $\mathcal{A}^\mathbb{N}$ the set of infinite words, and as $\mathcal{A}^\star := \bigcup_{n \geq 0} \mathcal{A}^n$ the set of 
\emph{finite words} of arbitrary length. % $, i.e. finite sequences of elements of $\mathcal{A}$. 
If $w$ is a finite word, then the symbol $\overline{w}$ will denote the infinite word given by infinite repetition 
of the word $w$.

If $w  = (\epsilon_1,...,\epsilon_\ell)\in \mathcal{A}^*$, we shall denote 
as $|w|$ the \emph{length} of the word $w$, i.e. the number of digits; 
moreover, if we fix a digit $a \in \mathcal{A}$, the symbol $|w|_a$
will denote the number of digits in the word which are equal to $a$.
Moreover, given a word $w=(\epsilon_1,...,\epsilon_\ell)\in \mathcal{A}^*$, 
we define its \emph{transpose} to be the word ${}^t w$ with  
$${}^t w:=(\epsilon_\ell,...,\epsilon_1); $$
a word which is equal to its transpose is called {\em palindrome}. 
Moreover, we define the \emph{cyclic permutation} operator $\tau$ to act on the word 
$w =  (\epsilon_1,...,\epsilon_\ell)$ as 
$$\tau w:=(\epsilon_2,...,\epsilon_\ell,\epsilon_1).$$

A total order $<$ on the alphabet $\mathcal{A}$ induces, for each $n$, 
a total order on the set $\mathcal{A}^n$ of words of length $n$ by 
using the lexicographic order, and similarly it induces a total order 
on the set $\mathcal{A}^\mathbb{N}$ of infinite words.
We shall extend this order to a (partial) order on the set $\mathcal{A}^*$
of finite words by defining that 
$$u < v \quad \textup{ if } \quad uv < vu.$$
Note that it is not difficult to check that, if $u, v \in \mathcal{A}^*$, 
then the inequality $u<v$ is equivalent to 
$\bar{u}<\bar{v}$ (this fact also proves that $<$ is an order relation).

Finally, we shall also define the stronger partial order relation $<<$ on the set $\mathcal{A}^*$
of finite strings by saying that 
%if $u,v \in \{0,1\}^*$, we say that 
$$u<<v$$ 
if there exist a prefix $u_1$ of $u$ and a prefix $v_1$ of $v$ with $|u_1] = |v_1|$ and such that 
$u_1 < v_1$. 
Note that $u << v$ implies $u < v$, and moreover that any infinite word beginning with $u$ is smaller 
than any infinite word beginning with $v$.

In the following we will mainly be interested in the binary alphabet $\mathcal{A} := \{0, 1\}$, 
with the natural order $0 < 1$.
%We shall consider the set of finite words on a binary alphabet
%$\{0,1\}^*$; if $w\in \{0,1\}^*$ we denote by $|w|$ the total number
%of digits of $w$ while $|w|_0$ ($|w|_1$) will denote the number of $0$
%digits ($1$ digits, respectively) in $w$.  
%If $w=(\epsilon_1,...,\epsilon_\ell)\in \{a,b\}^*$ we define its
%transposed (or reflected) word to be ${}^t
%w:=(\epsilon_\ell,...,\epsilon_1)$; a word which is invariant for this
%transformation is called {\em palindrome}. 
For $\epsilon\in \{0,1\}$, we also define the \emph{negation} %switch 
operator  $\check{\epsilon}:=1-\epsilon$, which can be extended digit-wise to binary words: 
if $w=(\epsilon_1, ..., \epsilon_\ell) \in \{0, 1\}^*$, we define $\check{w}:=(\check{\epsilon_1},...,\check{\epsilon_\ell})$.

%For any fixed $n\in \NN$, the elements of $\{0,1\}^n$ are totally ordered by lexicographic
%order; we extend this notion of order on $\{0,1\}^*$ as well: we say
%that $u<v$ iff $uv<vu$. We also consider the set of infinite words
%$\{0,1\}^{\NN}$; any element $w\in \{0,1\}^*$ generates an element
%$\bar{w} \in \{0,1\}^{\NN}$ by infinite repetition. The set $\{0,1\}^{\NN}$
%is totally ordered lexicographically and it is not difficult to check
%that, if $u,v \in \{0,1\}^*$, then $u<v$ is equivalent to
%\bar{u}<\bar{v}$. 

%%cc dove lo metto questo?
Every infinite word $w=(\epsilon_1, \epsilon_2, ...) \in \{0, 1\}^\mathbb{N}$ also corresponds to the unique real value in $[0,1]$
which has $w$ as its binary expansion, which will be denoted by $.w := \sum_{k =1}^\infty \epsilon_k 2^{-k};$
%that will be denoted by $.w$, 
the same is true for finite binary words in  $\{0,1\}^*$, which correspond to dyadic rationals.

%%cc tuning

%{\bf Inserire notazione per il tuning, order relation, simbolo $<<$, inversione prima e ultima lettera, trasposizione}

\subsection{Farey words}
We are now ready to define one of the main ingredients of the paper, namely the set of \emph{Farey words}.
As we shall see, several equivalent definitions can be given; we shall start with a recursive definition.

For each integer $n \geq 0$, we shall construct a list $F_n$ of finite words in the alphabet $\{0, 1\}$, 
called \emph{Farey list} of level $n$.
Let us start with $F_0 := (0, 1)$ the list consisting of the two one-digit words.
For each $n$, the next list $F_{n+1}$ is obtained by inserting between two consecutive words $v$, $w$ in the list 
$F_n$ the concatenation $vw$. In formulas, if 
$F_n = (w_1, \dots, w_k)$ with each $w_i$ a finite word, then the next list is 
$F_{n+1} = (v_1, \dots, v_{2k-1})$ with 
$$\begin{array}{ll}
v_{2i-1} :=  w_i & \textup{ for } 1 \leq i \leq k \\
v_{2i} := w_i w_{i+1} & \textup{ for } 1 \leq i \leq k-1.
\end{array}
$$%$$F_{n+1} := (w_1, w_1w_2, w_2, w_2w_3, w_3, \dots, w_{k-1}, w_{k-1}w_k, w_k).$$

\begin{definition}
The set of Farey words $FW$ is the union of all Farey lists:
$$FW:=\bigcup_{n\geq 0}F_n.$$
\end{definition}
As an example, the first few Farey lists are
$$
\begin{array}{l}
%F_0=\{a,b\} , \ 
F_0 = (0,1) \\
F_1= (0,01,1) \\
F_2= (0,001,01,011,1) \\ 
F_3= (0,0001,001,00101,01,01011,011,0111,1)
\end{array}
$$
and all their elements are Farey words. Note that each $F_n$ contains $2^n + 1$ elements.
A Farey word will be called \emph{non-degenerate} if it has more than one digit: we shall denote the set of non-degenerate 
Farey words as $FW^\star = FW \setminus \{ 0, 1 \}$. 

These words are also sometimes called {\it Christoffel words}, as in the book \cite{BLRS}, or 
{\it standard words} as in \cite{MRS}.

%Now, let us start from the two letter list $F_0:=\{0,1\}$ and let us generate a sequence
%of lists $(F_n)$ as follows: the $F_{n+1}$ is obtained from $F_n$ inserting between two adjacent words $v,w$ word $vw$ obtained
%by their concatenation. 

\begin{lemma} \label{incr}
Each Farey list $F_n$ is strictly increasing.
\end{lemma}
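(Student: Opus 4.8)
The plan is to induct on $n$, using the order relation $<$ on finite words defined via $u < v \iff uv < vu$ (equivalently $\bar u < \bar v$). The base case $n=0$ is immediate since $F_0 = (0,1)$ and $0 < 1$. For the inductive step, suppose $F_n = (w_1, \dots, w_k)$ is strictly increasing, so $w_i < w_{i+1}$ for all $i$. The list $F_{n+1}$ is obtained by inserting the concatenation $w_i w_{i+1}$ between each consecutive pair. Thus to prove $F_{n+1}$ is strictly increasing, it suffices to establish, for each $i$, the pair of inequalities
\begin{equation*}
w_i < w_i w_{i+1} < w_{i+1}.
\end{equation*}
This is the heart of the matter: everything reduces to the single \emph{mediant inequality} $u < uv < v$ whenever $u < v$.

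First I would record the following elementary facts about the word order, all of which follow quickly from the definition $u < v \iff uv < vu$ (and the stated equivalence with $\bar u < \bar v$): (a) for any words $u, v$, the word $uv$ satisfies $uv < vu$ precisely when $u < v$; (b) concatenation is ``monotone'' in the appropriate sense. The key step is then: if $u < v$, then $u < uv$ and $uv < v$. For $u < uv$: by definition this means $u(uv) < (uv)u$, i.e. $uuv < uvu$; since $u < v$ gives $uv < vu$, left-multiplying (concatenating $u$ on the left) preserves the strict lexicographic comparison at the first point of difference, so $uuv < uvu$ holds. Symmetrically, $uv < v$ means $(uv)v < v(uv)$, i.e. $uvv < vuv$, which again follows from $uv < vu$ by concatenating $v$ on the right. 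One must be slightly careful that these concatenation operations genuinely preserve strict inequality and cannot collapse it to equality; this is where the fact that $<$ on finite words corresponds to $\bar{}$-comparison of the periodic infinite words is cleanest to invoke, since for infinite words lexicographic comparison is a genuine total order and prepending/appending a fixed finite block to both sides shifts the first disagreement without destroying it.

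Granting the mediant inequality, the inductive step is then bookkeeping: within $F_{n+1}$, consecutive entries are either of the form $(w_i,\, w_i w_{i+1})$ or $(w_i w_{i+1},\, w_{i+1})$, and both comparisons are exactly instances of $u < uv < v$ with $u = w_i$, $v = w_{i+1}$, which are comparable with $u<v$ by the inductive hypothesis. Hence every consecutive pair in $F_{n+1}$ is strictly increasing, and since $<$ is transitive (being equivalent to the order on the periodic infinite words), the whole list $F_{n+1}$ is strictly increasing.

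The main obstacle I anticipate is purely the lemma on concatenation preserving strict order — making precise that $u < v$ implies $xu y < x v y$ (or at least the one-sided versions needed above) for the word order $<$, rather than just for the lexicographic order on words of equal length. The subtlety is that $<$ on unequal-length words is defined through the cyclic products $uv$ vs $vu$, so one genuinely needs the translation to infinite periodic words $\bar u, \bar v$ to argue cleanly; I would state this translation as a preliminary observation (it is already asserted in the text just before the lemma) and then the rest is routine.
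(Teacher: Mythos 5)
Your proposal is correct and follows essentially the same route as the paper: induction on $n$, reduction to the mediant inequality $u < uv < v$, and the proof of that inequality by prefixing $u$ (giving $uuv < uvu$, i.e.\ $u < uv$) and appending $v$ (giving $uvv < vuv$, i.e.\ $uv < v$) to the defining inequality $uv < vu$. The extra care you take about concatenation preserving strict lexicographic comparison is a sound elaboration of a point the paper leaves implicit.
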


\begin{proof}
Since $0 < 1$, then the list $F_0$ is ordered increasingly. By induction, we just need to prove 
that given two words $u < v$, then we have 
$$u < uv < v.$$
Indeed, by definition $u < v$ means that 
\begin{equation} 
uv < vu.
\label{uvvu}
\end{equation}
By prefixing $u$ on both sides of the inequality, we get $uuv < uvu$, that we can
interpret as $u(uv) < (uv)u$, hence by definition of the order we have $u < uv$. Similarly, by adding $v$ 
on both sides at the end of \eqref{uvvu} we get 
$uvv < vuv$, which by definition implies $uv < v$.

\end{proof}

Note moreover that each Farey word is naturally equipped with a \emph{standard factorization}; 
indeed, if $w$ is a Farey word, let $n$ be the smallest integer for which $w$ belongs to $F_n$; 
by definition, the word $w$ is generated in the iterative construction as a concatenation $w = w_1 w_2$
where $w_1$ and $w_2$ belong to the level $F_{n-1}$. Thus, the decomposition 
$w = w_1 w_2$ will be called the standard factorization of $w$. In the appendix we shall prove the following characterization:
\begin{proposition}\label{P:standard.factorization}
Given $w\in FW$, let us consider a decomposition $w=w'w''$ where $w', w''$ are non-empty words; then the following conditions are equivalent:
\begin{enumerate}
\item $w'$ and $w''$ are Farey words;
\item $w=w'w''$ is the standard factorization of $w$.
\end{enumerate} 
\end{proposition}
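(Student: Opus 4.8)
The plan is to prove the two implications separately, using the recursive structure of the Farey lists $F_n$ together with Lemma \ref{incr}. For the direction (2) $\Rightarrow$ (1): if $w = w'w''$ is the standard factorization, then by definition $w'$ and $w''$ are the two consecutive words of $F_{n-1}$ whose concatenation was inserted to produce $w$ at level $n$; in particular both $w'$ and $w''$ lie in $F_{n-1} \subseteq FW$, so they are Farey words. This direction is essentially immediate from the construction, so the real content is the converse.

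For (1) $\Rightarrow$ (2), I would argue by induction on $|w|$. Suppose $w \in FW$ admits a decomposition $w = w'w''$ with $w', w'' \in FW$; let $n$ be minimal with $w \in F_n$, and let $w = ab$ be the standard factorization, so $a, b \in F_{n-1}$ and, say, $a < b$ (by Lemma \ref{incr}, $a$ and $b$ are consecutive in $F_{n-1}$, with $a < ab < b$). The goal is to show $\{w', w''\} = \{a, b\}$ with $w' = a$, $w'' = b$. First I would dispose of the degenerate cases ($w' $ or $w''$ a single letter), which force $w$ to be of the form $0^k 1$ or $0 1^k$ and are easily checked directly. In the non-degenerate case, the key structural fact I want to invoke is that a Farey word $w$ of length $\geq 2$ is \emph{primitive} (not a proper power) and, more importantly, that its only decomposition into two Farey factors is the standard one — which is exactly what we are proving, so the induction must be set up carefully. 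The mechanism I would use is: compare the prefixes. Since $w' \in FW$ is a prefix of $w$ and $a \in FW$ is also a prefix of $w$, one of $w', a$ is a prefix of the other. If $w' = a$ then $w'' = b$ and we are done. Otherwise, WLOG $a$ is a proper prefix of $w'$, so $w' = a z$ for some non-empty $z$, and then $w = w'w'' = a z w'' $ while $w = a b$, giving $b = z w''$. Now I would like to say that $b$, being a Farey word of length $< |w|$, has its Farey-factor decompositions controlled by the inductive hypothesis; but $z$ need not be a Farey word, so instead I would use the combinatorial rigidity of standard words directly.

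The cleanest route, and the one I expect to actually carry out, is via the well-known characterization of Christoffel/standard words by their arithmetic slope: a non-degenerate Farey word $w$ with $|w|_0 = p$, $|w|_1 = q$ (so $\gcd(p,q)=1$, by an easy induction on the lists) is uniquely the lower Christoffel word of slope $q/p$, and its standard factorization $w = w_1 w_2$ is the unique factorization into two Christoffel words, whose slopes $q_1/p_1$ and $q_2/p_2$ are the two Farey parents of $q/p$ in the Stern–Brocot tree, i.e. $|p_1 q_2 - p_2 q_1| = 1$ and $(p,q) = (p_1+p_2, q_1+q_2)$. Given any decomposition $w = w'w''$ with $w', w'' \in FW$, counting digits gives $(|w'|_0, |w'|_1) + (|w''|_0, |w''|_1) = (p,q)$; the fact that $w'$ is a prefix of the Christoffel word of slope $q/p$ and is itself Christoffel pins down its slope to be a Stern–Brocot ancestor, and the mediant relation then forces $(|w'|_0,|w'|_1)$ and $(|w''|_0,|w''|_1)$ to be exactly the two Farey parents, whence $\{w', w''\} = \{w_1, w_2\}$; finally, ordering (Lemma \ref{incr}, $w' < w''$ since $w'w'' = w = w'' \cdot(\text{rotation})$ forces the standard order) identifies $w' = w_1$, $w'' = w_2$. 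I would also need the small lemma that the digit-count map $w \mapsto (|w|_0,|w|_1)$ sends $F_n$ bijectively onto the Farey/Stern–Brocot fractions of the appropriate level, which is a direct induction paralleling the construction of $F_{n+1}$ from $F_n$.

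The main obstacle is this last rigidity step — showing that a Farey prefix of a Farey word must have a digit-count equal to a Stern–Brocot ancestor, and that there is \emph{no other} way to split the digit budget $(p,q)$ into two Christoffel pieces that concatenate correctly. This is where I would lean on standard facts from the theory of Sturmian/Christoffel words (the relevant statements are in \cite{BLRS, Lot}), namely the uniqueness of the Christoffel word of a given slope and the characterization of factorizations of Christoffel words; the bookkeeping of matching slopes via the mediant is routine once those are in hand. Everything else — the degenerate cases, the prefix dichotomy, the coprimality $\gcd(|w|_0,|w|_1)=1$, and the final ordering argument — is short.
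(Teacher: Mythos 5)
The direction $(2)\Rightarrow(1)$ and the overall plan are fine, but your route for $(1)\Rightarrow(2)$ has a genuine gap at exactly the step you flag as "the main obstacle". The claim that the mediant/digit-count bookkeeping "forces $(|w'|_0,|w'|_1)$ and $(|w''|_0,|w''|_1)$ to be exactly the two Farey parents" is false as stated: take $w=W_{3/8}=00100101$. The prefix $(0)=W_{0/1}$ is a Farey word whose slope $0/1$ is a Stern--Brocot ancestor of $3/8$, and the residual digit budget $(3,8)-(0,1)=(3,7)$ is a reduced slope with the correct length-equals-denominator property --- yet $0/1$ and $3/7$ are not the parents of $3/8$ (those are $1/3$ and $2/5$). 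What saves the proposition in this example is that the actual suffix $0100101$ is not the Christoffel word $W_{3/7}=0010101$; that is information carried by the letters themselves, not by the digit counts, so your argument must return to the word structure. Closing that gap is essentially the uniqueness-of-factorization theorem for Christoffel words (Borel--Laubie), which is the very statement being proved; "leaning on \dots the characterization of factorizations of Christoffel words" from \cite{BLRS} would reduce your proof to a citation of the proposition itself. Your first route (prefix comparison) you rightly abandon, and note also that its "WLOG $a$ is a proper prefix of $w'$" silently drops the case where $w'$ is a proper prefix of $a$.

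For comparison, the paper's proof is a short self-contained induction on the Farey lists using the substitution operators $U_0,U_1$: assuming $w\in FW_0$, no element of $FW_0$ contains consecutive $1$s while every non-degenerate element of $FW_1$ does, so in any decomposition $w=w'w''$ with both factors Farey, both factors lie in the image of $U_0$; writing $w'=v'\star U_0$, $w''=v''\star U_0$ gives $w=(v'v'')\star U_0$ with $v'v''$ a shorter Farey word, and the inductive hypothesis applied to $v'v''$ finishes the argument. If you want to salvage your approach, you would need to prove directly that for a non-parent ancestor the corresponding suffix of $w$ is never Christoffel, which is considerably more work than the substitution argument.
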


We shall now construct a natural correspondence between the set of Farey words and the set 
of rational numbers between $0$ and $1$. 
Given a word $w \in \{0, 1\}^*$, let us define the rational number $\rho(w)$ to be the 
ratio between the number of occurrences of the digit $1$ and the total length of the word:
$$\rho(w) := \frac{|w|_1}{|w|}.$$
Clearly, $0 \leq \rho(w) \leq 1$. Moreover, we have the following correspondence:

\begin{proposition} \label{fareybij}
The map $\rho : FW \to \mathbb{Q} \cap [0, 1]$ is a bijection between the set of Farey words and 
the set of rational numbers between $0$ and $1$.
\end{proposition}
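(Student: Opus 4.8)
The plan is to prove that $\rho: FW \to \mathbb{Q} \cap [0,1]$ is a bijection by exploiting the recursive structure of the Farey lists $F_n$ and comparing it with the classical Stern--Brocot (Farey mediant) construction of $\mathbb{Q} \cap [0,1]$. The key observation is that if a Farey word $w$ has standard factorization $w = w_1 w_2$, then by additivity of $|\cdot|_1$ and $|\cdot|$ we have
\begin{equation}\label{eq:mediant}
\rho(w) = \frac{|w_1|_1 + |w_2|_1}{|w_1| + |w_2|},
\end{equation}
which is exactly the mediant of $\rho(w_1)$ and $\rho(w_2)$. So $\rho$ intertwines the concatenation operation on Farey lists with the mediant operation on fractions, and the whole statement should follow by transporting known facts about the Stern--Brocot tree.

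Concretely, I would proceed as follows. First, define the rational lists $Q_n$ by $Q_0 := (0/1, 1/1)$ and obtain $Q_{n+1}$ from $Q_n$ by inserting the mediant $\frac{p+p'}{q+q'}$ between each pair of consecutive entries $\frac{p}{q}, \frac{p'}{q'}$; it is classical (Stern--Brocot) that every rational in $[0,1]$ appears exactly once in $\bigcup_n Q_n$, that consecutive entries $\frac pq < \frac{p'}{q'}$ in any $Q_n$ satisfy the unimodularity relation $p'q - pq' = 1$, and that mediants of such pairs are always in lowest terms. Then I would prove by induction on $n$ that $\rho(F_n) = Q_n$ entrywise, i.e. the $i$-th word of $F_n$ maps under $\rho$ to the $i$-th fraction of $Q_n$. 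The base case $n=0$ is immediate since $\rho(0) = 0/1$ and $\rho(1) = 1/1$. For the inductive step, the odd-indexed entries of $F_{n+1}$ are the entries of $F_n$ (so they map correctly by hypothesis), and the even-indexed entry $v_{2i} = w_i w_{i+1}$ maps, by \eqref{eq:mediant}, to the mediant of $\rho(w_i)$ and $\rho(w_{i+1})$, which by the inductive hypothesis is the mediant of the $i$-th and $(i{+}1)$-th entries of $Q_n$ — exactly the newly inserted entry of $Q_{n+1}$. This establishes $\rho(F_n) = Q_n$ for all $n$, hence $\rho(FW) = \mathbb{Q} \cap [0,1]$ (surjectivity).

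For injectivity I would argue that each rational appears in exactly one $F_n$ as the word generated there, using Lemma \ref{incr} (each $F_n$ is strictly increasing, so no repetitions within a level) together with the fact that a word newly created at level $n+1$ is strictly between its two neighbours, so it cannot coincide with any word from an earlier level; since the corresponding fractions in $Q_n$ also have this ``first appearance is unique'' property, the bijection $\rho|_{F_n} : F_n \to Q_n$ being a bijection of lists of the same length forces $\rho$ to be globally injective on $FW$. Alternatively, one can phrase this via Proposition \ref{P:standard.factorization}: a Farey word is determined by its standard factorization tree, and $\rho$ sends that tree isomorphically to the Stern--Brocot tree of its image, which determines the rational uniquely.

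The main obstacle is bookkeeping rather than conceptual: I must be careful that the combinatorial recursion defining $F_{n+1}$ from $F_n$ matches the mediant recursion defining $Q_{n+1}$ from $Q_n$ \emph{position by position}, and that the ``Farey word generated at level $n$'' coincides with the standard-factorization notion so that no word is counted twice across levels. The unimodularity relation $p'q - pq' = 1$ for consecutive entries is what guarantees the mediants are automatically reduced, so that $\rho$ genuinely hits every rational in lowest terms and the counting ($2^n+1$ words at level $n$, matching $2^n+1$ fractions) lines up exactly; I would include a short inductive verification of this unimodularity as the one piece of real (but routine) computation.
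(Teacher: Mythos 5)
Your proof is correct, but it takes a genuinely different route from the paper's. The paper constructs an explicit right inverse of $\rho$, namely the rotation-coding map $r \mapsto W_r = \Phi_r(0^+)$, proves $\rho(W_r)=r$ by a telescoping sum (Lemma \ref{rightinv}), and then identifies $\textup{Im}\,W$ with $FW$ via the concatenation identity $W_{r\oplus r'}=W_rW_{r'}$ (Lemma \ref{Wconcat}); the bijection is then immediate. You instead bypass $W_r$ entirely and run a direct positionwise induction identifying the list $F_n$ with the $n$-th level $Q_n$ of the Stern--Brocot construction, using only the elementary identity $\rho(w_1w_2)=\frac{|w_1|_1+|w_2|_1}{|w_1|+|w_2|}$. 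Your argument is more elementary and self-contained (no circle rotations, no floor-function estimates), while the paper's approach buys the explicit dynamical description of the inverse $W_r$, which is not incidental: $W_r$ and the map $\Phi_r$ are the workhorses for the symmetry and cyclic-ordering results (Proposition \ref{P:simmetrie}, Lemma \ref{cyclic}) used throughout the rest of the paper, so the bijection there comes essentially for free from machinery they need anyway. One point you should make explicit when writing this up: the induction hypothesis must be that the $i$-th word of $F_n$ satisfies $(|w_i|_1,|w_i|)=(p_i,q_i)$ as a \emph{pair}, where $p_i/q_i$ is the $i$-th entry of $Q_n$ in lowest terms --- equality of the ratios alone does not let you conclude that $\frac{|w_i|_1+|w_{i+1}|_1}{|w_i|+|w_{i+1}|}$ is the Stern--Brocot mediant. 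The unimodularity relation you mention is exactly what keeps the pair reduced at each step, so this is the routine verification you flagged; with it, both surjectivity and (via Lemma \ref{incr} and the nesting of the lists) injectivity go through as you describe.
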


In the rest of this section we shall prove the Proposition \ref{fareybij}, and meanwhile establish more properties of Farey words. 
In particular, we shall see how to construct an inverse of $\rho$, i.e. to produce a Farey word given a rational number.

%The following proposition is an immediate consequence of the above definition

%Given two rational numbers $r:= \frac{p}{q}$ and $r' := \frac{p'}{q'}$ with $(p, q) = (p', q') =1$, their \emph{Farey sum} 

%\begin{proof}[Proof of Proposition \ref{fareybij}]
%We shall first prove that the map $\rho$ is injective because it is strictly increasing.
%It follows from the definition that the map $\rho$ intertwines the operation of concatenation with that of Farey sum:
%indeed, for any two finite strings $u, v$ we have
%$$\rho(uv) = \rho(u) \oplus \rho(v).$$
%Thus, if $\rho(u) < \rho(v)$, then we also have 
%$\rho(u) < \rho(uv) < \rho(v)$.
%Let now $n \geq 0$, and consider the Farey list $F_n = (w_1, \dots, w_{2^n+1})$.
%Starting with $\rho(0) = 0 < 1 = \rho(1)$ and by induction using the previous observation, we get that for each $n$ 
%the sequence 
%$(\rho(w_1) , \rho(w_2), \dots, \rho(w_{2^n+1}))$
%is ordered increasingly,
% i.e. 
%$$\rho(w_k) < \rho(w_{k'}) \textup{ iff }k < k',$$
%hence by Lemma \ref{incr} we have that for each $u, v \in F_n$
%$$\rho(u) < \rho(v) \textup{ iff }u < v.$$
%Thus, if $u, v$ are two Farey words with $u < v$, then there is an integer $n$ for which both $u$ and $v$ belong 
%to $F_n$, hence by the previous argument we have 
%$\rho(u) < \rho(v) \textup{ iff } u < v.$
%\end{proof}

Let $r := \frac{p}{q} \in [0, 1]$ be a rational number, 
with $(p, q) = 1$, and consider the one-dimensional torus $\mathbb{R}/\mathbb{Z}$, with the marked point $x_0 = 0$. 
Let $C_q := \{ x \in \mathbb{R}/\mathbb{Z} \ : \ qx \not\equiv 0  \mod 1  \}$.
For each $x \in C_q$, we shall define the binary word $\Phi_r(x) \in \{ 0, 1 \}^q$ by using the dynamics 
of the circle rotation
$$R_r(x) := x + r \mod 1.$$
%We shall now define a map
%$\Phi_r :  C_q \to \{ 0, 1\}^q$ way to construct a binary word 
%$W_r$, which will turn out to be a Farey word.
The word $\Phi_r(x)$ will be constructed as follows: starting at $x$, we successively apply the rotation $R = R_r$ 
and each time we write down $1$ if we cross the $x_0$ mark, and $0$ otherwise. 
More precisely, we define $\Phi_r(x) := (\epsilon_1, \dots, \epsilon_q)$, 
where, for each $k$ between $1$ and $q$, the $k^{th}$ digit $\epsilon_k$ is given by 
$$\epsilon_k := \left\{ \begin{array}{ll} 
                         0 & \textup{if }x_0 \notin (R^{k-1}(x), R^k(x)] \\
			 1 & \textup{if }x_0 \in (R^{k-1}(x), R^k(x)].                       
\end{array}\right.$$
It is immediate to check that one can also write the formula
\begin{equation} \label{altdef}
\epsilon_k = \lfloor x+ k r \rfloor - \lfloor x+ (k-1)r \rfloor \qquad \textup{ for }1 \leq k \leq q. 
\end{equation}
Note that the map $\Phi_r$ intertwines the rotation with the cyclic permutation $\tau$, i.e. 
$$\Phi_r \circ R_r = \tau \circ \Phi_r.$$

%Let us now check that $\rho(W_r) = r$. 

\begin{lemma} \label{increase}
The map $\Phi_r$ is (weakly) increasing, in the sense that if $0 < x < y < 1$, then $\Phi_r(x) \leq \Phi_r(y) $.  %%cc  
\end{lemma}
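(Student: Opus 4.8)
The plan is to reduce the statement to the explicit formula \eqref{altdef}, $\epsilon_k = \lfloor x + kr \rfloor - \lfloor x + (k-1)r \rfloor$, and compare the two words $\Phi_r(x)$ and $\Phi_r(y)$ digit by digit. Fix $0 < x < y < 1$. The key observation is that for a fixed real $t$, the function $x \mapsto \lfloor x + t \rfloor$ is non-decreasing; so each partial sum $\sum_{k=1}^{j} \epsilon_k(x) = \lfloor x + jr \rfloor - \lfloor x \rfloor$ is ``almost'' monotone in $x$ — the subtraction of $\lfloor x \rfloor$ is what needs care, but since $0 < x < 1$ we have $\lfloor x \rfloor = 0$, and since $0 < x + jr$ the boundary terms are controlled. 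Thus, writing $S_j(x) := \sum_{k=1}^j \epsilon_k(x) = \lfloor x + jr\rfloor$ for $1 \le j \le q$ (using $\lfloor x\rfloor = 0$), the function $j \mapsto S_j$ is non-decreasing in $x$: $S_j(x) \le S_j(y)$ for every $j$.

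Next I would translate this into the lexicographic comparison. Suppose, for contradiction, that $\Phi_r(x) > \Phi_r(y)$ in lexicographic order. Then there is a first index $m$ where they differ, with $\epsilon_m(x) = 1$ and $\epsilon_m(y) = 0$, while $\epsilon_k(x) = \epsilon_k(y)$ for all $k < m$. Summing, $S_{m}(x) = S_{m-1}(x) + 1 = S_{m-1}(y) + 1 > S_{m-1}(y) = S_m(y)$, which contradicts $S_m(x) \le S_m(y)$ established above. Hence no such $m$ exists and $\Phi_r(x) \le \Phi_r(y)$, as claimed. One has to be slightly careful that $x, y \in C_q$ guarantees that none of the intermediate points $R^k(x), R^k(y)$ equal $x_0 = 0$, so the half-open intervals $(R^{k-1}, R^k]$ in the definition are unambiguous and the floor formula \eqref{altdef} is valid without boundary issues; this is exactly where the hypothesis $x, y \in C_q$ enters.

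The only mild subtlety — and the step I expect to require the most attention — is bookkeeping around the floor function when $x + jr$ can reach or exceed $1$: one must check that $S_j(x) = \lfloor x + jr \rfloor$ really does equal the partial sum of digits as defined, i.e. that the telescoping $\sum_{k=1}^j (\lfloor x + kr\rfloor - \lfloor x + (k-1)r\rfloor) = \lfloor x + jr \rfloor - \lfloor x \rfloor$ together with $\lfloor x \rfloor = 0$ (valid since $0 < x < 1$) gives the claimed identity. Once this is in place the monotonicity of each $S_j$ in $x$ is immediate from monotonicity of $t \mapsto \lfloor t \rfloor$, and the lexicographic conclusion follows as above. No deeper input is needed; this is essentially a one-paragraph argument once the formula \eqref{altdef} and the telescoping are invoked.
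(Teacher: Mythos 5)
Your argument is correct and is essentially the same as the paper's: both rest on the monotonicity of $x \mapsto \lfloor x + kr \rfloor$ together with the telescoping identity from \eqref{altdef}, and both locate the first index where the floors (equivalently, the digits) differ. The paper phrases it directly (the minimal $k$ with $\lfloor x+kr\rfloor < \lfloor y+kr\rfloor$ forces $\epsilon_k(x)<\epsilon_k(y)$) while you argue by contradiction via the partial sums, but this is only a cosmetic difference.
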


\begin{proof}
If $x < y$, then for each $k$ we have $\lfloor x + kr \rfloor \leq \lfloor y + kr \rfloor$. Thus, either for each $0 \leq k \leq q$ 
we have $\lfloor x + kr \rfloor = \lfloor y + kr \rfloor$ (in which case $\Phi_r(x) = \Phi_r(y)$) or there exists a minimum $k \leq q$ 
such that $\lfloor x + kr \rfloor < \lfloor y + kr \rfloor$. In the latter case,  $\Phi_r(x) < \Phi_r(y)$ in lexicographical order.
\end{proof}

%As a consequence, if $w = \Phi_r(x)$ then the set $\{ \tau^k w \ : \ 0 \leq k < q\}$ of cyclic permutations of $w$ is ordered in the same way 
%as the set 
%$$S_r(x) := \{ \{ x + kr \} \ : \ 0 \leq k < q \}.$$
Moreover, the map $\Phi_r$ is constant on connected components of $C_q$; we will be particularly interested in the word 
$W_r$ defined as 
$$W_r := \Phi_r(0^+) = \lim_{x \to 0^+} \Phi_r(x).$$

\begin{lemma} \label{rightinv}
The map $W : \mathbb{Q} \cap [0,1] \to \{0, 1\}^\star$ is a right inverse of $\rho$; that is, for each $r \in \mathbb{Q} \cap [0, 1]$ we have 
$$\rho(W_r) = r.$$  
\end{lemma}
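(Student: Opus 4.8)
The plan is to compute $|W_r|_1$ directly from the definition of $W_r = \Phi_r(0^+)$. Recall that $W_r = (\epsilon_1,\dots,\epsilon_q)$ where $q$ is the denominator of $r = p/q$ in lowest terms, and each digit is given by the telescoping formula \eqref{altdef}, namely $\epsilon_k = \lfloor x + kr\rfloor - \lfloor x + (k-1)r\rfloor$ for $x \to 0^+$. So the natural first step is to sum these digits and telescope.

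First I would fix a small $x > 0$ (small enough that $x$ lies in the connected component of $C_q$ adjacent to $0$ on the right; since $q x \not\equiv 0$ for such $x$, this is legitimate) and write
\begin{equation}\label{eq:telescope}
|W_r|_1 = \sum_{k=1}^q \epsilon_k = \sum_{k=1}^q \left( \lfloor x + kr \rfloor - \lfloor x + (k-1)r \rfloor \right) = \lfloor x + qr \rfloor - \lfloor x \rfloor.
\end{equation}
Now $\lfloor x \rfloor = 0$ since $0 < x < 1$, and $qr = q \cdot \frac{p}{q} = p$ is an integer with $0 \le p \le q$; moreover for $x$ sufficiently small and positive we have $0 < x + p < p + 1$, so $\lfloor x + p \rfloor = p$ (this uses $p \ge 0$; if $p = 0$ then $r = 0$, $W_0 = 0$, and the claim is trivial, and similarly $r=1$ gives $W_1 = 1$, so we may assume $0 < p < q$). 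Hence from \eqref{eq:telescope} we get $|W_r|_1 = p$. Since $|W_r| = q$ by construction, we conclude
$$\rho(W_r) = \frac{|W_r|_1}{|W_r|} = \frac{p}{q} = r,$$
as desired.

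There is essentially no obstacle here: the only point requiring a moment of care is the choice of $x$ — one must pick $x$ in $C_q$ close enough to $0$ that both $\lfloor x \rfloor = 0$ and $\lfloor x + p \rfloor = p$, and observe that $\Phi_r$ is locally constant on components of $C_q$ so that this computation genuinely computes $W_r = \Phi_r(0^+)$ and not some other value. One should also note explicitly that the telescoping in \eqref{eq:telescope} is valid because the formula \eqref{altdef} holds simultaneously for all $k$ with the same $x$. The degenerate cases $r = 0$ and $r = 1$ (where $C_q$ behaves trivially) should be checked separately but are immediate. I do not expect to need any of the ordering lemmas (Lemma \ref{incr}, Lemma \ref{increase}) for this particular statement; those will be needed for the injectivity half of Proposition \ref{fareybij}, not for the right-inverse property.
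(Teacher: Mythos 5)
Your proof is correct and is essentially the paper's own argument: the paper likewise sums the digits via the telescoping formula \eqref{altdef} to get $|W_r|_1 = \lfloor qr \rfloor = p$ and concludes $\rho(W_r) = p/q$. Your extra care about choosing $x$ in the component of $C_q$ adjacent to $0^+$ is a reasonable (if slightly more verbose) way of justifying the paper's direct evaluation at $0^+$.
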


\begin{proof}
Since all digits of $W_r$ are either $0$ or $1$, then the number of $1$ digits of $W_r$ is just the sum 
of the digits, so by using equation \eqref{altdef}  we get the telescoping sum: 
$$|W_r|_1 = \epsilon_1 + \dots + \epsilon_q = \sum_{k = 1}^q (\lfloor k r \rfloor - \lfloor (k-1) r \rfloor) = \lfloor q r \rfloor = p$$
so $\rho(W_r) = |W_r|_1/|W_r| = p/q = r$.
\end{proof}

A pair $(r, r')$ of rational numbers $r:= \frac{p}{q}$ and $r' := \frac{p'}{q'}$ with $(p, q) = (p', q') =1$ and $pq'-p'q=1$ %%cc
is called a {\it Farey pair}; the \emph{Farey sum} of a Farey pair
is defined as 
$$ r\oplus r' := \frac{p+ p'}{q+q'}.$$
It is easy to check that $r \oplus r'$ lies in between $r$ and $r'$, that is if 
$r < r'$ we have 
\begin{equation} 
r < r \oplus r' < r'.
   \label{fareysum}
\end{equation}

%Let us now prove the key fact that 
\begin{lemma} \label{Wconcat}
Let $r, r'$ be a Farey pair with $r < r'$; then we have the identity 
$$W_{r \oplus r'} = W_r W_{r'}$$
where on the right-hand side we mean the concatenation of $W_r$ and $W_{r'}$.
\end{lemma}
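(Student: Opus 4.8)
The plan is to rewrite the claimed concatenation identity as an identity between two finite sequences of floor functions, and then verify it by a short Diophantine estimate. First I would record a closed form for $W_s$: writing $s=p/q$ in lowest terms, the map $\Phi_s$ is constant on the component $(0,1/q)$ of $C_q$, so $W_s=\Phi_s(x)$ for any $x\in(0,1/q)$; substituting such an $x$ into \eqref{altdef} and noting that $\lfloor x+ks\rfloor=\lfloor ks\rfloor$ whenever $0<x<1/q$ and $0\le k\le q$, one finds that the $k$-th digit of $W_s$ equals $\lfloor ks\rfloor-\lfloor(k-1)s\rfloor$ (these differences lie in $\{0,1\}$ because $0<s\le 1$; equivalently, $W_s$ is the binary word of length $q$ whose number of $1$'s among the first $k$ digits is $\lfloor ks\rfloor$).

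Next I would set up the bookkeeping. Put $m:=r\oplus r'=\tfrac{p+p'}{q+q'}$ and $n:=q+q'$. Any common divisor of $p+p'$ and $q+q'$ divides $(p+p')q'-(q+q')p'=pq'-p'q=\pm1$, so $m$ is in lowest terms with denominator $n$, and $|W_m|=n=|W_r|+|W_{r'}|$; hence it suffices to match $W_m$ with $W_rW_{r'}$ digit by digit. The Farey relation $|pq'-p'q|=1$ together with $r<r'$ (so that $r<r\oplus r'<r'$, cf.\ \eqref{fareysum}) forces $p'q-pq'=1$. The two identities I would establish are
$$\lfloor jm\rfloor=\lfloor jr\rfloor\ \ (0\le j\le q),\qquad \lfloor(q+j)m\rfloor=p+\lfloor jr'\rfloor\ \ (0\le j\le q').$$

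To prove them, a direct computation gives $jm-jr=\tfrac{j(p'q-pq')}{qn}=\tfrac{j}{qn}$ and, after expanding and using $p'q-pq'=1$, $(q+j)m-p-jr'=\tfrac{q'-j}{q'n}$; both quantities lie in $[0,1/n]\subset[0,1)$. If $j\in\{0,q\}$ (resp.\ $j\in\{0,q'\}$), then $jr$ (resp.\ $p+jr'$) is already an integer, so adding a number in $[0,1)$ leaves the integer part unchanged. If $0<j<q$ (resp.\ $0<j<q'$), then $jr=jp/q$ (resp.\ $jr'=jp'/q'$) is a non-integer lying at distance $\ge 1/q$ (resp.\ $\ge 1/q'$) from $\mathbb{Z}$, while the quantity we add is $<1/n<1/q$ (resp.\ $<1/q'$); so again the integer part is unchanged. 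Telescoping the first identity shows the first $q$ digits of $W_m$ form $W_r$, and telescoping the second — using $\lfloor qm\rfloor=p$, which is the first identity at $j=q$ and also the second at $j=0$, as the value at the seam — shows the last $q'$ digits of $W_m$ form $W_{r'}$. Since the lengths match, $W_m=W_rW_{r'}$.

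All the computations are elementary, so I do not expect a genuine obstacle; the one point demanding care is the treatment of the boundary indices $j\in\{0,q,q'\}$, where $jr$ or $jr'$ is an integer and one must check that the two partial-sum formulas are consistent at the seam $k=q$. It is worth noting that a purely ``dynamical'' induction — assembling $W_m$ from the words attached to a smaller Farey pair — does not get off the ground, because the mediant $m$ of a Farey pair $(r,r')$ has denominator strictly larger than those of both $r$ and $r'$; this is precisely why I would route the argument through the closed-form description of $W_s$ and the floor identities rather than through the recursive or geometric structure.
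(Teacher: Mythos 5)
Your proof is correct and follows essentially the same route as the paper's: both reduce the claim to the two floor identities $\lfloor jm\rfloor=\lfloor jr\rfloor$ for $0\le j\le q$ and $\lfloor (q+j)m\rfloor=p+\lfloor jr'\rfloor$ for $0\le j\le q'$, and both verify them by the same Diophantine estimate (the difference lies in $[0,1/(q+q')]$ while the fractional part of $jr$, resp. $jr'$, cannot be within $1/q$, resp. $1/q'$, of $1$). Your explicit treatment of the boundary indices and the seam at $k=q$ is a slightly more careful write-up of the same argument.
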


\begin{proof}[Proof of Proposition \ref{fareybij}.]
By Lemma \ref{rightinv}, the function $\rho$ is surjective, and moreover its restriction to the set 
$$\textup{Im }W := \{ W_r \ : \ r \in \mathbb{Q} \cap [0, 1] \}$$
is a bijection between $\textup{Im }W$ and $\mathbb{Q} \cap [0,1]$.
Therefore, we just need to show that the set $\textup{Im }W$ coincides with the set $FW$ of all Farey words.
Now, since $W_0 = 0$ and $W_1 = 1$, the elements of the Farey list $F_0 = (0, 1)$ belong to $\textup{Im }W$, 
and note that $(0, 1)$ is a Farey pair. Thus, by induction using Lemma \ref{Wconcat}, for each $n$ the elements 
of the list $F_n$ belong to $\textup{Im }W$, so all Farey words belong to $\textup{Im }W$.
Since it is well-known that every rational number can be obtained from $0$ and $1$ by taking successive Farey sums 
of Farey pairs, then $W_r$ is a Farey word for any rational numbers $r \in [0, 1]$, and the claim is proven.
%To prove the converse, suppose that $r \mathbb{Q} \cap [0, 1]$ is a rational number such that $W_r$ does not
%belong to $\textup{Im W}$ and it has the least denominator among all such rationals.
%Since every rational is the mediant of some Farey pair, there exist $0 \leq r' < r'' \leq 1$ forming a Farey pair
%such that $r = r' \oplus r''$. By minimality of $r$, $W_{r'}$ and $W_{r''}$ must be Farey words, 
%and by Lemma \ref{Wconcat} so does $W_r$.
%$$\Phi_{n+1} := \{ r_1, r_1 \oplus r_2, r_2, \dots, r_{2^n}, r_{2^n} \oplus r_{2^n + 1}, r_{2^n+1}\}$$
%by induction using Lemma \ref{Wconcat}, the elements of all Fare
% we get by induction that 
%each Farey list $F_n$ is of this form. Moreover, since all rational numbers between $0$ and $1$ are 
%obtained by successive Farey sum of Farey pairs, then all images of $W$ are Farey words.
%Lemma \ref{Wconcat}, 
\end{proof}

%%cc
Note moreover that the above $\rho$ is a bijection between the tree of Farey words and the tree of Farey fractions; 
%(which is a subtree of the Stern-Brocot tree of positive fractions) 
on one side, the operation is the concatenation of strings, while on the other side it is the Farey sum.

%\begin{proposition} All Farey words of length greater than one are obtained as
%$W(U_0,U_1)w_0$, for a suitable concatenatinon $W(U_0,U_1)$ of the tuning
%operators; elements of $F_n$ are obtained by a word of length $n$.
%\end{proposition}

%{\bf pezzo obsoleto: cambiare!!!}
% For instance the word $001010101$ is obtained as $U_0\circ U_1
%\circ U_1 \circ U_1 (01)$; note that $U_0U_1U_1U_1$ is the label of the path
%joining $01$ to $001010101$ (see Figure \ref{ouf})

%\includegraphics[scale = 0.3]{ftuning.jpg}

% Set the overall layout of the tree

%\begin{definition}\label{D:lyndon}
%A word $w\in \{0,1\}^*$ is a Lyndon word if it is minimal (with
%respect to lexicographic order) among all its cyclic permutations.
%\end{definition}

%Every Farey word is a Lyndon word, but the converse is not
%true. Indeed, the family of Farey words has much more structure:  for instance they are Sturmian words and 
% have some more symmetries.

For $w=(\epsilon_1, ..., \epsilon_\ell)\in \{0,1\}^*$ we set
$${}^\vee w:=(\check{\epsilon}_1,\epsilon_2,...,\epsilon_\ell), \ \ w^\vee:=(\epsilon_1,...,\epsilon_{\ell-1},\check{\epsilon}_\ell).$$
%%, \ \ \tau w:=(\epsilon_2,...,\epsilon_\ell,\epsilon_1).$$
We shall now see Farey words have many symmetries, arising from the symmetries of the dynamical system $R_r$.

\begin{proposition}\label{P:simmetrie}
If $w = W_r$ is a Farey word, then:
\begin{enumerate}
\item[(a)] the word ${}^t\check{w}$ is still a Farey word: in particular, 
$$W_{1-r} = {}^t \check{w};$$ 
%belongs to $FW$
\item[(b)] moreover, we have the identity 
$$\Phi_{1-r}(0^-) = \check{w}$$
\item[(c)] and
$$\Phi_r(0^-) = {}^\vee w^\vee={}^tw;$$
  % (for instance $0010101$ becomes   $1010100$);
\item[(d)] 
both ${}^\vee w$ and $w^\vee$ are palindromes;
\item[(e)]
finally, we have 
$$ {}^t w < {}^\vee w.$$

%the word ${}^t\check{w}$ belongs to $FW$; in fact, if $w:=(01)\star U_{\ee}$ then
%\begin{equation}\label{eq:ouf}
%{}^t\check{w}= (01)\star U_{\check{\ee}}.
%\end{equation}

%if $w = W(U,V)(ab)$, then switching all the letters of $w$ and reflecting it
%produces another Farey word, namely $W(V,U)(ab)$ (this is a symmetry with respect to the axis of the tree,
%the same result can be obtained switching $U$ and $V$).  
\end{enumerate}
\end{proposition}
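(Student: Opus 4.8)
The plan is to exploit the dynamical description of $W_r$ via the rotation $R_r$ on $\mathbb{R}/\mathbb{Z}$, since all five statements are really statements about the orbit $\{kr \bmod 1\}$ seen from the two sides of the marked point $x_0 = 0$. Throughout, write $w = W_r = \Phi_r(0^+)$ with $r = p/q$ in lowest terms, and recall the formula $\epsilon_k = \lfloor x + kr\rfloor - \lfloor x + (k-1)r\rfloor$ together with the intertwining relation $\Phi_r \circ R_r = \tau \circ \Phi_r$. The key auxiliary observation is that the map $x \mapsto -x$ on the circle conjugates $R_r$ to $R_{-r} = R_{1-r}$ and swaps the one-sided limits at $0$, i.e. it sends $0^+$ to $0^-$; and that reversing the direction of time in the orbit corresponds to transposing the word, because crossing $x_0$ going forward under $R_r$ from $y$ to $y+r$ is the same event as crossing $x_0$ going forward under $R_{-r}$ from $y+r$ to $y$.

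For part (a), I would argue that applying the reflection $x \mapsto -x \bmod 1$ to the orbit computing $\Phi_r(0^+)$ produces the orbit computing $\Phi_{1-r}$ approached from the correct side, and that reading this reflected orbit off requires both reversing the order of the $q$ steps (giving a transpose) and swapping the roles of ``crossing'' and ``not crossing'' at the endpoints in a way that digit-wise complements; combined with Lemma~\ref{increase} (monotonicity of $\Phi_r$) to pin down that we land on the $0^+$ rather than $0^-$ representative for $1-r$, this gives $W_{1-r} = {}^t\check w$, and in particular ${}^t\check w \in FW$ since $1-r \in \mathbb{Q}\cap[0,1]$. Part (b) is the bookkeeping companion: the same reflection applied but now tracking the side of approach shows $\Phi_{1-r}(0^-) = \check w$ directly (no transpose, because reflecting $0^+$ to $0^-$ already reverses orientation once, and the endpoint-convention swap accounts for the complement). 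Part (c) then follows by combining (a) and (b): substitute $r \leftrightarrow 1-r$ in (b) to get $\Phi_r(0^-) = \check{W_{1-r}} = \check{({}^t\check w)} = {}^t w$, and the identification ${}^tw = {}^\vee w^\vee$ is the elementary observation that transposing a word is the same as complementing both endpoints of the word obtained by... no — rather, one checks directly from the half-open-interval convention that $\Phi_r(0^-)$ and $\Phi_r(0^+) = w$ differ exactly in the first and last digits (the endpoint $x_0$ gets attributed to the previous step from one side and the next step from the other), which is precisely $\Phi_r(0^-) = {}^\vee w^\vee$, and then ${}^\vee w^\vee = {}^t w$ is forced because we already know $\Phi_r(0^-) = {}^tw$ from the first computation; alternatively one proves ${}^\vee w^\vee = {}^tw$ as a standalone combinatorial identity about Farey words using the standard factorization and Proposition~\ref{P:standard.factorization}.

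For part (d), I would show ${}^\vee w$ is a palindrome, i.e. ${}^t({}^\vee w) = {}^\vee w$. Using ${}^\vee w^\vee = {}^t w$ from (c), apply ${}^\vee(\cdot)$ to both sides: ${}^\vee({}^\vee w^\vee) = w^\vee$ on the left (double-complementing the first digit), and on the right ${}^\vee({}^t w) = {}^t(w^\vee)$. Hence $w^\vee = {}^t(w^\vee)$, so $w^\vee$ is a palindrome; by a symmetric manipulation applying $(\cdot)^\vee$, so is ${}^\vee w$. (One must take care that $w$ has length at least $2$ so that the first and last digits being modified are distinct — the degenerate words $0,1$ are excluded or handled trivially.) Finally, part (e) asserts ${}^tw < {}^\vee w$, i.e. $\Phi_r(0^-) < \Phi_r(0^+)$ with respect to the lexicographic order on finite words; but since $W_r = \Phi_r(0^+) = \lim_{x\to 0^+}\Phi_r(x)$ and $\Phi_r$ is weakly increasing by Lemma~\ref{increase}, while $\Phi_r(0^-) = \lim_{x \to 0^-}\Phi_r(x)$ is the value on the component of $C_q$ immediately to the left of $0$, monotonicity gives ${}^tw = \Phi_r(0^-) \le \Phi_r(0^+) = w$, and we upgrade $\le$ to $<$ by noting ${}^tw \ne w$ — indeed ${}^tw$ and $w$ differ in their first digit (the first digit of $w = W_r$ is $0$ for $r<1$ since the orbit starting just right of $0$ does not immediately cross $0$, whereas the orbit approached from the left does, making the first digit of $\Phi_r(0^-)$ equal to $1$; the cases $r = 0, 1$ being trivial). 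I expect the main obstacle to be the careful case-analysis of the half-open interval $(R^{k-1}x, R^k x]$ convention at the two endpoints $k=1$ and $k=q$ when reflecting the orbit — getting the complementations $\check{(\cdot)}$, the transposes ${}^t(\cdot)$, and the side-of-approach $0^\pm$ to line up without sign errors is the delicate part, and it is probably cleanest to prove (c) first as the ``endpoint-flip'' identity $\Phi_r(0^-) = {}^\vee w^\vee$, then (a)–(b) via the reflection, then derive (d)–(e) formally.
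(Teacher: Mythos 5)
Your overall strategy for (a)--(d) is essentially the paper's: time-reversal and reflection symmetries of the rotation $R_r$ give (a) and (b), the observation that moving the base point from $0^+$ to $0^-$ only touches the first and last digits gives $\Phi_r(0^-)={}^\vee w^\vee$, the substitution $r\leftrightarrow 1-r$ in (b) combined with (a) gives $\Phi_r(0^-)={}^t w$, and (d) is the same formal manipulation of ${}^\vee(\cdot)$, $(\cdot)^\vee$ and ${}^t(\cdot)$. (One small caveat on (a): the reflection $x\mapsto -x$ alone produces a digit-wise complement but \emph{no} transpose and lands you at $0^-$ — that is exactly statement (b); it is the time-reversal $R_{1-r}^k(x)\equiv R_r^{q-k}(x)$ that produces the transpose and keeps you at $0^+$. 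Your write-up conflates the two symmetries, but both ingredients are present in your preamble, so this is repairable bookkeeping.)

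Part (e), however, contains a genuine error. You paraphrase the claim ${}^t w<{}^\vee w$ as ``$\Phi_r(0^-)<\Phi_r(0^+)$'', but $\Phi_r(0^+)=w$, not ${}^\vee w$: the word ${}^\vee w$ has its \emph{first} digit complemented, so it begins with $1$ while $w$ begins with $0$, and the two are not comparable by the argument you give. Worse, the inequality ${}^t w\le w$ that you then try to establish via Lemma \ref{increase} is false, and the monotonicity argument actually runs the other way: $0^-$ is approached through points near $1$, so weak monotonicity of $\Phi_r$ gives ${}^t w=\Phi_r(0^-)\ge \Phi_r(x)\ge \Phi_r(0^+)=w$ for all $x$ — consistent with Lemma \ref{cyclic}, where $w$ is the \emph{smallest} and ${}^t w$ the \emph{largest} cyclic permutation. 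The correct proof of (e) is a one-line consequence of (c) together with the fact that every non-degenerate Farey word ends in the digit $1$: since ${}^t w={}^\vee w^\vee$, the words ${}^t w$ and ${}^\vee w$ agree in every position except the last, where ${}^t w$ carries a $0$ and ${}^\vee w$ a $1$, whence ${}^t w<{}^\vee w$.
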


As an example, let us pick $w = W_{2/5} = 00101$. One can check that ${}^\vee w = 10101$ and $w^\vee = 00100$ are both 
palindromes, and ${}^\vee w^\vee = 10100$ equals the transpose of $w$. Finally, the word ${}^t\check{w} = 01011$ is also a Farey word
($ = W_{3/5}$).

\begin{proof}

(a) Let us note that considering the rotation $R_{1-r}$ instead of $R_r$ is equivalent to inverting the 
direction (clockwise or counterclockwise) of the rotation. Thus, for each $x \in C_q$, the first $q+1$ elements of the orbit 
of $x$ under $R_r$ are the same as the first $q+1$ elements of the orbit of $x$ under $R_{1-r}$, but the order of visit is reversed
(in symbols, $R^k_r(x) \equiv R^{q-k}_{1-r}(x) \mod 1$ for $0 \leq k \leq q$), which proves the claim.

(b) This identity relies on the fact that the circle is symmetric under reflection $\sigma(x) := -x \mod 1$; 
indeed, for each $x$ the orbit of $x$ under $R_r$ is the reflection of the orbit of $1-x$ under $R_{1-r}$ 
(in symbols, $R^k_r(x) \equiv - R^k_{1-r}(-x) \mod 1$), while the marked point $x_0 = 0$ is fixed by $\sigma$.

(c) The first equality follows by noting that the iterates $R_r^k(0)$ encounter a discontinuity 
of the function $\lfloor \cdot \rfloor$ if and only if $k \equiv 0 \mod q$; thus, 
changing the starting point $x$ from $0^+$ to $0^-$ only affects the first and last digits of $\Phi_r(x)$. 
For the second equality, denote $v := \Phi_{1-r}(0^+) = {}^t\check{w}$; we have 
by (b) and then (a) 
$$\Phi_r(0^-) = \check{v} = {}^t w.$$

(d) follows immediately from (c): indeed we have 
$${}^t({}^\vee w) = ({}^t w)^\vee = ( {}^\vee w^\vee)^\vee = {}^\vee w$$
where the first and third equalities are elementary, and the second one uses (c); 
a completely analogous proof works for $w^\vee$.

(e) Applying (c), and using the fact that the last digit of each (non-zero) Farey word is $1$, we get
$${}^tw = {}^\vee w^\vee < {}^\vee w.$$
\end{proof}

It will be crucial in the following to study the ordering of the set of cyclic 
permutations of a given Farey word. The essential properties are contained in the following lemma.

\begin{lemma} \label{cyclic}
Let $w = W_r$ be a Farey word, and consider the set 
$$\Sigma(w) := \{ \tau^k w \ : \ k \in \NN\}$$ 
of its cyclic permutations. Moreover, let $w = w_1 w_2$ be the standard factorization 
of $w$, and let $q_1 := |w_1|$, $q_2 := |w_2|$. Then the following are true:
\begin{enumerate}
\item the smallest cyclic permutation of $w$ is $w$ itself (i.e., $\min \Sigma(w) = w$);
%$$\min\{ \tau^k w \ : \ k \in \NN\} = w ;$$
\item
the second smallest cyclic permutation of $w$ is 
$$\tau^{q_1} w = w_2 w_1;$$
%, where$w = w_1 w_2$ is the standard factorization of $w$; 
\item
the largest cyclic permutation of $w$ is 
%its transpose 
$$\tau^{q_2} w = {}^tw.$$
\end{enumerate}
%$$\max\{ \tau^k w \ : \ k \in \NN\} = {}^t w ;$$
\end{lemma}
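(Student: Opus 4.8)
The plan is to reduce everything to the geometry of the circle rotation $R_r$, using the fact (established in the discussion preceding Lemma \ref{increase}) that $\Phi_r$ intertwines $R_r$ with the cyclic permutation $\tau$, i.e. $\Phi_r \circ R_r = \tau \circ \Phi_r$. Since $w = W_r = \Phi_r(0^+) = \lim_{x\to 0^+}\Phi_r(x)$, the cyclic permutation $\tau^k w$ is $\Phi_r$ evaluated at (a point infinitesimally to the right of) $R_r^{-k}(0) = -kr \bmod 1$; more precisely, $\tau^k w$ is the value of $\Phi_r$ on the connected component of $C_q$ whose right endpoint is the point $\{-kr\}$, i.e. $\tau^k w = \Phi_r(x)$ for $x$ slightly less than $\{-kr\}$. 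Because $\Phi_r$ is weakly increasing on $(0,1)$ by Lemma \ref{increase}, and because two distinct cyclic permutations of a primitive word are distinct, the ordering of $\Sigma(w)$ is governed by the cyclic ordering of the $q$ points $\{0\},\{-r\},\dots,\{-(q-1)r\}$ on the circle, read in $[0,1)$.

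\emph{Part (1).} The smallest component of $C_q$ is $(0,\{-kr\})$ where $\{-kr\}$ is the point of the orbit closest to $0$ from the right. One checks that the orbit point immediately to the right of $0$ is $\{-kr\}$ with $k$ chosen so that $\{-kr\} = \{q_1' r\}$ for the appropriate index — but more directly: the value of $\Phi_r$ on the leftmost component $(0,\cdot)$ is $\Phi_r(0^+) = w$ itself, and since $\Phi_r$ is weakly increasing, any other component gives a word $\geq w$. As the other cyclic permutations are genuinely different words, we get $w = \min \Sigma(w)$, proving (1). (This also re-proves that $w$ is Lyndon/primitive, which we may alternatively cite.)

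\emph{Part (2).} By three-distance/Steinhaus-type considerations for the rotation by $r = p/q$, the orbit point second-closest to $0$ from the right is $\{q_2 r\} = \{-q_1 r\}$, where $w = w_1 w_2$ is the standard factorization with $|w_1| = q_1$, $|w_2| = q_2$: indeed if $r = r'\oplus r''$ with $W_{r'} = w_1$, $W_{r''} = w_2$ (Lemma \ref{Wconcat}), then $q_1$ and $q_2$ are the denominators of $r'$ and $r''$, and the two smallest return gaps of the orbit to $0$ have combinatorial lengths $q_1$ and $q_2$. So the component of $C_q$ second from the left has right endpoint $\{-q_1 r\}$, hence gives $\Phi_r$-value $\tau^{q_1} w$; and by the intertwining relation applied $q_1$ times, $\tau^{q_1} w = \Phi_r(R_r^{q_1}(0^+))$, which combinatorially is the word obtained by cutting $w = w_1 w_2$ after position $q_1$ and swapping, i.e. $w_2 w_1$. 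Weak monotonicity of $\Phi_r$ then gives that this is the second smallest element of $\Sigma(w)$. For part (3), the largest cyclic permutation corresponds to the \emph{rightmost} component of $C_q$, i.e. the one with right endpoint $\{0\} = 1$, whose left endpoint is the orbit point closest to $1$ from the left, namely $\{-q_2 r\}$ (symmetric to the $q_1$ case). Hence the largest element is $\tau^{q_2} w = \Phi_r(0^-)$, which by Proposition \ref{P:simmetrie}(c) equals ${}^t w$.

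The main obstacle will be part (2): pinning down precisely that the lengths of the two largest "gaps" of the finite orbit $\{0, -r, \dots, -(q-1)r\}$ around the marked point are exactly $q_1$ and $q_2$, and translating a gap of combinatorial length $j$ into the statement that the corresponding $\Phi_r$-value is $\tau^j w$. I would handle this by an induction on the Farey construction: the identity $W_{r'\oplus r''} = W_{r'}W_{r''}$ of Lemma \ref{Wconcat} together with the recursive structure of the Stern–Brocot/Farey tree shows that the standard factorization $w = w_1w_2$ matches the mediant decomposition $r = r'\oplus r''$, and the mediant structure is exactly what controls the two shortest return times of the rotation to $0$; the base case $r \in \{0,1\}$ is trivial. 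Parts (1) and (3) are then comparatively soft, following from Lemma \ref{increase}, Proposition \ref{P:simmetrie}(c), and primitivity of $w$.
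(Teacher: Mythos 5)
Your overall strategy is the same as the paper's: use the intertwining $\Phi_r \circ R_r = \tau \circ \Phi_r$ together with the monotonicity of $\Phi_r$ (Lemma \ref{increase}) to transfer the ordering of $\Sigma(w)$ to the ordering of the orbit $\{0,\{r\},\dots,\{(q-1)r\}\}$, identify the extremal orbit points, and use Proposition \ref{P:simmetrie}(c) for $\Phi_r(0^-)={}^tw$. However, your central bookkeeping is wrong. The relation $\Phi_r \circ R_r = \tau \circ \Phi_r$ gives $\tau^k w = \Phi_r(R_r^k(0^+)) = \Phi_r(\{kr\}^+)$, i.e.\ $\tau^k w$ is the value of $\Phi_r$ on the component of $C_q$ whose \emph{left} endpoint is the \emph{forward} iterate $\{kr\}$; you instead assign $\tau^k w$ to the component whose right endpoint is the backward iterate $\{-kr\}$ (after first saying ``to the right of'' that point --- the two descriptions already disagree). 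Taken literally, your rule assigns to the leftmost component $(0,1/q)$ the word $\tau^{q_2}w={}^tw$, since $\{-q_2r\}=1/q$ --- that is, the \emph{maximum} of $\Sigma(w)$ --- contradicting your own part (1). The error propagates into the intermediate claims: $\{-q_1 r\}=\tfrac{q-1}{q}$ is the orbit point closest to $1$, not ``second-closest to $0$ from the right'', and $\{-q_2 r\}=\tfrac{1}{q}$ is not the left endpoint of the rightmost component. The correct facts are $\{q_1 r\}=\tfrac{1}{q}$ and $\{q_2 r\}=\tfrac{q-1}{q}$.

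Beyond the sign confusion, the one step you yourself flag as ``the main obstacle'' --- locating the times at which the orbit comes closest to $0$ from the right and from the left --- is left to an unexecuted induction on the Farey tree via three-distance considerations. That is precisely where the content of parts (2) and (3) lives, and no induction is needed: writing $r_1=p_1/q_1$, $r_2=p_2/q_2$ for the Farey pair with $r=r_1\oplus r_2$ (which matches the standard factorization by Lemma \ref{Wconcat} and Proposition \ref{P:standard.factorization}), the determinant relation $p_2q_1-p_1q_2=1$ gives in one line $q_1(p_1+p_2)\equiv 1$ and $q_2(p_1+p_2)\equiv -1 \pmod{q_1+q_2}$, i.e.\ $\{q_1 r\}=1/q$ and $\{q_2 r\}=1-1/q$. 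With the corrected identification, monotonicity of $\Phi_r$ then yields (1), (2) and (3) exactly as you intend, and $\tau^{q_1}(w_1w_2)=w_2w_1$ since $|w_1|=q_1$. In short: right skeleton, but the proof as written contains a genuine orientation error that would break part (1), and the key arithmetic is asserted rather than proved.
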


\begin{proof}
Let us start by noting that, if $w = \Phi_r(x)$, then the set of cyclic permutations of $w$ is given by 
$$\{ \tau^k w \ : \  0 \leq k < q \} = \{ \Phi_r(R_r^k(x)) \ : \ 0 \leq k < q \};$$
moreover, by Lemma \ref{increase}, the order in the above set is the same as the order in the set 
$$S_r(x) := \{ \{x + kr\} \ : \ 0 \leq k < q \}.$$
Thus, the smallest cyclic permutation of $w = W_r$ corresponds to the smallest possible value of $\{ kr \}$, which 
is attained for $k = 0$, hence by $w = \Phi_r(0^+)$ itself, proving (1).

Moreover, let $w = w_1 w_2$ be the standard factorization of $w$. Then by definition we have 
$w = W_r$, while $w_1 = W_{r_1}$ and $w_2 = W_{r_2}$, in such a way that $(r_1, r_2)$ is a Farey pair, 
with $r_1 < r_2$ and $r := r_1 \oplus r_2$.
Note now that, writing $r_1 = \frac{p_1}{q_1}$ and $r_2 = \frac{p_2}{q_2}$, we have $p_2 q_1 - p_1 q_2 = 1$
by the definition of Farey pair, hence we can write
\begin{equation} \label{mod1}  
q_1 (p_1+p_2) \equiv 1 \mod (q_1 + q_2).
\end{equation}
Thus, the second smallest element of $S_r(0)$ is attained for $k = q_1$, hence 
the second smallest element of $\Sigma(w)$ is $\tau^{q_1} w = w_2 w_1$, proving (2).

Finally, the largest element of the set $\Sigma(w)$ is $\tau^k w$, where $k$ is such that  
$\{ k r\} = 1-\frac{1}{q}$; thus, the corresponding word is $\Phi_r(0^-)$, which equals ${}^t w$ by 
Proposition \ref{P:simmetrie} (c). Moreover, from equation \ref{mod1} one also gets
$$q_2 (p_1+p_2) \equiv -1 \mod (q_1 + q_2)$$
hence $\{ q_2 r\} = 1-\frac{1}{q}$, so the largest element of the set $\Sigma(w)$ is $\tau^{q_2} w$.
\end{proof}

Let us now state one more consequence of the previous lemma, in terms of ordering of subsets of the circle.
Recall the \emph{doubling map} $D: \RR/\ZZ \to \RR/\ZZ$ is defined as $D(x) := 2x \mod 1$.
We say that a finite set $X \subseteq S^1$ has \emph{rotation number} $r = \frac{p}{q} \in \mathbb{Q}$ if 
it is invariant for the doubling map, and the restriction of $D$ to $X$ is conjugate to the circle rotation $R_r$ via an orientation-preserving homeomorphism of $S^1$. 
More concretely, this means that if we write the elements of $X$ in cyclic order as $X = (\theta_0, \theta_1, \dots, \theta_{q-1})$ 
with $0 \leq \theta_0 < \theta_1 < \dots < \theta_{q-1} < 1$, then we have for each index $i$
$$D(\theta_i) = \theta_{i+p} \ \mod q.$$
The proof of the previous lemma also yields the following (uniqueness follows from \cite{Gol}, Corollary 8):

\begin{lemma} \label{rotnumber}
Let $w = W_r$ a Farey word. Then the set 
$$C(w) = \{ 0.\overline{\tau^k w} \ : \ 0 \leq k \leq q-1 \} \subseteq S^1$$
is the unique subset of $S^1$ which has rotation number $r$ for the doubling map. 
\end{lemma}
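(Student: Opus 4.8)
The plan is to establish two things: first, that the set $C(w)$ has rotation number $r$ for the doubling map $D$; second, that it is the \emph{unique} such set, which is precisely what is granted by \cite{Gol}, Corollary 8 (so the second point requires no work beyond citing it). Thus the substance of the argument is the first point, and here I would exploit the fact that the same combinatorics that governs the circle rotation $R_r$ — as developed in Lemmas \ref{increase} and \ref{cyclic} — governs the doubling map on $C(w)$.

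First I would recall that, for infinite binary words, the lexicographic order matches the order of the corresponding real values in $[0,1]$: if $u, v \in \{0,1\}^\mathbb{N}$ then $u < v$ iff $.u < .v$, with the only caveat being dyadic rationals (which do not arise here since each $0.\overline{\tau^k w}$ is non-dyadic, as $w$ is not a power of a single digit once it is non-degenerate — and the degenerate cases $w = 0, 1$, i.e. $q = 1$, are trivial). Combined with Lemma \ref{cyclic}(1), which says $w = \min \Sigma(w)$, and the fact (from the proof of Lemma \ref{cyclic}) that the lexicographic order on the cyclic permutations $\{\tau^k w : 0 \le k < q\}$ agrees with the order on $S_r(0) = \{\{kr\} : 0 \le k < q\}$, I get that the bijection $k \mapsto 0.\overline{\tau^k w}$ sends the cyclic order of $\{\{kr\}\}$ on $S^1$ to the cyclic order of $C(w)$ on $S^1$. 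So if I write $C(w) = (\theta_0, \dots, \theta_{q-1})$ in increasing cyclic order starting from $\theta_0 = 0.\overline{w}$, then $\theta_i = 0.\overline{\tau^{k_i} w}$ where $\{k_i r\}$ is the $i$-th smallest element of $S_r(0)$; equivalently $k_i r \equiv \text{(something monotone in } i)$, and the standard fact about three-distance/rotation combinatorics gives $k_i \equiv i \cdot k_1 \pmod q$ with $k_1$ the unique residue with $k_1 r \equiv \tfrac1q$, i.e. $k_1 \equiv p^{-1} \pmod q$ — but more simply, $\{k_i r\}$ is the $i$-th point means $k_i$ is determined by $k_i p \equiv i \pmod q$.

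Next, the key computation: the doubling map acts on these words by the shift $\sigma$, since $D(0.\overline{u}) = 0.\overline{\tau u}$ for any word $u$ (doubling shifts the binary expansion left by one digit and drops the integer part, and $\overline{u}$ is periodic so this is the cyclic shift). Hence $D(\theta_i) = D(0.\overline{\tau^{k_i} w}) = 0.\overline{\tau^{k_i + 1} w}$, so in terms of the indexing $k \mapsto 0.\overline{\tau^k w}$, the doubling map is simply $k \mapsto k+1 \bmod q$. Now I need to translate this into the statement $D(\theta_i) = \theta_{i+p \bmod q}$: the point $\theta_i$ carries index $k_i$ with $k_i p \equiv i$, and $D(\theta_i)$ carries index $k_i + 1$, which corresponds to cyclic position $j$ with $(k_i+1)p \equiv j$, i.e. $j \equiv i + p \pmod q$. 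This is exactly the defining relation for rotation number $p/q$. Therefore $C(w)$ is $D$-invariant and $D|_{C(w)}$ is conjugate (via the orientation-preserving homeomorphism of $S^1$ carrying $\{\{kr\}\}$ to $C(w)$) to $R_r$, so $C(w)$ has rotation number $r$.

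Finally, uniqueness is immediate from \cite{Gol}, Corollary 8: among all finite $D$-invariant subsets of $S^1$, there is exactly one with a given rational rotation number $r$. Since we have exhibited $C(w)$ as one such set, it is \emph{the} such set, completing the proof. The main obstacle I anticipate is purely bookkeeping: keeping the two index systems straight (the "combinatorial" index $k$ of the cyclic permutation versus the "geometric" index $i$ of the cyclic position on the circle) and verifying carefully that the orientation-preserving conjugacy $\{\{kr\}\} \to C(w)$ is genuinely orientation-preserving, which follows from Lemma \ref{increase} (monotonicity of $\Phi_r$) together with the order-preservation of $u \mapsto .u$ on non-dyadic words — but both are already in hand, so there is no genuinely hard step once the setup is aligned.
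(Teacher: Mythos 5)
Your proposal is correct and follows essentially the same route as the paper, which simply observes that the proof of Lemma \ref{cyclic} (the order-preserving correspondence $k \mapsto \tau^k w$ between $S_r(0)$ and the cyclic permutations, combined with the fact that the doubling map acts as the shift on periodic binary expansions) already yields the existence part, and cites \cite{Gol}, Corollary 8 for uniqueness. Your writeup merely makes explicit the index bookkeeping ($k_i p \equiv i \pmod q$, hence $D(\theta_i) = \theta_{i+p}$) that the paper leaves implicit.
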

\noindent For an example, if $w = 00101$, then $C(w) = (\frac{5}{31}, \frac{9}{31}, \frac{10}{31}, \frac{18}{31}, \frac{20}{31})$ (see also 
Figure \ref{F:cardioid}).

Recall that a word $w\in \{0,1\}^*$ which is minimal (with
respect to lexicographic order) among all its cyclic permutations is also called a \emph{Lyndon word}, 
hence property (e) of Lemma \ref{cyclic} can be paraphrased as saying that every Farey word is a Lyndon word (but not viceversa: e.g., 
$0011$ is a Lyndon word but not a Farey word).
Let us recall that all Lyndon words of length greater than 1 begin with the digit $0$ and end with the digit 
$1$; moreover one has the following (see \cite{Lot})
\begin{proposition}\label{P:lyndon}
If $w=ps$ is a Lyndon word (in particular, if $w$ is a Farey word), then $w<<s$.
\end{proposition}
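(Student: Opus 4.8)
The plan is to locate the first index $j$ at which the word $w$ and its suffix $s$ disagree, and to show that at that index $w$ carries the smaller digit; since then the length-$j$ prefixes of $w$ and $s$ satisfy $w_{[1..j]}<s_{[1..j]}$ (here $w_{[1..j]}$ denotes the prefix of length $j$), this is precisely the relation $w<<s$. Throughout I will use only the classical meaning of a Lyndon word $w$ (see \cite{Lot}): $w$ is primitive, so its $|w|$ cyclic permutations are pairwise distinct, and $w$ is the \emph{strict} minimum, in the lexicographic order on words of length $|w|$, among these cyclic permutations. Since a Farey word is a Lyndon word by property (e) of Lemma \ref{cyclic}, it suffices to treat the Lyndon case. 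Write $w=ps$ with $p,s$ non-empty and set $u:=\tau^{|p|}w=sp$; note $|u|=|w|$ and $u\neq w$ by primitivity, so $w<u$ lexicographically.

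\smallskip
\textbf{Step 1: $s$ is not a prefix of $w$.} Suppose it were; writing $w=sq$ with $q$ the complementary suffix, we have $|q|=|w|-|s|=|p|$ and also $w=ps$. Comparing $w=sq$ with $u=sp$: they agree on the first $|s|$ letters, so $p$ and $q$ (words of equal length) must differ, as otherwise $w=u$; since $w<u$, this forces $q<p$ in lexicographic order. Comparing now $w=ps$ with the cyclic permutation $u':=\tau^{|s|}w=qs$ (again $u'\neq w$, so $w<u'$): the same reasoning with $p\neq q$ forces $p<q$. This contradiction rules out a border. Since $|s|<|w|$, the first disagreement between $w$ and $s$ therefore occurs at some index $j\le|s|$.

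\smallskip
\textbf{Step 2: at index $j$ the digit of $w$ is smaller.} Suppose not, so the $j$-th letter of $w$ exceeds that of $s$. Because $j\le|s|$, the cyclic permutation $u=sp$ agrees with $s$ on its first $j$ letters; hence $w$ and $u$ agree on their first $j-1$ letters while the $j$-th letter of $w$ exceeds the $j$-th letter of $u$, i.e. $w>u$ lexicographically. This contradicts the minimality of the Lyndon word $w$ among its cyclic permutations. Therefore the $j$-th letter of $w$ is strictly smaller than that of $s$, and the prefixes $w_{[1..j]}$ and $s_{[1..j]}$ witness $w<<s$.

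\smallskip
I expect the only delicate points to be the bookkeeping in Step 1 and, more conceptually, the observation that makes the whole argument run: $sp$ (and $qs$) are cyclic permutations of $w$ \emph{of the same length as} $w$, so that the defining minimality of a Lyndon word applies to an ordinary lexicographic comparison of equal-length words. Once this is isolated, the proof uses nothing beyond primitivity of Lyndon words; alternatively one may simply cite \cite{Lot} for the statement.
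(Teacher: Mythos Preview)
Your proof is correct. The paper does not give its own argument for this proposition but simply refers to \cite{Lot}; what you have written is essentially the standard proof found there, based on primitivity and comparison of $w$ with the cyclic rotations $sp$ and $qs$.
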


%%%xxx
%To get the last claim note that, calling $V:=\fect{1}{0}$ the switch
%operator ($\ee \star V=\check{\ee}$), we get that 
%${}^t (U_\epsilon) \star V = V \star U_{\check{\epsilon}}$ 
%($\epsilon \in \{ 0,1 \}$); 
%moreover an analogous formula holds for
%$\ee:=(\epsilon_1 ... \epsilon_n)$: 
%$ {}^t U_{\ee} \star V = V \star U_{\check{\ee}}$. 
%Thus from \eqref{eq:transpose} we get
%\begin{equation}\label{eq:E}
%{}^t (u \star U_{\ee}) \star V = {}^t u \star V \star U_{\check{\ee}}.
%\end{equation}
%Since by assumption 
%${}^t \check{w} = {}^t \left( (01) \star U_{\ee} \right) \star V$ 
%coincides with the left endside of \eqref{eq:E} for
%$u=(0,1)$, equation \eqref{eq:ouf} follows from \eqref{eq:E} and the
%fact that ${}^t (01) \star V=(01)$.

\subsection{Substitutions}

Another way to generate Farey words is by substitutions. 
Given a pair of words $U=\fect{u_0}{u_1} \in \{0,1\}^* \times \{0,1\}^*$ we
can define the \emph{substitution operator} associated to $U$ to be the 
operator acting on $\{0,1\}^*$ (or on $\{0,1\}^{\NN}$) as 
$$w= (\epsilon_1,\epsilon_2,...) \mapsto
(u_{\epsilon_1}, u_{\epsilon_2},...);$$
the action of $U$ on $w$ will be denoted by $w \star U$. Let us note that if $u_0<u_1$ then the
operator is order preserving while if $u_0>u_1$ it is order reversing;
moreover, the negation operator can be obtained as the 
substitution associated to % $u_0=(1)$ and $u_1=(0)$.
$V:=\fect{(1)}{(0)}$.
We can also extend the substitution operator to pairs of words: 
if $U=\fect{u_0}{u_1}$  and $W=\fect{w_0}{w_1}$ let us define 
$U \star W:=\fect{u_0 \star W}{u_1 \star W}$; in this way we get 
the following associativity property, that for each word $w$ we have
\begin{equation}\label{eq:associative}
(w \star U)\star W = w \star (U \star W).
\end{equation}
Finally, the substitution and transposition operators are compatible, in the sense that 
\begin{equation}\label{eq:transpose}
{}^t(w\star U)={}^t w \star {}^tU  \ \ \ \mbox{where } \ {}^tU:=\fect{{}^t u_0}{{}^tu_1}.\end{equation}

It turns out that one can produce all (non-degenerate) Farey words by successive iteration of two substitution 
operators, starting with the word $w_0 = (01)$. Namely, let us define 
%one can start with
%the word $w_0=(01)$ and apply 
the two substitution operators
$$ U_0: 
\left\{
\begin{array}{l}
0\mapsto 0\\
1\mapsto 01
\end{array}
\right.
\ \ 
U_1:
\left\{
\begin{array}{l}
0\mapsto 01\\
1\mapsto 1.
\end{array}
\right.
$$ 
It is not difficult to realize that the action of $U_0$ and $U_1$ preserves the set of Farey words.
More precisely, let us set $F_n^*:=F_n \setminus\{(0),(1)\}$,
%$FW^*=FW\setminus\{(0),(1)\}$,
$$FW_0:=\{w\in FW^* : |w|_0>|w|_1\} \ \ FW_1:=\{w\in FW^* : |w|_0<|w|_1\},$$
 and $F_n^\epsilon:=F_n \cap FW_\epsilon$ (note that, by Proposition \ref{fareybij}, for each $n \geq 2$ one has 
$F_n^\star = F_n^0 \cup \{(01)\} \cup F_n^1$).
We can now formulate the following lemma. 
%if $w\in FW$ then $w\star U_\epsilon \in FW$ for $\epsilon=0,1$; 
%in fact one can prove the 
\begin{lemma}\label{L:u.tuning}
For each $\epsilon = 0, 1$, the operator $U_\epsilon:F^*_n \to F_{n+1}^\epsilon$ is a bijection.
\end{lemma}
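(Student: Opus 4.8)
The plan is to prove Lemma \ref{L:u.tuning} in two steps: first show that $U_\epsilon$ maps $F_n^\star$ into $F_{n+1}^\epsilon$, and then construct an explicit two-sided inverse, which simultaneously proves injectivity and surjectivity. Throughout I would fix $\epsilon = 0$; the case $\epsilon = 1$ is symmetric (indeed, by \eqref{eq:transpose} and Proposition \ref{P:simmetrie}(a), conjugating $U_0$ by the operation $w \mapsto {}^t\check w$ should interchange $U_0$ and $U_1$, which would let me deduce the $\epsilon = 1$ case formally from the $\epsilon = 0$ case).

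For the first step, I would argue inductively on $n$, using the recursive construction of the Farey lists together with the standard factorization. The key observation is that $U_0$ is compatible with concatenation: if $w = w_1 w_2$ is the standard factorization of a Farey word $w \in F_n^\star$, then $w \star U_0 = (w_1 \star U_0)(w_2 \star U_0)$, so it suffices to understand $U_0$ on the building blocks $0$ and $1$ and check it respects Farey pairs. Concretely, on the level of fractions $U_0$ should act on $\rho$ via the Möbius map $r \mapsto \frac{r}{r+1}$ (since $|0 \star U_0|_1 = 0$, $|0 \star U_0| = 1$, $|1 \star U_0|_1 = 1$, $|1 \star U_0| = 2$), and this map sends a Farey pair $(r_1, r_2)$ to a Farey pair and respects Farey sums: $\frac{r_1 \oplus r_2}{(r_1 \oplus r_2)+1} = \frac{r_1}{r_1+1} \oplus \frac{r_2}{r_2+1}$. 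Combined with Lemma \ref{Wconcat} and the base case $W_{0} = 0$, $W_{1/2} = W_0 W_1 = 01 = (01)\star U_0$... wait, more carefully: I would check that $W_r \star U_0 = W_{r/(r+1)}$ for all rationals $r \in [0,1]$ by induction on the depth in the Farey tree, using that concatenation on words corresponds to Farey sum on fractions (the remark after the proof of Proposition \ref{fareybij}). Since $r \mapsto \frac{r}{r+1}$ is a bijection from $\QQ \cap [0,1]$ onto $\QQ \cap [0, \frac12]$ and carries the $n$-th Farey level of $[0,1]$ onto the portion of the $(n+1)$-st Farey level lying in $[0,\frac12]$, and since $\rho(w) < \frac12 \iff w \in FW_0$, this identifies the image $U_0(F_n^\star)$ with exactly $F_{n+1}^0$. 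This already gives both that $U_0$ lands in $F_{n+1}^0$ and that it is onto $F_{n+1}^0$; injectivity is then immediate since $U_0$ is injective as a map on all of $\{0,1\}^\star$ (one can read off the original word from the blocks, as every image word is an unambiguous concatenation of blocks $0$ and $01$ — each $1$ is preceded by a $0$ that belongs to its own block only if... ).

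Alternatively, and perhaps more cleanly, for injectivity and surjectivity I would exhibit the inverse directly on words: given $v \in F_{n+1}^0$, since $|v|_0 > |v|_1$ the word $v$ never contains $11$ and ends in $1$ (it is a Lyndon word by Lemma \ref{cyclic}(e) and Proposition \ref{P:lyndon}), so $v$ parses uniquely into blocks from $\{0, 01\}$; replacing each block $0 \mapsto 0$, $01 \mapsto 1$ yields a word $w$ with $w \star U_0 = v$, and one checks $w \in F_n^\star$ using the fraction computation $\rho(w) = \frac{\rho(v)}{1-\rho(v)}$ together with Proposition \ref{fareybij}. I expect the main obstacle to be the bookkeeping that the block-parsing is genuinely unambiguous and that it sends Farey words to Farey words in \emph{both} directions — i.e. pinning down that the $U_0$-image of level $n$ is \emph{all} of $F_{n+1}^0$ and not a proper subset. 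The cleanest route to that is the Möbius-on-$\rho$ argument above: once I know $W_r \star U_0 = W_{r/(r+1)}$ and that $\rho$ is a bijection $FW \to \QQ\cap[0,1]$ (Proposition \ref{fareybij}), the bijectivity of $U_0 : F_n^\star \to F_{n+1}^0$ follows from the elementary bijectivity of $r \mapsto \frac{r}{r+1}$ between the appropriate sets of rationals, and the counting check $|F_{n+1}^0| = |F_n^\star|$ is automatic (both equal $2^n - 1$).
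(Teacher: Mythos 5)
Your argument is correct, but it completes the proof by a different mechanism than the paper. The paper's proof is a two-line induction staying entirely at the level of words: since $U_\epsilon$ respects concatenation, $(vw)\star U_\epsilon=(v\star U_\epsilon)(w\star U_\epsilon)$, and preserves the lexicographic order, the image under $U_0$ of the ordered list $F_n$ is exactly the sublist of $F_{n+1}$ generated from the pair $(0,01)=(0\star U_0,\,1\star U_0)$ by the same insertion-of-concatenations process, namely the words $w\in F_{n+1}$ with $w\leq 01$; stripping the two degenerate endpoints gives the bijection $F_n^*\to F_{n+1}^0$. You instead push everything to the fraction side: you establish the identity $W_r\star U_0=W_{r/(r+1)}$ by induction over the Farey tree, using Lemma \ref{Wconcat} together with the check that $r\mapsto \frac{r}{r+1}$ preserves Farey pairs and Farey sums, and then invoke the bijection $\rho: FW\to\QQ\cap[0,1]$ of Proposition \ref{fareybij} to reduce the lemma to the elementary bijectivity of $r\mapsto\frac{r}{r+1}$ between $\QQ\cap[0,1]$ and $\QQ\cap[0,\frac12]$, level by level. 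There is no circularity (both cited results precede the lemma), and your route buys a sharper statement — the explicit action of $U_0$ on the $\rho$-labels — which makes surjectivity transparent, at the cost of being longer than the paper's order-preservation argument. Your reduction of $\epsilon=1$ to $\epsilon=0$ via conjugation by $w\mapsto{}^t\check w$ also checks out using \eqref{eq:transpose} and \eqref{eq:associative}. The only blemish is the unfinished parenthetical on unique decipherability: the code $\{0,01\}$ is a suffix code (neither word is a suffix of the other), so the parsing is indeed unambiguous, but as you yourself note the $\rho$-argument renders this unnecessary.
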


\begin{proof}
By induction, using the fact that substitution operators $U_0,U_1$ preserve the lexicographical order and respect concatenation, in the sense that
$$(vw)\star U_\epsilon = (v\star U_\epsilon)(w \star U_\epsilon).$$
\end{proof}

%Now, if $\ee:=(\epsilon_1,...\epsilon_\ell)\in \{0,1\}^*$, let us define
%$$U_{\ee}:=U_{\epsilon_1}\star ... \star U_{\epsilon_\ell}, \ \ \ U_{\emptyset}:= id$$
%so that $U_{\ee\ee'}=U_{\ee} \star U_{\ee'}$.

\begin{proposition}\label{P:t.char}
For all $n\geq 1$, the following characterization holds:
$$F_n^*=\left\{ (01)\star  U_{\epsilon_1}\star ... \star U_{\epsilon_\ell}  \ : \ \epsilon_k \in \{0,1\}, \ \  0 \leq \ell < n \right\}.$$
\end{proposition}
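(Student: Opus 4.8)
The plan is to prove the identity $F_n^* = \{ (01)\star U_{\epsilon_1}\star\cdots\star U_{\epsilon_\ell} : \epsilon_k\in\{0,1\},\ 0\le\ell<n\}$ by induction on $n$, using Lemma \ref{L:u.tuning} as the engine. The base case $n=1$ is immediate, since $F_1^* = \{(01)\}$ and the right-hand side (with only $\ell=0$ allowed) is exactly $\{(01)\}$.

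For the inductive step, assume the characterization holds for some $n\ge 1$; I want to deduce it for $n+1$. By the remark just before Lemma \ref{L:u.tuning}, for $n+1\ge 2$ we have the decomposition $F_{n+1}^* = F_{n+1}^0 \cup \{(01)\} \cup F_{n+1}^1$. By Lemma \ref{L:u.tuning}, $U_0$ maps $F_n^*$ bijectively onto $F_{n+1}^0$ and $U_1$ maps $F_n^*$ bijectively onto $F_{n+1}^1$, so
$$F_{n+1}^* = U_0(F_n^*) \cup \{(01)\} \cup U_1(F_n^*) = \{(01)\} \cup \bigcup_{\epsilon\in\{0,1\}} \{\, w\star U_\epsilon : w\in F_n^*\,\}.$$
Wait — I should be careful about the direction of the substitution: the statement of Lemma \ref{L:u.tuning} writes $U_\epsilon$ as acting on $F_n^*$, and in the proof the identity $(vw)\star U_\epsilon = (v\star U_\epsilon)(w\star U_\epsilon)$ shows the action is written on the right, i.e. the element of $F_{n+1}^\epsilon$ is $w\star U_\epsilon$ for $w\in F_n^*$. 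Now apply the inductive hypothesis to each $w\in F_n^*$: write $w = (01)\star U_{\epsilon_1}\star\cdots\star U_{\epsilon_\ell}$ with $0\le\ell<n$. Then by the associativity property \eqref{eq:associative},
$$w\star U_\epsilon = \big((01)\star U_{\epsilon_1}\star\cdots\star U_{\epsilon_\ell}\big)\star U_\epsilon = (01)\star U_{\epsilon_1}\star\cdots\star U_{\epsilon_\ell}\star U_\epsilon,$$
which is a word of the required form with the new length $\ell+1$ satisfying $1\le\ell+1<n+1$. Conversely, any word $(01)\star U_{\epsilon_1}\star\cdots\star U_{\epsilon_m}$ with $1\le m<n+1$ can be written as $w\star U_{\epsilon_m}$ where $w=(01)\star U_{\epsilon_1}\star\cdots\star U_{\epsilon_{m-1}}$ has $0\le m-1<n$, hence $w\in F_n^*$ by the inductive hypothesis, and therefore $w\star U_{\epsilon_m}\in F_{n+1}^{\epsilon_m}\subseteq F_{n+1}^*$. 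Together with the word $(01)$ itself (the $\ell=0$ term), this shows the two sets coincide, completing the induction.

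The argument is essentially bookkeeping once Lemma \ref{L:u.tuning} is in hand; the only genuinely delicate point — and the one I would double-check carefully — is the correct handling of left-versus-right action of the substitution operators and the precise use of the associativity identity \eqref{eq:associative}, to make sure that appending $U_\epsilon$ on the outside of the composition (rather than on the inside) is what corresponds to passing from level $n$ to level $n+1$. One also needs the harmless observation that the operators $U_0,U_1$ genuinely map into $FW^*$ (so that no degenerate words sneak in), which is already contained in the statement of Lemma \ref{L:u.tuning}. No other obstacle is anticipated.
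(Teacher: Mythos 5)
Your proof is correct and follows essentially the same route as the paper: induction on $n$ with base case $n=1$, using Lemma \ref{L:u.tuning} to identify $F_{n+1}^\epsilon = U_\epsilon(F_n^*)$ and the decomposition $F_{n+1}^* = F_{n+1}^0 \cup \{(01)\} \cup F_{n+1}^1$ to conclude. The paper's version is more terse, but the content — including the appending of $U_\epsilon$ on the right via the associativity property — is identical.
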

 \begin{proof}
Again by induction: the base of the induction ($n = 1$) is true, and if the claim holds at level $n$ then
we get, by Lemma \ref{L:u.tuning}, for each $\epsilon = 0, 1$
$$
\begin{array}{l}
F_{n+1}^\epsilon=U_\epsilon(F_{n}^*)= \left\{ (01)\star 
U_{\epsilon_1}\star ... \star U_{\epsilon_\ell} \star U_{\epsilon} \ : \ \epsilon_k \in \{0, 1\}, \ 0 \leq \ell < n  \right\}.
%F_{n+1}^1=U_1(F_{n}^*)= \left\{ (01)\star U_{\ee} \ : 1 \leq |\ee| < n+1, \epsilon_1=1 \right\}\\
%(01)=(01)\star U_{\ee} \ \ \mbox{ for } \ee=\emptyset
\end{array}
$$
Thus, using the fact 
$$F_{n+1}^*=F_{n+1}^0\cup F_{n+1}^1 \cup \{(01)\} $$
the claim follows.
\end{proof}

\subsection{Farey words, kneading theory and external angles}

%We shall now see how Farey words come up in several contexts in dynamics, and use the properties we
%established before to

Denote $[a, b]$ 
the closed interval of the circle from $a$ to $b$, with positive orientation.
Let us define the \emph{binary bifurcation set} $\EB$ as 
%It will be essential in the following to consider the set:
%$$\EB:=\{x \in [0,1/2]: x \leq \{2^k x\} \leq x+1/2 \ \ \forall k \in \NN \}.$$
$$\EB:=\{x \in [0,1/2]: D^k(x) \in [x, x+1/2] \ \ \forall k \in \NN \}.$$
The set $\EB$ is a closed subset of the interval $[0, 1/2]$ and it has no interior as we will see. Let us point out that the only dyadic rationals which belong to $\EB$ are $0$ and $1/2$.
Moreover, we shall see that the connected components of the complement of $\EB$ are 
canonically labelled by Farey words. Indeed, if $w\in\{0,1\}^*$
is a Farey word we set  $I_w:=(a^-, a^+)$ with
$$\begin{array}{l}
a^+:=0.\overline{w}  \\ % &  b^-:= 0.\overline{{}^t w}                      \\
a^-:= 0.\overline{{}^t w} -1/2. \\ %=0.{}^\vee({}^tw)\overline{{}^tw}          &  b^+:= a^+ +1/2= 0.{}^\vee w \overline{w}
\end{array}
$$
For instance, if $w = 00101$ then $a^+ = 0.\overline{00101} = \frac{5}{31}$ and $a^- = \frac{9}{62}$. 
We have the following properties.

\begin{proposition}\label{L:book}
With the notation above we have
\begin{enumerate}
\item $a^\pm \in \EB$;
\item if $x\in I_w$ then $x \notin \EB $;
\item for each Farey word $w$, the length of $I_w$ is 
$$|I_w|=\frac{1}{2(2^n-1)}$$ with $n=|w|$;
\item each $I_w$ is a connected component of $[0,1/2]\setminus \EB$; moreover, we have 
%the complement of $\EB$ is given by 
$$[0,1/2]\setminus \EB = \bigcup_{w \in FW} I_w;$$
\item the Hausdorff dimension of $\EB$ is zero. %${\rm B.dim}\ \EB =0$. 
%\end{equation} 
\end{enumerate}
\end{proposition}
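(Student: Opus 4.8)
The plan is to prove the five statements more or less in order, relying on the theory of cyclic permutations of Farey words developed above (especially Lemmas \ref{cyclic} and \ref{rotnumber}) and on the identification of $\EB$ with an orbit-ordering condition for the doubling map $D$. The conceptual key is that, for a Farey word $w = W_r$ of length $n = q$, the set $C(w) = \{0.\overline{\tau^k w} : 0 \leq k \leq q-1\}$ has rotation number $r$ for $D$ (Lemma \ref{rotnumber}), and its smallest element is $a^+ = 0.\overline{w}$ (by part (1) of Lemma \ref{cyclic}, since $w$ is the minimal cyclic permutation), while its largest element is $0.\overline{{}^tw}$ (by part (3) of Lemma \ref{cyclic}). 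For (1), I would check directly that $a^+ = 0.\overline{w}$ lies in $\EB$: its $D$-orbit is exactly $C(w)$, and since $a^+ = \min C(w)$ and $\max C(w) = 0.\overline{{}^t w} = a^- + 1/2$, every orbit point lies in $[a^+, a^- + 1/2]$; one then uses property (e) of Proposition \ref{P:simmetrie} (${}^tw < {}^\vee w$) together with a short estimate to see $a^+ \leq a^- $ actually isn't what's needed — rather, one verifies $[a^+, a^+ + 1/2] \supseteq [a^+, a^-+1/2]$, i.e. $a^- \leq a^+$, which follows because $0.\overline{{}^tw} \leq 0.\overline{{}^\vee w}$ and $0.\overline{{}^\vee w} - 1/2 \leq 0.\overline{w}$ (the latter since ${}^\vee w = {}^\vee w^\vee$ up to the last digit, making $0.\overline{{}^\vee w} - 0.\overline{w}$ controllably small). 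The point $a^-$ is handled symmetrically, using that $D(a^-) \in C(w)$ after the first step or by the reflection symmetry $\alpha \mapsto 1-\alpha$ encoded in Proposition \ref{P:simmetrie}(a), (b).

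For (2), the idea is that $I_w$ is precisely the ``gap'' in the circle complementary to the orbit closure structure: if $x \in (a^-, a^+)$, then applying $D$ must eventually move $x$ outside $[x, x+1/2]$. Concretely, I would show that for $x$ strictly between $a^-$ and $a^+$, the binary expansion of $x$ must begin with a prefix that is \emph{not} a prefix of $\overline{w}$ but is smaller (in the $<<$ order) than the relevant cyclic permutation, and then run the combinatorial argument: the first return of the $D$-orbit to a neighbourhood of $x$ forces a digit that violates the containment $D^k(x) \in [x, x+1/2]$. This is where Proposition \ref{P:lyndon} ($w = ps \Rightarrow w << s$) enters — it guarantees that partial orbits of interval endpoints separate from one another in the $<<$ order, so there is ``room'' for exactly one gap per Farey word. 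I expect this step — pinning down that the gap is exactly $(a^-, a^+)$ and not larger or smaller — to be the main obstacle, and the cleanest route is probably to prove (2) and (4) together: show each $I_w$ is disjoint from $\EB$, show the $I_w$ are pairwise disjoint, and then show that any $x \in [0,1/2] \setminus \EB$ lies in some $I_w$ by extracting from a ``first violation'' $D^k(x) \notin [x, x+1/2]$ a rational rotation number and hence a Farey word whose gap contains $x$.

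For (3), this is a direct computation: $a^+ = 0.\overline{w} = \frac{.w \cdot 2^n}{2^n - 1}$ where $.w$ denotes the integer with binary digits $w$; writing ${}^tw$ similarly and using that $|{}^tw|_1 = |w|_1$ together with the palindrome facts in Proposition \ref{P:simmetrie}(d) (so ${}^\vee w$ and $w^\vee$ are palindromes, and ${}^tw = {}^\vee w^\vee$), one gets that the integers with binary expansions $w$ and ${}^tw$ differ by exactly $2^{n-1}$ after accounting for the flipped first/last bits, which yields $a^+ - (0.\overline{{}^tw}) = a^+ - (a^- + 1/2)$ and hence $|I_w| = a^+ - a^- = \frac{1}{2(2^n-1)}$. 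Finally, for (5): using (3) and (4), the complement $[0,1/2] \setminus \EB$ is a countable union of intervals $I_w$ of length $\frac{1}{2(2^{|w|}-1)}$, and by Proposition \ref{fareybij} the Farey words of a given length $n$ correspond to rationals $p/q$ with $q = n$ of which there are $\varphi(n) \leq n$; so for any $s > 0$ the $s$-dimensional Hausdorff content of $\EB$ is bounded by $\sum_{n} \#\{w \in FW : |w| = n\} \cdot |I_w|^s \leq \sum_n n \cdot \big(\tfrac{1}{2(2^n-1)}\big)^s$, which converges for every $s > 0$ and in fact tends to $0$ as one restricts to words of length $\geq N$; hence $\dim_H \EB = 0$. (More precisely one covers $\EB$ by the complementary intervals of a finite union of the longest gaps and lets the cutoff $N \to \infty$.) This last estimate is routine once (3) and (4) are in hand.
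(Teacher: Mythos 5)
Your parts (1), (3) and (5) follow essentially the paper's route: (1) comes down to the chain $w \leq \tau^k w \leq {}^t w < {}^\vee w$ from Lemma \ref{cyclic} and Proposition \ref{P:simmetrie}(e), and here you should note that $0.\overline{{}^\vee w} = 0.\overline{w} + \tfrac12$ \emph{exactly} (since ${}^\vee w$ flips only the leading digit), so there is no ``controllably small'' error to estimate --- the needed inclusion of the orbit in $[a^+, a^+ +\tfrac12]$ is immediate; (3) is the same binary-expansion computation via ${}^t w = {}^\vee w^\vee$; (5) is the standard covering bound using $\varphi(n)\leq n$ words of length $n$. For (2), your prefix/$<<$ sketch is vaguer than necessary: the clean observation (which the paper uses) is that with $n=|w|$ and $a_0 = 0.w$ the dyadic pseudocenter, $D^{n}$ maps $[a_0,a^+)$ order-preservingly onto $[0,a^+)$ and $(a^-,a_0]$ onto $(0.\overline{{}^tw},1]$, so every $x\in I_w$ already violates the defining condition of $\EB$ at time $k=n$; no return-time analysis is needed.

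The genuine gap is in (4). Your plan to show that every $x\in[0,1/2]\setminus\EB$ lies in some $I_w$ by ``extracting a rational rotation number from the first violation $D^k(x)\notin[x,x+1/2]$'' is not an argument --- a single violating iterate carries no rotation number, and you flag this yourself as the main obstacle without resolving it. The paper sidesteps the difficulty entirely with a measure computation for which you already have all the ingredients: the intervals $I_w$ are pairwise disjoint (an endpoint of one lies in $\EB$ by (1), hence cannot lie in the interior of another by (2)), and by (3) together with Proposition \ref{fareybij} their total length is
$$\sum_{w\in FW^\star}|I_w| \;=\; \frac12\sum_{q\geq 2}\frac{\varphi(q)}{2^q-1} \;=\; \frac12\sum_{Q\geq 2}(Q-1)\,2^{-Q} \;=\; \frac12 ,$$
where $\varphi$ is Euler's totient and the middle equality is the Lambert-series rearrangement $\sum_{q\mid Q,\,q\geq2}\varphi(q)=Q-1$. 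Hence $[0,1/2]\setminus\bigcup_w I_w$ is a closed null set containing $\EB$; if it contained a point of the open set $[0,1/2]\setminus\EB$, that point would have a neighbourhood disjoint from $\EB$, hence disjoint from every gap endpoint, hence disjoint from every $I_w$, contradicting nullity. You should replace your sketch of (4) with this argument (or supply a genuine combinatorial proof of the surjectivity half); as written, that half of (4) is unproved.
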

Various similar constructions of this set appear in the literature; in particular, the above properties
are proven by Bullett and Sentenac \cite{BS}; thus, for the convenience of the reader we still give a complete 
proof of the Proposition, but we postpone it to the appendix. 
The set $\EB$ also appears in the kneading theory for Lorentz maps: indeed, it is the one-dimensional 
projection of the two-dimensional set of all kneading invariants for Lorentz maps (see \cite{HS, LaMo}). 

Finally, the following lemma will be needed in the last section.

\begin{lemma} \label{L:difference}
If $x\in \EB \cap [0,1/6)$ then the following limit is infinite:
$$\lim_{\delta \to 0}\inf\{|w|_0-|w|_1 \ : \ I_w\subset[x-\delta,
    x+\delta] \}=+\infty.$$
\end{lemma}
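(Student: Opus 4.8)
\textbf{Proof plan for Lemma \ref{L:difference}.}

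The plan is to argue by contradiction: suppose the limit is not $+\infty$, i.e.\ there is a constant $M$ and a sequence of Farey words $w_n$ with $|w_n|_0 - |w_n|_1 \le M$ whose intervals $I_{w_n}$ accumulate at $x$ (and are contained in shrinking neighbourhoods of $x$). Since $x \in [0, 1/6)$, eventually all these $I_{w_n}$ are contained in $[0, 1/6)$, so in particular $a^+_{w_n} = 0.\overline{w_n} < 1/6$. The key is to turn the constraint $|w|_0 - |w|_1 \le M$ together with $0.\overline{w} < 1/6 = 0.\overline{001}$ into a contradiction, exploiting that $|I_{w_n}| \to 0$ forces $|w_n| \to \infty$ (by Proposition \ref{L:book}(3)).

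First I would record that $x < 1/6$ together with $I_w \subseteq [x-\delta, x+\delta] \subseteq [0,1/6)$ forces $0.\overline{w} < 1/6$, which since $1/6 = 0.\overline{001}$ and $w$ is a Lyndon word (Lemma \ref{cyclic}(e)) means $w$ must begin with the prefix $00$ — indeed any Farey word starting with $01\ldots$ or $1\ldots$ has $0.\overline{w} \ge 0.\overline{01} = 1/3 > 1/6$, and a Farey word starting $000\ldots$ is fine but one must check the boundary case $w=\overline{001}$-type words carefully. More precisely, writing $\rho(w) = |w|_1/|w| = p/q$, the value $0.\overline{w}$ is comparable to (and controlled by) $\rho(w)$: a short computation shows that if $0.\overline{w}$ is close to $1/6$ then $\rho(w)$ is close to $1/3$, i.e.\ $|w|_1 \approx |w|/3$, hence $|w|_0 - |w|_1 \approx |w|/3 \to \infty$ as $|w| \to \infty$. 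The cleanest route is: show there is $\eta>0$ (depending on how far $x$ is below $1/6$) such that $0.\overline{w} < 1/6 - \eta'$ implies $\rho(w) \le 1/3 - \eta$ for some $\eta>0$; this is where I'd use the bijection $\rho: FW \to \QQ\cap[0,1]$ (Proposition \ref{fareybij}) and monotonicity-type estimates relating $0.\overline{W_r}$ to $r$.

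The main obstacle, and the step needing the most care, is the quantitative comparison between $0.\overline{w}$ and $\rho(w)$: these are genuinely different functions of the Farey word (one is a binary value, the other a digit ratio), and while both are monotone along the Farey/Stern–Brocot tree, I need a uniform bound of the form ``$0.\overline{w}$ bounded away from $1/6$ $\Rightarrow$ $\rho(w)$ bounded away from $1/3$''. One way to get this: if $0.\overline{w} \le c < 1/6$, then $w$ lies (in the tree of Farey words) below the word $W_{1/6-\epsilon}$-region for appropriate $\epsilon$, and by Lemma \ref{incr} and Proposition \ref{fareybij} all such words $w$ satisfy $\rho(w) \le \rho(W_{some fixed rational < 1/3}) < 1/3$; then $|w|_0 - |w|_1 = |w|(1 - 2\rho(w)) \ge |w|(1 - 2(\tfrac13 - \eta)) = |w|(\tfrac13 + 2\eta)$, which tends to $+\infty$ since $|w| = |w_n| \to \infty$. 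This contradicts $|w_n|_0 - |w_n|_1 \le M$ and completes the proof. I would double-check the boundary behaviour near the words approaching $1/6$ from below (the words $W_{p/q}$ with $p/q \nearrow 1/3$), since that is exactly the regime where the gap $\eta$ could a priori degenerate, but the hypothesis $x < 1/6$ strictly (with $\delta$ small) keeps us uniformly away from that limit.
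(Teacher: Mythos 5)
Your overall strategy is the same as the paper's: write $|w|_0-|w|_1=|w|(1-2\rho(w))$, note that $|I_w|\to 0$ forces $|w|\to\infty$ by Proposition \ref{L:book}(3), and then bound $\rho(w)$ away from the critical value so that the factor $1-2\rho(w)$ stays bounded below. However, the step you yourself flag as the main obstacle is carried out incorrectly: the critical threshold for $\rho$ is $1/2$, not $1/3$. The point is that $1/6$ is the \emph{left endpoint of} $I_{01}$, where $01=W_{1/2}$ (indeed $a^-=0.\overline{10}-1/2=1/6$ for $w=01$; also note $1/6=0.0\overline{01}$, whereas $0.\overline{001}=1/7$, so your identification $1/6=0.\overline{001}$ is an arithmetic slip). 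Consequently the Farey words $w$ whose intervals $I_w$ accumulate at a point of $[0,1/6)$ are exactly those with $\rho(w)$ bounded above by some number strictly less than $1/2$; their ratios need not be anywhere near $1/3$. Your claimed implication ``$0.\overline{w}<1/6-\eta'$ implies $\rho(w)\le 1/3-\eta$'' is false: for instance $w=00101=W_{2/5}$ has $0.\overline{w}=5/31<1/6$ but $\rho(w)=2/5>1/3$, and more generally $W_{n/(2n+1)}=0(01)^n$ has $0.\overline{w}<1/6$ with $\rho(w)=n/(2n+1)\to 1/2$.

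The error is fixable and does not change the architecture of the argument: since the intervals $I_{W_r}$ are pairwise disjoint and ordered consistently with $r$ (Lemma \ref{incr} together with Proposition \ref{fareybij}), any $I_w\subseteq[x-\delta,x+\delta]\subseteq[0,1/6-\epsilon]$ lies to the left of some fixed $I_{w_0}\subset(x+\delta,1/6)$, whence $\rho(w)\le\rho(w_0)<1/2$ uniformly; then
$$|w|_0-|w|_1=|w|\bigl(1-2\rho(w)\bigr)\ \geq\ |w|\bigl(1-2\rho(w_0)\bigr)\ \longrightarrow\ +\infty,$$
which is exactly the paper's one-line proof. So the conclusion survives, but as written your central quantitative claim is wrong, and your final display $|w|_0-|w|_1\ge|w|(\tfrac13+2\eta)$ does not hold for the words actually involved when $x$ is close to $1/6$.
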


\begin{proof}
Indeed, note that setting $\rho := \rho(w)$ we can rewrite 
$|w|_0 - |w|_1 = |w|(1 - 2 \rho)$; thus, if $w_n$ is a sequence of Farey words 
such that $I_{w_n}$ converge to $x \in \EB$, then $\rho(w_n)$ tends to a finite number, 
which is $< \frac{1}{2}$ if $x \in [0, 1/6)$, while $|w|$ tends to infinity, hence 
the liminf of the product is infinite.
\end{proof}

%\item %if $x\in \EB$ and $\delta>0$, then
%$$\sup\{||w|_0-|w|_1| \ : \ I_w\subset [x-\delta, x+\delta]
%  \}=+\infty.$$ 

\medskip

Let us now highlight a connection between the combinatorics of Farey words and the symbolic coding
of rays landing on the main cardioid of the Mandelbrot set. We shall start by recalling few standard 
facts in complex dynamics: for an account, we refer to \cite{Mi} and references therein.

Let us consider the family of quadratic polynomials $f_c(z) := z^2 + c$ with $c\in \mathbb{C}$.
Recall the \emph{filled Julia set} $K(f)$ of a polynomial $f(z)$ is the set of points with bounded orbits:
$$K(f) := \{ z \in \mathbb{C} \ : \ \sup_n |f^n(z)| < \infty \}.$$
If $K(f)$ is connected, then its exterior is conformally isomorphic to a disk, hence it can be uniformized by a unique map
$\Phi : \hat{\mathbb{C}}\setminus \overline{\mathbb{D}} \to \hat{\mathbb{C}}\setminus K(f)$
with $\lim_{z\to \infty}|\Phi(z)| = \infty$ and $\lim_{z \to \infty}\Phi(z)/z >  0.$
For each $\theta \in \mathbb{R}/\mathbb{Z}$, the \emph{external ray at angle $\theta$} is the set 
$$R(\theta) := \{ \Phi(\rho e^{2 \pi i \theta}) \ | \ \rho > 1\}.$$ 
The ray $R(\theta)$ is said to \emph{land} if $\lim_{\rho \to 1^+} \Phi(\rho e^{2 \pi i \theta})$ exists 
(and then it is a point on the boundary of $K(f)$). The \emph{Julia set} $J(f)$ is the topological 
boundary of $K(f)$. By Carath\'eodory's theorem, if the Julia set is locally connected, then all rays land.
The map $f_c$ has two fixed points (which coalesce if $c = \frac{1}{4}$); we shall call $\beta$\emph{-fixed point}
the fixed point where the external ray of angle $\theta = 0$ lands, and $\alpha$\emph{-fixed point} the other fixed point.
A fixed point $z_0$ is called \emph{indifferent} when the derivative $f_c'(z_0)$ has modulus $1$ (the derivative $f_c'(z_0)$
is usually called the \emph{multiplier} of the fixed point).

In parameter space, let us recall the \emph{Mandelbrot set} $\mathcal{M}$ is the set of parameters $c$ 
for which the orbit under $f_c$ of the critical point $z = 0$ is bounded: 
$$\mathcal{M} := \{ c \in \mathbb{C} \ : \ \sup_{n}|f_c^n(0)| < \infty \}.$$
The set $\mathcal{M}$ also equals the set of parameters $c \in \mathbb{C}$ for which the Julia set of $f_c$ 
is connected.
Just as the Julia sets, the Mandelbrot set admits a unique uniformizing map $\Phi_M : \hat{\mathbb{C}} \setminus \overline{\mathbb{D}} \to 
\hat{\mathbb{C}} \setminus \mathcal{M}$ such that $\lim_{z\to \infty}|\Phi_M(z)| = \infty$ and $\lim_{z \to \infty}\Phi_M(z)/z >  0$.

Let us define the \emph{main cardioid} \Heart \ of the Mandelbrot set as 
$$\textup{\Heart} := \{ c \in \mathbb{C} \ : \ f_c(z) \textup{ has an indifferent fixed point} \}.$$
A simple computation shows \Heart \ can be parametrized as $c = \frac{1}{2} e^{2 \pi i \theta} - \frac{1}{4} e^{4 \pi i \theta}$
where $\theta \in [0, 1]$; in this parametrization, for each $c \in $\Heart \ the map $f_c$ has multiplier $e^{2 \pi i \theta}$ 
at the $\alpha$-fixed point.
Let $\Omega$ denote the set of angles of external rays landing on the main cardioid of the Mandelbrot set:
$$\Omega := \{ \theta \in \mathbb{R}/\mathbb{Z} \ : \ R_M(\theta) \textup{ lands on \Heart }\}.$$
%Let $\theta \in \mathbb{R}/\mathbb{Z}$. Consider the quadratic polynomial
%$$f_{\theta}(z) := z^2 + e^{2 \pi i \theta} z.$$
%Each $f_{\theta}(z)$ has a fixed point at $z = 0$ with multiplier $f'_\theta(0) = e^{2 \pi i \theta}$
%of modulus $1$.
The following proposition makes precise the connection between Farey words and the set of rays landing on the cardioid.

\begin{proposition} \label{P:cardioid}
Let $r = \frac{p}{q} \in \mathbb{Q}\cap (0, 1)$, and $w = W_r$ the corresponding Farey word. 
Let us now define the pair of angles $(\theta^-,\theta^+)$ as
$$
\begin{array}{lll}
\theta^- & = & 0.\overline{\tau ({}^t w)} \\
\theta^+ & = & 0.\overline{\tau w}
\end{array}
$$
and let $c \in $\ \Heart \ denote the parameter on the main cardiod for which the $\alpha$-fixed point of 
$f_c$ has multiplier $e^{2 \pi i r}$. Then we have the following properties:
\begin{enumerate}
\item in the Julia set of $f_c$, the set of external rays landing at the $\alpha$-fixed point is 
the set 
$$C(w) = \{ 0.\overline{\tau^k w} \ : \ 0 \leq k \leq q-1 \}$$
whose binary expansions are all cyclic permutations of $w$;
\item in parameter space, the pair of angles of external rays $(\theta^-, \theta^+)$ lands on the main 
cardioid at the parameter $c$;
\item we have the identity
$$\Omega = 2 \EB.$$
\end{enumerate}

\end{proposition}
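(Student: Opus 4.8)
The plan is to prove Proposition \ref{P:cardioid} by combining the combinatorial information about cyclic permutations of Farey words (Lemma \ref{cyclic}, Lemma \ref{rotnumber}) with standard facts about the landing pattern of rays on the cardioid from complex dynamics. First I would establish part (1). By a classical result of Goldberg--Milnor (see \cite{Gol}), for a parameter $c$ on the main cardioid with $\alpha$-fixed point of multiplier $e^{2\pi i r}$, the $\alpha$-fixed point of $f_c$ is the landing point of exactly $q$ external rays, which are permuted by the dynamics $\theta \mapsto 2\theta$ in the combinatorial pattern of the rigid rotation $R_r$; that is, the set of landing angles is the unique finite subset of $S^1$ with rotation number $r$ for the doubling map. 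By Lemma \ref{rotnumber}, that unique set is exactly $C(w) = \{0.\overline{\tau^k w} : 0 \le k \le q-1\}$, which gives (1) immediately.

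For part (2), I would recall the standard description (due to Douady--Hubbard, see \cite{Mi}) of how rays on the cardioid pair up at a root of a hyperbolic component: the two parameter rays landing at the root $c$ of the period-$q$ component attached to the main cardioid have angles obtained from the two rays in the dynamical plane of $f_c$ which bound the ``characteristic'' sector at the $\alpha$-fixed point, i.e. the two consecutive rays in $C(w)$ immediately on either side of the one that, under iteration of the doubling map, is closest to the root. Concretely, among the rays landing at the $\alpha$-fixed point, the relevant pair consists of the images under doubling of the smallest and largest elements of $C(w)$; equivalently, since doubling acts on the binary expansions by the shift $\sigma$, and the shift of $0.\overline{w}$ is $0.\overline{\tau w}$, the characteristic pair of dynamical-plane angles is $\{0.\overline{\tau w}, 0.\overline{\tau({}^t w)}\}$ — using Lemma \ref{cyclic}(1) and (3), which identify $w$ as the minimal and ${}^t w$ as the maximal cyclic permutation. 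The wake correspondence then transports this pair, unchanged, to parameter space, yielding $(\theta^-, \theta^+)$ as in the statement. I would double-check the orientation so that $\theta^- < \theta^+$ as angles in $[0,1)$, which follows from ${}^t w < {}^\vee w$ and the inequalities on cyclic permutations already proven; the rays $R_M(\theta^-)$ and $R_M(\theta^+)$ then bound the wake of the component whose root is $c$, so they land at $c$ on the cardioid.

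For part (3), I would argue set-theoretically. The set $\Omega$ of angles landing on the cardioid is the closure of the countable set of roots' angles $\bigcup_{r} \{\theta^-(r), \theta^+(r)\}$ together with the endpoints $\{0\}$; its complement in $[0,1]$ (respectively in the relevant half, using the symmetry $\theta \mapsto 1-\theta$ coming from complex conjugation) is the union of the open ``wakes'' $(\theta^-(r), \theta^+(r))$. On the other side, $\EB$ was shown in Proposition \ref{L:book} to have complement in $[0,1/2]$ equal to $\bigcup_{w \in FW} I_w$ where $I_w = (0.\overline{{}^t w} - 1/2,\ 0.\overline{w})$. Multiplying by $2$: $2 I_w = (2\cdot 0.\overline{{}^t w} - 1,\ 2 \cdot 0.\overline{w}) = (0.\overline{\tau({}^t w)},\ 0.\overline{\tau w}) = (\theta^-, \theta^+)$, using that doubling the binary value shifts the expansion (modulo $1$), exactly matching the wake of $c$. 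Hence $2\bigl([0,1/2]\setminus \EB\bigr) = \bigcup_w (\theta^-, \theta^+)$ is precisely the complement of $\Omega$ restricted to the appropriate fundamental domain, and taking complements (and invoking that both $\EB$ and $\Omega$ are closed with empty interior, and that the dyadic/rational endpoints are handled correctly) gives $\Omega = 2\EB$.

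The main obstacle I anticipate is not the combinatorics on the continued-fraction side — that is entirely supplied by Lemma \ref{cyclic} and Lemma \ref{rotnumber} — but rather pinning down precisely, and citing correctly, the statement from complex dynamics that the parameter rays landing at the root of the period-$q$ satellite component of the main cardioid carry the angles obtained by shifting the minimal and maximal rays landing at the $\alpha$-fixed point. One must be careful that the ``wake'' is the right one and that no intermediate rays land on the cardioid inside $(\theta^-,\theta^+)$; this is where the uniqueness clause of Lemma \ref{rotnumber} (via \cite{Gol}, Corollary 8) and the Douady--Hubbard landing theorem for the cardioid do the real work, and the argument should explicitly note that every rational $r \in (0,1)$ arises this way so that the wakes exhaust the complement of $\Omega$. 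A secondary, more bookkeeping-type difficulty is matching endpoints and the behavior at $\theta = 0$ and $\theta = 1/2$ (the cusp and the root of the period-$2$ component), together with the reflection symmetry, so that the identity $\Omega = 2\EB$ holds on the nose rather than up to a countable set.
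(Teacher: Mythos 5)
Your proposal follows essentially the same route as the paper: part (1) is identical (local connectivity, the landing map semiconjugating doubling with $f_c$, rotation number $r$, then Lemma \ref{rotnumber}); part (2) identifies the characteristic pair as the doubled images of the minimal and maximal elements of $C(w)$ via Lemma \ref{cyclic}; and part (3) passes from the dynamical statement to $\Omega = 2\EB$ by exhausting the complement with wakes, which is just the complement-side rephrasing of the paper's ``take closures of the rational angles'' step (both rest on the same input, namely that $\Omega$ is the closure of its rational points).

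The one place where your write-up is vaguer than the paper, and which you yourself flag as the anticipated obstacle, is the justification in (2) that the characteristic sector is bounded by $D(0.\overline{w})$ and $D(0.\overline{{}^t w})$; your description of it as ``on either side of the ray closest to the root'' is not quite right. The paper's clean argument is: the doubling map sends every complementary arc of $C(w)$ of length $<1/2$ homeomorphically onto its image, so the critical point must lie in the unique arc of length $\geq 1/2$; by Lemma \ref{cyclic} the arc from $0.\overline{{}^t w}$ to $0.\overline{w}$ (through $0$) is a complementary arc, and its length exceeds $1/2$ precisely because $I_w=(0.\overline{{}^t w}-1/2,\ 0.\overline{w})$ is a nonempty interval (Proposition \ref{L:book}). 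The critical value sector is then the forward image of this arc, giving $(\theta^-,\theta^+)$. You should insert this length computation to make (2) complete; everything else in your outline matches the paper.
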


As an example, if $r = \frac{2}{5}$ then $w = 00101$, and $\theta^- = 0.\overline{01001} = \frac{9}{31}$, 
while $\theta^+ = 0.\overline{01010} = \frac{10}{31}$. In the dynamical plane, the set of rays landing at the 
$\alpha$-fixed point is $C(w) = (\frac{5}{31}, \frac{9}{31}, \frac{10}{31}, \frac{18}{31}, \frac{20}{31})$.

\begin{figure}
\fbox{\includegraphics[scale=0.13]{./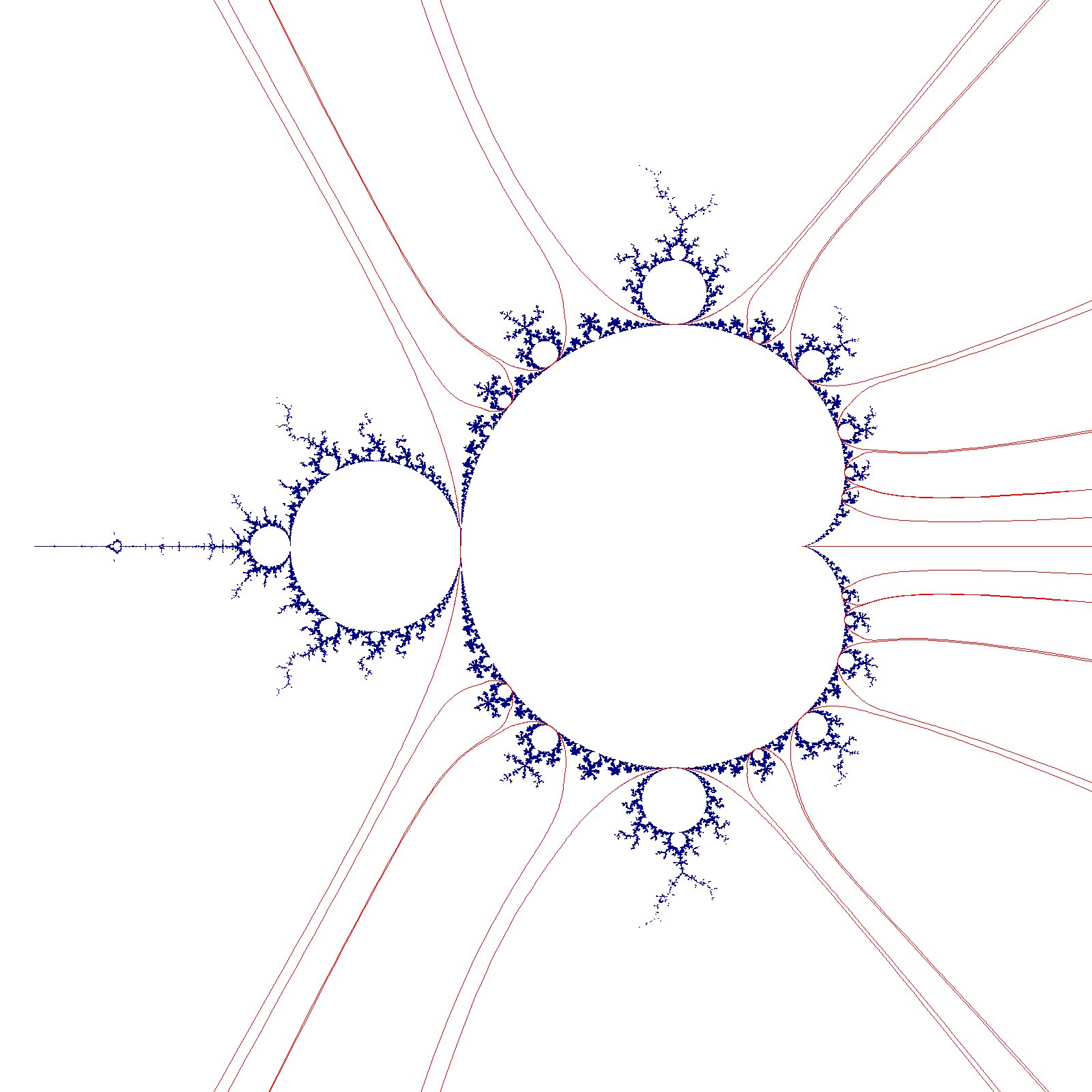}} 
\fbox{\includegraphics[scale=0.13]{./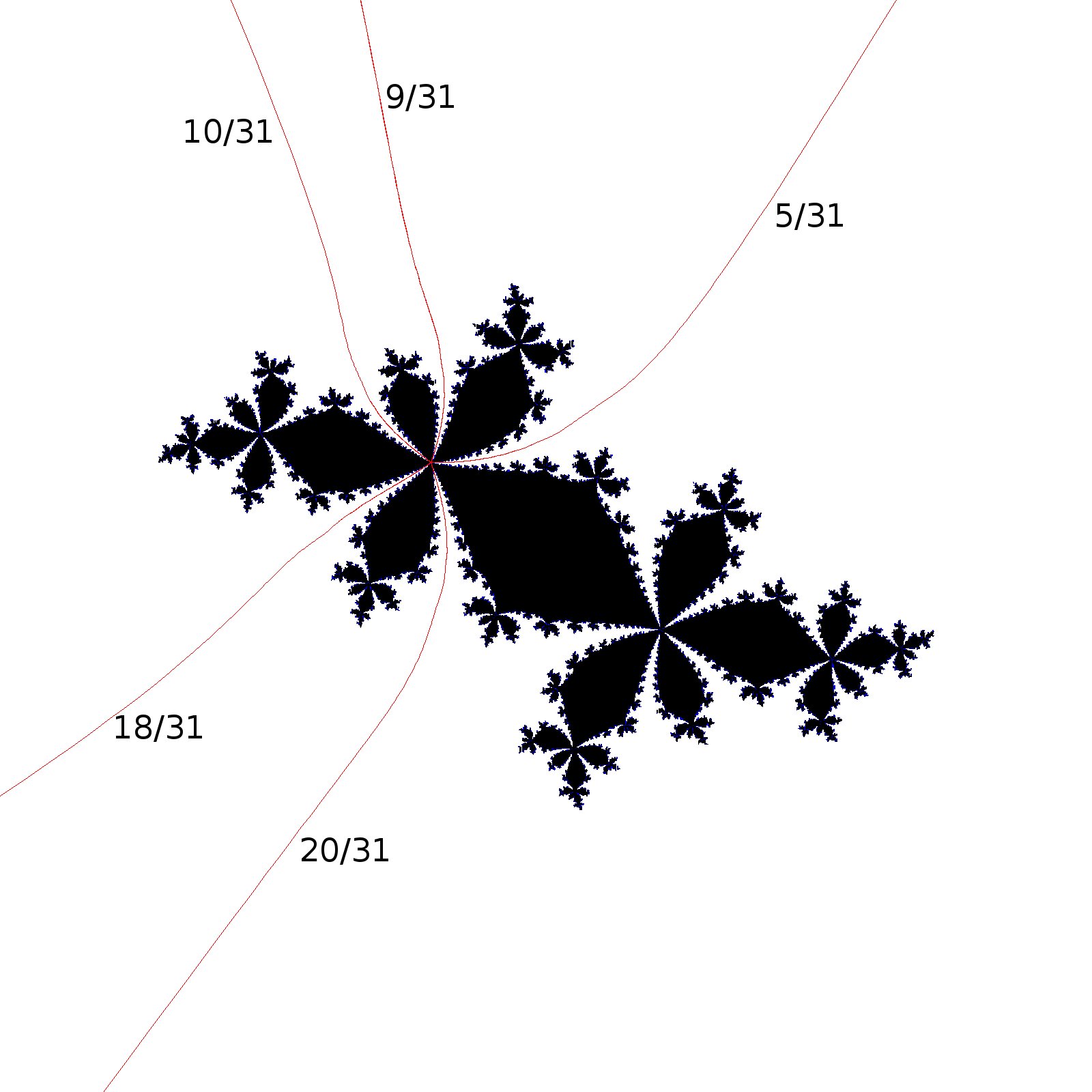}}

\caption{Left: the set $\Omega$ of external rays landing on the main cardioid of the Mandelbrot set.
Right: the set $C(w)$ of external rays landing on the $\alpha$-fixed point of a Julia set for the center 
of a hyperbolic component tangent to the main cardioid (here, the rotation number is $r=2/5$ and the corresponding Farey word is $w = 00101$.)}
\label{F:cardioid}
\end{figure}

\begin{proof}
(1) Let $c \in $\Heart \ be the parameter for which the map $f_c$ has an indifferent fixed point of multiplier $e^{2 \pi i r}$. 
It is known that its Julia set $J(f_c)$ is locally connected, hence all external rays land in the dynamical plane of $f_c$, 
and the \emph{landing map} 
$L(\theta) : \mathbb{R}/\mathbb{Z} \to J(f_c)$ defined as $L(\theta):= \lim_{\rho \to 1^+} \Phi(\rho e^{2\pi i \theta})$
is a continuous semiconjugacy between the doubling map and $f_c$, that is we have the commutative diagram (see also \cite{Mi}):
$$\xymatrix{\mathbb{R}/\mathbb{Z} \ar^D@(ul,ur) \ar^L[r] & J(f_c) \ar^{f_c}@(ul, ur) }$$
Let $S := L^{-1}(\alpha)$ the set of angles of external rays landing at the $\alpha$-fixed point. 
The map $f_c$ permutes the rays landing at $\alpha$, and preserves their cyclic orientation in the plane. Moreover, 
since the multiplier of $f_c$ at $\alpha$ is $e^{2\pi i r}$, then the set $S$ has rotation number $r$ under the doubling map, 
hence by Lemma \ref{rotnumber} the set $S$ equals $C(w)$.

To pass to the statement in parameter space, note that it is known that the pair of rays landing on 
$c$ in parameter space corresponds to the elements of $S$ which delimit a sector which (in the dynamical plane) 
contains the critical value.
Now, the set $S^1 \setminus S$ is the union of $q$ (connected) arcs, and the doubling map permutes their endpoints: 
hence, $D$ maps each arc of length smaller than $1/2$ homeomorphically to its image, and there is a unique arc $\ell_0$ 
of length at least $1/2$. Since the map $f_c$ is a local homeomorphism away from its critical point, then the 
component in the dynamical plane corresponding to $\ell_0$ must contain the critical point.
Now note that by Lemma \ref{cyclic} (1) and (3), the arc  $\ell = (0.\overline{{}^tw}, 0.\overline{w})$ is a connected component of 
$S^1 \setminus S$, and by Proposition \ref{L:book} and the definition of $\EB$, the length of $\ell$ is more than $1/2$, 
so it must be $\ell = \ell_0$ the one which contains the critical point. 
As a consequence, the arc which contains the critical value is delimited by taking the forward image of the endpoints
of $\ell_0$; thus, it is the arc $\ell_1 = (0.\overline{\tau ({}^t w)}, 0.\overline{\tau w}) = 
(\theta^-, \theta^+)$ and claim (2) is proven.
 
As for the last statement, the previous construction implies the correspondence
$\Omega \cap \mathbb{Q} = 2 (\EB \cap \mathbb{Q})$; 
claim (3) follows by taking closures, as it is known that the set of angles of rays landing on the main cardioid is 
the closure of the set of rational angles of rays landing on the main cardioid (see \cite{Hub}, Corollary 4.4).
\end{proof}

%\bigskip
%\subsection{Kneading theory for Lorentz maps}
%The set $\EB$ also appears in the framework of  kneading theory for Lorentz maps.
%It is well known (see \cite{HS, LaMo}) that kneading invariants for Lorentz maps are coded by the following set, 
%which is defined by means of the doubling map. 

%\begin{equation} \label{eq:LW}
%LW:= \{ (a,b) \in [0,1]^2 \ : \ 
%\begin{array}{l}
%a \leq D^k(a) \leq b \\ 
%a \leq D^k(b) \leq b \\
%\end{array}
%\ \ \forall k \in \NN
%\}.
%\end{equation}  
%This is a fractal set  contained in the triangular region
%$\{(a,b)\in [0,1]^2 \ : \ b\geq a+1/2\}$ (see \cite{HS}, Figure 3).
%It is immediate to check that the projection on the $x$-axis of the intersection of $LW$ with the boundary 
%line $b=a+1/2$ is precisely  the set $\EB$.
%% $LW \cap \{(a,b)\in [0,1]^2 \ : \ b=a+1/2\}$ 

%$$\EB:=\{a\in [0,1]: a\leq g^k(a) \leq a+1/2 \ \ \forall k \in \NN \}$$

%\medskip

\section{Regular continued fraction expansions} \label{S:rcf}
Let us first fix some notation regarding the classical continued
fractions expansions. %(or regular continued fractions).
 Any irrational number admits a unique infinite
continued fraction expansion, which will be denoted as 
$$ x=a_0+\cfrac{1}{a_1+\cfrac{1}{a_2+ ...}} = [a_0; a_1, a_2,...]$$
with $a_k \in \ZZ$ $\forall k$, and $a_k \geq 1 \ \forall k \geq 1$. 
Moreover, any rational value $r$ admits exactly two finite expansions; 
indeed, we can write 
$$r = [a_0; a_1, \dots, a_n] = [a_0; a_1, \dots, a_n-1, 1]$$
with $a_n \geq 2$. 
Any nonempty string of positive integers %$S \neq (1)$ 
$S = (a_1, \dots, a_n)$ defines a rational value $r = [0; a_1, \dots, a_n] \in (0,1]$, 
which we will sometimes denote as $r = [0; S]$.

We then define the {\em right conjugate} of $S$ to be the only string
$S'$ which defines the same rational value as $S$, i.e. such that $[0;S']=[0;S]$. For
instance $(3,1,3)'=(3,1,2,1)$ and viceversa (conjugation is
involutive, and affects only the last one or two digits). We also define the
{\em left conjugate} $'\!S$ of a (finite or infinite) string $S$ in a similar
way, just acting on the leftmost digits: that is, if $S = (a_1, a_2, \dots)$
we define 
%with $a_1 \geq 2$ we define $'\!S := (1, a_1-1, a_2, \dots)$, while  
$$'\!S:= 
\left\{ \begin{array}{ll}
(1, a_1-1, a_2, \dots) & \textup{if } a_1 \geq 2 \\
(1 + a_2, a_3, \dots) & \textup{if } a_1 = 1. 
\end{array}
\right.
$$
Thus, the left conjugate of $(3,1,3)$ will be $'(3,1,3)=(1,2,1,3)$. It is not difficult to check
that this manipulation on strings translates into the map
$\sigma:[0,1]\to [0,1]$ defined as $\sigma(x):=1-x$ on the side of
continued fraction expansions, namely for any string of positive integers we have 
\begin{equation} \label{eq:sigmax}
\sigma([0;S])=[0; {'\!S}].
\end{equation}
 Another operation on
strings we shall often use in the following is the operator $\partial$ defined on (finite or infinite) strings as
$$\partial (a_1,a_2,...) :=  \left\{ \begin{array}{ll} 
                                      (a_1-1,a_2,...) & \textup{if }a_1 > 1 \\
                                      (a_2,...) & \textup{if }a_1 = 1. \\
	                                       \end{array}
\right.$$
We shall sometimes also use the transposition: the transpose string of $S=(a_1,...,a_\ell)$ is the string ${}^t S=
(a_\ell,...,a_1)$. 
%sometime we will use the notation $\SI$ instead of
%${}^t S$.
Finally, if $S$ is a finite string of positive integers we will denote by $q(S)$ the denominator 
of the rational number  whose c.f. expansion is $S$, i.e. such that $\frac{p(S)}{q(S)} = [0; S]$ with $(p(S), q(S)) = 1$, $q(S) > 0$. 

Let us also recall the well-known estimate 
\begin{equation}\label{eq:supermult}
q(S)q(T) \leq q(ST)\leq 2q(S)q(T).
\end{equation}
Moreover, we define the map $f_S : x \mapsto S \cdot x$, which
corresponds to appending the string $S$ at the beginning of the continued fraction
 expansion of $x$. 
That is, if $S=(a_1,...,a_n)$ we can write, by identifying matrices with M\"obius transformations, 
\begin{equation}\label{eq:bblocks} S \cdot x := \matr{0}{ 1}{ 1}{ a_1} \matr{0}{ 1}{ 1}{ a_2}...  \matr{0}{
  1}{ 1}{ a_n} \cdot x.
\end{equation}
It is easy to realize that concatenation of
strings corresponds to composition, namely  
$ (ST)\cdot x = S
\cdot(T\cdot x)$; moreover the map $f_S$ is increasing
if $|S|$ is even, decreasing if $|S|$ is odd.
%Each string $S$ also identifies an interval $J_S:=S\cdot [0,1]$; %si usa da qualche parte?%
%in fact the string action is just the inverse branch of the Gauss map on the cylinder $J_S$: $G(S\cdot x)=x$ for all $x\in [0,1]$. 
The image of $f_S$ is a \emph{cylinder set} 
$$I(S) := \{ x = S \cdot y,\ y \in [0, 1] \}$$ which is a closed interval with endpoints
$[0; a_1, \dots, a_n]$ and $[0; a_1, \dots, a_n +1]$. 
The map $f_S$ is a contraction of the unit interval, and it is easy to see that 
\begin{equation} \label{eq:contraction}
\frac{1}{4 q(S)^2} \leq |f'_S(x)| \leq \frac{1}{q(S)^2} \qquad \qquad \forall x \in [0,1]
\end{equation}
and the length of $I(S)$ is bounded by
\begin{equation} \label{eq:cylsize}
\frac{1}{2 q(S)^2} \leq |I(S)| \leq \frac{1}{q(S)^2}.
\end{equation}

Given two strings of positive integers $S = (a_1, \dots, a_n)$ and $T = (b_1, \dots, b_n)$ of equal length, let us define the 
\emph{alternate lexicographic order} as
$$S < T \ \textup{if } \exists k\leq n \textup{ s.t. } a_i = b_i \ \forall 1 \leq i \leq {k-1} \textup{ and } \left\{ \begin{array}{ll} a_n < b_n & \textup{if }n\textup{ even} \\
                                                                                                                       a_n > b_n & \textup{if }n\textup{ odd}. 
                                                                                                                      \end{array} \right.$$
The importance of such order lies in the fact that given two strings of equal length $S < T \textup{ iff } [0; S] < [0; T]$. 
In order to compare quadratic irrationals with periodic expansion, the following \emph{string lemma} (\cite{CT}, Lemma 2.12) is useful:
 for any pair of strings $S$, $T$ of positive integers, we have the equivalence
\begin{equation} \label{eq:stringlemma}
ST < TS \Leftrightarrow [0; \overline{S}] < [0; \overline{T}].
\end{equation}
The order $<$ is a total order on the strings of positive integers of fixed length; to be able to compare strings of different lengths 
we define the partial order 
$$S << T \quad \textup{if }\exists i \leq \min\{|S|, |T|\} \textup{ s.t. }S_1^i < T_1^i$$
where $S_1^i = (a_1, \dots, a_i)$ denotes the truncation of $S$ to the first $i$ characters. 
Let us note the following basic properties:

\begin{enumerate}
\item if $|S| = |T|$, then $S < T$ iff $S << T$;
\item if $S, T, U$ are any strings, $S << T \Rightarrow SU << T, S << TU$; 
\item if $S << T$, then $S \cdot z < T \cdot w$ for any $z, w \in (0, 1)$.
\end{enumerate}

\subsection{Farey legacy}

We shall now see how to construct, using continued fractions, an irrational number
given a binary word; this way, starting from the set of Farey words we shall define the fractal subset $\EKU$ 
of the interval, and establish its properties from the properties of Farey words we obtained in the previous sections.

Indeed, let us define the \emph{runlength map} $RL$ %:\{0,1\}^* \to (\ZZ_+)^*$ 
to be the map which associates to a (finite or infinite) binary word $w$ the string of positive integers which records the size of 
blocks of consecutive equal digits: namely, if 
$$w = \underbrace{0\dots0}_{a_1} \underbrace{1\dots1}_{a_2} \dots$$
we set
$$RL(w) := (a_1, a_2, \dots).$$  
%$$a_n := \max \{ k \ : \ \sigma_{a_1+\dots+a_{n-1}+1} = \sigma_{a_1+\dots+a_{n-1}+k} \}$$
For instance, $RL(0001001001)=RL(1110110110)=(3,1,2,1,2,1)$; 
%of course one can also define an analogous map on infinite strings (we will call it in the same way). 
note that $RL$ is a two-to-one map ($RL(w)=RL(\check{w})$), 
 but it is strictly increasing when restricted to words  beginning with the digit $0$.
If $S=RL(w)$ for some $|w|>1$ then  
\begin{equation}\label{eq:vee}
RL({}^\vee w)= {'\!S}, \ \ \ \ \ \ RL(w^\vee)=S'.
\end{equation}
Note also that, if $w=w_0w_1$ and the last digit of $w_0$ is different from the first of $w_1$, then 
one has 
$$RL(w)=RL(w_0)RL(w_1)$$ 
(note this is always the case when $w_i$ are non-degenerate Farey words).  
For the runlength string of Farey
words some more nice properties hold:

\begin{lemma}\label{L:fareylegacy}
Let $w\in FW^*$ be a Farey word and $S:=RL(w)$. Then the length $|S|$ is even and
\begin{enumerate}
\item[(i)] there exists an integer $a\geq 1$ and a Farey word $f = (\epsilon_1,...,\epsilon_n)$ such that
one can write
$$S=B_{\epsilon_1}\dots B_{\epsilon_n}$$
 with  
$$\begin{array}{lll}
B_0 =  (a+1, 1),  & B_1 = (a, 1) & \textup{ if }w \in FW_0 \\ 
\end{array}$$
or 
$$\begin{array}{lll}
                  B_0 = (1, a), & B_1 = (1, a+1) & \textup{ if }w \in FW_1.
                 \end{array}$$
%$B_{0}=(a+1,1)$, $B_{1}=(a,1)$ 
%if $w\in FW_0$ and $B_{0}=(1,a)$, $B_{1}=(1,a+1)$ 
%if $w\in FW_1$.
The Farey word $f$ is unique as long as $w \neq (01)$; since it plays a central role in the following, it will be referred 
to as the \emph{Farey structure} of the string $S$. 
%Moreover, if $S = RL(w)$ with $w \in FW^*$ and $w \neq (01)$, then the Farey structure is unique.

\item[(ii)] The runlength   %=RL( {}^t\check{w})$; % 
 of the Farey word ${}^t\check{w}$ is 
$$RL( {}^t\check{w})= {'\!S'}={}^t S;$$
\item[(iii)] if $S=(a_1,\dots,a_\ell)$ and $1\leq k < \ell/2$, we set $P_k:=(a_1,\dots,a_{2k})$, $S_k:=(a_{2k+1},\dots,a_\ell)$,
(so that $S=P_kS_k$), then
\begin{equation}\label{eq:elyndon}
S<<S_k; %\ \ \ \forall k \in \{ 1,2, ..., \ell/2 -1  \}
\end{equation} 
\item[(iv)] using the same notation as above, if $w\in FW_0$  then 
\begin{equation}\label{eq:partial}
S_k P_k<<\partial S.
\end{equation}
%\item if ${}^t S=PZ$ then $S>>P$
 \end{enumerate}
\end{lemma}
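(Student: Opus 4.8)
The plan is to establish the four assertions in the order stated: (i) feeds into (iv), whereas (ii) and (iii) are essentially independent of it.

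\emph{Part (i).} A non-degenerate Farey word is a Lyndon word (Lemma~\ref{cyclic}(1)), so $w$ begins with $0$ and ends with $1$; its maximal monochromatic blocks therefore alternate, beginning with a $0$-block and ending with a $1$-block, so $|S|$ is even. Assume next $w = W_r\in FW_0$, i.e.\ $r = \rho(w) < \tfrac12$. From $\epsilon_k = \lfloor kr\rfloor - \lfloor (k-1)r\rfloor$ and $2r<1$ one checks that $w$ has no two consecutive $1$'s, so every $1$-block has length exactly $1$ and $S = (a_1,1,a_3,1,\dots,a_{\ell-1},1)$. The $0$-block lengths $a_1,a_3,\dots$ take, by the three-distance theorem for $R_r$, only two consecutive values $a,a+1$ with $a\ge 1$, and the word recording which $0$-block is long and which short (long $\leftrightarrow 0$, short $\leftrightarrow 1$) is again of the form $\Phi_{r'}(0^+)$ for a renormalised rotation number $r'$, hence a Farey word $f$ — this is the classical continued-fraction structure of Christoffel words (\cite{Be, BLRS, Lot}), and can alternatively be obtained by induction from the substitution operators of Proposition~\ref{P:t.char}. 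The case $w\in FW_1$ is symmetric, either by the same argument applied to $1-r>\tfrac12$ (no two consecutive $0$'s), or by combining Proposition~\ref{P:simmetrie}(a) with (ii); and $f$ is determined by $w$ as stated.

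\emph{Parts (ii) and (iii).} For (ii) I would use two elementary properties of $RL$: invariance under digit-wise negation, and commutation with transposition ($RL({}^t u) = {}^t RL(u)$, since reversing a word reverses its block-length sequence). Then $RL({}^t\check w) = {}^t RL(\check w) = {}^t RL(w) = {}^t S$; and the identity ${'\!S'} = {}^t S$ follows by applying \eqref{eq:vee} to $w^\vee$ (using $RL(w^\vee) = S'$), which gives ${'\!S'} = RL({}^\vee(w^\vee))$, and then invoking ${}^\vee(w^\vee) = {}^\vee w^\vee = {}^t w$ from Proposition~\ref{P:simmetrie}(c). For (iii), write $w = ps$ where $p$ consists of the first $2k$ blocks of $w$ and $s$ of the rest, so $RL(p) = P_k$, $RL(s) = S_k$ (a genuine block decomposition, as $p$ ends with a $1$-block and $s$ begins with a $0$-block). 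Since $w$ is Lyndon, Proposition~\ref{P:lyndon} gives $w<<s$ in the binary order; as $RL$ restricted to words beginning with $0$ is strictly increasing and, by inspection of the first differing block, carries the relation $<<$ into the alternate lexicographic order on runlength strings, we conclude $S = RL(w) << RL(s) = S_k$.

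\emph{Part (iv).} Assume $w\in FW_0$ and $\ell := |S|\ge 4$ (else the statement is vacuous). By (i), $S = B_{\epsilon_1}\cdots B_{\epsilon_n}$ with $n = \ell/2\ge 2$, $f = (\epsilon_1,\dots,\epsilon_n)\in FW^*$, $B_0 = (a+1,1)$, $B_1 = (a,1)$, $a\ge 1$. Because $\ell\ge 4$, $w$ is not of the form $0^c 1$, hence — Farey words being primitive — its $0$-blocks are not all of the same length; were the first one short, a cyclic permutation of $w$ starting at a long $0$-block would be lexicographically smaller than $w$, against $w$ being Lyndon. So $\epsilon_1 = 0$, $a_1 = a+1\ge 2$, whence $\partial S = (a,1,a_3,\dots,a_\ell) = B_1 B_{\epsilon_2}\cdots B_{\epsilon_n}$, a string of $B$-blocks following the pattern ${}^\vee f = (1,\epsilon_2,\dots,\epsilon_n)$, while $S_kP_k = \tau^{2k}S = B_{\epsilon_{k+1}}\cdots B_{\epsilon_n}B_{\epsilon_1}\cdots B_{\epsilon_k}$ follows the pattern $\tau^k f$. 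For any two strings of $B$-blocks, the first index $i$ at which the patterns disagree yields a first discrepancy in the $(2i-1)$-st entry — an odd position — at which the block labelled $0$ carries the larger value $a+1$; in the alternate lexicographic order this makes it the smaller string, so comparing strings of $B$-blocks amounts to comparing their patterns in the binary order. It therefore suffices to show $\tau^k f << {}^\vee f$. But $\tau^k f$ is bounded above by the largest cyclic permutation of $f$, namely ${}^t f$ (Lemma~\ref{cyclic}(3)), and ${}^t f < {}^\vee f$ by Proposition~\ref{P:simmetrie}(e); since $\tau^k f$, ${}^t f$, ${}^\vee f$ all have length $n$, where $<$ and $<<$ agree and transitivity holds, $\tau^k f << {}^\vee f$, hence $S_kP_k << \partial S$.

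The main obstacle is the block-structure statement in (i): that the $0$-block lengths of a Farey word in $FW_0$ take exactly two consecutive values and that their long/short pattern is again a Farey word is precisely the (classical but not entirely trivial) continued-fraction description of Christoffel words, and everything in (iv) is built on top of it. Once (i) is in place, the remaining difficulty in (iv) is the careful but routine bookkeeping of how $\partial$, the cyclic shift $\tau^{2k}$ and deletion of a leading letter act on the block decomposition, and (ii), (iii) are straightforward consequences of the symmetries of Farey words and the order-preservation of $RL$.
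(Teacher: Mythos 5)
Your proof is correct and, for parts (ii)--(iv), follows essentially the paper's own route: (ii) and (iii) are exactly the paper's argument (equation \eqref{eq:vee} together with ${}^\vee w^\vee = {}^t w$ for (ii); Proposition \ref{P:lyndon} plus order-preservation of $RL$ on words beginning with $0$ for (iii)), and your (iv) rests on the same key inequality ${}^\vee f > {}^t f \geq \tau^k f$ coming from Proposition \ref{P:simmetrie}(e) and Lemma \ref{cyclic}(3). The genuine divergence is in (i): the paper works purely with the substitution operators, observing via Proposition \ref{P:t.char} and Lemma \ref{L:u.tuning} that every $w \in FW_0$ factors as $w = f \star U_1 \star U_0^a$, so that, since $U_1 \star U_0^a$ sends $0 \mapsto 0^{a+1}1$ and $1 \mapsto 0^a1$, the block decomposition of $S$ drops out at once and uniqueness follows from the disjointness of the images of $U_0$ and $U_1$. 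Your primary route via the three-distance theorem and renormalisation of the rotation $R_r$ is a genuinely different (and classical) way to obtain the same structure, but as written it is an appeal to the literature rather than a proof; the self-contained version is precisely the substitution argument you mention only as an alternative, and it should be promoted to the main line. In (iv) the paper simply writes $\partial S = RL(({}^\vee f)\star U_1 \star U_0^a)$ and $S_kP_k = RL((\tau^k f)\star U_1 \star U_0^a)$ and concludes by order-preservation of $f \mapsto f \star U_1 \star U_0^a$ followed by $RL$; your block-by-block comparison (the first disagreeing block sits at an odd position, where the larger entry $a+1$ makes the string smaller in the alternate order) is the same computation unwound by hand, and your explicit check that $\epsilon_1 = 0$ -- so that $a_1 = a+1 \geq 2$ and $\partial S$ really follows the pattern ${}^\vee f$ -- via the Lyndon property fills in a detail the paper leaves implicit, which is a small but worthwhile addition.
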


\begin{proof}
%{\bf Armonizzare la notazione!}  

{\bf (i).} 
Recall that $FW^* = FW_0 \cup (01) \cup FW_1$. Clearly, for $w = (01)$ we have $S = RL(w) = (1, 1)$, so 
$a = 1$ and we can choose $f = (0)$ or $f = (1)$. 
Let us now assume $w\in FW_0$. Then by Proposition \ref{P:t.char} and Lemma \ref{L:u.tuning} we can 
write 
\begin{equation} \label{fact0a}
w=f \star U_1 \star U_0^a 
\end{equation}
 for some
$f=(\epsilon_1, \dots, \epsilon_\ell) \in FW$ and $a\geq 1$; on the other
hand $U_1 \star U_0^a=\fect{(0^{a+1}1)}{(0^a1)}$ and $RL(0^{a}1)=(a,1)$ so,
calling $B_0:=(a+1,1)$ and $B_1:=(a,1)$ we get that $S$ is the
concatenation $B_{\epsilon_1}\dots B_{\epsilon_\ell}$. 
Note that, since the image of $U_0$ is contained in $FW_0$ and the image of $U_1$
is contained in $FW_1$ (Lemma \ref{L:u.tuning}), then the factorization of equation \eqref{fact0a} is unique, hence also the Farey structure 
of $S$ is unique.
The case $w\in
FW_1$ is analogous.

{\bf (ii).} The second claim is an immediate consequence of equation \eqref{eq:vee} and the fact that ${}^\vee w^\vee= {}^t w$, 
together with the fact that $RL(\check{w}) = RL(w)$.
%${'\!S'}=RL({}^vee w^\vee)=RL( {}^t\check{w})={}^t S$.

{\bf (iii).} It follows from Proposition \ref{P:lyndon}, and the fact that the runlength map preserves the strong order $<<$ 
when restricted to words which begin with $0$. 

{\bf (iv).}
%The last two claims follows from these facts:
By unwinding the definitions it is not hard to see that, if $w \in FW_0$, we can write the identities
$$
\begin{array}{l}
S= RL(f \star U_1 \star U_0^a)\\
\partial S= RL(({}^\vee f) \star U_1 \star U_0^a)\\
S_kP_k=RL((\tau^k f) \star U_1 \star U_0^a).
\end{array}$$
Moreover, by Proposition \ref{P:simmetrie} (e) and Lemma \ref{cyclic} (3) we have 
$$
 \ \ \  {}^\vee f >> {}^t f \geq \tau^k f  \ \ \ \forall k
$$
hence the claim follows from the fact that both the substitution operator $f \mapsto f \star U_1 \star U_0^a$ 
and the runlength map (when restricted to words beginning with $0$) are order-preserving. 

%where $f$ is a Farey word, $p$ its greatest palindrome and $\tau$ denotes a one-step cyclic permutation.
\end{proof}

\subsection{Qumtervals} \label{S:qumtervals}
For $x\in [0,1/2]$ we shall consider the map $\phi : [0, 1/2] \to [0, 1]$ induced by runlength as follows:
if $x=\sum_{j\geq 1} \epsilon_j 2^{-j}$ is the binary expansion of $x$, with $\epsilon_j \in \{0, 1\}$, then we define 
$\phi(x)$ to be the number with continued fraction
$$\phi(x):=[0;RL(\epsilon)]$$
where $RL(\epsilon)$ is the runlength of the sequence $(\epsilon_j)_{j \geq 1}$. 
This map is certainly well-defined  for those values of $x$ which admit a unique (and infinite) binary
expansion; in fact, it also extends continuously to dyadic rationals, since the two binary expansions 
of a dyadic rational are mapped onto two continued fraction expansions of the same rational. 
For instance, if $x = \frac{3}{8} = 0.011$ then $\phi(x) = [0; 1, 2] = \frac{2}{3}$; on the other hand, 
we can write $\frac{3}{8} = 0.010\overline{1}$, which maps to $[0; 1, 1, 1, \infty] = \frac{2}{3}$.
 It is not difficult to check that this map is a homeomorphism between $[0,1/2]$ and $[0,1]$.
The inverse of $\phi$ is essentially \emph{Minkowski's question mark} function $Q : [0, 1] \to [0,1]$
sending $x = [0; a_1, a_2, \dots]$ to 
$$Q(x) := 0.\underbrace{0\dots0}_{a_1-1}\underbrace{1\dots1}_{a_2}\dots.$$
In fact, one has for each $x \in [0, 1/2]$
\begin{equation}\label{MinkInv}
Q(\phi(x)) = 2 x.
\end{equation}
For properties of Minkowski's question mark function, we refer to \cite{Sa}.

\begin{definition}\label{D:qum}
Given $w\in FW^*$ we will call \emph{ qumterval of label $w$} the interval $J_w$ which is the image under $\phi$ 
of the interval $I_w$ appearing in Lemma \ref{L:book}, that is 
$$J_w:= \phi(I_w).$$
\end{definition}
Note that, if $S:=RL(w)$, then by Lemma \ref{L:book} the endpoints of $J_w$ are given by  
\begin{equation}\label{eq:qumterval}
J_w=(\alpha^-, \alpha^+)  \ \ \ \mbox{ with } 
\begin{array}{l}\alpha^+ := \phi(a^+)=[0;\overline{S}]\\ 
\alpha^- := \phi(a^-)=[0; S' \overline{{}^tS}].
\end{array}
\end{equation}
As an example, the Farey word $w = 001$ yields $S = (2, 1)$, hence $\alpha^+ = [0; \overline{2, 1}] = \frac{\sqrt{3}-1}{2}$ and 
$\alpha^- = [0; 3, \overline{1, 2}] = 2 - \sqrt{3}$.
The rational value $r:=\phi(.w)=[0;S]$ is the (unique!) rational value in $J_w$ with least denominator, and 
will be called the {\it pseudocenter} of $J_w$ (see \cite{CT} for more general properties of the pseudocenter
of an interval).
Note that, by using equation \eqref{eq:sigmax} and Lemma \ref{L:fareylegacy} (ii), the left endpoint $\alpha^-$ can also be described 
by the property 
\begin{equation} \label{eq:leftendpoint}
1 - \alpha^- = [0; \overline{{}^t S}].
\end{equation}
We also define $\EKU:=\phi (\EB)$. In this way we get
$$[0,1]\setminus \EKU=\bigcup_{w\in FW^*} J_w.$$
Let us point out that, since $\phi$ is a bijection between dyadic rationals  and rationals in $[0,1]$, we have $\EKU\cap \QQ=\{0,1\}$.
Theorem \ref{T:mandel} now follows immediately from our setup. 

We now have the tools to prove one of the results stated in the introduction.

\medskip

\noindent \textbf{Proof of Theorem \ref{T:mandel}.}
Since by definition $\EKU = \phi(\EB)$, then by using equation \eqref{MinkInv} and Proposition \ref{P:cardioid} (3)
we get 
$$ Q(\EKU) = Q(\phi(\EB)) = 2\EB = \Omega.$$ 
\qed

\subsection{Thickening $\mathbb{Q}$}

We shall now perform an alternative construction of $\EKU$ which
is not essential for the main results of this paper, but it is useful for a comparison with the results in \cite{CT}.
%resumes some techniques introduced in \cite{CT}. 
Given any rational value $r\in (0,1)$, let us consider its
\cfe of even length $r=[0;S]$; then set $\beta(r):= [0;\overline{S}]$
and
$$\tilde{J}_r:=(\sigma \beta(\sigma r), \beta(r)).$$ Since
$\beta(r)>r$ and $\sigma$ is order reversing, we can easily see that
the $\tilde{J}_r$ is an open interval containing $r$, and in fact $r$ is
the pseudocenter of $\tilde{J}_r$. 
% We shall
%call $\tilde{J_r}$ {\it symmetrized quadratic interval} generated by
%$r$. 
For all $w\in FW^*$ we have that $J_w=\tilde{J_r}$ for
$r=\phi(.w)$. Indeed qumtervals have the following maximality
property (which will be proven in the appendix):
\begin{proposition}\label{P:thick}
For any $r'\in \QQ\cap (0,1)$ there is a Farey word $w\in FW^*$ such that $\tilde{J}_{r'}\subset J_w$. 
\end{proposition}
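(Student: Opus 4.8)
The plan is to show that every "thickened rational" interval $\tilde J_{r'}$ is contained in some qumterval $J_w$, by reducing the statement, via the homeomorphism $\phi$ (equivalently, via Minkowski's $Q$), to a corresponding statement on the binary side about the sets $I_w$ of Proposition \ref{L:book}. Recall $\phi : [0,1/2] \to [0,1]$ is an order-preserving homeomorphism with $\phi(I_w) = J_w$ and $\phi(\EB) = \EKU$, so $[0,1]\setminus\EKU = \bigcup_{w\in FW^\star} J_w$. Since $\phi$ is a bijection between dyadic rationals and rationals, given $r'\in\QQ\cap(0,1)$ there is a dyadic $d' = \phi^{-1}(r')\in(0,1/2)$, and by pulling back it suffices to produce a Farey word $w$ with $\phi^{-1}(\tilde J_{r'}) \subseteq I_w$; equivalently, since the dyadic $d'$ lies in $[0,1]\setminus\EB$, it already lies in a unique component $I_w$ of that complement (Proposition \ref{L:book} (4)), and the real work is to check that the whole interval $\phi^{-1}(\tilde J_{r'})$ — not just its pseudocenter $d'$ — stays inside that same $I_w$.

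The key steps, in order: First, identify explicitly the endpoints of $\phi^{-1}(\tilde J_{r'})$ in terms of the binary expansion of $d'$. Writing $r' = [0;S']$ with $|S'|$ even and $w' := \phi^{-1}(r')$ the corresponding dyadic word (so $RL(w') = S'$, up to the two-to-one ambiguity resolved by starting with $0$), the definition $\tilde J_{r'} = (\sigma\beta(\sigma r'),\, \beta(r'))$ together with $\beta(r') = [0;\overline{S'}]$ and equation \eqref{eq:sigmax} translates, under $\phi^{-1}$, into an interval whose right endpoint has binary expansion $0.\overline{w'}$ and whose left endpoint has binary expansion $0.\overline{{}^t w'} - 1/2$ — here one uses Lemma \ref{L:fareylegacy} (ii), namely $RL({}^t\check{w'}) = {}^tS'$, to handle the $\sigma\beta(\sigma\cdot)$ term, exactly as in the computation of $\alpha^-$ in \eqref{eq:qumterval} and \eqref{eq:leftendpoint}. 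So $\phi^{-1}(\tilde J_{r'})$ has the same shape $(0.\overline{{}^t v}-1/2,\, 0.\overline{v})$ as $I_v$ would if $v$ were a Farey word, but now $v = w'$ is an arbitrary binary word. Second, let $w\in FW^\star$ be the Farey word with $d'\in I_w$; I must show $0.\overline{{}^t w'}-1/2 \ge a^-$ and $0.\overline{w'}\le a^+$ where $I_w = (a^-,a^+)$, $a^+ = 0.\overline{w}$, $a^- = 0.\overline{{}^t w}-1/2$. The inequality on the right reduces, via the string/word order, to showing $\overline{w'}\le\overline{w}$ in lexicographic order, and the inequality on the left to $\overline{{}^t w'}\le\overline{{}^t w}$; both should follow from the defining property of $\EB$ and the structure of its complementary gaps. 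Concretely, since $d'\notin\EB$ and $d'\in I_w$, the orbit of $a^+ = 0.\overline w$ under the doubling map, which lies in $\EB$, "dominates" the periodic point $0.\overline{w'}$: because $d'$ has period-$|w'|$ expansion lying strictly inside the gap $I_w$ whose endpoints are the nearest points of $\EB$, any rational (dyadic-periodic) point of that gap satisfies $0.\overline{w'}\le a^+$, and symmetrically $0.\overline{{}^tw'}\le 0.\overline{{}^tw}$ after reflecting via $x\mapsto x+1/2$ and the palindromic symmetry ${}^t$.

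The main obstacle I expect is the second step: proving that the periodic endpoints $0.\overline{w'}$ and $0.\overline{{}^t w'}-1/2$ of $\phi^{-1}(\tilde J_{r'})$ do not escape the gap $I_w$ containing the pseudocenter $d'$. The cleanest route is probably to argue directly with the self-map $D$ and the characterization $\EB = \{x\in[0,1/2] : D^k(x)\in[x,x+1/2]\ \forall k\}$: one shows that if $x$ is a dyadic point in the gap $I_w$, then the purely periodic point $0.\overline{w'}$ obtained by repeating a period of $x$ satisfies $D^k(0.\overline{w'}) \le 0.\overline{w'} + 1/2$ for all $k$ only after possibly enlarging to the gap — but more to the point, that $0.\overline{w'}$ lies in $[a^-, a^+]$ because $a^\pm\in\EB$ sandwich the gap and the order $<$ on infinite words is compatible with the order on reals. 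An alternative, matching the paper's style, is to invoke the pseudocenter formalism of \cite{CT}: $r'$ is the pseudocenter of $\tilde J_{r'}$ with least denominator, $w$ is the Farey word whose pseudocenter $r = \phi(.w)$ is the unique least-denominator rational in $J_w$, and since $r'\in J_w$ one must have $q(S)\le q(S')$ where $S = RL(w)$; maximality of $J_w$ among intervals with that pseudocenter then forces $\tilde J_{r'}\subseteq J_w$. I would present the argument through the binary picture since all the needed combinatorial facts about $\EB$ and Farey words are already established in Section \ref{S:farey}, relegating the purely computational endpoint identifications to a short lemma.
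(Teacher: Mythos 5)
Your reduction to the binary side is a reasonable idea, but two steps fail as written. First, the endpoint identification is incorrect: for a general (non-Farey) binary word $w'$ the left endpoint of $\phi^{-1}(\tilde J_{r'})$ is $0.\overline{{}^\vee w'^\vee}-1/2$, not $0.\overline{{}^t w'}-1/2$. These agree when $w'$ is a Farey word because ${}^\vee w^\vee={}^t w$ (Proposition \ref{P:simmetrie} (c)), but that identity is special to Farey words, and here $w'$ is an arbitrary binary word. Concretely, take $r'=2/5=[0;2,2]$, so $w'=0011$: then $\sigma\beta(\sigma r')=1-[0;\overline{1}]=g^2=\phi(1/6)$, and indeed $0.\overline{{}^\vee w'^\vee}-1/2=0.\overline{1010}-1/2=1/6$; whereas your formula gives $0.\overline{{}^t w'}-1/2=0.\overline{1100}-1/2=3/10$, and $\phi(3/10)=[0;1,1,\overline{2,2}]=2-\sqrt{2}$, which is not even smaller than the right endpoint $\beta(2/5)=\sqrt{2}-1$. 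So the interval you would be comparing with $I_w$ is not $\phi^{-1}(\tilde J_{r'})$. (Relatedly, the inequality you state for the left endpoint, $\overline{{}^t w'}\le\overline{{}^t w}$, points the wrong way: containment of the left endpoint requires a lower bound, not an upper bound.)

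Second, and more importantly, the key inequality --- that the periodic point $0.\overline{w'}$ does not exceed $a^+=0.\overline{w}$ when the dyadic $0.w'$ lies in the gap $I_w$ --- is asserted rather than proved. Your justification (``any rational point of that gap satisfies $0.\overline{w'}\le a^+$'') is circular: whether $0.\overline{w'}$ stays in $\overline{I_w}$ is precisely what has to be shown, and it is the entire content of the proposition. The paper proves the corresponding fact on the continued fraction side by writing $RL(w')=ST$ with $S=RL(w)$, splitting into the cases where the tail $T$ is, or is not, a prefix of a power of $S$, and in the delicate case combining Lemma \ref{L:fareylegacy} (iii) with the string lemma \eqref{eq:stringlemma} to compare $[0;\overline{ST}]$ with $[0;\overline{S}]$. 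Some argument of this type (or its lexicographic analogue comparing $\overline{w'}$ with $\overline{w}$, which leads to the same case analysis on whether $w'$ is a prefix of a power of $w$) is unavoidable. Likewise, your fallback via ``maximality of $J_w$ among intervals with that pseudocenter'' assumes the maximality you are trying to establish. Until these two points are repaired, the proposal does not constitute a proof.
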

As a consequence of the above proposition one gets the identity 
\begin{equation} \label{E:EKU}
\EKU=[0,1]\setminus \bigcup_{r\in \QQ \cap (0,1)} \tilde{J}_r.  
\end{equation}

\begin{figure}
\centering
\fbox{\includegraphics[scale=0.45]{./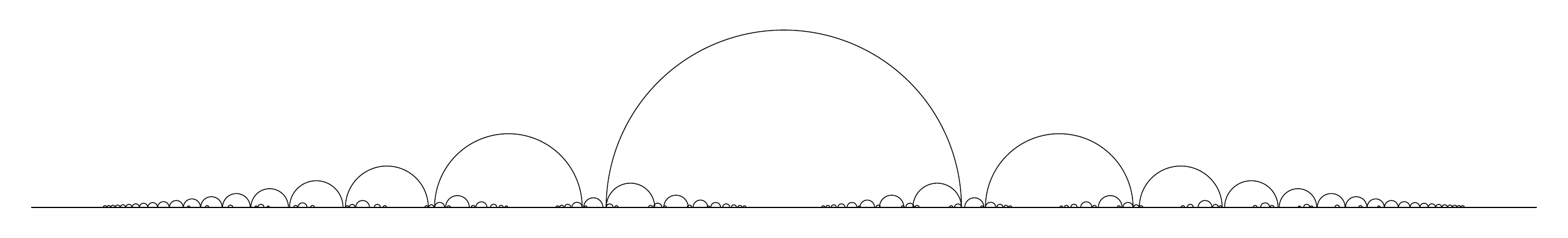}}
\caption{The quadratic intervals $\tilde{J}_r$. Each interval is represented by a half-circle 
with the same endpoints. The intervals which are maximal with respect to inclusion are precisely 
the connected components of the complement of the bifurcation set $\EKU$.}
\label{F:EKU} 
\end{figure}

Let us now compute the dimension of $\EKU$.
\begin{proposition} \label{Hdim}
The Hausdorff dimension of $\EKU$ is zero:
% in fact the same is true for the box dimension:
% $$ {\rm B.dim}\ \EKU= 
$${\rm H.dim}\ \EKU=0.$$ 

%The restriction $\phi|_{\EB}$ is locally H\"older continuous on $\EB \setminus \{0,1/2\}$ 
%and hence ${\rm B.dim}(\EKU)=0$.
\end{proposition}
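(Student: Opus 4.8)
The plan is to estimate the Hausdorff dimension of $\EKU$ directly from a covering by qumtervals and to transfer the dimension-zero statement from $\EB$ (Proposition \ref{L:book}(5)) via the H\"older-type regularity of the homeomorphism $\phi$. First I would record that $\EKU = [0,1]\setminus\bigcup_{w\in FW^\star} J_w$, and that the qumtervals at level $n$ (that is, $J_w$ with $|w|\le n$) have complement a finite union of closed intervals which I will call $E_n$, so that $\EKU = \bigcap_n E_n$ and each $E_n$ is a covering of $\EKU$. The key is to control, for each $n$, both the number of intervals in $E_n$ and the size of the largest one.

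The combinatorial count is straightforward: the Farey list $F_n$ has $2^n+1$ elements, so the number of qumtervals with label of length $\le$ (the relevant cutoff) is exponential in the level, say at most $C\cdot 2^n$; consequently $E_n$ consists of at most $C\cdot 2^n$ intervals. The heart of the matter is the size bound: I need to show that the length of each component $K$ of $E_n$ decays faster than any exponential, i.e. for every $\varepsilon>0$ there is an $N$ with $|K|\le \varepsilon^n$ for all components $K$ of $E_n$ once $n\ge N$; equivalently, $\frac{\log|K|}{n}\to-\infty$. To get this I would use the explicit endpoint formula \eqref{eq:qumterval}: a component $K$ of $E_n$ is an interval whose two endpoints are quadratic irrationals of the form $[0;S'\overline{{}^tS}]$ and $[0;\overline{T}]$ with $S=RL(w)$, $T=RL(v)$ for consecutive Farey words $w,v$ in the appropriate list, and by the contraction estimates \eqref{eq:contraction}--\eqref{eq:cylsize} the diameter of $K$ is comparable to $q(S)^{-2}$ (or $q(T)^{-2}$), where $S$ is the runlength string of one of the bounding Farey words. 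Now Lemma \ref{L:fareylegacy}(i) writes $S = B_{\epsilon_1}\cdots B_{\epsilon_m}$ with each $B_i$ a block of total length at least $2$ coming from a Farey word $f$ of length $m$; using the supermultiplicativity \eqref{eq:supermult}, $q(S)\ge \prod q(B_{\epsilon_i})\ge 2^{m}$ (since each $q(B_i)\ge 2$), and a Farey word of length bounded above forces the generating word $f$ to have length $m\to\infty$ as the level $n\to\infty$ — more precisely, Lemma \ref{L:difference} (phrased for $\EB$) tells us exactly that along any shrinking neighbourhood of a point of $\EB$ the quantity $|w|_0-|w|_1$, and hence $|w|$, blows up. Translating through $\phi$: near any point of $\EKU$ the bounding Farey words have length going to infinity, so $q(S)\to\infty$ superexponentially in $n$, giving $|K|\le q(S)^{-2}\to 0$ faster than any geometric rate.

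Putting the two bounds together: for any $s>0$, the $s$-dimensional premeasure of $\EKU$ computed from the cover $E_n$ is at most (number of intervals) $\times$ (max length)$^s \le C\,2^n\cdot (\max_K|K|)^s$, and since $\max_K|K|$ decays superexponentially this product tends to $0$ as $n\to\infty$ for every $s>0$. Hence $\mathcal H^s(\EKU)=0$ for all $s>0$, so $\mathrm{H.dim}\,\EKU=0$.

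\textbf{Main obstacle.} The delicate point is making the superexponential decay of $\max_K|K|$ quantitative and uniform over all components of $E_n$ simultaneously, rather than just along a single sequence converging to a fixed point of $\EKU$: one must argue that \emph{every} component of $E_n$ — not just those near a prescribed accumulation point — is bounded by a Farey word whose length grows with $n$, uniformly. I expect this to follow from the self-similar (substitution) structure of the Farey list together with Lemma \ref{L:fareylegacy}(i): a component of $E_n$ surviving to level $n$ is sandwiched between two level-$\le n$ Farey words that are adjacent in $F_n$, and adjacency in $F_n$ for words of small length is impossible past a bounded level, so the surviving gaps are all bounded by long Farey words. Alternatively, and perhaps more cleanly, one can simply bypass the uniformity issue by covering $\EKU$ by the qumtervals themselves: $\sum_{w\in FW^\star}|J_w|^s < \infty$ for every $s>0$ because $|J_w|\asymp q(RL(w))^{-2}\le 4^{-|f|}$ where $|f|$ is the length of the Farey structure, and summing over Farey words grouped by the length of their Farey structure gives a convergent series for every $s>0$; this shows the packing/Hausdorff dimension of the complement's closure, and hence of $\EKU$, is $0$ without needing the delicate uniform estimate.
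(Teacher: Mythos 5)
Your proposal has a genuine gap, and in fact both of your routes fail at the same point: the qumtervals accumulating at $\alpha=0$ (and $\alpha=1$). The claim in your ``cleaner'' variant that $\sum_{w\in FW^\star}|J_w|^s<\infty$ for every $s>0$ is false: the words $w=0^a1$ have $RL(w)=(a,1)$, hence $q(RL(w))=a+1$ and $|J_{0^a1}|\asymp a^{-2}$, so $\sum_a|J_{0^a1}|^s$ already diverges for $s\le 1/2$. The grouping ``by the length of the Farey structure'' conceals exactly this: for a fixed Farey structure $f$ there are infinitely many Farey words $f\star U_1\star U_0^a$, one for each $a\ge 1$, so each group is infinite and the bound $4^{-|f|}$ alone cannot control it. Your first route breaks for the same reason plus a counting mismatch: the component of $E_n$ containing $0$ has length of order $1/n$, so the maximal gap length decays only polynomially, never superexponentially; and counting gaps by Farey level gives $2^n$ of them, which is far too generous — the useful count is by word length, where there are at most $\varphi(k)\le k$ Farey words of length $k$ (since $\rho$ is a bijection onto $\mathbb{Q}\cap[0,1]$ and $|W_{p/q}|=q$), a polynomial rather than exponential quantity.

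The paper's proof repairs both defects at once by localizing: it fixes $N\ge 2$ and bounds the upper box dimension of $\EKU\cap[\frac{1}{N+1},\frac{1}{N}]$, after which countable stability of Hausdorff dimension finishes the argument. On such a window Lemma \ref{L:fareylegacy}(i) forces $RL(w)$ to be a concatenation of $n$ blocks equal to $(N,1)$ or $(N-1,1)$ with $n(N+1)<|w|$, whence $q(RL(w))\ge N^{|w|/(N+1)}$ and $|J_w|\le 2\,N^{-2|w|/(N+1)}$: genuine exponential decay in the word length, with base depending only on $N$. Combined with the polynomial count of Farey words of each length, the localized geometric zeta function $\sum_w|J_w|^s$ is dominated by $2^s\sum_k k\,b^{sk}$ with $b=N^{-2/(N+1)}<1$, which converges for every $s>0$; the fact you implicitly invoke (cited from \cite{F}: the abscissa of convergence equals the upper box dimension of the residual set) then yields dimension zero. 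So your overall framework — complementary intervals, a zeta function, transfer to Hausdorff dimension — is the right one, but you must localize away from $0$ and $1$ and count by word length rather than by Farey level or substitution depth.
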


\begin{proof}
We shall actually prove the stronger statement that for each $N \geq 2$ 
the set $\EKU \cap [\frac{1}{N+1}, \frac{1}{N}]$ has zero box-counting dimension: 
the claim then follows since the box-counting dimension is an upper bound for the Hausdorff 
dimension.
Fix $N \geq 2$, set
$$C_N:=\left\{w\in FW_0
  \ : J_w\cap \left[\frac{1}{N+1}, \frac{1}{N}\right]\neq \emptyset\right\}$$ 
and consider  the geometric $\zeta$-function defined by
$$\zeta_N (s) := \sum_{w\in C_N} |J_w|^s. 
%\ \ C_N:=\{w\in FW_0
 % \ : J_w\cap [\frac{1}{N+1}, \frac{1}{N}]\neq \emptyset\}, \ \ (N\geq 2)
$$
Since the abscissa of convergence of the series $\zeta_N$ coincides with the upper box dimension
of $\EKU \cap [\frac{1}{N+1}, \frac{1}{N}]$ (see \cite{F}, pg. 54), it is enough to prove that 
the above series converges for any $s > 0$.
Now, it is not hard to prove that, for all $N\geq 2$, one has
\begin{equation} \label{eq:direct}
|J_w|<2 \ b^{|w|}  \ \ \ \ \forall w\in C_N, \ \ \ \mbox{ where } b:= N^{-\frac{2}{N+1}}.
\end{equation}
Indeed, it is easy to check that $$|r-\beta(r)|=|S \cdot 0 - S \cdot \beta(r)|\leq  \sup |f'_S| \beta(r) \leq \frac{1}{q(S)^2}  $$
where the last inequality is a consequence of equation \eqref{eq:contraction}; since an analogous estimate holds for the distance between $r$ and the left endpoint, one gets
\begin{equation}\label{eq:qvsJ_w}
 |J_w|<\frac{2}{q(S)^2}.
\end{equation}
On the other hand, if $w\in C_N$ then
$S=RL(w)$ is a concatenation of $n$ blocks of the type $B_0:=(N,1)$ or
$B_1:=(N-1,1)$, where $n (N+1)<|w|$. Thus we get that
$$q(S)=q(B_{\epsilon_1}...B_{\epsilon_n})\geq q(B_1^n)\geq q(B_1)^n$$
and since $q(B_1)\geq N$ we get $q(S)\geq N^\frac{|w|}{N+1}$; thus \eqref{eq:direct} follows from this last estimate and equation \eqref{eq:qvsJ_w}. 
Since $\#\{w \in FW \ : \  |w|=k\}\leq k$, the estimate \eqref{eq:direct} implies that $\zeta_N$ is dominated by the sum $2^s\sum_1^{\infty}k b^{sk}$, therefore it
converges for all $s>0$, proving the claim.

\end{proof}

%\begin{definition}
%Let $Q:[0,1] \to [0,1/2]$ be defined on $x=[0;a_1,a_2,a_3, \dots]$ as
%$$Q(x)=0.\, {\underbrace
%{00\dots 0}_{a_1}}\;{\underbrace {11\dots 1}_{a_2}}\;{\underbrace
%{00\dots 0}_{a_3}}\; \cdots, $$
%This map is an homeomorphism, and we shall use its inverse to define
%$$\EKU:=Q^{-1}\EB.$$
%\end{definition}

\medskip

Let us conclude this section by comparing the bifurcation set $\EKU$ with the 
bifurcation set (or \emph{exceptional set}) $\EE_N$ for Nakada's $\alpha$-continued 
fraction transformations (see \cite{CT}).
By comparing equation \eqref{E:EKU} with the definition\footnote{In \cite{CT}, the bifurcation 
set is simply denoted by $\mathcal{E}$, and its complement is denoted by $\mathcal{M}$. See also \cite{CT3}, section 3.}  of $\EE_N$ from \cite{CT}, one can easily check the inclusion
\begin{equation} \label{E:inclusion}
\EKU\cap [0,1/2] \subset \EE_N.
\end{equation}
Note that the inclusion is strict (and actually, the Hausdorff dimension of $\EE_N$ is $1$, 
while the dimension of $\EKU$ is $0$).

Since both sets in \eqref{E:inclusion} are related to sets of rays landing in the Mandelbrot set, 
it is intersting to see what our dictionary tells us when we transport the previous inclusion
to the world of complex dynamics. 
%In fact if $r=[0;S]$ with $S=RL(w)$, $w\in FW_0$, then $J_w$ contains the whole tuning window generated
%by $r$ (see \cite{CT2}).
%{\bf ho messo qui quest'osservazione, ma forse va spostata}
First, using Theorem \ref{T:mandel}, the Minkowski question mark $Q$ maps $\EKU$ homeomorphically to the 
set $\Omega$ of external angles of rays landing on the main cardioid. 
Meanwhile, by the main theorem of \cite{BCIT}, the set $\EE_N$ is related to the set of rays landing on the 
real slice of the Mandelbrot set. Indeed, if we let $\mathcal{R}$ be the set of external angles of rays whose 
impression intersects the real slice of the Mandelbrot set, then we have 
the homeomorphism (\cite{BCIT}, Theorem 1.1)
$$\psi(\EE_N) = \mathcal{R} \cap [1/2, 1)$$ 
where $\psi(x) := \frac{1}{2} + \frac{Q(x)}{4}$.
Thus we have the following commutative diagram, where $i$ is the inclusion map:
$$\xymatrix{\EKU \cap [0, 1/2] \ar^{Q}[d]  \ar[r]^{\quad i} &
  \EE_N \ar^{\psi}[d]\\ \Omega\cap[0, 1/2] \ar@{.>}[r]^{\quad T} & \mathcal{R} }$$
As a consequence, the map $T(\theta) := \psi(Q^{-1}(\theta))$, which can be just expressed as 
$$T(\theta) = \frac{1}{2} + \frac{\theta}{4}$$
maps the set of rays landing on the upper half of the main cardioid into the set of real rays, i.e.
$$T(\theta) \subseteq \mathcal{R}$$ 
for each $\theta \in \Omega \cap [0, 1/2]$.
This fact is known in the folklore as ``Douady's magic formula'' (see \cite{Bl}, Theorem 1.1).

%%%XXX

%Given a Farey word $w$ let us consider the string $S$ that encodes the lengths of blocks of
%equal digits; for instance if $w=0001001001$ then $S=(3,1,2,1,2,1)$. The \ku $J_w$ can be built
%directly from $S$, indeed let us set 
%\begin{equation}\label{eq:kunterval}
%r:=[0;S], \ \ \ \alpha^-:=[0;S' \overline{\SI}], \ \ \alpha^+:=[0;\overline{S}], J_r:=(\alpha^-,\alpha^+)
%\end{equation}
%the interval $J_r$ will be called the {\em maximal quadratic interval}  of pseudocenter $r$; 
%We shall also set $J_r^+:= [r,\alpha^+)$ and 
%$J_r^-:= (\alpha^-,r]$.
%{\bf mettere qualcosa sull'algoritmo di bisezione?}

%cc%%%%%%%%%%%%%%%%%%%%%%%%%%%%%%%%5

\section{Matching intervals for continued fractions with $SL(2,\ZZ)$-branches.} \label{S:matching}
%\section{A one parameter family of continued fraction expansions and its matching properties} 
Let us return to the maps $K_\alpha :[\alpha -1, \alpha] \to [\alpha
  -1, \alpha]$ which are defined by $K_\alpha (0)=0$ and
$$
K_\alpha (x)= -\frac{1}{x} - c_\alpha(x),
%{\bf T}^{-c_{\alpha}(x)}{\bf S} x 
\ \ \ \ \ \ \ \ c_\alpha(x):=\left \lfloor -\frac{1}{x} +1-\alpha \right \rfloor \in \ZZ.$$ 
The goal of this section is to prove that a matching condition between the orbits of the endpoints
$\alpha$ and $\alpha-1$ is achieved for any parameter which belongs to some qumterval 
(Theorem \ref{T:anatomy} and Corollary \ref{C:weakmatching}). 
In order to formulate the result precisely, we need some notation.

Recall the group %$PSL(2, \mathbb{Z})$ of two-dimensional matrices of unit determinant 
$PSL(2, \mathbb{Z})$ acts on the real projective line by M\"obius transformations.
Indeed, if $\mathbf{A} = \matr{a}{b}{c}{d}$ and $x \in \mathbb{R} \cup \{ \infty \}$ then we shall write
$\mathbf{A}x := \frac{ax +b}{cx + d}$. 
The group $PSL(2, \mathbb{Z})$ is generated by the two elements $\mathbf{S}$ and $\mathbf{T}$, which are 
represented by the matrices
$${\bf S} := \matr{0}{-1}{1}{0} \qquad {\bf T} := \matr{1}{1}{0}{1}$$
and act respectively as the inversion ${\bf S} x := -1/x$ and the translation ${\bf T}x:=x+1$. 
For any fixed $\alpha \in (0,1)$,  the map $K_\alpha$ is just given by the inversion followed by an integer power 
of the translation  which brings the point back to the interval $[\alpha-1,\alpha]$.
Thus, each branch of $K_\alpha(x)$ is represented by the map
$x \mapsto \mathbf{T}^{-c_\alpha(x)}\mathbf{S}x$. 
Now, in order to keep track of the inverse branches of the powers of $K_\alpha$, 
we shall now use the notation $c_{j,\alpha}(x):=c_\alpha(K_\alpha^{j-1}(x))$ for each positive integer $j$, 
and define the matrices
$${\bf M}_{\alpha, x, \ell}:=\matr{0}{-1}{1}{c_{1,\alpha}(x)} \cdot ... \cdot \matr{0}{-1}{1}{c_{\ell,\alpha}(x)}.$$
%%= {\bf S T}^{c_{1,\alpha(x)}}...{\bf S T}^{c_{\ell,\alpha(x)}} $$ 
Note that these matrices represent the inverses of $K_\alpha$, in the sense that %if $y := K_\alpha(x)$ then 
%${\bf M}_{\alpha, x, \ell} y = x$.
$${\bf M}_{\alpha, x, \ell}( K^\ell_\alpha(x)) = x$$
for each $\alpha \in [0, 1], x \in [\alpha-1, \alpha], \ell \in \mathbb{N}$. 
Finally, note that the family $K_\alpha$ possesses the following fundamental symmetry:
the maps $K_\alpha$ and $K_{1-\alpha}$ are measurably conjugate, 
namely one has
%It is useful to note that this family of transformations owns some symmetries\footnote{questo e' quasi vero}:  we can s%ee that $K_{\sigma \alpha}:[-\alpha, 1-\alpha] \to [-\alpha, 1-\alpha]$ and
 \begin{equation}\label{eq:ssimmetria}
K_\alpha(x)=-K_{1- \alpha}(-x) % \ \ \ \forall x \in [\alpha -1, \alpha],  
\end{equation}
for all $x \in [\alpha-1,\alpha] \setminus \bigcup_{k \in \mathbb{Z}} \frac{1}{k-\alpha}$ (the countable 
set of exceptions is due to the convention about the floor function).
As a consequence, it is sufficient to study the dynamics for $\alpha \in [0, 1/2]$.
%the entropy of $K_\alpha$ is the same as the entropy of $K_{1-\alpha}$, 
%so it is enough to study the entropy function for $\alpha \in [0, 1/2]$.%%$K_\alpha \circ j=j \circ K_{\sigma(\alpha)}$,
%and this implies that the graph of $h$ is symmetric with respect to $1/2$.
%(while the graph of $h_N$ is not symmetric).

We are now ready to formulate the main result of this section. 

%We shall now prove that  every connected \ku $J_w$ hosts some matching; to do this we shall
%use the information of Proposition \ref{L:book} in order to give a fairly explicit description
%of the action of $K_\alpha$ when $\alpha \in J_w$.

%The following theorem shows that an algebraic matching property (or strong cycle property) holds on $J_w$ for every $w\in FW^*$.

%%%xxx
\begin{theorem}\label{T:anatomy}
Let $w\in FW^*$ a Farey word,  $m_0:=|w|_0, m_1:=|w|_1$ and  $J_w$ the corresponding qumterval; 
let moreover $S:=RL(w)$ denote the runlength of $S$ and $r:=[0;S]$ be the pseudocenter of $J_w$.
Then there exist two elements ${\bf M, M'} \in 
PSL(2,\ZZ)$ such that, for all $\alpha \in J_w$  we have the equalities
\begin{equation}\label{eq:cost}
\begin{array}{l}
{\bf M}_{\alpha,\alpha-1,m_0}={\bf M} \\
{\bf M}_{\alpha,\alpha,m_1}={\bf M'}. \\
\end{array}
\end{equation}
Moreover, the following {\em matching condition} holds:
\begin{equation}\label{eq:amatching}
{\bf TM}= {\bf M' ST}^{-1}{\bf S}. 
\end{equation}

\end{theorem}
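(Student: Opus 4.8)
The plan is to track the combinatorial dynamics of the two endpoint orbits under $K_\alpha$ as $\alpha$ ranges over a fixed qumterval $J_w$, and show that the sequences of digits $c_{j,\alpha}(\alpha-1)$ for $1\le j\le m_0$ and $c_{j,\alpha}(\alpha)$ for $1\le j\le m_1$ are \emph{constant} on $J_w$, determined entirely by the runlength string $S=RL(w)$ (and its conjugates). Concretely, I would first reduce to $\alpha\in[0,1/2]$ using the symmetry \eqref{eq:ssimmetria}, which exchanges the orbit of $\alpha$ with that of $1-\alpha$ up to sign and hence relates the two constant matrices by conjugation by $\mathbf S$. Then the key is to use the description of the endpoints of $J_w$ from \eqref{eq:qumterval}: $\alpha^+=[0;\overline S]$ and $\alpha^-=[0;S'\overline{{}^tS}]$, together with \eqref{eq:leftendpoint} $1-\alpha^-=[0;\overline{{}^tS}]$. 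The orbit of $\alpha-1$ under $K_\alpha$ should be read off from the continued fraction expansion of $\alpha$ (equivalently of $1-\alpha$): applying $\mathbf S$ and then a power of $\mathbf T$ to land back in $[\alpha-1,\alpha)$ is exactly the operation that peels off one ``digit'' of the continued fraction, suitably interpreted via $RL$. So the first substantive step is a dictionary lemma: for $\alpha\in J_w$, the first $m_0=|w|_0$ iterates $K_\alpha^j(\alpha-1)$ stay, for each fixed $j$, inside a single cylinder whose address is a fixed truncation of $S$ (depending on $w$ but not $\alpha$), and similarly the first $m_1=|w|_1$ iterates of $\alpha$ correspond to the complementary blocks. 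This is where the block decomposition $S=B_{\epsilon_1}\cdots B_{\epsilon_n}$ of Lemma \ref{L:fareylegacy}(i) and the Lyndon/ordering estimates \eqref{eq:elyndon}, \eqref{eq:partial} enter: they guarantee that the floor function $c_\alpha(x)=\lfloor -1/x+1-\alpha\rfloor$ takes the same integer value for every $\alpha\in J_w$, because the relevant point $-1/K_\alpha^{j-1}(\alpha-1)$ is pinned strictly between two consecutive integers shifted by $[\alpha-1,\alpha)$, uniformly in $\alpha$.

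Granting the constancy, $\mathbf M:=\mathbf M_{\alpha,\alpha-1,m_0}$ and $\mathbf M':=\mathbf M_{\alpha,\alpha,m_1}$ are well-defined elements of $PSL(2,\mathbb Z)$, and by the defining property $\mathbf M_{\alpha,x,\ell}(K_\alpha^\ell(x))=x$ we have $\mathbf M(K_\alpha^{m_0}(\alpha-1))=\alpha-1$ and $\mathbf M'(K_\alpha^{m_1}(\alpha))=\alpha$ for all $\alpha\in J_w$. The matching identity \eqref{eq:matching0}, namely $K_\alpha^{m_0+1}(\alpha-1)=K_\alpha^{m_1+1}(\alpha)$, will follow from the dictionary once I check that after $m_0$ steps the orbit of $\alpha-1$ and after $m_1$ steps the orbit of $\alpha$ have ``used up'' complementary pieces of the same bi-infinite symbolic data, so that one more application of $K_\alpha$ (i.e. one more $\mathbf S\mathbf T^{k}$) lands them at the same point. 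To convert this into the matrix identity \eqref{eq:amatching}, I would argue as follows: let $y:=K_\alpha^{m_0}(\alpha-1)$ and $y':=K_\alpha^{m_1}(\alpha)$. Then $\mathbf M y=\alpha-1=\mathbf T^{-1}\alpha$ and $\mathbf M' y'=\alpha$, while the matching at the \emph{next} step says that the branch of $K_\alpha$ at $y$ and at $y'$ are related: $K_\alpha(y)=\mathbf T^{-a}\mathbf S y$ and $K_\alpha(y')=\mathbf T^{-b}\mathbf S y'$ agree. Feeding $\alpha-1=\mathbf T^{-1}\alpha$ into $\mathbf M y=\alpha-1$ gives $\mathbf T\mathbf M y=\alpha=\mathbf M' y'$, so it suffices to identify $y$ with $\mathbf S\mathbf T^{-1}\mathbf S y'$ — i.e. to show $y=\mathbf S\mathbf T^{-1}\mathbf S y'$ for all $\alpha\in J_w$, which is a single explicit relation between the two ``tail'' points that I would verify directly from their continued fraction expansions read off the endpoints via \eqref{eq:qumterval}. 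Since both sides are M\"obius with integer coefficients and the equality holds on the nontrivial interval $J_w$, it holds as an identity in $PSL(2,\mathbb Z)$, giving $\mathbf T\mathbf M=\mathbf M'\,\mathbf S\mathbf T^{-1}\mathbf S$ as claimed.

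The main obstacle I anticipate is the bookkeeping in the dictionary lemma: correctly matching the two symbolic itineraries — the one for $\alpha-1$, which is naturally governed by the expansion of $\alpha$ (hence of $1-\alpha$, hence of ${}^tS$ by \eqref{eq:leftendpoint}), and the one for $\alpha$ itself, governed by $S$ — and checking that they interlock so that exactly $m_0=|w|_0$ steps on one side and $m_1=|w|_1$ steps on the other exhaust complementary halves of a common orbit. The subtlety is that the digits $c_\alpha$ are \emph{not} literally the continued fraction digits of $\alpha$ but are related to them through the runlength encoding $RL$ and the block structure $B_0,B_1$; keeping the conjugations $S\mapsto {}^tS$, $S\mapsto\partial S$, $S\mapsto S'$ straight (via Lemma \ref{L:fareylegacy}(ii)–(iv)) is where errors are most likely to creep in. A secondary technical point is the boundary case $w=(01)$, where $S=(1,1)$, $r=g^2$, and $J_w=(g^2,g)$: here the matching should reduce to the classical computation showing the entropy is constant on $[g^2,g]$, and I would treat it separately as the base of the induction on the level of $w$ in the Farey tree, using the substitution operators $U_0,U_1$ of Proposition \ref{P:t.char} to propagate the statement from $(01)$ to all Farey words.
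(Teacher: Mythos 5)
Your plan is essentially the paper's proof. The ``dictionary lemma'' you describe is exactly the paper's Proposition \ref{R:mconcatenazione}: the orbits of $\alpha-1$ and $\alpha$ are written out explicitly (in two tables, one for $\alpha\in[r,\alpha^+)$ where $\alpha=S\cdot y$, one for $\alpha\in(\alpha^-,r]$ where $\alpha=S'\cdot y$), the transition rules \eqref{eq:rules-}--\eqref{eq:rules+} drive the iteration, and the only nontrivial work is checking that each iterate stays in $[\alpha-1,\alpha]$ --- which is done with precisely the ordering estimates from Lemma \ref{L:fareylegacy}(iii)--(iv) that you single out. So you have correctly identified both the structure and the key inputs of the hard step. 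Where you genuinely diverge is in how you extract the matrix identity \eqref{eq:amatching}: the paper reads off explicit product formulas \eqref{eq:mm'} for ${\bf M}$ and ${\bf M}'$ from the symbolic itineraries and then verifies \eqref{eq:amatching} block-by-block by pure algebra in $PSL(2,\ZZ)$, using $({\bf ST})^3=\mathrm{id}$ to prove ${\bf T}({\bf ST}^2)^{a_k}={\bf ST}^{-a_k-1}{\bf ST}^{-1}{\bf S}$ for each block. You instead propose to deduce \eqref{eq:amatching} from the single dynamical relation between the two tail points plus the rigidity of M\"obius maps agreeing on an interval; this is a legitimate and arguably cleaner finish, since it avoids the block computation entirely, but note it still requires the full content of the tables (the tail relation is their last line). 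Two small cautions: (i) you have the tail relation backwards --- the tables give $K_\alpha^{m_1}(\alpha)=\mathbf S\mathbf T^{-1}\mathbf S\,K_\alpha^{m_0}(\alpha-1)$, i.e.\ $y'=\partial^- y$, not $y=\mathbf S\mathbf T^{-1}\mathbf S\,y'$; carried through literally your version yields ${\bf TM}={\bf M}'{\bf STS}$ rather than \eqref{eq:amatching}, so the direction matters. (ii) Your suggested alternative organization --- induction on the Farey tree via the substitutions $U_0,U_1$ --- is not what the paper does and would require a separate argument that matching is preserved under substitution (a renormalization statement); the direct block decomposition of Lemma \ref{L:fareylegacy}(i) is the safer route and is the one your main text actually relies on.
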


The matching condition \eqref{eq:amatching} implies the following identification between of the orbits of the 
two endpoints $\alpha$ and $\alpha-1$.

\begin{corollary} \label{C:weakmatching}
%Let $w\in FW^*$,  $m_0:=|w|_0, m_1:=|w|_1$ and  $J_w$ the corresponding qumterval;
For each parameter $\alpha \in J_w$ we have the identity
\begin{equation} \label{eq:weakmatching}
K_\alpha^{m_0+1}(\alpha-1)=K_\alpha^{m_1+1}(\alpha). 
\end{equation}
\end{corollary}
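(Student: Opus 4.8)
The plan is to derive Corollary \ref{C:weakmatching} directly from Theorem \ref{T:anatomy} by translating the matching identity \eqref{eq:amatching} in $PSL(2,\ZZ)$ into an identity of orbit points. First I would unwind the meaning of the matrices $\mathbf{M}_{\alpha,x,\ell}$: by the relation $\mathbf{M}_{\alpha,x,\ell}(K_\alpha^\ell(x)) = x$, the matrix $\mathbf{M}_{\alpha,\alpha-1,m_0} = \mathbf{M}$ encodes the first $m_0$ inverse branches along the orbit of $\alpha-1$, so that $\mathbf{M}(K_\alpha^{m_0}(\alpha-1)) = \alpha-1$; similarly $\mathbf{M}'(K_\alpha^{m_1}(\alpha)) = \alpha$. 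The goal is to show $K_\alpha^{m_0+1}(\alpha-1) = K_\alpha^{m_1+1}(\alpha)$.

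The key computational step is to observe that applying one more step of $K_\alpha$ corresponds to composing with one more branch matrix, i.e. a matrix of the form $\mathbf{T}^{-c}\mathbf{S}$. So $K_\alpha^{m_0+1}(\alpha-1)$ is obtained from $K_\alpha^{m_0}(\alpha-1)$ by applying $\mathbf{T}^{-c}\mathbf{S}$ for the appropriate integer $c$; writing $y := K_\alpha^{m_0}(\alpha-1)$, we have $\mathbf{M}y = \alpha-1$, hence $y = \mathbf{M}^{-1}(\alpha-1)$, and we must identify the next branch. The cleanest route is: from \eqref{eq:amatching}, $\mathbf{T}\mathbf{M} = \mathbf{M}'\mathbf{S}\mathbf{T}^{-1}\mathbf{S}$, so $\mathbf{M}^{-1}\mathbf{T}^{-1} = \mathbf{S}\mathbf{T}\mathbf{S}(\mathbf{M}')^{-1}$, equivalently $\mathbf{M}^{-1} = \mathbf{S}\mathbf{T}\mathbf{S}(\mathbf{M}')^{-1}\mathbf{T}$. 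Now apply both sides to $\alpha$: the right side is $\mathbf{S}\mathbf{T}\mathbf{S}(\mathbf{M}')^{-1}(\alpha+1)$. One then checks — using the definition of $K_\alpha$ as $\mathbf{S}$ followed by the integer translation $\mathbf{T}^{-c_\alpha}$ returning to $[\alpha-1,\alpha)$ — that $\mathbf{S}\mathbf{T}\mathbf{S}$ is precisely (the inverse of) the branch of $K_\alpha$ taken at the relevant point, so that $\mathbf{S}\mathbf{T}\mathbf{S}(z) = $ (inverse branch applied to) $z$; concretely $\mathbf{S}\mathbf{T}\mathbf{S}x = \mathbf{S}\mathbf{T}(-1/x) = \mathbf{S}(1 - 1/x) = \frac{x}{x-1} = \frac{-1}{1/x - 1}$, which is one inverse branch $\mathbf{T}^{-1}\mathbf{S}$ composed suitably. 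Tracking the integer shifts carefully, one gets that $\mathbf{M}^{-1}(\alpha-1) = K_\alpha^{m_0}(\alpha-1)$ and $(\mathbf{M}')^{-1}(\alpha) = K_\alpha^{m_1}(\alpha)$ are related by exactly one extra application of $K_\alpha$ on each side landing on a common point.

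A perhaps more transparent way to organize the argument, which I would actually write, is: set $P := K_\alpha^{m_0}(\alpha-1)$ and $P' := K_\alpha^{m_1}(\alpha)$. By the inverse-branch relation, $\mathbf{M}P = \alpha - 1$ and $\mathbf{M}'P' = \alpha$, so $\mathbf{T}\mathbf{M}P = \alpha$ and $\mathbf{M}'P' = \alpha$, giving $\mathbf{T}\mathbf{M}P = \mathbf{M}'P'$; by \eqref{eq:amatching}, $\mathbf{M}'\mathbf{S}\mathbf{T}^{-1}\mathbf{S}P = \mathbf{M}'P'$, hence $\mathbf{S}\mathbf{T}^{-1}\mathbf{S}P = P'$. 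Finally one identifies $\mathbf{S}\mathbf{T}^{-1}\mathbf{S}$ with a single step of $K_\alpha$: indeed $\mathbf{S}\mathbf{T}^{-1}\mathbf{S}x = \frac{-1}{-1/x - 1} = \frac{x}{x+1}$, and one notes $K_\alpha(P') = \mathbf{T}^{-c}\mathbf{S}P'$ while $K_\alpha(P)$ applies $\mathbf{S}$ then a translation to $P$; a short case analysis on the integer parts (using that $P, P' \in [\alpha-1,\alpha)$) shows $K_\alpha(P) = K_\alpha(P')$, which is exactly \eqref{eq:weakmatching}. The main obstacle is this last bookkeeping step: \eqref{eq:amatching} is an identity in $PSL(2,\ZZ)$, so it only determines the branch matrices up to the ambiguity in the integer translations, and one must verify that the specific integers $c_\alpha$ selected by the floor-function convention at the points $K_\alpha^{m_0}(\alpha-1)$ and $K_\alpha^{m_1}(\alpha)$ are consistent for \emph{all} $\alpha \in J_w$ simultaneously — this is where the constancy statement \eqref{eq:cost} and the precise structure of $S = RL(w)$ (via Lemma \ref{L:fareylegacy}) must be invoked, rather than just the formal group identity. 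I expect the reduction from \eqref{eq:amatching} to \eqref{eq:weakmatching} modulo this convention check to be essentially a one-line manipulation as above, with the genuine content already residing in Theorem \ref{T:anatomy}.
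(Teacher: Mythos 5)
Your second, ``more transparent'' formulation is precisely the paper's proof: inverting \eqref{eq:amatching} and acting on $\alpha$ yields $K_\alpha^{m_1}(\alpha)=\mathbf{S}\mathbf{T}^{-1}\mathbf{S}\,K_\alpha^{m_0}(\alpha-1)$, i.e.\ $-1/K_\alpha^{m_1}(\alpha)$ and $-1/K_\alpha^{m_0}(\alpha-1)$ differ by the integer $1$, so one more application of $K_\alpha$ sends both points to the same value. Your closing worry is unnecessary, though: no appeal to \eqref{eq:cost} or Lemma \ref{L:fareylegacy} is needed in this last step, since two reals differing by an integer automatically acquire the same representative in the half-open fundamental domain $[\alpha-1,\alpha)$, which is exactly how the paper concludes (comparing $c_{m_0+1,\alpha}(\alpha-1)$ with $c_{m_1+1,\alpha}(\alpha)+1$).
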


Note that \eqref{eq:cost} implies that the first $m_0$ steps of the (symbolic) itinerary of $\alpha$ is constant
for all $\alpha$ in the same qumterval, and the same is true for the first $m_1$ steps of the orbit of $m_1$.
A condition of this kind is called {\bf strong cycle condition} in \cite{KU1}; 
see Section \ref{S:NatExt} for a more detailed comparison.

As an illustration of Theorem \ref{T:anatomy}, let us consider the case $\alpha \in J_w$ with $w=01$: 
it turns out that for every $\alpha \in J_{01}=(g^2,g)$ the following identity holds:
\begin{equation}\label{eq:cyc01}
K^2_\alpha(\alpha)=K^2_\alpha(\alpha-1) \ \ \ \ \forall \alpha \in (g^2,g).
\end{equation}
Indeed, this is due to the fact that the analytic expression of $K_\alpha$ at the endpoints does not change as $\alpha \in J_{01}$; 
in this simple case in fact we can work out the explicit form of $K_\alpha$, and we get 
$$
K_\alpha(\alpha)= {\bf T}^2{\bf S } \alpha=\frac{2\alpha -1}{\alpha} \ \ \ \ \   K_\alpha(\alpha-1)={\bf T}^{-2}{\bf S }(\alpha-1)=\frac{2\alpha -1}{1-\alpha}.
$$
whence we have $\mathbf{M} = \mathbf{S}\mathbf{T}^2$ and $\mathbf{M}' = \mathbf{S}\mathbf{T}^{-2}$, 
and we can check that 
$$\mathbf{TM} = \matr{1}{1}{1}{2} = \mathbf{M}'\mathbf{S}\mathbf{T}^{-1}\mathbf{S}$$
which is an instance of equation \eqref{eq:amatching}. Note that the essential point is that the matrices 
$\mathbf{M}$
and $\mathbf{M}'$ do not depend on the particular $\alpha$ as long as $\alpha$ belongs to $J_w$; thus, the matching condition
is just an identity between elements of the group $PSL(2, \mathbb{Z})$. However, to different matching intervals $J_w$
there correspond different identities in the group.

%\begin{equation}\label{eq:match01}
%K_\alpha(\alpha) = {\bf S} {\bf T^{-1}} {\bf S}K_\alpha(\alpha-1)
%\end{equation}
%so that ${\bf S} K_\alpha(\alpha) =  {\bf T^{-1}} {\bf S}K_\alpha(\alpha-1)$ which leads to \eqref{eq:cyc01}.

%Let us point out that the matching condition above does not depend on $\alpha \in (g^2,g)$, indeed
%\eqref{eq:match01T} is just the algebraic identity 
%$${\bf T} {\bf S} {\bf T}^{2} = {\bf S} {\bf T}^{-2}  {\bf S} {\bf T^{-1}} {\bf S}.$$

The proof of Theorem \ref{T:anatomy} follows from an explicit description of the symbolic orbits of $\alpha$ and $\alpha-1$
in terms of the regular continued fraction expansion of the pseudocenter of $J_w$, as stated in the following proposition.

%Before going into the technical details of the proof let us point out some properties which will come as a by-product of the proof of Theorem \ref{T:anatomy}:
%The proof of the following remarks is implicit in the proof of Theorem \ref{T:anatomy}
%\begin{remark}
%The following identities hold for all $\alpha \in J_w$ as well
%\begin{equation}\label{eq:matching}
%{\bf TM}_{\alpha,\alpha-1,m_0+1}=  {\bf M}_{\alpha,\alpha,m_1+1} \ \ \ \  K_\alpha^{m_0+1}(\alpha-1)=K_\alpha^{m_1+1}(\alpha).
%\end{equation}
%(but, in this case, the fractional transformations in the first identity may vary as $\alpha$ varies in $J_w$). %% fare cenno alle discontinuita'?
%\end{remark}

\begin{proposition}\label{R:mconcatenazione}
%\begin{enumerate}
%\item 
Let $w\in FW_0$ (hence $J_w \subset (0,1/2)$), denote $m := |w|_0$ and  $n:=|w|_1$
and let $RL(w) = (a_1,1,...,a_n,1)$ be the associated string of positive integers. 
Then for each $\alpha \in J_w$ we have the identities
\begin{equation}
\begin{array}{c}
{\bf M}_{\alpha, \alpha-1, m} = {\bf M}\\
{\bf M}_{\alpha, \alpha, n} = {\bf M'}
\end{array}
\end{equation}
where the above matrices are constructed as 
%\begin{equation}\label{eq:mm'}
%\begin{array}{l}
%{\bf M}:=({\bf ST}^2)^{a_1-1} {\bf ST}^3 ({\bf ST}^2)^{a_2-1} {\bf ST}^3...({\bf ST}^2)^{a_{n-1}-1}{\bf ST}^3 ({\bf ST}^2)^{a_n}  \\
%{\bf M'}  := {\bf S} {\bf T}^{-a_1-1} {\bf S} {\bf T}^{-a_2-2}... {\bf T}^{-a_{n-1}-2} {\bf S} {\bf T}^{-a_n-2}. 
%({\bf T}^{-2}{\bf S})^{a_n} \ {\bf T}^{-3}{\bf S}\ ({\bf T}^{-2}{\bf S})^{a_{n-1}-1} \ ... \ {\bf T}^{-3} {\bf S} \ ({\bf T}^{-2}{\bf S})^{a_1-1}\\
%{\bf M}_{\alpha,r,m_1} \\
%{\bf T}^{a_n+2}{\bf S}\ ... \ {\bf T}^{a_2+2}{\bf S} \  {\bf T}^{a_1+1}{\bf S}
%{\bf M}_{\alpha,r-1,m_0}
%\end{array}
%\end{equation}
\begin{equation}\label{eq:mm'}
\begin{array}{l}
{\bf M}:=({\bf ST}^2)^{a_1} {\bf T} ({\bf ST}^2)^{a_2} {\bf T}...({\bf ST}^2)^{a_{n-1}}{\bf T} ({\bf ST}^2)^{a_n}  \\
{\bf M'}  := {\bf S} {\bf T}^{-a_1-1} {\bf S} {\bf T}^{-a_2-2}... {\bf T}^{-a_{n-1}-2} {\bf S} {\bf T}^{-a_n-2}. 
%({\bf T}^{-2}{\bf S})^{a_n} \ {\bf T}^{-3}{\bf S}\ ({\bf T}^{-2}{\bf S})^{a_{n-1}-1} \ ... \ {\bf T}^{-3} {\bf S} \ ({\bf T}^{-2}{\bf S})^{a_1-1}\\
%{\bf M}_{\alpha,r,m_1} \\
%{\bf T}^{a_n+2}{\bf S}\ ... \ {\bf T}^{a_2+2}{\bf S} \  {\bf T}^{a_1+1}{\bf S}
%{\bf M}_{\alpha,r-1,m_0}
\end{array}
\end{equation}
\end{proposition}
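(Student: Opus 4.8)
\textbf{Proof plan for Proposition \ref{R:mconcatenazione}.}

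The plan is to track the symbolic orbit of the two endpoints $\alpha$ and $\alpha-1$ under $K_\alpha$ explicitly, and to show that the digits $c_{j,\alpha}$ that appear are exactly the ones read off from the runlength string $S = RL(w) = (a_1,1,a_2,1,\dots,a_n,1)$ and do not depend on $\alpha$ as long as $\alpha \in J_w$. First I would set up the basic dictionary: a step of $K_\alpha$ is $x \mapsto \mathbf{T}^{-c}\mathbf{S}x$ with $c = c_\alpha(x) = \lfloor -1/x + 1 - \alpha\rfloor$, so its inverse branch is the M\"obius map $\matr{0}{-1}{1}{c} = \mathbf{S}\mathbf{T}^{c}$. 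Observe that the elementary block $\mathbf{S}\mathbf{T}^2$ corresponds (as an inverse branch) to applying the digit $c = 2$ once, and that $(\mathbf{S}\mathbf{T}^2)^a$ is the inverse branch of $a$ successive steps each with digit $2$ — equivalently, $(\mathbf{S}\mathbf{T}^2)^a \cdot y$ has continued fraction starting with a run of $a$ digits equal to $2$. This is precisely where the runlength map enters: a block $0^{a_i}$ in the binary word $w$ becomes a string of $a_i$ consecutive RCF-digits, which on the dynamical side is $a_i$ consecutive $K_\alpha$-steps with constant digit, interleaved with single extra $\mathbf{T}$'s at the junctions where a $0$-block meets a $1$-block.

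The core of the argument is an induction that simultaneously follows both orbits. For $\alpha - 1$: I would show that as long as the iterate stays to the \emph{left} of a threshold determined by the pseudocenter $r = [0;S]$ and its conjugate (cf. the endpoint formulas \eqref{eq:qumterval}, \eqref{eq:leftendpoint}), the floor function $c_\alpha(K_\alpha^{j-1}(\alpha-1))$ takes the value $2$ repeatedly for $a_i$ steps, then takes a value one larger (the ``$\mathbf{T}$'' correction) at the transition, reproducing the alternating pattern $({\bf ST}^2)^{a_1}{\bf T}({\bf ST}^2)^{a_2}\dots$; the point is that the inequalities defining $\alpha \in J_w = (\alpha^-,\alpha^+)$ are exactly strong enough (using the order properties of Lemma \ref{L:fareylegacy}, especially \eqref{eq:elyndon} and \eqref{eq:partial}) to guarantee that all the floor evaluations along the first $m_0$ steps land in the same integer interval for every $\alpha \in J_w$. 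The analogous bookkeeping for $\alpha$ — now using the \emph{right} endpoint $\alpha^+ = [0;\overline S]$ — produces the negative-exponent pattern ${\bf S}{\bf T}^{-a_1-1}{\bf S}{\bf T}^{-a_2-2}\cdots$, where the shifts by $1$ versus $2$ in the exponents come from whether one is at the very first block or at an interior junction. The key quantitative inputs are the comparison estimates \eqref{eq:supermult} and \eqref{eq:contraction}–\eqref{eq:cylsize} to control the denominators, plus the string lemma \eqref{eq:stringlemma} to compare the relevant periodic expansions, and Lemma \ref{L:fareylegacy}(i) to know the block structure $B_0 = (a+1,1)$, $B_1 = (a,1)$ with $a = 2$ in the $FW_0$ case under consideration (after possibly reducing to that normalization).

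The main obstacle I anticipate is the \emph{uniformity over $\alpha \in J_w$}: one must verify that \emph{none} of the floor evaluations $\lfloor -1/K_\alpha^{j-1}(\bullet) + 1 - \alpha\rfloor$ jumps as $\alpha$ sweeps the whole qumterval, i.e. that each intermediate iterate stays strictly inside an open interval of the form $(\frac{1}{k+1-\alpha}, \frac{1}{k-\alpha})$ for a \emph{fixed} $k$, uniformly in $\alpha$. This is a finite but delicate chain of inequalities, and it is exactly here that the ``Farey structure'' order inequalities of Lemma \ref{L:fareylegacy} — which encode that $w$ is a Lyndon (indeed Christoffel) word — do the work, since they say the runlength string is strictly smaller (in the $<<$ order) than all its relevant tails and than $\partial S$. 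I would organize this as: (a) reduce to $a = 2$ by the substitution $U_0$-conjugacy so that $\mathbf{ST}^2$ is literally the building block; (b) prove by induction on the blocks that the orbit of $\alpha-1$ realizes $\mathbf{M}$ and stays in the required region; (c) do the same for $\alpha$ and $\mathbf{M}'$; (d) read off \eqref{eq:mm'}. The matching identity \eqref{eq:amatching} of Theorem \ref{T:anatomy} then follows by a direct computation in $PSL(2,\mathbb{Z})$ comparing the two words $\mathbf{TM}$ and $\mathbf{M'STS}^{-1}$ once \eqref{eq:mm'} is in hand, using the relations $({\bf ST})^3 = 1$ and $\mathbf{S}^2 = 1$.
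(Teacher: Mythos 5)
Your overall strategy is the paper's: one tracks the symbolic orbits of $\alpha-1$ and $\alpha$ block by block (the paper records them in Tables \ref{eq:trajA} and \ref{eq:trajB}), reads the digits $c_{j,\alpha}$ off the runlength string via the rules \eqref{eq:rules-} and \eqref{eq:rules+}, and uses the order inequalities of Lemma \ref{L:fareylegacy}(iii)--(iv) to check that every intermediate iterate lies in $[\alpha-1,\alpha]$, the check being needed only at the ends of blocks since $\partial$ is increasing. You correctly identify \eqref{eq:elyndon} and \eqref{eq:partial} as the inequalities that do the work. Two points, however, need repair.

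First, your step (a) --- ``reduce to $a=2$ by the substitution $U_0$-conjugacy so that ${\bf ST}^2$ is literally the building block'' --- rests on a misreading and would fail if executed. The exponent $2$ in ${\bf ST}^2$ has nothing to do with the parameter $a$ of the Farey structure in Lemma \ref{L:fareylegacy}(i): it is the symbolic digit $c=2$, which by \eqref{eq:rules-} occurs at every step where the leading regular continued fraction digit of $1+K_\alpha^j(\alpha-1)$ exceeds $1$, for \emph{every} value of $a$. The substitutions $U_\epsilon$ act on binary words; no conjugacy between the interval maps $K_\alpha$ for parameters in different qumtervals is available or needed, and the correct move is simply to delete this step. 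Second, the uniformity over $\alpha\in J_w$ is not obtained by a single chain of inequalities against one ``threshold'': the paper splits $J_w$ at the pseudocenter $r=[0;S]$, writing $\alpha=S\cdot y$ on $[r,\alpha^+)$ but $\alpha=S'\cdot y$ on $(\alpha^-,r]$, and the two halves require genuinely different verifications. In the left half the inequality $S_k>>S$ does not directly transfer to the conjugated strings; when $S'_k$ is not $>>S'$ one must decompose $S'=S'_kZ$ with $Z$ a suffix of ${}^tS$ and invoke the palindrome symmetry of Lemma \ref{L:fareylegacy}(ii) together with the order-reversal of $\sigma$. Your plan does not anticipate this bifurcation, and it is the part of the proof where most of the care is needed. (A minor point: the estimates \eqref{eq:supermult} and \eqref{eq:contraction}--\eqref{eq:cylsize} play no role here; the argument is purely combinatorial.)
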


Let us point out that in the special case $n = 1$ we have $RL(w) = (N, 1)$ for some $N \geq 1$, and 
the above equations must be interpreted as yielding
%$w=(\underbrace{0,0,...,0}_N,1)$ then 
${\bf M} =({\bf ST}^2)^N$, ${\bf M'} ={\bf ST}^{-N-1}$. %and  the matching condition becomes
%$$
%{\bf T} ({\bf ST}^2)^N={\bf ST}^{-N-1}{\bf ST}^{-1}{\bf S}
%$$
%which is just an alternative form of the well-known identity $({\bf TST})^N={\bf ST}^{-N}{\bf S}$.
Moreover, as a consequence of the Proposition, the matrices determining matching conditions 
behave well under concatenation; indeed, if the Farey word $w$ is the concatenation of two Farey words $w'$ and $w''$, then 
the left-hand side of the matching condition on $J_w$ is the concatenation of the left-hand sides 
of the matching conditions of $J_{w'}$ and $J_{w''}$.

Before delving into the core of the proofs of Proposition \ref{R:mconcatenazione} and Theorem \ref{T:anatomy} let us 
make some elementary observations and define some more notation.
The action of both ${\bf S, \ T}$ can be easily expressed in terms of regular 
continued fraction expansion, thus the action of 
$K_\alpha$ on the regular continued fraction expansion of $x$ follows some simple rules. 
Namely, if  $x=[-1;a_1,a_2,a_3,...]\in [\alpha-1,0)$ then one gets the formulas
\begin{equation}\label{eq:rules-}
K_\alpha(x)=\left\{
\begin{array}{ll}
{\bf T}^{-2} {\bf S} x= [-1; a_1-1,a_2,a_3, a_4...] & a_1 >1\\
{\bf T}^{-(a_2+1-\epsilon)} {\bf S} x = [\epsilon; a_3, a_4, ...] & a_1 = 1
\end{array}\right.  \ \ \ \epsilon \in \{-1,0\}
\end{equation}
where to decide whether $\epsilon$ is $-1$ or $0$ one has to check which of these choices returns an element in $[\alpha-1,\alpha]$.
On the other hand, if $x=[0;a_1,a_2,a_3,...]\in (0,\alpha]$ then
\begin{equation}\label{eq:rules+}
K_\alpha(x)= {\bf T}^{a_1+1+\epsilon} {\bf S} x= \left\{
\begin{array}{lll}
  &[\epsilon; 1,a_2-1,a_3,...]& a_2 >1\\
& [\epsilon; 1+a_3, a_4,...] & a_2 = 1
\end{array}\right.  \ \ \ \ \ \epsilon \in\{-1,0\};
\end{equation}
again, the choice between $\epsilon=-1$ and $\epsilon=0$ is forced by the condition that the range of $K_\alpha$ must be $[\alpha-1 ,\alpha]$.
To write some of the above branches of $K_\alpha$ in compact form we shall also use the following 
fractional transformations:%\footnote{Note that one could also write
%$\partial^- := {\bf T} {\bf S} {\bf T}, \ \ \ \  \partial := {\bf T}^{-1} {\bf S} {\bf T}^{-1}$.}
$$\partial^- := {\bf S} {\bf T^{-1}} {\bf S}, \ \ \ \  \partial := {\bf S} {\bf T} {\bf S}.$$
Note that, if $0 < x < 1$, one has $\partial^- x < x$; in fact in terms of regular continued fractions we get 
$\partial^- ([0;a_1, a_2, a_3,...])=[0;1+a_1,  a_2, a_3,...]$, while $\partial$ is the inverse of $\partial^- $
(and is consistent with the previous definition in section \ref{S:rcf}). 
Finally, if $S = (a_1, \dots, a_n)$ and $x \in [0, 1]$, we shall use the \emph{string action} notation
$S \cdot x$ to denote the number whose continued fraction expansion is obtained by appending $S$ at the beginning 
of the continued fraction expansion of $x$; in terms of {\bf S} and {\bf T}, this can be defined as 
\begin{equation}\label{eq:evenl}
S \cdot x := {\bf S}{\bf T}^{-a_1}{\bf S }{\bf T}^{a_2}...{\bf S}{\bf T}^{-a_{2n-1}}{\bf S}{\bf T}^{a_{2n}} x.
\end{equation}

%Using \eqref{eq:match01} and equation \eqref{eq:mobius} we get that
%$$
%\alpha={\bf M}_{\alpha,\alpha,1}{\bf S} {\bf T^{-1}} {\bf S}K_\alpha(\alpha-1) \ \ \ \ \
%\alpha-1={\bf M}_{\alpha,\alpha-1,1} K_\alpha(\alpha-1)
%$$
%and one can easily check that this happens because the following stability property holds
%\begin{equation}\label{eq:match01T}
%{\bf T}{\bf M}_{\alpha,\alpha-1,1} ={\bf M}_{\alpha,\alpha,1} {\bf S}
%{\bf T^{-1}} {\bf S} \ \ \ \ \ \ \ \forall \alpha \in (g^2,g).
%\end{equation}
%We will refer to such a condition as {\it matching}.

%A condition of this kind is called {\bf strong cycle condition} in \cite{KU1};
%it should be clear that it is essentially the same as the matching property (see Section \ref{S:NatExt} for a comparison).

%As a matter of fact, equation \eqref{eq:cyc01} can be translated into an algebraic identity as well, namely
%$$
%{\bf T}^k{\bf ST^2S}={\bf T^{k-1}ST^{-2}ST^{-1}}
%$$
%but the integer $k$ will be only locally constant for $\alpha \in (g^2,g)$. 

%%%XXX
\begin{proof}[Proof of Proposition \ref{R:mconcatenazione}.]
Let $w \in FW_0$, and $\alpha \in J_w$. %By the symmetry equation \eqref{eq:ssimmetria} 
%it's enough to study what happens on
%$[0,1/2]$, so we may assume that $J_w$ is a \ku
%contained 
%we may assume that
%\footnote{Doing so we are going to miss the case $w=(01)$,
 % $J_w=(g^2,g)$, but this is in fact the easiest case and it is left
 % to the reader as an excercise.}  
%$w\in FW_0$. 
Let us denote $S := RL(w)$ the runlength of $S$ and $r := [0; S]$ the pseudocenter of $J_w$.
Now, by Lemma \ref{L:fareylegacy} we have that $S$ is of the form 
%Hence$w=f\star(U_1\star U_0^a)$ for some $a\geq 1$. Therefore
$$S=(a_1,1,a_2,1,...,a_{m},1)$$
 with $a_j\in \{a, a+1\}$, $m_0=\sum_{j = 1}^m a_j$, and $m_1=m$. Moreover for
$1\leq k\leq m$ we define the even prefixes and suffixes of $S$ as
$$P_k:=(a_1,1,a_2,1,...,a_{k},1), \ \ \ \ \ S_k:=(a_{k+1},1,a_{k+2},1,...,a_{m},1).$$
Recall also that by definition the endpoints of $J_w = (\alpha^-, \alpha^+)$ are 
$$\alpha^- = [0; S'\overline{{}^t S}] \qquad \alpha^+ = [0; \overline{S}].$$

{\bf Case A.} Let us first take into account the case $\alpha \in [r, \alpha^+)$.
%, with $r=[0;S]$, $\alpha^+:=[0;\overline{S}]$ as usual, so that
Then we can write $\alpha:= S\cdot y$ for some $y\in [0,  \alpha^+)$. In this case 
we claim that the orbits of the endpoints $\alpha$ and $\alpha -1$ under $K_\alpha$ 
eventually match, and before getting to the matching point the orbits (and symbolic orbits, 
on the right column) of $\alpha-1$ and $\alpha$ are given by table \ref{eq:trajA}.

\begin{table} 

$$\begin{array}{|rcl|r|}
\hline
 \alpha-1&=& -1+S\cdot y&  \\
 K_\alpha(\alpha-1)&=& -1+\partial S \cdot y&  c_{1,\alpha}=2\\
 K^2_\alpha(\alpha-1)&=& -1+\partial^2 S \cdot y&c_{2,\alpha}=2\\
 ...&&& ...\\
 K^{a_1-1}_\alpha(\alpha-1)&=& -1+\partial^{a_1-1} S \cdot y&c_{a_1-1, \alpha}=2\\
 \maltese \ \ \  K^{a_1}_\alpha(\alpha-1)&=& -1+S_1 \cdot y&     c_{a_1,\alpha}=3 \\ 

K^{a_1+1}_\alpha(\alpha-1)&=& -1+\partial S_1 \cdot y& c_{a_1+1,\alpha}=2\\
...&&&...\\
\maltese \ \ \ K^{a_1+a_2}_\alpha(\alpha-1)&=& -1+ S_2 \cdot y & c_{a_1+a_2,\alpha}=3\\
...&&&...\\
...&&&...\\
\maltese \ \ \ K^{a_1+...+a_{m-1}}_\alpha(\alpha-1)&=& -1+ S_{m-1} \cdot y& c_{a_1+...+a_{m-1},\alpha}=3 \\
...&&&...\\
K^{a_1+...+a_{m}-1}_\alpha(\alpha-1)&=& -1+ \partial^{a_m-1} S_{m-1} \cdot y&c_{a_1+...+a_{m}-1,\alpha}=2 \\
\maltese \ \ \ K^{a_1+...+a_{m}}_\alpha(\alpha-1)&=&  y& c_{a_1+...+a_{m},\alpha}=2\\
\hline 
\alpha&=& S\cdot y& \\
 K_\alpha(\alpha)&=& \partial^- S_1 \cdot y & c_{1,\alpha}=-a_1-1 \\
 K^2_\alpha(\alpha)&=& \partial^{-} S_2 \cdot y& c_{2,\alpha}=-a_2-2\\
 ...&&&...\\
 K^{m-1}_\alpha(\alpha)&=& \partial^{-} S_{m-1} \cdot y&  c_{m-1,\alpha}=-a_{m-1}-2 \\
 K^{m}_\alpha(\alpha)&=&\partial^{-} y=\frac{y}{y+1} & c_{m,\alpha}=-a_{m}-2\\
\hline
\end{array}$$
\caption{Orbits of $\alpha$ and $\alpha-1$, for $\alpha \in [r, \alpha^+)$.}
\label{eq:trajA}
\end{table}

One can go from one line to the following 
just using the rules  \eqref{eq:rules-} (for the upper part) or \eqref{eq:rules+} (for the lower part). 
So we only have to check  that
at each stage we actually get a value which lies in the interval $[\alpha-1, \alpha]$.

As far as the orbit of $\alpha-1$ is concerned, all items in the list except for the last one are negative, 
so we just have to check
that we never drop below $\alpha-1$; since the operator $\partial$ increases the value of its 
argument (i.e. $\partial x \geq x$), it is sufficient to check 
%the only steps which need some comment are those corresponding to 
the iterates of $K_\alpha$ of order $a_1+...+a_k$ (with $k\in\{1,...m\}$): the corresponding lines are 
marked by the symbol $\maltese $. 
That is, we need to check the following: 
%for each $k \in \{1, \dots, m-1 \}$ we have $z_k \geq \alpha-1$.

%if we define 
%$$
%\begin{array}{ll}
\begin{enumerate}
 \item 
$-1 + S_k \cdot y \geq \alpha -1 \quad \textup{for all } k \in \{1, \dots, m-1 \} $
\item 
$y \leq \alpha$.
\end{enumerate}
%\end{array}
%$$
(1) is true by Lemma \ref{L:fareylegacy}-(iii): indeed, we have the inequality $S_k  >> S$, from which follows that 
$$-1 + S_k \cdot y \geq -1 + S \cdot y = -1 + \alpha$$
as needed.
%Here we will use Lemma \ref{L:fareylegacy}-(iii): since $S$ is built on a Farey word (which is Lyndon), by Lemma \ref{L:fareylegacy}. we have that
% $S_k >> S$ for all $k \in \{1,...m-1\}$. 
Now, since by construction 
%The last step follows from the fact that 
$S\cdot y=\alpha < \alpha^+$ and the map $x \mapsto S\cdot x$ is increasing with a fixed point at $\alpha^+$, we have that 
$y\leq S\cdot y =\alpha$, which proves (2).

Checking that the values in the lower part of table \ref{eq:trajA} are actually in $[\alpha-1, \alpha]$
is slightly more tricky. We need to prove that $\partial^-S_k \cdot y \leq S\cdot y = \alpha$ for  $k \in \{1,...,m-1\}$; 
as a matter of fact by Lemma \ref{L:fareylegacy}-(iv) one has
$$ S_k P_k << \partial S,$$
which implies, since $P_k$ is a prefix of the continued fraction expansion of $\alpha^+$,
$$S_k \cdot \alpha^+ \leq \partial S\cdot y.$$
Since the map $x \mapsto S_k \cdot x$ is increasing we then get 
%(see equation %\eqref{eq:elyndon}).
%$ \partial^-S_k \cdot y < S\cdot y \iff$
$S_k \cdot y \leq S_k \cdot \alpha^+ < \partial S\cdot y$, 
which implies, by applying $\partial^-$ to both sides of the equation, 
$\partial^-S_k \cdot y  < S\cdot y = \alpha$.
%This last inequality is a consequence of equation
%\eqref{eq:partial} 
%in Lemma \ref{L:fareylegacy}).
Since $0<\partial^-y<y<S\cdot y=\alpha$,  we get the last step for free.

%Let us finally remark that the way these orbits are ordered does not depend
%on the particular value of $\alpha \in [r, \alpha^+)$.
\smallskip

{\bf Case B.} We must now settle the case $\alpha \in (\alpha^-,r]$. Let us recall that $'\!S'={}^t S$, so  that we must have
 $\alpha= S'\cdot y$ for $0\leq y \leq [0; \overline{{}^tS}]$ or, which is equivalent,  $\sigma \alpha = {}^tS \cdot y$,  $0\leq y \leq [0; \overline{{}^tS}]$.  
In this case we claim that the orbits  of the endpoints, before reaching the matching point, are given by table \ref{eq:trajB}

%\begin{equation} \label{eq:trajB}

\begin{table}
$$\begin{array}{|rcl|r|}
\hline
 \alpha-1 &=& -1+S'\cdot y & \\
 K_\alpha(\alpha-1) &=& -1+\partial S' \cdot y & c_{1,\alpha}=2\\
 K^2_\alpha(\alpha-1) &=& -1+\partial^{2} S' \cdot y & c_{2,\alpha}=2 \\
 ...&&& ...\\
 K^{a_1-1}_\alpha(\alpha-1) &=& -1+\partial^{a_1-1} S' \cdot y & c_{a_1-1,\alpha}=2 \\
\maltese \ \ \  K^{a_1}_\alpha(\alpha-1)&=& -1+S'_1 \cdot y & c_{a_1,\alpha}=3\\
K^{a_1+1}_\alpha(\alpha-1)&=& -1+\partial S'_1 \cdot y& c_{a_1+1,\alpha}=2\\
...&&&...\\
\maltese \ \ \ K^{a_1+a_2}_\alpha(\alpha-1)&=& -1+ S'_2 \cdot y & c_{a_1+...+a_{2},\alpha}=3\\
...&&&...\\
...&&&...\\
\maltese \ \ \ K^{a_1+...+a_{m-1}}_\alpha(\alpha-1)&=& -1+ S'_{m-1} \cdot y& c_{a_1+...+a_{m-1},\alpha}=3\\
...&&&...\\
K^{a_1+...+a_{m}-1}_\alpha(\alpha-1)&=& -1+ \partial^{a_m-1} S'_{m-1} \cdot y & c_{a_1+...+a_{m}-1,\alpha}=2\\
\maltese \ \ \ K^{a_1+...+a_{m}}_\alpha(\alpha-1)&=&  -y/(y+1)  & c_{a_1+...+a_{m},\alpha}=2\\
\hline
 \alpha&=& S'\cdot y&\\
 K_\alpha(\alpha)&=& \partial^- S'_1 \cdot y& c_{1,\alpha}=-a_{1}-1\\
 K^2_\alpha(\alpha)&=& \partial^{-} S'_2 \cdot y& c_{2,\alpha}=-a_{2}-2\\
 ...&&& ...\\
 K^{m-1}_\alpha(\alpha)&=& \partial^{-} S'_{m-1} \cdot y &c_{m-1,\alpha}=-a_{m-1}-2 \\
 K^{m}_\alpha(\alpha)&=& -y & c_{m,\alpha}=-a_{m}-2\\ 
\hline
\end{array}$$
\caption{Orbits of $\alpha$ and $\alpha-1$, for $\alpha \in (\alpha^-, r]$.}
\label{eq:trajB}
\end{table}

Again, one can go from one line to the following
just using the rules \eqref{eq:rules-} for the upper list or \eqref{eq:rules+} for the lower; we just have to check  that
at each stage we actually get values inside the interval $[\alpha-1, \alpha]$.

As far as the orbit of $\alpha-1$ is concerned, all items of the list are negative, and we just have to check
that we never drop below $\alpha-1$, therefore the only steps which need some comment
are those corresponding to iterates of $K_\alpha$ of order $a_1+...+a_k$
% ($k \in \{1,...,m\}$) 
(marked by $\maltese$ in the table). Let us observe that by Lemma \ref{L:fareylegacy} (iii) we have 
 $S_k >> S$. Then we have two cases: either $S'_k >> S'$  and we are done,
or we can write $S' = S_k' Z$, with $Z$ a suffix of $S'$ (hence also a suffix of $'S'={}^t S$), 
and the length of $Z$ is even.
In the latter case, by applying Lemma \ref{L:fareylegacy} to ${}^t S$, one gets $Z >> {}^t S$, hence since 
$y \leq [0; \overline{{}^t S}] \leq [0; \overline{Z}]$ and 
the length of $S_k'$ is odd, we have 
$$y \leq Z \cdot y \Rightarrow S_k' \cdot y \geq S_k' Z \cdot y \Rightarrow  -1 + S_k' \cdot y \geq S' \cdot y$$
proving the required inequality.
%for all $k  \in\{1,...,m-1\}$, so we are done. 
The last step is immediate, since $-y/(y+1)>-1/2>\alpha-1$ (note that $\alpha < 1/2$ since $w \in FW_0$).

To check that the values in the lower part of the list of table \ref{eq:trajB} are actually in $[\alpha-1, \alpha]$
we need to prove that $\partial^-S'_k \cdot y  < S'\cdot y$ for  $k \in \{1,...,m-1\}$; indeed, since $\sigma$ is order reversing 
and $'S'={}^tS$, we get
$$ \partial^-S'_k \cdot y < S'\cdot y \iff 
\sigma(\partial^- S'_k \cdot y)> \sigma(S' \cdot y) 
\iff
'(\partial^- S'_k) \cdot y = ({}^t S)_k \cdot y > {}^tS \cdot y
$$ 
and the last inequality holds because we can apply 
Lemma \ref{L:fareylegacy} (iii) to ${}^t S$, yielding
$({}^t S)_k >> {}^tS $.
%the expansion of  
%$\sigma(\partial^- S'_k \cdot y)$ begins with 
%$'(\partial^- S'_k) = ({}^t S)_k$ and 
%$({}^t S)_k >> \cdot y > ({}^t S)\cdot y$
%and the rightmost inequality is in turn implied by  
%$({}^t S)_k >> {}^tS $ by lemma \ref{L:fareylegacy} (iii).

Finally,  to check that $K^m(\alpha)=-y$ we have to show that $-y>\alpha -1$: indeed, since $y<[0; \overline{{}^tS}]$  we get
${}^tS\cdot y> y$, and $1-\alpha= 'S'\cdot y={}^tS \cdot y>y$.

\smallskip
If we now keep track of the symbolic orbit of $\alpha-1$ and $\alpha$ as we described (see the right column of 
the tables \ref{eq:trajA}, \ref{eq:trajB}), we realize that the values of the coefficients 
$c_{j, \alpha}(\alpha)$ and $c_{j, \alpha}(\alpha-1)$ are
%In the rightmost column of tables \eqref{eq:trajA}, \eqref{eq:trajB} we kept track of the symbolic orbit of $\alpha-1$ and $\alpha$ which are, respectively:
$$
\begin{array}{lll}
(c_{j, \alpha}(\alpha-1))_{1 \leq j \leq m_0} & = & (\underbrace{2,\ldots,2}_{a_1-1},3,\underbrace{2,\ldots,2}_{a_2-1},3,\ldots,\underbrace{2,\ldots,2}_{a_{m-1}-1},3,\underbrace{2,\ldots,2}_{a_m})\\
(c_{j, \alpha}(\alpha))_{1 \leq j \leq m_1} & = & (-a_1-1,-a_2-2,\ldots,-a_m-2)
\end{array}
$$
thus we can compute the matrices ${\bf M}_{\alpha, \alpha-1, m_0}$ and ${\bf M}_{\alpha, \alpha, m_1}$, recovering formula \eqref{eq:mm'}.
\end{proof}

\begin{proof}[Proof of Theorem \ref{T:anatomy}.]
By symmetry (eq. \eqref{eq:ssimmetria}), we need only to check what happens for $\alpha \leq \frac{1}{2}$.
Now, the previous proposition gives us formulas for ${\bf M}$ and ${\bf M}'$, so we just need to check 
that equation \eqref{eq:amatching} holds given these formulas; this is a simple algebraic manipulation as follows.
Let us first prove the case $n = 1$, for which equation \eqref{eq:amatching} becomes
\begin{equation}
 \label{eq:simplematching}
{\bf T}({\bf ST}^2)^N = {\bf S T}^{-N-1} {\bf S T}^{-1} {\bf S}
\end{equation}
(with $N = a_1$). It is well-known and easy to check that $({\bf ST })^3$ is the identity, 
from which it follows 
$${\bf TST } = {\bf S T}^{-1} {\bf S}.$$
Thus, by writing ${\bf TST } = {\bf T}({\bf ST}^2) {\bf T}^{-1}$ and raising both sides to the $N^{th}$ power, we have
$${\bf T} ({\bf ST}^2)^N {\bf T}^{-1} = {\bf S T}^{-N} {\bf S}$$
from which ${\bf T} ({\bf ST}^2)^N = {\bf S T}^{-N} {\bf S} {\bf T}  = {\bf S T}^{-N-1} {\bf T} {\bf S} {\bf T} = 
{\bf S T}^{-N-1} {\bf S} {\bf T}^{-1} {\bf S}$, proving \eqref{eq:simplematching}.
Thus, in general for each $1 \leq k \leq n$ we have the identity 
${\bf T}({\bf ST}^2)^{a_k} = {\bf S T}^{-a_k-1} {\bf S T}^{-1} {\bf S}$, and concatenating all pieces we get 
precisely equation \eqref{eq:amatching}.
% Moreover since
% equation ${\bf TM}\cdot y= {\bf M'ST}^{-1}{\bf S}\cdot y $ has infinitely many solutions, we conclude that
%the algebraic identity \eqref{eq:amatching} must hold.
\end{proof}

\begin{proof}[Proof of Corollary \ref{C:weakmatching}]
By taking the inverses of both sides of equation \eqref{eq:amatching} and acting on $\alpha$ we get the equality
$$\mathbf{M}_{\alpha, \alpha-1, m_0} \mathbf{T}^{-1} \alpha = \mathbf{S} \mathbf{T}\mathbf{S} \mathbf{M}^{-1}_{\alpha, \alpha, m_1}(\alpha)$$
hence using that $K_\alpha^{m_1}(\alpha) = \mathbf{M}^{-1}_{\alpha, \alpha, m_0}(\alpha)$ and $K_\alpha^{m_0}(x) = \mathbf{M}^{-1}_{\alpha, \alpha-1, m_0}(\alpha-1)$
we get that 
$$-\frac{1}{K^{m_1}_\alpha(\alpha)} + 1 = -\frac{1}{K^{m_0}_\alpha(\alpha-1)}$$
hence for any $k \in \mathbb{Z}$ we have
$$-\frac{1}{K^{m_1}_\alpha(\alpha)} - k \in [\alpha-1, \alpha] \Leftrightarrow
-\frac{1}{K^{m_0}_\alpha(\alpha-1)} - (k+1) \in [\alpha-1, \alpha]$$
thus $c_{m_0+1, \alpha}(\alpha-1) = c_{m_1, \alpha}(\alpha) +1$ and the claim follows.
\end{proof}

Let us conclude this section by studying the ordering between the iterates
of $K_\alpha$, which will be needed in the last section.
As we shall see, this also follows from the combinatorics of the underlying 
Farey words: in particular, the ordering is the same as the ordering between the cyclic permutations
of their Farey structure.

%\begin{lemma} \label{L:zkorder}
%Let $w \in FW^*$ be a Farey word, and $S = RL(w)$ its runlength. 
%Let moreover $f = f_1 f_2$ be its standard factorization, with $q_1: = |f_1|$ and $q_2 := |f_2|$.
%Denote for each $k \in \{0, \dots, m\}$ the quadratic irrational 
%$z_k := [0; \overline{S_k P_k}]$.
%Then for each $k, l \in \{0, \dots, m\}$ we have 
%$$\tau^k w < \tau^l w \textup{ if and only if }z_k < z_l.$$ 
%In particular, for each $k \in \{1, \dots, m-1\}$ one gets 
%$$z_0 < z_{q_1} \leq z_k \leq z_{q_2}.$$
%\end{lemma}

%\begin{proof}
%For the first statement, let us note that since $B_0 << B_1$ then 
%for any two binary sequences $(\epsilon_1, \dots, \epsilon_m)$ and $(\eta_1, \dots, \eta_m)$
%one has 
%$$B_{\epsilon_1}\dots B_{\epsilon_m} << B_{\eta_1}\dots B_{\eta_m} \textup{ if and only if } (\epsilon_1, \dots, \epsilon_m) 
%< (\eta_1, \dots, \eta_m).$$
%Thus, the claim follows since the runlength map is compatible with the shift operator, so that for each $k$ one has
%$RL(\tau^k f) = S_k P_k$.
%The second statement follows immediately from the first one using the ordering between 
%the cyclic permutations of $f$ as given in Lemma \ref{cyclic}.
%\end{proof}

\begin{lemma} \label{L:disjointorbit}
Let $w \in FW_0$: then for any $\alpha \in J_w$ the ordering of the set 
$$\{ K_{\alpha}^j(\alpha-1) \ : \ 0 \leq j \leq m_0 \}$$
of the first $m_0 + 1$ iterates of $\alpha-1$ under $K_\alpha$ is independent 
of $\alpha$. Similarly, the ordering of the set of the first $m_1 +1$ iterates of $\alpha$ is also independent of $\alpha$.
\end{lemma}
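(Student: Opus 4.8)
The statement to prove is Lemma~\ref{L:disjointorbit}: the cyclic ordering (here, the linear ordering in $\RR$) of the first $m_0+1$ iterates of $\alpha-1$, and of the first $m_1+1$ iterates of $\alpha$, under $K_\alpha$ is independent of $\alpha \in J_w$. The key observation is that Proposition~\ref{R:mconcatenazione} already gives us \emph{explicit closed formulas} for all these iterates in terms of a single variable $y$ and the fixed string $S = RL(w)$. Indeed, writing $\alpha = S\cdot y$ (Case A) or $\alpha = S'\cdot y$ (Case B), Tables~\ref{eq:trajA} and~\ref{eq:trajB} express each $K_\alpha^j(\alpha-1)$ as $-1 + (\text{fixed string}) \cdot y$ for an explicit string depending only on $j$ and $w$, and each $K_\alpha^j(\alpha)$ as $\partial^-(\text{fixed string})\cdot y$ (or $\pm y/(y+1)$, $\pm y$ at the tail). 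So the plan is: fix $w \in FW_0$, and for each pair of indices $i \neq j$ compare the two expressions as functions of $y$ and show the sign of the difference is constant on the relevant range of $y$.

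\textbf{Main steps.} First I would record that, as $\alpha$ ranges over $J_w$, the parameter $y$ ranges over an interval: $y \in [0, \alpha^+)$ in Case A and $y \in (0, [0;\overline{{}^tS}]]$ in Case B, and the two cases meet at $y = \alpha^+$ resp.\ the corresponding endpoint, where $\alpha = r$ is the pseudocenter; so it suffices to treat each case and check consistency at the junction. Second, within one case, all the iterates of $\alpha-1$ have the form $-1 + T\cdot y$ for various finite strings $T$ (the $\partial^k S_j$, $\partial^k S'_j$, plus the final $y$ or $-y/(y+1)$); two such are compared via: $T \cdot y < T' \cdot y$ for all $y$ in an interval iff $T << T'$ or ($T,T'$ share a long enough prefix and the alternating-lex order of the differing truncations is constant) — precisely property (3) of the strong order $<<$ recalled after \eqref{eq:stringlemma}, together with monotonicity of the string-action maps $x \mapsto T\cdot x$. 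The point is that $T << T'$ is a purely combinatorial condition on $w$, not on $\alpha$. Third, the same works for the iterates of $\alpha$, which are of the form $\partial^- S_j \cdot y$: since $\partial^-$ is an (increasing) Möbius map, comparing $\partial^- S_i \cdot y$ with $\partial^- S_j \cdot y$ reduces to comparing $S_i \cdot y$ with $S_j \cdot y$, again governed by $<<$. The tail terms ($\pm y$, $\pm y/(y+1)$) are compared directly. Fourth, invoke the symmetry \eqref{eq:ssimmetria} to dispose of $w \in FW_1$ if needed (though the statement only asks for $FW_0$), and note the whole argument is visibly independent of which $\alpha \in J_w$ we picked, since at no point did the specific value of $y$ enter — only the interval it lives in.

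\textbf{The main obstacle.} The routine part is the bookkeeping: there are many pairs of iterates, and one has to be careful that when two strings $T, T'$ are \emph{not} $<<$-comparable in the naive way (e.g.\ one is a prefix of the other, or they agree for a while and then one runs out), the comparison of $T\cdot y$ and $T'\cdot y$ is still constant in $y$ on the restricted range — this is exactly where the constraint $y \in [0,\alpha^+)$ (resp.\ $y \le [0;\overline{{}^tS}]$) is used, just as it was used inside the proof of Proposition~\ref{R:mconcatenazione} to verify the $\maltese$-lines. So the real content is not new: it is a repackaging of the inequalities from Lemma~\ref{L:fareylegacy}(iii)--(iv) and the argument of Case A / Case B above, now read as statements about orderings rather than about containment in $[\alpha-1,\alpha]$. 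I expect the cleanest writeup to proceed by noting that the map sending $j$ to the string $T_j$ appearing in Table~\ref{eq:trajA} (resp.\ \ref{eq:trajB}) is itself determined by the cyclic permutations of the Farey structure $f$ of $S$ — by the identities $S = RL(f\star U_1 \star U_0^a)$, $\partial S = RL(({}^\vee f)\star U_1\star U_0^a)$, $S_kP_k = RL((\tau^k f)\star U_1\star U_0^a)$ already exploited in the proof of Lemma~\ref{L:fareylegacy}(iv) — so that the ordering of the orbit of $\alpha-1$ literally matches the ordering of $\{\tau^k f\}$ via an order-preserving map, which is the promised statement that ``the ordering is the same as the ordering between the cyclic permutations of their Farey structure.''
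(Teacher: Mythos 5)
Your strategy --- read the iterates off the tables of Proposition \ref{R:mconcatenazione} and compare them pairwise as functions of $y$ --- is not the paper's route, and the step you yourself flag as ``the main obstacle'' is a genuine gap rather than routine bookkeeping. When two of the strings $T_j, T_{j'}$ are not $<<$-comparable (typically because one is a prefix of the other, e.g.\ $\partial^{h'}S_{k'}$ sitting as a prefix inside $\partial^{h}S_{k}$, which does happen since every $S_{k'}$ is a suffix of $S_k$ for $k<k'$), the sign of $T_j\cdot y - T_{j'}\cdot y$ reduces to comparing $y$ with $U\cdot y$ for a leftover block $U$, and this genuinely depends on the position of $y$ relative to the fixed point $[0;\overline{U}]$. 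Lemma \ref{L:fareylegacy} (iii)--(iv) only compares suffixes of $S$ against $S$ and against $\partial S$; it says nothing about comparing two suffixes against each other, so completing your plan would require combinatorial input the paper does not supply. Your closing suggestion --- identify the orbit ordering with the ordering of the cyclic permutations of the Farey structure --- is essentially equation \eqref{eq:phitoK}, but that identification is only valid at the single parameter $\alpha=\alpha^+$, where $y$ is the fixed point of $x\mapsto S\cdot x$ and the iterates really are $\phi$-images of cyclic shifts of $\overline{w}$; for a general $\alpha\in J_w$ the iterates carry the free tail $y$ and are not such values, so this observation does not by itself give independence of $\alpha$. (Indeed the paper proves Proposition \ref{P:order} by checking the extremes at $\alpha=\alpha^+$ and then invoking Lemma \ref{L:disjointorbit}, not the other way around.) A small additional slip: the two cases A and B are glued at $y=0$, i.e.\ at the pseudocenter $\alpha=r$, not at $y=\alpha^+$.

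The paper's proof avoids all pairwise comparisons by a two-step reduction that your proposal is missing. First, each map $\alpha\mapsto K_\alpha^{j}(\alpha-1)$ is a fixed M\"obius transformation on $J_w$ by \eqref{eq:cost}/\eqref{eq:mm'}, hence continuous, so the ordering of the $m_0+1$ iterates can only change if two of them coincide at some $\alpha\in J_w$. Second, a coincidence $K_\alpha^{j}(\alpha-1)=K_\alpha^{j'}(\alpha-1)$ with $j<j'$ propagates forward under $K_\alpha^{\,m_0-j'}$ to a coincidence of the iterate of order $j+m_0-j'<m_0$ with $K_\alpha^{m_0}(\alpha-1)$. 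It therefore suffices to prove the single chain of strict inequalities $K_\alpha^{j}(\alpha-1)<K_\alpha^{m_0}(\alpha-1)$ for all $j<m_0$, which is immediate from the tables: in Case A the left side is negative while $K_\alpha^{m_0}(\alpha-1)=y\ge 0$, and in Case B one compares the first two digits of the continued fraction expansions ($[-1;1,a,\dots]$ with $a>1$ versus $[-1;1,1,\dots]$). The same reduction handles the orbit of $\alpha$. I recommend restructuring your argument around this reduction rather than attempting the full set of pairwise comparisons.
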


\begin{proof}
%As for the previous theorem, the proof will be performed just in the
%case $w\in FW_0$; we also resume the same notation, so that
%$S=(a_1,1,...,a_m,1)$, $m_0=\sum_{j=1}^m a_j$.  
The first part of the claim can be rephrased as saying that for each $j, j' \in \{0, \dots, m_0\}$ and 
for each $\alpha, \alpha' \in J_w$
$$K_\alpha^i(\alpha-1) < K_\alpha^j(\alpha-1) \textup{ if and only if } K_\alpha^i(\alpha-1) < K_\alpha^j(\alpha-1).$$
Let now $0\leq j<j'\leq m_0$ be fixed; to check that $K^j_\alpha(\alpha-1)$ and $K^{j'}_\alpha(\alpha-1)$ are
ordered in the same way for all $\alpha \in J_w$ it is enough to prove
that they are always different, and to prove this latter statement it
is enough to prove that $K^j_\alpha(\alpha-1) < K^{m_0}_\alpha(\alpha-1)$ for all $\alpha \in J_w$
and all $j \in \{0, \dots, m_0 -1 \}$. This follows from the explicit description of the orbits 
given in the proof of Proposition \ref{R:mconcatenazione}, of which we will keep the notation (see tables \ref{eq:trajA} and \ref{eq:trajB}).
Indeed, in case A the claim holds because of the inequality
$$K_\alpha^j(\alpha-1) < 0 \leq y = K_\alpha^{m_0}(\alpha-1);$$
in case B we have 
$$K_\alpha^{m_0}(\alpha-1) = \frac{-y}{y+1}=-1+ \frac{1}{1+y}=[-1;1,a,...] \ \ \ \ \mbox{ for some } a>1,$$
while the largest of the previous iterates has a continued fraction expansion beginning with $[-1;1,1,...]$.

The corresponding claim about iterates of $\alpha$ is proven in the same way. Indeed, it is enough to check that if $j<m_1$ 
then $K^{m_1}_\alpha(\alpha) < K^j_\alpha(\alpha)$: this is obvious in case B, while in case A we just have to check that 
$\partial^-y<\partial^-S_j \cdot y$; now, since $y < S \cdot y$ (see also proof of Proposition \ref{R:mconcatenazione}, case A)
and $S << S_j$ (Lemma \ref{L:fareylegacy}-(iii)), one gets 
$\partial^-y<\partial^-S \cdot y \leq \partial^-S_j \cdot y$ as claimed.
\end{proof}

\begin{proposition}\label{P:order}
Let $w \in FW_0$ a Farey word, and $w=w'w''$ be its standard factorization, and denote $j_0:=|w'|_0$ and $j_1=|w''|_1$.
Then for each $\alpha \in J_w$ the following holds:
$$
\begin{array}{c}
K_\alpha^{j_0}(\alpha-1)=\min\{K_\alpha^{j}(\alpha-1) \ :1\leq j\leq m_0\} \\
K_\alpha^{j_1}(\alpha)=\max\{K_\alpha^{j}(\alpha) \ :1\leq j\leq m_1\}. 
\end{array}
$$
%For all $\alpha \in J_w$ the iterates of $\alpha-1$ and $\alpha$ are ordered in the same way. Moreover let
%$j_0, j_1$ be defined as
%$$K_\alpha^{j_0}(\alpha-1)=\min\{K_\alpha^{j}(\alpha-1) \ :1\leq j\leq m_0\}, \ \ \ K_\alpha^{j_1}(\alpha)=\max\{K_\alpha^{j}(\alpha) \ :1\leq j\leq m_1\}
%$$
%Then if $w=w'w''$ is the standard factorization of $w$, $j_0:=|w'|_0$ and $j_1=|w''|_1$.
\end{proposition}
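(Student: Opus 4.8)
The plan is to leverage the explicit descriptions of the orbits given in the proof of Proposition \ref{R:mconcatenazione} (tables \ref{eq:trajA} and \ref{eq:trajB}) together with the combinatorics of the standard factorization developed in Section \ref{S:farey}. First I would recall that by Proposition \ref{P:standard.factorization} the standard factorization $w = w'w''$ has both $w'$ and $w''$ Farey words, and by Lemma \ref{cyclic}(2) the second-smallest cyclic permutation of $w$ is exactly $\tau^{|w'|}w = w''w'$. The idea is that the orbit of $\alpha-1$ under $K_\alpha$ visits, at the steps marked $\maltese$ in the tables, the values $-1 + S_k\cdot y$ where $S_k$ is the suffix of $S = RL(w)$ after the $k$-th block; and the minimum among these is governed by the largest suffix $S_k$, which by Lemma \ref{L:fareylegacy}(iii)-(iv) corresponds via the runlength map to the largest cyclic permutation of the Farey structure that is still an initial segment situation. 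Since $S_k \cdot y$ is increasing in the string (larger string gives larger value, because $|S_k|$ is even), the value $-1+S_k\cdot y$ is largest, hence its negative is... wait — we want the minimum of $K_\alpha^j(\alpha-1)$, and the unmarked steps $-1+\partial^t S_k\cdot y$ are all larger than the preceding marked step (since $\partial$ increases its argument). So the minimum over all $j$ is the minimum over the marked steps $-1 + S_k \cdot y$ (together with the final matching value in Case A, which is $y \geq 0$, clearly not the minimum, and in Case B the final value $-y/(y+1)$, which must also be checked).

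Concretely, I would argue: among the marked iterates $K_\alpha^{a_1+\dots+a_k}(\alpha-1) = -1 + S_k\cdot y$ for $1 \le k \le m_1-1$ (plus $k=0$ giving $\alpha-1 = -1 + S\cdot y$, and the terminal one), the minimum is attained when $S_k\cdot y$ is smallest, i.e.\ when $S_k$ is the smallest suffix in alternate lexicographic order. Now $S_k = RL(\tau^{k}f \star U_1 \star U_0^a \text{ truncated appropriately})$ — more precisely, by the computation in the proof of Lemma \ref{L:fareylegacy}(iv), the suffixes $S_k$ of $S$ correspond under the runlength map to the words $\tau^{k}f$ for the Farey structure $f$ of $S$; but I must be careful, since the relevant index is not that of the Farey structure $f$ but of $w$ itself. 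The cleanest route: the decreasing matching step $\maltese$ at position $a_1+\dots+a_k$ corresponds to the factorization $S = P_k S_k$ where $P_k = RL(w'_{(k)})$ for the prefix $w'_{(k)}$ of $w$ consisting of the first $k$ "1"-digits' worth of blocks. Then $\min_k S_k \cdot y$ is attained at the $k$ for which $S_k$ is $<<$-minimal among suffixes, and by Lemma \ref{L:fareylegacy}(iii) we have $S << S_k$ for all $k$, with equality of the comparison refined by: the minimal proper suffix in the $<<$ order is exactly $S_{j_1'}$ where the split $S = P S_{j_1'}$ matches the standard factorization $w = w'w''$. This uses that $w'w''$ being the standard factorization makes $w''$ the Lyndon-minimal rotation beyond $w$ itself, i.e.\ Lemma \ref{cyclic}(2) combined with the order-preservation of $RL$ on words beginning with $0$ (but here suffixes of $w$ beginning with $0$, which holds since each block $B_0, B_1$ ends in $1$ and $w''$ starts a fresh block, i.e.\ starts with $0$). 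Hence the index $k$ realizing the minimum has $P_k = RL(w')$, so the number of iterations is $|w'|_0 = j_0$, giving $K_\alpha^{j_0}(\alpha-1) = \min$.

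For the second identity, about the maximum of $K_\alpha^j(\alpha)$: from the lower parts of tables \ref{eq:trajA}, \ref{eq:trajB} we have $K_\alpha^k(\alpha) = \partial^-S_k\cdot y$ (Case A) or $\partial^-S'_k\cdot y$ (Case B) for $1 \le k \le m_1-1$, and these are in bijection with the suffixes $S_k$; since $\partial^-$ is order-preserving and $x \mapsto S_k\cdot x$ is increasing, the maximum of $K_\alpha^k(\alpha)$ over $k$ is attained at the $<<$-largest suffix $S_k$. By Lemma \ref{L:fareylegacy}(iii), the suffixes satisfy $S << S_k$, and among them the largest corresponds — via $RL$ and the identification of suffixes of $w$ with suffixes of the underlying Farey structure — to the cyclic permutation that is maximal, which by Lemma \ref{cyclic}(3) is ${}^t f$; but again translating back to $w$ rather than $f$, the maximal suffix of $w$ (among those starting a block) that is itself a Farey word is $w''$, the second factor of the standard factorization, because by Lemma \ref{cyclic}(3) the transpose ${}^t w$ is the maximal rotation and $w''w'$ is the relevant suffix-prefix swap; tracking lengths, the maximum is attained after $|w''|_1 = j_1$ iterations, i.e.\ $K_\alpha^{j_1}(\alpha) = \max$. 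Symmetry \eqref{eq:ssimmetria} reduces the case $w \in FW_1$ (i.e.\ $J_w \subset (1/2,1)$) to $FW_0$, so it suffices to treat $w \in FW_0$ as stated. Finally I would invoke Lemma \ref{L:disjointorbit} to know a priori that all these iterates are distinct, so the argmin and argmax are well-defined and constant in $\alpha$, and the combinatorial identification above pins down which index it is.

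The main obstacle I anticipate is the bookkeeping in relating the suffixes $S_k$ of $S = RL(w)$ — whose $<<$-ordering is controlled by Lemma \ref{L:fareylegacy}(iii)-(iv) in terms of the \emph{Farey structure} $f$ of $S$ (a smaller Farey word) — to the standard factorization of $w$ \emph{itself}, since a priori $f$ and $w$ live at different "scales" (one gets $w$ from $f$ by $f \star U_1 \star U_0^a$). The resolution should be that the substitution $f \mapsto f\star U_1\star U_0^a$ commutes with both cyclic structure and standard factorization in the relevant sense (this is essentially Lemma \ref{L:u.tuning} plus the concatenation property $(vw)\star U_\epsilon = (v\star U_\epsilon)(w\star U_\epsilon)$), so that the standard factorization of $w$ is obtained by applying the substitution to the standard factorization of $f$; hence the index realizing the extremal suffix of $f$ pushes forward to the index realizing the extremal suffix of $w$, with the exponent count $|w'|_0$ respectively $|w''|_1$ coming out correctly. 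Getting this "commutation with standard factorization" clean, and double-checking the edge cases $n=1$ (where $w = 0^N1$, $w' = 0^{N-1}1 \cdot$? — actually the standard factorization of $0^N1$ is $(0)(0^{N-1}1)$, so $j_0 = N-1$, $j_1 = 1$, which matches the explicit $\mathbf{M},\mathbf{M}'$) is where the care is needed.
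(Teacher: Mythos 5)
Your high-level strategy (the explicit orbits from Proposition \ref{R:mconcatenazione}, the reduction via Lemma \ref{L:disjointorbit}, and Lemma \ref{cyclic} to identify the extremal rotation) matches the paper's, but there are two concrete gaps. First, your reduction of the minimum to the ``marked'' iterates $-1+S_k\cdot y$ is invalid for the first block: the unmarked iterates $K_\alpha^{j}(\alpha-1)=-1+\partial^{j}S\cdot y$ for $1\le j\le a_1-1$ are dominated only by the $j=0$ term $\alpha-1$ itself, which is excluded from the minimum, so they remain genuine candidates. This is not a hypothetical worry: for $w=0^N1$ with $N\ge 2$ (standard factorization $(0)(0^{N-1}1)$, so $j_0=|w'|_0=1$, not $N-1$ as in your edge-case check) the minimum over $1\le j\le m_0$ is attained at the unmarked step $j=1$, while your reduction, having no intermediate marked steps to offer, would point to the terminal one. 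More generally the minimum sits at a marked step exactly when $w'$ is non-degenerate, and even then one must still compare the first-block unmarked iterates against the best proper marked step, which your domination argument does not do.

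Second, the step you yourself flag as the ``main obstacle'' --- translating the $<<$-ordering of the suffixes $S_k$ of $S=RL(w)$, which Lemma \ref{L:fareylegacy} controls via the Farey structure $f$, back into the standard factorization of $w$ itself --- is left unresolved, and it is essentially the whole content of the proof. The paper sidesteps this bookkeeping entirely: after reducing to the single parameter $\alpha=\alpha^+$ (by Lemma \ref{L:disjointorbit} together with continuity of the maps in \eqref{eq:mm'} on the closure of $J_w$), every iterate satisfies $\phi(0.\overline{\tau^{l}w})=K_\alpha^{l_0}(\alpha-1)+1$, where $l_0=|v|_0$ for the length-$l$ prefix $v$ of $w$ (equation \eqref{eq:phitoK}), because $\phi$ semiconjugates the shift on binary expansions with $\partial$. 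This puts all iterates --- marked and unmarked alike --- in increasing bijection with cyclic permutations of $w$ itself, so Lemma \ref{cyclic} (the second smallest permutation is $\tau^{|w'|}w$, the largest is $\tau^{|w''|}w={}^tw$) immediately yields both extrema with the correct indices $j_0=|w'|_0$ and $j_1=|w''|_1$, with no passage through the Farey structure $f$ and no separate treatment of Case B.
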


\begin{proof}
By Lemma \ref{L:disjointorbit} and since all the maps $\alpha \to K_\alpha^j(\alpha-1)$ for $0 \leq j \leq m_0$
are continuous on the closure of $J_w$ (they are given by equation \eqref{eq:mm'}), it is sufficient to verify the statement 
for $\alpha = \alpha^+$ the right endpoint of $J_w$. 
In this case, by Proposition \ref{R:mconcatenazione}, the first iterates of $\alpha-1$ are given by 
$$K_\alpha^{a_1 + \dots + a_k + h}(\alpha-1) = -1 + \partial^h S_k \cdot \alpha^+$$
with $0 \leq h \leq a_{k+1} - 1$, $0 \leq k \leq m-1$.
Note that the homeomorphism $\phi$ defined in section \ref{S:qumtervals} semiconjugates the shift 
in the binary expansion with the map $\partial$, hence 
for each prefix $v$ of $w$ we have 
\begin{equation} \label{eq:phitoK}
\phi(0.\overline{\tau^l w}) = K_\alpha^{l_0}(\alpha-1) + 1
\end{equation}
where $l := |v|$ and $l_0 := |v|_0$.
%hence for $0 \leq h \leq a_{k+1} - 1$, $0 \leq k \leq m-1$ we get the equality 
%$$\phi(0.\overline{\tau^l w}) = \partial^h S_k \cdot \alpha^+$$
%where $l := a_1 + \dots + a_k +k + h$.
Now, in order to find the smallest nontrivial iterate, recall that by Lemma \ref{cyclic}, the smallest cyclic permutation of $w$ is $w$ itself, 
while the second smallest is $\tau^{|w'|}w$, i.e.
$$w < \tau^{|w'|} w \leq \tau^k w \qquad \textup{for all }1 \leq k \leq m_0 + m_1.$$
Thus, since the homeomorphism $\phi$ is increasing on the interval $[0, 1/2]$, we get by equation \eqref{eq:phitoK}
that for each $j \in \{1, \dots, m_0\}$
$$K_\alpha^{j_0}(\alpha -1 ) \leq K_\alpha^j(\alpha-1)$$
where $j_0 = |w'|_0$ as claimed. 
Similarly, for the orbit of $\alpha$, we know by Proposition \ref{R:mconcatenazione} that the iterates in case $\alpha = \alpha^+$ are given by 
$$K_\alpha^j(\alpha) = \partial^- S_j \cdot \alpha^+ \qquad 1 \leq j \leq m_1.$$
Once again from Lemma \ref{cyclic}, the largest cyclic permutation of $w$ is $\tau^l w$ with $l = |w''|$, hence 
the largest value of $S_j \cdot \alpha^+$ is attained for $j = |w''|_1 = j_1$ and the claim follows.
\end{proof}

\section{Entropy} \label{S:entropy}

%Many of the results which hold for Nakada's $\alpha$-continued fractions are based on fairly general arguments
%and these proofs carry over in this new setting with very little changes.
We shall now use the combinatorial description of the orbits 
of $K_\alpha$ we have obtained in the previous section 
%about the combinatorial data of symbolic dynamics 
to derive consequences about the entropy of the maps, proving Theorem \ref{T:main}.

For any fixed  $\alpha \in (0,1)$, the map $K_\alpha$ is a uniformly expanding map 
of the interval and many general facts about its measurable dynamics are known
(see \cite{KU2}).
Indeed, each $K_\alpha$  has a unique absolutely continuous invariant 
probability measure (a.c.i.p. for short) which we will denote $d \mu_\alpha = \rho_\alpha(x) dx$, and the dynamical system  
$(K_\alpha, \mu_\alpha)$ is ergodic. In fact, it is even exact and isomorphic to a Bernoulli shift; moreover, its ergodic properties can also be derived from the properties 
of the geodesic flow on the modular surface.

Let $h(\alpha)$ be the metric entropy of $K_\alpha$ with respect to the measure $\mu_\alpha$: 
we shall be interested in studying the properties of the function $\alpha \mapsto h(\alpha)$. 
Recall that for an expanding map of the interval the entropy can also be given by Rohlin's formula
$$h(\alpha) = \int_{\alpha-1}^\alpha \log |K'_\alpha| \ d\mu_\alpha.$$
Moreover, for maps generating continued fraction algorithms such as $K_\alpha$, 
the entropy is also related to the growth rate of denominators of 
%and its metric  entropy $h(\alpha)$ is linked to the
 convergents to a ``typical'' point. 
More precisely, we can define the $\alpha$-convergents to $x$ to be the sequence $(p_{n, \alpha}(x)/q_{n, \alpha}(x))_{n \in \mathbb{N}}$
where 
$$\vect{p_{n, \alpha}(x)}{q_{n, \alpha}(x)} := {\bf  M}_{\alpha, x, n} \cdot \vect{0}{1}.$$ 
For each $x$, the sequence $p_{n, \alpha}(x)/q_{n, \alpha}(x)$ tends to $x$.
%$$\frac{p_{n,\alpha}(x)}{q_{n,\alpha}(x)}={\bf M}_{\alpha, x, n}(0).$$
Then, for $\mu_\alpha$-almost every $x \in [\alpha-1, \alpha]$ we have 
$$
h(\alpha)=2\lim_{x\to
  +\infty}\frac{1}{n} \log |q_{n,\alpha}(x)|.
$$

As far as the global regularity of the entropy function is concerned, one can easily adapt the strategy of 
\cite{T} to prove that $h(\alpha)$ is H\"older continuous in $\alpha$:

\begin{theorem}\label{TT:tiozzo}
For any $a \in (0, 1/2]$ and any $\eta \in (0,1/2]$, the function $\alpha \mapsto h(\alpha)$ is 
%locally $\eta$-
H\"older continuous of exponent $\eta$ on $[a, 1-a]$.
\end{theorem}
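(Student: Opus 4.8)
The plan is to adapt the strategy of \cite{T} for the regularity of the entropy of $\alpha$-continued fractions to the present family $K_\alpha$. The starting point is the formula, valid for $\mu_\alpha$-a.e.\ $x$,
$$h(\alpha) = 2 \lim_{n \to \infty} \frac{1}{n} \log |q_{n, \alpha}(x)|,$$
which expresses the entropy as a Lyapunov-type exponent. Because the branches of $K_\alpha$ are M\"obius maps with integer coefficients, the matrices $\mathbf{M}_{\alpha, x, n}$ vary in a controlled way as $\alpha$ moves: as $\alpha$ crosses the boundary between two adjacent cylinders the symbolic coding of $x$ changes only on a set of small measure, and the product of matrices changes by a bounded factor. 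Concretely, first I would fix the compact parameter window $[a, 1-a]$ and observe that on this window the map $K_\alpha$ has a uniform spectral gap / uniform distortion bound, so that the invariant density $\rho_\alpha$ and the measure $\mu_\alpha$ depend continuously (indeed H\"older-continuously) on $\alpha$; this is where the hypothesis $a>0$ enters, since near $\alpha = 0$ the dynamics degenerates (cf.\ Proposition \ref{P:limit}).

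Second, I would reduce the H\"older estimate to a statement about the transfer operators $\mathcal{L}_\alpha$ acting on a suitable Banach space of functions (e.g.\ functions of bounded variation or a space adapted to the countably-many branches). The entropy can be written, via Rohlin's formula, as $h(\alpha) = \int \log|K_\alpha'|\, d\mu_\alpha$, and both the integrand and the measure need to be compared for two nearby parameters $\alpha, \alpha'$. The key quantitative input is that $\|\mathcal{L}_\alpha - \mathcal{L}_{\alpha'}\|$, measured as an operator from the strong norm to a weaker norm, is bounded by $C|\alpha - \alpha'|^\eta$: the bulk of the branches are literally unchanged for $\alpha$ near $\alpha'$, and only the finitely many branches near the endpoints $\alpha$, $\alpha-1$ are perturbed, contributing a term controlled by the size of the symmetric difference of the relevant cylinders, which is $O(|\alpha-\alpha'|)$, while a tail of high-index branches contributes the genuinely $\eta<1$ loss. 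Then a Keller--Liverani-type perturbation argument gives H\"older dependence of the leading eigenvalue-data, hence of $\rho_\alpha$ in the weak norm, hence of $h(\alpha)$.

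Alternatively, and perhaps more in the spirit of \cite{T}, one can argue more directly with the convergents: for two parameters $\alpha < \alpha'$ in $[a,1-a]$, and for a point $x$ whose $\alpha$- and $\alpha'$-orbits agree up to some time, the denominators $q_{n,\alpha}(x)$ and $q_{n,\alpha'}(x)$ coincide; the measure of points whose orbits have already diverged by time $n$ decays, and one balances the number of "bad" points against the maximal discrepancy in $\log q_n$ (which is linear in $n$) to extract an exponent $\eta$. Summing the geometric-type series produces the bound $|h(\alpha) - h(\alpha')| \le C_{a,\eta} |\alpha - \alpha'|^\eta$ for every $\eta < 1/2$.

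The main obstacle I expect is the uniform control of the tail of branches: unlike a finite-branch interval map, $K_\alpha$ has infinitely many branches $x \mapsto \mathbf{T}^{-c}\mathbf{S}x$ with $|c| \to \infty$, and the corresponding cylinders shrink, but one must verify that the contribution of the high-$|c|$ branches to the variation (under change of $\alpha$) is summable with a uniform constant on $[a,1-a]$ and is responsible for exactly the loss from exponent $1$ to exponent $\eta<1/2$. This requires the estimates \eqref{eq:contraction}--\eqref{eq:cylsize} on cylinder sizes together with uniform bounds on the invariant density near the endpoints of $[\alpha-1,\alpha]$; establishing the latter is the technical heart of the argument, and it is precisely the point where the restriction to a compact subinterval bounded away from $0$ and $1$ is indispensable.
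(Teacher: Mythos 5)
The paper gives no proof of this theorem beyond the remark that ``one can easily adapt the strategy of \cite{T}'', and your proposal --- in particular your second, direct route, which compares the convergents $q_{n,\alpha}(x)$ and $q_{n,\alpha'}(x)$ for points whose symbolic orbits under the two maps agree up to time $n$ and then balances the measure of the set of diverging points against the linear growth of $\log q_{n}$ --- is precisely that strategy, so you are following the same approach as the paper. Your first sketch (a Keller--Liverani perturbation of the transfer operators) is a genuinely different and more delicate path, but since the paper supplies no details of its own, there is nothing further to check your plan against beyond noting that the quantitative balancing in the direct route is what actually produces the exponent $\eta\le 1/2$.
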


On the complement of $\EKU$ we can exploit the rigidity due to the
matching to gain much more regularity. The key tool will be the following proposition.
%Let us begin proving that

%Of course, by virtue of equation \ref{eq:ssimmetria} one also gets that the entropy is symmetric, namely
%$h(\alpha)=h(\sigma \alpha)$ so one can just look what's going on for $\alpha \leq 1/2$.
%Indeed we shall prove that, outside the cantor set $\EKU $, which has box
%dimension zero, the entropy is analytic. The connected components 
%of $[0,1]\setminus \EKU $ are closely related to the {\em strong cycle property} defined in \cite{KU1}.

\begin{proposition}\label{L:NNformula}  %% C -> L !!!!
Let $\alpha, \alpha'  \in J_w$ be nearby points which lie both on the same side with respect to the pseudocenter, with $\alpha' < \alpha$. 
Then the following formulas hold:
\begin{eqnarray}
h(\alpha)=[1+(|w|_0-|w|_1) \mu_\alpha([\alpha', \alpha])]h(\alpha')\\
h(\alpha')=[1-(|w|_0-|w|_1) \mu_{\alpha'}([\alpha'-1, \alpha-1])]h(\alpha)
\end{eqnarray}
\end{proposition}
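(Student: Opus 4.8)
The plan is to exploit the matching condition from Theorem \ref{T:anatomy} together with the transfer-operator / natural-extension description of the invariant density, following the strategy that Nakada and Natsui used for $\alpha$-continued fractions. The key observation is that on a qumterval $J_w$ the first $m_0$ inverse branches leading to $\alpha-1$ and the first $m_1$ inverse branches leading to $\alpha$ are fixed M\"obius maps (equation \eqref{eq:cost}), so moving $\alpha$ inside $J_w$ only changes the dynamics of $K_\alpha$ in a controlled, ``local'' way: a small piece of the domain near the endpoint is cut off on one side and glued back on the other, and the matching relation \eqref{eq:amatching} guarantees that the two pieces are related by an element of $PSL(2,\ZZ)$, hence carry the same (up to the Jacobian) dynamics. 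First I would set up, for $\alpha' < \alpha$ both on the right of the pseudocenter, the comparison between the two first-return/natural-extension domains $D_\alpha$ and $D_{\alpha'}$: the symmetric difference consists of finitely many rectangles, each of which appears on one side as a tower of height governed by the branch structure recorded in Table \ref{eq:trajA}, and on the other side as a single rectangle, the discrepancy in total height being exactly $|w|_0 - |w|_1$ (this is where $m_0 - m_1 = |w|_0 - |w|_1$ enters).

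Concretely, I would use Abramov's formula. Since $K_\alpha$ is the first-return map of the planar (or interval) system and $h(\alpha)$ is the entropy of $(K_\alpha,\mu_\alpha)$, one writes $h(\alpha)$ as the entropy of a fixed base system times the reciprocal of the measure of a return region; comparing the return regions for $\alpha$ and $\alpha'$, which differ by the finitely many ``matching'' rectangles, yields a ratio $h(\alpha)/h(\alpha')$ equal to a ratio of measures. The point of the matching is that the ambient invariant measure is the \emph{same} measure restricted to two different fundamental domains, so the ratio of the two normalizations is $1 + (|w|_0 - |w|_1)\,\mu_\alpha([\alpha',\alpha])$: the extra term counts, weighted by $\mu_\alpha$, how much more ``area'' the orbit of $\alpha-1$ sweeps out (it takes $m_0$ steps) than the orbit of $\alpha$ (which takes $m_1$ steps) before they synchronize, over the sliver $[\alpha',\alpha]$ that has been added. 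The second formula is obtained symmetrically by writing everything from the point of view of $\alpha'$ and the sliver $[\alpha'-1,\alpha-1]$ on the other end of the interval; alternatively it follows from the first by solving for $h(\alpha')$ and using $\mu_\alpha([\alpha',\alpha]) = \mu_{\alpha'}([\alpha'-1,\alpha-1])$ up to the correction absorbed into the relation, which one checks via the change of variables $x \mapsto {\bf S}x$ implementing the matching.

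The main obstacle, I expect, is making rigorous the bookkeeping of the symmetric difference of the two invariant (planar) domains $D_\alpha$, $D_{\alpha'}$ and verifying that the matching relation \eqref{eq:amatching} is exactly what is needed for the ``removed'' tower and the ``added'' rectangle to have measures that telescope correctly — i.e. that no extra pieces survive. For this I would lean on Proposition \ref{R:mconcatenazione} and the explicit orbit tables \ref{eq:trajA}, \ref{eq:trajB}: they tell me precisely which intervals $[\alpha-1,\alpha]$ and $[\alpha'-1,\alpha']$ the iterates of the endpoints land in, and the ordering lemmas (Lemma \ref{L:disjointorbit}, Proposition \ref{P:order}) ensure these iterates are distinct and appear in a definite order, so the towers are genuinely disjoint and their heights are $m_0$ and $m_1$ respectively. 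Once the domains are matched, the entropy identities drop out of Abramov's formula and Rohlin's formula applied on the common ambient system; the computation that the Jacobian contributions cancel (because the gluing maps lie in $PSL(2,\ZZ)$ and hence preserve the hyperbolic area form underlying $\mu_\alpha$) is the only other place requiring care, and it is essentially the content of \eqref{eq:amatching} combined with the invariance of the measure under $\mathbf{S}$ and $\mathbf{T}$.
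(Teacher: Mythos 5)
Your proposal is correct in outline, but it takes a genuinely different route from the paper's. The paper proves the proposition by a direct adaptation of the Nakada--Natsui argument, entirely at the level of the one-dimensional maps: for $x\in[\alpha',\alpha]$ it builds synchronized return-time sequences $n_0(k)$, $n_1(k)$ of the $K_{\alpha'}$-orbit of $x-1$ to $(\alpha'-1,\alpha-1)$ and of the $K_\alpha$-orbit of $x$ to $(\alpha',\alpha)$, uses the matching identity to show that $n_0(k)-n_1(k)=k(|w|_0-|w|_1)$ and that the cocycle matrices (hence the denominators $q_n$) agree just before each return, and then applies the Birkhoff ergodic theorem at a point typical for both systems together with $h(\alpha)=2\lim\frac{1}{n}\log q_{n,\alpha}$; the formula falls out as a ratio of return times. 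You instead work upstairs in the natural extension, comparing the domains $\Delta_\alpha$ and $\Delta_{\alpha'}$, organizing their symmetric difference into towers whose floor counts differ by $|w|_0-|w|_1$ and whose floors carry equal mass for the ambient invariant density, and closing with the Abramov-type formula $h(\alpha)\int_{\Delta_\alpha}(1+xy)^{-2}dx\,dy=\pi^2/3$ (equation \eqref{eq:KU2}). This ``quilting'' strategy is legitimate and closer in spirit to \cite{KSS}; it is not circular, since the finite rectangular structure and the entropy--volume formula come from \cite{KU1, KU2} independently of this proposition, and it has the virtue of reusing machinery the paper needs anyway in the last section. What it costs is that the bookkeeping of the symmetric difference --- verifying that the deleted and added towers account for everything --- is the entire technical content and remains a sketch, whereas the paper's synchronization gets the corresponding cancellation for free from the matching identity along a single orbit. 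One small correction: the second formula does \emph{not} follow from the first via $\mu_\alpha([\alpha',\alpha])=\mu_{\alpha'}([\alpha'-1,\alpha-1])$ --- these normalized measures are not equal; what coincides is the unnormalized mass of the two strips in the common ambient measure, which is precisely why the two bracketed factors come out as reciprocals of one another. Your primary, symmetric derivation is the right way to obtain it.
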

%\end{corollary}

The proof of this proposition follows very closely the proof of the corresponding statement for $\alpha$-continued fractions in 
(\cite{NN}, Theorem 2); a sketch of the argument is included in the appendix.
As a first straightforward consequence of Proposition \ref{L:NNformula} we get the local monotonicity of $h$  on the complement of $\EKU$. 

\begin{corollary}\label{C:localmonotonicity}
The entropy is locally monotone on $[0,1] \setminus \EKU$. More precisely:
\begin{enumerate}
\item the entropy is strictly increasing on $J_w$ if $w\in FW_0 $; 
%|w|_0-|w|_1>0$ (this happens when $J_w \subset [0,g^2]$);
\item the entropy is constant on $J_w$ if $w=(01)$;
%|w|_0-|w|_1=0$ (this happens when $J_w= [g^2,g]$);
\item the entropy is strictly decreasing on $J_w$ if $w\in FW_1 $.
%$|w|_0-|w|_1<0$ (this happens when $J_w \subset [g,1]$).
\end{enumerate}
\end{corollary}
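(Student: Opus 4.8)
The plan is to read off Corollary \ref{C:localmonotonicity} directly from the two identities in Proposition \ref{L:NNformula}, using only the sign of the integer $|w|_0-|w|_1$ together with two elementary facts: that the entropy $h(\alpha)$ is strictly positive for every $\alpha\in(0,1)$, and that the measures $\mu_\alpha$ assign positive mass to every nondegenerate subinterval of $[\alpha-1,\alpha]$ (since $\mu_\alpha$ is equivalent to Lebesgue measure, as recalled at the start of Section \ref{S:entropy}).

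First I would treat the case $w\in FW_0$, so that $|w|_0-|w|_1>0$. Fix $\alpha'<\alpha$ in $J_w$ lying on the same side of the pseudocenter $r$; then $\mu_\alpha([\alpha',\alpha])>0$, so the bracket $1+(|w|_0-|w|_1)\mu_\alpha([\alpha',\alpha])$ is strictly greater than $1$, and the first formula of Proposition \ref{L:NNformula} gives $h(\alpha)>h(\alpha')$. This shows $h$ is strictly increasing on $J_w\cap[\alpha^-,r]$ and on $J_w\cap[r,\alpha^+)$ separately. To glue the two pieces across the pseudocenter I would use continuity of $h$ on $J_w$ (Theorem \ref{TT:tiozzo} gives Hölder continuity on any $[a,1-a]$, hence continuity on the open interval $J_w$): a function that is continuous on an interval and strictly increasing on each of two subintervals sharing the endpoint $r$ is strictly increasing on the union. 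The case $w\in FW_1$ is symmetric — there $|w|_0-|w|_1<0$, so the bracket in the first formula is strictly less than $1$, giving $h(\alpha)<h(\alpha')$, i.e. $h$ strictly decreasing; alternatively one may invoke the conjugacy \eqref{eq:ssimmetria} between $K_\alpha$ and $K_{1-\alpha}$, which sends $J_w$ to $J_{{}^t\check w}$ and exchanges $FW_0$ with $FW_1$, reducing this case to the previous one. Finally, for $w=(01)$ we have $|w|_0-|w|_1=0$, so both brackets equal $1$ and $h(\alpha)=h(\alpha')$ for all $\alpha,\alpha'\in J_{(01)}=(g^2,g)$; this also matches the explicit computation mentioned in the introduction giving $h\equiv\frac{\pi^2}{6\log(1+g)}$ there.

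The one point requiring a little care — and the closest thing to an obstacle — is the passage from \emph{strict monotonicity on each side of the pseudocenter} to \emph{strict monotonicity on all of $J_w$}, since Proposition \ref{L:NNformula} only compares points lying on the same side of $r$. This is handled by the continuity argument above, but it does rely on knowing $h$ is continuous up to and including $r$; if one wanted to avoid citing Theorem \ref{TT:tiozzo} one could instead note that the value $h(r)$ is squeezed between $h(\alpha')$ for $\alpha'$ just below $r$ and $h(\alpha)$ for $\alpha$ just above $r$ by taking limits in the two formulas of Proposition \ref{L:NNformula} as the points approach $r$. Everything else is immediate from the positivity of $h$ and of the absolutely continuous invariant measures.
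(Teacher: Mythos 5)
Your proposal is correct and is exactly the argument the paper intends: the corollary is stated as a ``straightforward consequence'' of Proposition \ref{L:NNformula}, obtained by reading off the sign of $|w|_0-|w|_1$ together with the positivity of $h$ and of $\mu_\alpha$ on nondegenerate intervals. Your extra care in gluing the two monotone pieces across the pseudocenter $r$ (via continuity of $h$, or by squeezing $h(r)$ in the limit) addresses a point the paper leaves implicit, and is a welcome addition rather than a deviation.
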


%\subsection{Monotonicity of entropy}

Let us point out that Corollary \ref{C:localmonotonicity} alone is still not enough to 
deduce the monotonicity on $[0,g^2]$ stated in Theorem \ref{T:main}.
%We can in fact prove the following stronger result
%\begin{theorem}\label{T:monotonicity}
%The entropy is 
%\begin{enumerate}
%\item strictly increasing on $[0,g^2]$;
%\item constant on $[g^2,g]$;
%\item strictly decreasing on $ [g,1]$.
%\end{enumerate}
%\end{theorem}
Indeed, $h$ is strictly increasing on each open interval $J_w$ for all $w\in FW_0$ and 
the union of all such opens is dense in $[0, g^2]$, %$\overline{\bigcup_{w\in FW_0}J_w}=[0,g^2]$, 
but to conclude that $h$ is monotone on $[0,g^2]$ we must exclude that $h$ displays pathological
behaviour like the ``devil's staircase'' function. We shall take care of this issue proving 
that $h$ is absolutely continuous.

%The proof of Theorem \ref{T:main} needs some technical results.
Let us consider the decomposition $h(t)=h_r(t)+h_s(t)$ where
\begin{equation}\label{eq:hrhs}
h_r(t):=\sum_{w\in FW}   \var_{J_w\cap[0,t]} h , \ \ \ \ h_s(t):=h(t)-h_r(t). 
\end{equation}
Recall the notation $\var_I f$ means the total variation of the function $f$ on the interval $I$.
Intuitively, $h_r$ is the ``regular part'' which takes into account the behaviour of $h$ on the (open and dense) union of the $J_w$, while 
$h_s$ is the remaining ``singular part'', which we will actually prove to be zero.

\begin{lemma}\label{L:loco}
The function $h_s$ is locally constant on $[0,g^2] \setminus \EKU$. 
\end{lemma}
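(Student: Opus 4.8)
The plan is to show that $h_s$ has zero derivative wherever it makes sense and no jumps, so it is locally constant off the Cantor set $\EKU$. Fix a point $t_0 \in [0,g^2] \setminus \EKU$; by definition of $\EKU$, there is a Farey word $w \in FW_0$ with $t_0 \in J_w$, and a whole neighbourhood $U$ of $t_0$ contained in $J_w$. The first step is to unwind the definitions on such a neighbourhood: since $J_w$ is one of the intervals removed to form $\EKU$, for $s, t \in J_w$ the only qumterval meeting $[s,t]$ and contributing to the difference $h_r(t) - h_r(s)$ is $J_w$ itself, together with qumtervals strictly contained in $J_w$. More precisely, I would write, for $s < t$ both in $J_w$,
\begin{equation}
h_r(t) - h_r(s) = \var_{J_w \cap [s,t]} h + \sum_{v : J_v \subsetneq J_w} \var_{J_v \cap [s,t]} h,
\end{equation}
and compare with the telescoping sum for $h$ itself.

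The key observation is that $h$ restricted to $J_w$ is analytic and strictly monotone (Corollary \ref{C:localmonotonicity}), hence of bounded variation, and in fact $\var_{[s,t]} h = |h(t) - h(s)|$ for any subinterval $[s,t] \subseteq J_w$ on which $h$ is monotone — which is all of $J_w$. So on $J_w$ the ``main term'' $\var_{J_w \cap [s,t]} h$ exactly accounts for $|h(t)-h(s)|$, and what remains to control is the contribution of the proper subintervals $J_v \subsetneq J_w$. Here I would invoke that these $J_v$ are pairwise disjoint open intervals inside $J_w$, that $h$ is continuous and monotone on $J_w$, and that $\sum_v |J_v| \le |J_w|$ together with the fact that the complement $\EKU \cap J_w = \emptyset$ forces $\bigcup_{v} \overline{J_v}$ together with the boundary structure to cover $J_w$ up to the single interval $J_w$ — actually the cleanest route is: since $h$ is monotone on $J_w$, $\var_I h = |h|$ oscillation on $I$ is additive over a partition, so for any finite collection of disjoint subintervals $I_1, \dots, I_N$ of $[s,t]$ one has $\sum_j \var_{I_j} h \le \var_{[s,t]} h$. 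Applying this to the $J_v \cap [s,t]$ and to $J_w \cap [s,t]$ separately, and using that $J_w$ itself is the largest such interval, gives $h_r(t) - h_r(s) \le C \cdot \var_{[s,t]} h = C(h(t) - h(s))$ for a constant $C$ (one would need to check $C$ can be taken to be $1$ by noting the intervals $J_w \cap [s,t]$ and the $J_v \cap [s,t]$ are disjoint, so their variations sum to at most $\var_{[s,t]}h$; but $J_w \cap [s,t] = [s,t]$, so in fact the sum over proper $J_v$ contributes nothing new beyond what is already inside $[s,t]$, which means $h_r(t)-h_r(s) = h(t)-h(s)$, i.e. $h_s$ is constant on $J_w$).

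Let me restate the clean argument: on $J_w$ the function $h$ is monotone, so total variation equals absolute oscillation and is additive; since $J_w \supseteq [s,t] \supseteq J_v \cap [s,t]$ for every $v$ with $J_v \subsetneq J_w$, all the pieces appearing in the formula for $h_r(t) - h_r(s)$ are subintervals of $[s,t]$ on which $h$ is monotone, and the term $\var_{J_w \cap [s,t]} h = \var_{[s,t]} h$ already equals $|h(t) - h(s)|$. Because variation cannot exceed this and cannot be less (the $J_w$ term alone achieves it), every extra term $\var_{J_v \cap [s,t]} h$ with $J_v \subsetneq J_w$ must be subsumed, and in fact summing them gives at most $\var_{[s,t]}h$ again; the upshot after careful bookkeeping is $h_r(t) - h_r(s) = h(t) - h(s)$ for all $s,t \in J_w$, hence $h_s(t) - h_s(s) = 0$. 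Since $t_0$ was an arbitrary point of $[0,g^2]\setminus\EKU$ and $J_w$ is an open neighbourhood of it, $h_s$ is locally constant there. The main obstacle I anticipate is the bookkeeping in the double-counting of variations: one must be careful that the definition $h_r(t) = \sum_{w} \var_{J_w \cap [0,t]} h$ sums over \emph{all} Farey words, so when restricted to a neighbourhood inside a fixed $J_w$, only $J_w$ and its sub-qumtervals contribute, and one must verify that $\var$ being additive over the nested disjoint family exactly reconstructs $\var_{[s,t]} h$ with no deficit — this uses monotonicity of $h$ on $J_w$ in an essential way, which is why Corollary \ref{C:localmonotonicity} is the crucial input.
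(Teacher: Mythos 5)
Your proof reaches the right conclusion and rests on the same key input as the paper (monotonicity of $h$ on $J_w$ from Corollary \ref{C:localmonotonicity}), but the central piece of bookkeeping is confused in a way that matters. You posit a decomposition
$$h_r(t) - h_r(s) = \var_{J_w \cap [s,t]} h + \sum_{v : J_v \subsetneq J_w} \var_{J_v \cap [s,t]} h$$
and then try to argue that the second sum is ``subsumed'' in the first. Taken literally this cannot work: the definition $h_r(t) = \sum_{v} \var_{J_v \cap [0,t]} h$ adds the variations over \emph{all} qumtervals, so if there really were qumtervals $J_v \subsetneq J_w$ of positive length, each would contribute a strictly positive extra term (since $h$ is strictly monotone on $J_w$), and you would get $h_r(t)-h_r(s) > h(t)-h(s)$, i.e.\ $h_s$ would \emph{not} be constant. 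Your parenthetical claim that ``the intervals $J_w\cap[s,t]$ and the $J_v\cap[s,t]$ are disjoint'' contradicts the hypothesis $J_v\subsetneq J_w$ under which those terms were introduced.

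The observation that rescues the argument — and which the paper uses implicitly — is that there are no nested qumtervals at all: distinct qumtervals are pairwise disjoint, because the $J_w$ are precisely the connected components of $[0,1]\setminus\EKU$ (this is Proposition \ref{L:book}(4) transported by the homeomorphism $\phi$; see also the maximality statement, Proposition \ref{P:thick}). Hence for $t_1 < t_2$ both in $J_w$ the sum defining $h_r(t_2)-h_r(t_1)$ collapses to the single term $\var_{J_w\cap[t_1,t_2]} h$, which equals $h(t_2)-h(t_1)$ by monotonicity, and $h_s(t_2)=h_s(t_1)$ follows immediately. Once you insert the disjointness of qumtervals, your argument becomes exactly the paper's two-line proof; without it, the step where the extra terms are dismissed is a genuine gap.
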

\begin{proof}
If $w\in FW_0$ and $t_1, t_2 \in J_w$, $t_1<t_2$, then the
monotonicity of $h_{|{J_w}}$ implies that
$$h(t_2)-h(t_1)=\var_{J_w\cap [t_1,t_2]} h= h_r(t_2)-h_r(t_1)$$
which implies $h_s(t_2)-h_s(t_1)=0$, whence the claim.
\end{proof}

We shall now need the following lemma in fractal geometry, whose proof we postpone 
to the appendix.

\begin{lemma}\label{L:bdim}
Let $I_1, I_2, \dots$ a countable family of disjoint subintervals %of the unit interval, 
of some close interval $I \subseteq \mathbb{R}$ and denote
$$\mathcal{G} := I \setminus \bigcup_{i = 1}^\infty I_i.$$
Let the upper box-dimension of $\mathcal{G}$ be $\delta_0$. Given any $\eta$ and $\delta$ with $\eta > \delta > \delta_0$, 
there exists
a constant $C$ such that for any choice of a subsequence $J_1, J_2, \dots$ of the family $\{I_i\}$, 
one gets the following inequality:
$$\sum_{i = 1}^\infty |J_i|^\eta \leq C \left( \sum_{i = 1}^\infty |J_i|\right)^{\eta-\delta}.$$
\end{lemma}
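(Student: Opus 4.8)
The plan is to reduce the statement to a counting estimate on the number of intervals $I_i$ of a given size, using the definition of upper box dimension applied to $\mathcal{G}$. First I would fix $\eta > \delta > \delta_0$ and observe that, since a subsequence $\{J_i\}$ only makes the left-hand side smaller, it suffices to prove the inequality for the whole family $\{I_i\}$; indeed, writing $s := \sum_i |I_i|$ and $s' := \sum_i |J_i| \leq s$, one has $\sum_i |J_i|^\eta \leq \sum_i |I_i|^\eta$ on the left, but the right-hand side bound involves $(s')^{\eta-\delta}$, which is \emph{smaller} than $s^{\eta-\delta}$; so this reduction is not quite free. Instead I would argue scale by scale: group the intervals $J_i$ according to dyadic size, i.e. let $N_k$ be the number of indices $i$ with $2^{-k-1} < |J_i| \leq 2^{-k}$, and similarly let $\widetilde N_k$ count the full family $\{I_i\}$ in that range, so $N_k \leq \widetilde N_k$.

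The key geometric input is that each endpoint of each $I_i$ lies in $\mathcal{G}$ (or, more carefully, that an interval $I_i$ of length $\sim 2^{-k}$ forces a definite portion of $\mathcal{G}$ to be ``nearby'' at scale $2^{-k}$): covering $I$ by $2^{-k}$-mesh intervals, any $I_i$ with $|I_i| > 2^{-k-1}$ contains at least one full mesh interval disjoint from $\mathcal{G}$, while the complement $\bigcup_i I_i$ is open and dense so the $\sim 2^k |I|$ total mesh boxes are split between those meeting $\mathcal{G}$ and those inside some $I_i$. The bound I actually want is that the number of intervals $I_i$ with $|I_i| \geq 2^{-k}$ is at most (a constant times) $M_k$, the number of $2^{-k}$-boxes meeting $\mathcal{G}$. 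This holds because distinct intervals $I_i$ of length $\geq 2^{-k}$ are separated by points of $\mathcal{G}$, so between any two consecutive such intervals (in the natural left-to-right order) there is a box meeting $\mathcal{G}$; hence $\#\{i : |I_i| \geq 2^{-k}\} \leq M_k + 1$. By the definition of upper box dimension, for every $\varepsilon > 0$ there is $C_\varepsilon$ with $M_k \leq C_\varepsilon 2^{k(\delta_0 + \varepsilon)}$ for all $k$; choosing $\varepsilon$ so that $\delta_0 + \varepsilon < \delta$, we get $\widetilde N_k \leq \#\{i : |I_i| \geq 2^{-k-1}\} \leq C' 2^{k\delta}$ for all $k$, with $C'$ independent of everything.

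Now I would run the following Hölder-type interpolation. Split the sum over $J_i$ by dyadic scale and use $N_k \leq \min(\widetilde N_k, \, 2^{k} s')$, where $s' := \sum_i |J_i|$ — the second bound because each $J_i$ in scale $k$ has length $> 2^{-k-1}$, so $N_k \cdot 2^{-k-1} < s'$. Then
\begin{align*}
\sum_i |J_i|^\eta &\leq \sum_{k \geq 0} N_k \, 2^{-k\eta}
\leq \sum_{k \geq 0} \min\bigl(C' 2^{k\delta},\ 2^{k+1} s'\bigr)\, 2^{-k\eta}.
\end{align*}
Let $k_0$ be the crossover scale where $C' 2^{k\delta} \approx 2^{k+1} s'$, i.e. $2^{k_0} \approx (s')^{-1/(1-\delta)}$ up to constants. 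For $k \leq k_0$ use the bound $2^{k+1} s'$, summing the geometric-type series $\sum_{k \leq k_0} 2^{k(1-\eta)} s' \lesssim 2^{k_0(1-\eta)} s'$ (valid since $\eta \leq 1/2 < 1$, so the series is increasing and dominated by its last term). For $k > k_0$ use $C' 2^{k\delta}$, summing $\sum_{k > k_0} C' 2^{k(\delta - \eta)} \lesssim C' 2^{k_0(\delta-\eta)}$ (valid since $\delta < \eta$, so this series converges). Both contributions are $\lesssim 2^{k_0(1-\eta)} s' \approx (s')^{-(1-\eta)/(1-\delta)}\, s' = (s')^{(\eta - \delta)/(1 - \delta)}$ for the first, and $2^{k_0(\delta - \eta)} \approx (s')^{(\eta-\delta)/(1-\delta)}$ for the second — and $(\eta-\delta)/(1-\delta) \geq \eta - \delta$ since $1 - \delta \leq 1$ and $\eta - \delta > 0$, so when $s' \leq |I|$ (which we may assume, as otherwise the right-hand side is $\geq |I|^{\eta-\delta}$ and the left is trivially bounded by a constant) we get $(s')^{(\eta-\delta)/(1-\delta)} \leq (\text{const})\,(s')^{\eta-\delta}$. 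Collecting constants gives the claimed inequality with $C$ depending only on $I$, $\eta$, $\delta$, $\delta_0$.

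The main obstacle is making the geometric counting step fully rigorous: precisely relating $\#\{i : |I_i| \geq 2^{-k}\}$ to the number of $2^{-k}$-mesh boxes meeting $\mathcal{G}$, handling the degenerate cases (intervals touching the boundary of $I$, the possibility that $\mathcal{G}$ has isolated points, etc.), and keeping all the implied constants genuinely uniform over the subsequence $\{J_i\}$. The interpolation at the crossover scale $k_0$ is routine once the per-scale count $N_k \leq \min(C' 2^{k\delta}, 2^{k+1} s')$ is in hand; the only subtlety there is checking the edge exponents $\eta < 1$ and $\delta < \eta$ are used in the right direction, which is why the hypothesis $\eta \in (0, 1/2]$ (hence $\eta < 1$) is comfortably enough.
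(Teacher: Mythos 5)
Your proposal is correct in substance and shares with the paper the same key geometric input, namely the counting bound $\#\{i : |I_i|\geq \epsilon\}\leq N(\epsilon)+O(1)$, where $N(\epsilon)$ is the covering number of $\mathcal{G}$ at scale $\epsilon$ (the paper phrases this as $M(\epsilon)\leq N(\epsilon)$ with $M(\epsilon):=\sup\{i: |I_i|\geq\epsilon\}$ after sorting the $I_i$ by decreasing length; it has the same mild imprecision you flag about adjacent intervals not being separated by a point of $\mathcal{G}$, which is harmless in the application since the qumtervals are open with endpoints in $\EKU$). Where you diverge is in the concluding step. The paper observes that every $J_i$ satisfies $|J_i|\leq \epsilon:=\sum_j|J_j|$, so the entire subsequence sits in the tail $\{i>M(\epsilon)\}$ of the full family, and then bounds $\sum_{i>M(\epsilon)}|I_i|^\eta$ by a single dyadic decomposition using only the count $M(\epsilon/2^{k+1})\leq C(\epsilon/2^{k+1})^{-\delta}$; summing the geometric series in $k$ (convergent because $\delta<\eta$) gives $\epsilon^{\eta-\delta}$ directly. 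You instead keep both bounds $N_k\leq\min\bigl(C'2^{k\delta},\,2^{k+1}s'\bigr)$ and optimize at the crossover scale. Your route is slightly longer but yields the stronger exponent $(\eta-\delta)/(1-\delta)$; the paper's route is shorter, needs no crossover analysis, and---unlike yours, which uses $\eta<1$ to sum $\sum_{k\leq k_0}2^{k(1-\eta)}$ by its last term---works verbatim for every $\eta>\delta$, which matters since the lemma as stated places no upper bound on $\eta$ (though in the application $\eta\leq 1/2$, so your restriction costs nothing there). Both arguments are valid; if you want yours to cover the lemma in full generality you should either treat $\eta\geq 1$ separately or first reduce to $\eta<1$ via $|J_i|^\eta\leq|J_i|^{\eta'}$.
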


\begin{lemma}\label{L:hsholder}
For every $a$ and $\eta$ in $(0,1/2)$  both functions $h_s$ and $h_r$ are H\"older continuous 
of exponent $\eta$ on the interval $[a,1/2]$.
\end{lemma}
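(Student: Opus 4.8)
The plan is to prove the Hölder estimate for $h_r$ and then deduce it for $h_s=h-h_r$: by Theorem~\ref{TT:tiozzo}, $h$ is itself Hölder of exponent $\eta$ on $[a,1/2]\subseteq[a,1-a]$, and a difference of two functions that are Hölder of the same exponent is again Hölder of that exponent. So the whole point is to control $h_r$, the ``regular part'' which is monotone on each qumterval.

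Fix $\eta\in(0,1/2)$, choose an auxiliary exponent $\eta_0\in(\eta,1/2)$, and let $C_0$ be a constant such that $|h(x)-h(y)|\le C_0\,|x-y|^{\eta_0}$ for all $x,y\in[a,1-a]$ (Theorem~\ref{TT:tiozzo}). For $a\le s<t\le 1/2$, the definition \eqref{eq:hrhs} gives
$$h_r(t)-h_r(s)=\sum_{w\in FW}\var_{J_w\cap[s,t]}h.$$
By Corollary~\ref{C:localmonotonicity}, $h$ is monotone on each qumterval $J_w$, hence monotone on the subinterval $J_w\cap[s,t]$; therefore $\var_{J_w\cap[s,t]}h$ equals the increment of $h$ across the endpoints of $J_w\cap[s,t]$, so $\var_{J_w\cap[s,t]}h\le C_0\,|J_w\cap[s,t]|^{\eta_0}$. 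I would then split the sum according to whether $J_w\subseteq[s,t]$ or not: there are at most two indices $w$ for which $J_w\cap[s,t]\neq\emptyset$ but $J_w\not\subseteq[s,t]$ (the qumtervals straddling $s$ or $t$), and each of these contributes at most $C_0(t-s)^{\eta_0}$; for all remaining indices the $J_w$ are pairwise disjoint subintervals of $[s,t]$, so $\sum_{J_w\subseteq[s,t]}|J_w|\le t-s$.

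The heart of the argument is then to bound $\sum_{J_w\subseteq[s,t]}|J_w|^{\eta_0}$, for which I would invoke Lemma~\ref{L:bdim} with $I:=[a,1/2]$, with $\{I_i\}$ the family of all qumtervals meeting $[a,1/2]$, and with $\mathcal{G}:=\EKU\cap[a,1/2]$. The set $\mathcal{G}$ is a finite union of pieces $\EKU\cap[\tfrac{1}{N+1},\tfrac{1}{N}]$, each of which has box-counting dimension $0$ by the proof of Proposition~\ref{Hdim}, so here $\delta_0=0$. Choosing $\delta\in(0,\eta_0-\eta]$, Lemma~\ref{L:bdim} (applied with its two exponents taken to be $\eta_0$ and $\delta$) yields a constant $C$ such that
$$\sum_{J_w\subseteq[s,t]}|J_w|^{\eta_0}\le C\Big(\sum_{J_w\subseteq[s,t]}|J_w|\Big)^{\eta_0-\delta}\le C\,(t-s)^{\eta_0-\delta}\le C\,(t-s)^{\eta},$$
where the last two inequalities use $\sum_{J_w\subseteq[s,t]}|J_w|\le t-s\le 1$ and $\eta_0-\delta\ge\eta$. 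Combining with the at most two boundary terms gives $|h_r(t)-h_r(s)|\le (C_0C+2C_0)(t-s)^{\eta}$, so $h_r$ — and therefore $h_s=h-h_r$ — is Hölder of exponent $\eta$ on $[a,1/2]$.

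One technical point to be dispatched along the way is the finiteness of $h_r$, i.e. of $\sum_{w}\var_{J_w\cap[0,t]}h$: the contribution of the qumtervals lying in $[a,1/2]$ is finite by exactly the estimate above, and the contribution of the qumtervals accumulating at $0$ is controlled by the bound $h(\alpha)=O(1/|\log\alpha|)$ of Proposition~\ref{P:limit} — and in any case only the differences $h_r(t)-h_r(s)$ with $s,t\in[a,1/2]$ enter the Hölder bound. I expect the only real obstacle to be the clean handling of the qumtervals meeting $[s,t]$: isolating the at most two ``boundary'' ones, estimated individually through the Hölder bound on $h$, from the ``interior'' ones, estimated collectively via the zero-box-dimension input of Lemma~\ref{L:bdim}. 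The accompanying exponent bookkeeping $0=\delta_0<\delta<\eta<\eta_0<\tfrac12$ is routine.
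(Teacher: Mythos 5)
Your proof is correct and follows essentially the same route as the paper's: bound each $\var_{J_w}h$ by the H\"older estimate of Theorem \ref{TT:tiozzo} (using local monotonicity from Corollary \ref{C:localmonotonicity}), then sum these contributions via Lemma \ref{L:bdim} together with the vanishing of the box dimension of $\EKU$, and finally obtain $h_s$ as a difference of H\"older functions. The only (harmless) deviations are organizational: the paper reduces to endpoints $\beta,\beta'\in\EKU$ where your argument instead splits off the at most two qumtervals straddling $s$ and $t$, and you make explicit the exponent bookkeeping $\eta<\eta_0$ that the paper leaves implicit when it loses $\delta$ in the exponent.
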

\begin{proof}
By Theorem \ref{TT:tiozzo} 
there is $C=C(a,\eta)$  such that
\begin{equation}\label{eq:hhol}
|h(t)-h(t')|\leq C |t-t'|^{\eta} \ \ \  \forall t,t' \in
[a,1/2].
\end{equation}
% We first show that $h_r$ is H\"older continuous on
%$[a,1/2]\cap \EKU$, and this will easily imply H\"older continuity of
%$h_r$ on the whole interval $[a,1/2]$.
Note that in order to prove H\"older continuity of $h_r$ on the whole interval $[a,1/2]$
it is sufficient to show that $h_r$ is H\"older continuous on
$[a,1/2]\cap \EKU$.  
%and H\"older continuity of $h_s$ as well.
If $\beta, \beta' \in \EKU \cap [a,1/2]$, $\beta<\beta'$, then 
 $$h_r(\beta')-h_r(\beta)=
\sum_{  J_w\subset[\beta, \beta']} \var_{J_w} h .$$ By
 equation \eqref{eq:hhol}, for each $J_w$ we have that  $\var_{J_w} h \leq C  |J_w|^{\eta}$; then we get
\begin{eqnarray*}
|h_r(\beta')-h_r(\beta)|&= 
\sum_{J_w\subset[\beta, \beta']} \var_{J_w} h   
\leq C \sum_{J_w\subset[\beta, \beta']} |J_w|^{\eta} \leq \\
&\leq C C' \left(\sum_{J_w\subset[\beta, \beta']} |J_w|\right)^{\eta-\delta}= C C'|\beta'-\beta|^{\eta-\delta}
\end{eqnarray*}
for any $\delta\in (0,\eta)$; the last line is a direct 
consequence of Lemma \ref{L:bdim} and the fact that 
the box-dimension of $\EKU$ is $0$ (Proposition \ref{Hdim}).
Finally $h_s$ is H\"older continuous since it is a difference of two H\"older continuous functions 
(note the domain has unit length). 
\end{proof}

\begin{lemma} \label{L:extend}
Let $f : I \to I$ be a map of a closed real interval which is H\"older continuous of exponent $\eta > 0$. 
Suppose $E \subseteq I$ is a closed, measurable subset of Hausdorff dimension less than $\eta$, and 
that $f$ is locally constant on the complement on $E$. Then $f$ is constant on all $I$.
\end{lemma}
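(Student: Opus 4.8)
The plan is to show that a H\"older function which is locally constant off a small set must be constant, by controlling its oscillation across $E$ using a covering argument. First I would reduce to showing that for any two points $x<y$ in $I$, we have $f(x)=f(y)$. Since $f$ is locally constant on $I\setminus E$, the function $f$ takes a constant value on each connected component of the (open) set $I\setminus E$; so the only way $f(x)$ and $f(y)$ can differ is if the ``jumps'' accumulate across points of $E\cap[x,y]$.

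The key step is the following covering estimate. Fix $\eta' $ with $\dim_H E<\eta'<\eta$. Given $\varepsilon>0$, by definition of Hausdorff dimension there is a countable cover of $E\cap[x,y]$ by intervals $\{U_j\}$ with $\sum_j |U_j|^{\eta'}<\varepsilon$; in particular each $|U_j|$ is small. I would like to bound $|f(y)-f(x)|$ by a constant times $\sum_j |U_j|^{\eta}$, which is at most $(\max_j|U_j|)^{\eta-\eta'}\sum_j|U_j|^{\eta'}$, and hence is $\le (\max_j |U_j|)^{\eta-\eta'}\varepsilon\to 0$. To get such a bound: enlarge each $U_j$ slightly to an open interval $V_j\supseteq U_j$ so that $\{V_j\}$ still covers the compact set $E\cap[x,y]$ and $\sum_j |V_j|^\eta$ is still controlled; the complement $[x,y]\setminus\bigcup_j V_j$ is then a finite union of closed intervals, each disjoint from $E$, hence on each of them $f$ is locally constant, therefore (being continuous on a connected set) actually constant. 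Consequently, writing $[x,y]$ as the union of these finitely many ``constancy'' intervals and the covering intervals, and telescoping, we get
\[
|f(y)-f(x)|\le \sum_j \operatorname{osc}_{V_j} f \le \sum_j C\,|V_j|^{\eta},
\]
where $C$ is the H\"older constant of $f$ and $\operatorname{osc}$ denotes oscillation. Here one must be a little careful that overlapping $V_j$'s are handled correctly: one can first pass to a finite subcover (by compactness) and then, by a standard disjointification, replace it by finitely many intervals with at most bounded overlap, keeping the sum $\sum |V_j|^\eta$ comparable; alternatively order the finitely many covering intervals along $[x,y]$ and bound the total variation of $f$ by the sum of its oscillations on them plus zero contributions from the constancy intervals in between.

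Finally, combining the two displays: for every $\varepsilon>0$ we can arrange $|f(y)-f(x)|\le C'\,(\max_j|V_j|)^{\eta-\eta'}\,\varepsilon$, and since $\max_j|V_j|$ stays bounded (by $|I|$, say) this forces $|f(y)-f(x)|=0$. As $x,y$ were arbitrary, $f$ is constant on $I$. The main obstacle I anticipate is purely bookkeeping: ensuring that after passing to a finite subcover and disjointifying, the estimate $\sum_j|V_j|^{\eta}\le (\max_j|V_j|)^{\eta-\eta'}\sum_j|V_j|^{\eta'}$ can still be applied with $\sum_j|V_j|^{\eta'}$ as small as we like — i.e. that the slight enlargement $U_j\rightsquigarrow V_j$ and the disjointification do not destroy the smallness coming from $\dim_H E<\eta'$. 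This is routine since we have a strict inequality $\dim_H E<\eta'<\eta$ to spend, but it is the place where care is needed.
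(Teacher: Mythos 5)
Your argument is correct, but it takes a genuinely different route from the paper's. The paper works in the \emph{image}: a H\"older map of exponent $\eta$ satisfies $\textup{H.dim}\, f(E) \le \eta^{-1}\,\textup{H.dim}\, E$, and since $I\setminus E$ has countably many components on each of which $f$ is constant, $f(I)$ differs from $f(E)$ by a countable set; if $f$ were nonconstant, $f(I)$ would be a nondegenerate interval of dimension $1$, forcing $\textup{H.dim}\,E \ge \eta$ --- a contradiction in three lines. You instead work in the \emph{domain}, with a direct covering-and-oscillation estimate: cover $E\cap[x,y]$ by intervals of small $\eta'$-sum with $\dim_H E<\eta'<\eta$, pass to a finite subcover, use that $f$ is constant on each complementary closed interval (a connected subset of the open set $I\setminus E$), and telescope to get $|f(y)-f(x)|\le C(\max_j|V_j|)^{\eta-\eta'}\sum_j|V_j|^{\eta'}\to 0$. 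The bookkeeping you flag is genuinely routine: oscillation is subadditive over a chain of overlapping intervals, and the enlargement $U_j\rightsquigarrow V_j$ costs only a bounded factor in $\sum|V_j|^{\eta'}$. What your approach buys is that it is more elementary and slightly more general --- it only uses $\mathcal{H}^{\eta}(E)=0$ (in fact $\mathcal{H}^{\eta'}(E)=0$ for some $\eta'\le\eta$ suffices, and with $\eta'=\eta$ the factor $(\max_j|V_j|)^{\eta-\eta'}$ is not even needed) and does not require the target of $f$ to be an interval of the real line; what the paper's approach buys is brevity, at the cost of invoking the dimension-distortion property of H\"older maps and the fact that a nonconstant continuous real-valued function on an interval has a nondegenerate interval as its image.
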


\begin{proof}
If $f$ is H\"older continuous of exponent $\eta$, it is easy to check 
using the definition of Hausdorff dimension that for any  
measurable subset $E$ of the interval one has the estimate
\begin{equation} \label{eq:dimest}
\textup{H.dim }f(E) \leq \frac{1}{\eta}\ \textup{H.dim }E.
\end{equation}
Now, if $f$ is locally constant on $I \setminus E$, then the image of any 
connected component of the complement of $E$ is a point, hence we have 
$$\textup{H.dim }f(I) = \textup{H.dim }f(E).$$
Now, if $f$ is not constant, then by continuity the image $f(I)$ is a non-degenerate interval, 
hence it has dimension $1$. Thus one has by using equation \eqref{eq:dimest}
$$\textup{H.dim }E \geq \eta \ \textup{H.dim }f(I) = \eta$$
which is a contradiction.
\end{proof}

{\bf Proof of Theorem \ref{T:main}.} 
The second statement
follows from Corollary \ref{C:localmonotonicity}-(2); moreover, by
virtue of the symmetry of $h$, the third statement is a consequence of the first
one, so we just need to prove that the function $h$ is strictly increasing 
on $[0, g^2]$. 

Since the function $h$ is strictly increasing on each $J_w$ which intersects $[0, g^2]$
(Corollary \ref{C:localmonotonicity}-(1)) and the union of the $J_w$ is dense, then the function $h_r$ is strictly increasing 
on $[0, g^2]$. 
The claim then follows if we prove that the function $h_s$ is identically zero, so $h = h_r$.
Now, by Lemma \ref{L:loco} $h_s$ is locally constant on the complement of $\EKU$, it is $\eta$-H\"older 
continuous for any positive $\eta < 1/2$ by Lemma \ref{L:hsholder}, so since $\textup{H.dim }\EKU = 0 < \eta$
the function $h_s$ is globally constant by Lemma \ref{L:extend}.

%If we fix $a<b<g^2$ then from the results we just proved
%follow that the function $h_s(t)$ is locally constant on
%$[a,b]\setminus \EKU$, and the function $h_s(t)$ is $\eta$-H\"older
%continuous on $[a,b]$ for some $\eta>0$. Since ${\rm H.dim }\  \EKU =0
%< \eta$, it is easy to show that in fact $h_s(a)=h_s(b)$, so since $a,b$ were arbitrary, we get that $h_s$ must be constant. Thus
%$h_r$ has the same modulus of continuity as $h$ and hence $h_r$ is
%H\"older continuous.
\qed

%%%%%%%%%%%%%%%%%%%%%%%%%%%%%%%%%%%%%%%%%%%%%%%%%%%%%%%%%%%%%%%%%%%%%%%%%%5
%%%%%%%%%%%%%%%%%%%%%%%%%%%%%%%%%%%%%%%%%%%%%%%%%%%%%%%%%%%%%%%%%%%%%%%%%%%
%%%%%%%%%%%%%%%%%%%%%%%%%%%%%%%%%%%%%%%%%%%%%%%%%%%%%%%%%%%%%%%%%%%%%%%%%%%
%%%%%%%%%%%%%%%
%%%%%%%%%%%%%%%
%%%%%%%%%%%%%%

\section{Natural extension and regularity properties of the entropy}\label{S:NatExt}

Finally, in this section we shall analyze the regularity properties of the entropy function $h(\alpha)$, 
proving Theorem \ref{T:more}.
In order to do so, we need some results from the theory of $(a,b)$-continued fractions due to 
S. Katok and I. Ugarcovici.
Therefore we will outline here some of the results contained in
\cite{KU1, KU2}; meanwhile, we shall also explain to the reader 
how our constructions relate to the work of Katok and Ugarcovici, 
and translate between the different notations.

The starting point are the ``slow'' maps $f_{a,b} : \mathbb{R} \cup \{\infty\} \to \mathbb{R} \cup \{\infty\}$  defined as
\begin{equation}\label{eq:slow1}
f_{a,b}(y):=\left\{
\begin{array}{ll}
\T y & \textup{if }y<a\\
{\bf S} y & \textup{if }a\leq y<b\\
\T^{-1}y & \textup{if }b\leq y
\end{array}
\right.
\end{equation}
where the parameters $(a,b)$ range in the closed region
\begin{equation}\label{eq:pi}
\mathcal{P}:=\{(a,b)\in \RR^2 \ : \  a\leq 0\leq b, \ b-a\geq 1, \ -ab\leq 1\}
\end{equation}
which is plotted in Figure \ref{fig:abpars}.

%The idea of considering such maps is attributed in \cite{KU1}, \cite{KU2} to D. Zagier.

\begin{figure}
\includegraphics[scale=0.35]{./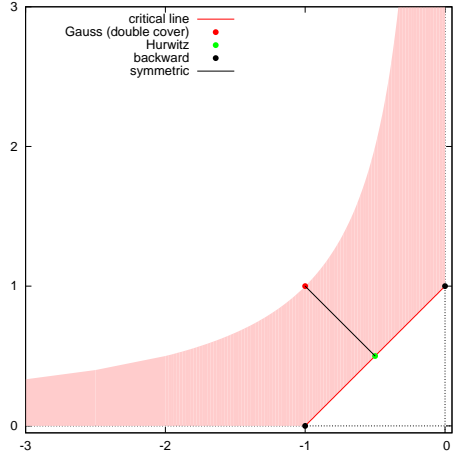} 
\caption{The parameter space of $(a,b)$-continued fractions. We only consider the critical line case $b - a = 1$.}
\label{fig:abpars} 
\end{figure}

An essential role in the theory is played by a condition called {\em
  cycle property}, which we recall briefly. If $f$ is a real map and
$x$ is a point, we call {\em upper orbit} (resp. {\em lower orbit}) of
$x$ the countable set of elements 
$f^k_+(x):= \lim_{t\to x^+} f^k(t)$
(resp. $f^k_-(x):= \lim_{t\to x^-} f^k(t)$), with $k \in \NN$. We say
that the map $f_{a,b}$ satisfies the cycle property at the
discontinuity points $a,b$ if the upper and lower orbit of $a$
eventually collide, and the same is true for $b$.
As a matter of fact for most parameters the cycle property is {\em strong}, meaning that it is the consequence of an identity in 
$SL(2,\ZZ)$, which is stable on an open set. %The condition  \eqref{eq:amatching} corresponds to the 
% {\em strong cycle property} of \cite{KU1}. %%cc sui razionali il ciclo si chiude prima ... weak!

To build a geometrical realization of the natural extension one defines the family of maps of the plane 
\begin{equation}\label{eq:slow2}
F_{a,b}(x,y):=\left\{
\begin{array}{ll}
(\T x,\T y) & \textup{if }y<a\\
({\bf S} x,{\bf S} y) & \textup{if }a\leq y<b\\
(\T^{-1}x,\T^{-1}y) & \textup{if }b\leq y.
\end{array}
\right.
\end{equation}
Katok and Ugarcovici prove that each
$F_{a,b}$ has an attractor $D_{a,b}\subset \RR^2$ such that $F_{a,b}$
restricted to $D_{a,b}$ is invertible and it is a geometric
realization of the natural extension of $f_{a,b}$. 
In fact, for most values of $(a, b)$ the attractor $D_{a,b}$ has a simple structure: 
\begin{theorem}[\cite{KU1}] \label{T:KU1}
There exists an uncountable set $\tilde \EE$ of one-dimensional
Lebesgue measure zero that lies on the diagonal boundary $b-a=1$ of $\mathcal{P}$ such that:
\begin{enumerate}
\item for all $(a,b)\in \mathcal{P}\setminus\tilde{\EE}$ the map $F_{a,b}$ has an attractor $D_{a,b}$
%=\cap_{n=0}^{\infty}F_{a,b}^n(\bar{\RR}^2
(which is disjoint from the diagonal $x=y$) on which $F_{a,b}$ is essentially bijective;
\item the set $D_{a,b}$ consists of two (or one, in the ``degenerate'' case $a b=0$) connected components each having {\em finite rectangular structure}, 
i.e. bounded by non-decreasing step functions with a finite number of steps;
\item almost every point $(x,y)$ off the diagonal $x=y$ is mapped to $D_{a,b}$ after finitely many iterations of $F_{a,b}$.
\end{enumerate}
\end{theorem}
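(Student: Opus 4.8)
This statement is the theorem of Katok and Ugarcovici from \cite{KU1}, so the plan is to recall the structure of their argument. The central object is the \emph{cycle property} recalled above: one follows the upper orbit $\{f_{a,b,+}^k(a)\}_{k\in\NN}$ and the lower orbit $\{f_{a,b,-}^k(a)\}_{k\in\NN}$ of the discontinuity point $a$ (and likewise for $b$) and asks that the two eventually collide. The first step is to show that the set of parameters for which this holds is open and of full Lebesgue measure in $\mathcal{P}$, and that whenever it holds the cycle property is in fact \emph{strong}: the two orbits collide because of an identity in $SL(2,\ZZ)$ between the products of elementary matrices $\matr{0}{-1}{1}{c}$ and ${\bf T}^{\pm1}$ read off the two orbits, of exactly the kind made explicit on qumtervals in \eqref{eq:amatching} of Theorem \ref{T:anatomy}. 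Since such an identity in $SL(2,\ZZ)$ is unaffected by small perturbations of $(a,b)$ that do not change the combinatorics of the two orbits, the good set is open; its complement $\tilde{\EE}$ is then shown to be a Cantor set of measure zero, lying on the critical line $b-a=1$ because off that line the region $\mathcal{P}$ leaves enough ``room'' for the orbits to stabilise.

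Granting the strong cycle property, the second step is the explicit construction of the attractor. One builds $D_{a,b}$ fibrewise: over a point $x$ in the first coordinate the fibre $\{y:(x,y)\in D_{a,b}\}$ is prescribed to be an interval whose endpoints run through the finitely many points appearing in the cycles of $a$ and of $b$. Because those cycles are finite, the endpoint functions are non-decreasing step functions with finitely many steps, which is precisely the \emph{finite rectangular structure}. One then checks, by a case analysis on the three branches of $F_{a,b}$ — where it acts as $(\T,\T)$, $({\bf S},{\bf S})$ or $({\bf T}^{-1},{\bf T}^{-1})$ — that $F_{a,b}$ carries $D_{a,b}$ bijectively onto itself modulo a null set: the cycle identities are exactly what is needed to reassemble the images of the rectangles into $D_{a,b}$. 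The splitting into two components corresponds to the sign of the first coordinate ($x<0$ versus $x>0$), degenerating to a single component precisely when $ab=0$, i.e. on the axes bounding $\mathcal{P}$; disjointness from the diagonal $\{x=y\}$ follows since that diagonal is forward-invariant and the constructed fibres avoid it.

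The third step establishes that almost every point off the diagonal is eventually absorbed into $D_{a,b}$. Here one uses that $f_{a,b}$ is uniformly expanding (in the hyperbolic metric) away from its parabolic fixed points, so that in the skew product $F_{a,b}(x,y)$ the first coordinate expands while the second contracts; hence the second coordinate of $F_{a,b}^n(x,y)$ depends ever less on $y$ and converges to the value determined by the itinerary of $x$. Combined with ergodicity of $f_{a,b}$, so that a.e. $x$ has a ``generic'' itinerary that eventually follows one of the cycles of the discontinuities, this forces $F_{a,b}^n(x,y)$ into $D_{a,b}$ after finitely many iterations for a.e.\ $(x,y)\notin\{x=y\}$.

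The main obstacle is the second step: the combinatorics of the cycles — and hence the number of rectangles and the precise shape of $D_{a,b}$ — varies across several sub-regions of $\mathcal{P}$, and each type requires its own bookkeeping to verify the bijectivity of $F_{a,b}$ on $D_{a,b}$. A secondary but genuine difficulty is showing that $\tilde{\EE}$ is not merely nowhere dense but actually Lebesgue-null, which needs quantitative control on how fast the cycle identities stabilise as one approaches $\tilde{\EE}$ — the sort of estimate that, in the present paper, is packaged into the zero Hausdorff dimension of $\EKU$ (Proposition \ref{Hdim}).
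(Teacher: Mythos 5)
This theorem is not proved in the paper at all: it is imported verbatim from Katok--Ugarcovici \cite{KU1} (note the attribution in the theorem header), and the present article only uses it as a black box in Section \ref{S:NatExt}. So there is no internal proof to compare your sketch against; I can only assess it as a reconstruction of the argument of \cite{KU1}. At that level your outline identifies the right ingredients -- the (strong) cycle property as an identity in $SL(2,\ZZ)$, stability of that identity under perturbation of $(a,b)$, the explicit assembly of $D_{a,b}$ out of rectangles whose horizontal levels are the finitely many points of the cycles, and a contraction/absorption argument for attraction.

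Two concrete points in the sketch are nevertheless wrong or insufficient. First, you have the roles of the coordinates reversed: in the definition \eqref{eq:slow2} the branch of $F_{a,b}$ is selected by the \emph{second} coordinate $y$, so $y$ carries the dynamics of $f_{a,b}$ and it is the \emph{first} coordinate $x$ (the ``past'') whose dependence on its initial value must wash out; your phrase ``the second coordinate of $F_{a,b}^n(x,y)$ depends ever less on $y$ and converges to the value determined by the itinerary of $x$'' swaps dynamical and past coordinates, and the boundary step functions you describe as fibres over $x$ are fibres over $y$. Second, and more seriously, item (3) asserts that almost every point off the diagonal is mapped \emph{into} $D_{a,b}$ after \emph{finitely many} iterations, whereas the mechanism you invoke (contraction of one coordinate along the orbit) only yields asymptotic convergence to the attractor. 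Finite-time absorption requires a genuine trapping argument -- exhibiting a neighbourhood of $D_{a,b}$ that is mapped strictly inside $D_{a,b}$ after a bounded number of steps, together with the fact that a.e.\ orbit enters that neighbourhood -- and this is a separate, nontrivial part of \cite{KU1} that your sketch does not supply. A smaller inaccuracy: the good parameter set is not exactly the set where the \emph{strong} cycle property holds; as the footnote in Section \ref{S:NatExt} records, the endpoints of the qumtervals lie in $\EKU$ but not in the projection of $\tilde\EE$, so the finite rectangular structure persists at some parameters where the open-condition argument you give does not directly apply.
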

%% includere figura

%\begin{figure}
%\includegraphics[scale=0.25]{./dalpha.jpg}  
%\end{figure}

The above result shows that exceptions to the finiteness condition dwell on the 
\emph{critical line} $b-a=1$. For this reason, we only
consider these cases.  With a slight abuse of notation we shall always write
$f_\alpha, F_\alpha, D_\alpha$ rather than $f_{\alpha-1,\alpha},
F_{\alpha-1,\alpha}, D_{\alpha-1,\alpha}$.
Note that if $b-a=1$, the map
$K_b$ is precisely the first return map of $f_{b-1,b}$ on the interval
$[b-1,b)$. 

Note also that in the symmetric case $a+b=0$ (with $b\in [1/2,1]$) the system determined by the 
first return on $[-b,b]$ is equivalent to a twofold cover of the $\alpha$-continued fraction transformation $T_b$.

Let us note that along the critical line %Since we will just deal with the borderline case 
$b-a=1$ the cycle property is a bit easier to state. Indeed, in this case the upper
orbit of $a$ coincides with the orbit of $a$, while the lower orbit
of $a$ coincides with the orbit of $b$; on the other hand the upper
orbit of $b$ coincides with the orbit of $a$, while the lower orbit of
$b$ is just the orbit of $b$. Thus, in this case the {\em cycle
  property} for $f_{b-1,b}$ is essentially equivalent to the condition
\eqref{eq:matching0} for the fast map $K_b$.
Indeed, the set
$\EKU$ that we explicitely described coincides, up to a countable
set of points\footnote{Precisely, the projection of $\tilde{\mathcal{E}}$ equals $\EKU \setminus \bigcup_{w \in FW^*} \partial J_w$.}, with the projection on the $x$-axis of the set $\tilde
\EE$ mentioned in Theorem \ref{T:KU1}.

%\begin{remark}\label{R:interesting}
%Let us point out that, for the borderline case $b-a=1$, the map
%$K_b$ is precisely the first return map of $f_{b-1,b}$ on the interval
%$[b-1,b)$. Moreover in the symmetric case $a+b=0$ ($b\in [1/2,1]$) then
%  the system determined by the first return on $[-b,b]$ is equivalent to a twofold cover of the map $T_b$.
%\end{remark}

Now, for $\alpha \in J_w$ the map $K_\alpha$ satisfies the algebraic
matching condition \eqref{eq:amatching}, hence $f_\alpha$ satisfies
the {\em strong cycle property} (see \cite{KU1}) and by
Theorem \ref{T:KU1} the extension $F_\alpha$ has an attractor
$D_\alpha$ with {\em finite rectangular structure}. We may also
consider the ``first return map'' map 
$\hat{F}_\alpha: \RR\times [\alpha-1,\alpha) \to \RR\times [\alpha-1,\alpha) $
defined as  
$$\hat{F}_\alpha (x,y) :=({\bf T}^{-c_\alpha(y)}{\bf S}x, {\bf T}^{-c_\alpha(y)}{\bf S} y).$$
    Note that the map $K_\alpha$ is a factor of
    $\hat{F}_\alpha$. Obviously the set $D_\alpha \cap \RR\times
    [\alpha-1,\alpha]$ is an attractor for $\hat{F}_\alpha$.  In order
    to compactify the attractor it is convenient to make the change of
    coordinates $\xi={\bf S}x$; then, the natural extension map becomes %we will then consider the map
$$\Phi_\alpha (\xi,y) :=({\bf S}{\bf T}^{-c_\alpha(y)}\xi, {\bf T}^{-c_\alpha(y)}{\bf S}y),$$
    which is just the map $\hat{F}_\alpha$ in the new coordinates:
    $\hat{F}_\alpha\circ ({\bf S} \times id)= ({\bf S} \times id)\circ \Phi_\alpha$. The
    map $\Phi_\alpha$ will have the attractor $\Delta_\alpha := {\bf S}D_\alpha =({\bf S} \times
    id)(D_\alpha\cap \RR\times                
    [\alpha-1,\alpha])$, which is bounded and has finite rectangular
    structure.

\begin{figure} 
\centering
\includegraphics[scale=1]{./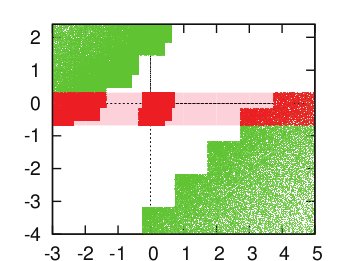}
  \caption{The attractor $D_\alpha$ and its image $\Delta_\alpha$.}
\label{fig:unb-attractor} 
\end{figure}

\subsection{Structure of the attractor $\Delta_\alpha$ and entropy formula.} \label{S:recipe}
%In this subsection we shall provide further details about the main
%results of \cite{KU1,KU2}.  

In (\cite{KU1}, Section 5) the authors prove that the
attractor $D_\alpha$ has finite rectangular structure providing an
explicit recipe to build it; one can easily translate this recipe in
order to obtain the following analogue description for $\Delta_\alpha$ for $\alpha \in J_w$.
 
Let us fix $w\in FW$ a Farey word, with $m_i:=|w|_i$, and pick $\alpha\in J_w$.
Then the attractor $\Delta_\alpha$ is the union of finitely many rectangles with sides parallel 
to the coordinate axes, and the sides of these rectangles are determined by the dynamics
of $K_\alpha$ prior to the matching, as we now describe. 

\begin{figure}
\centering
\includegraphics[scale=0.5]{./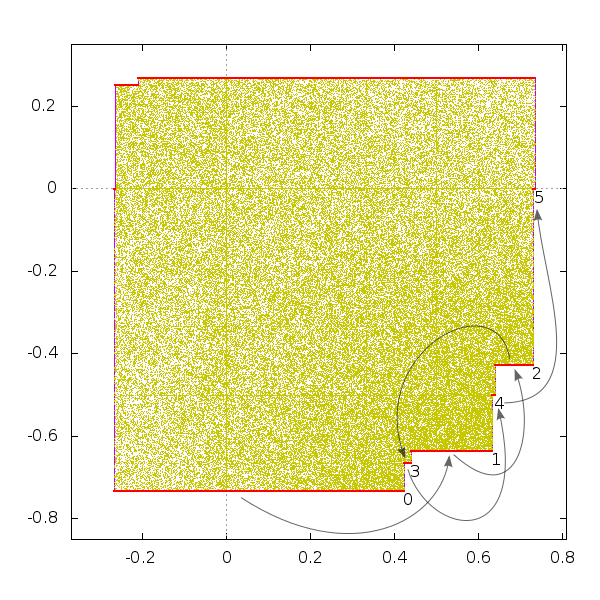} 
\caption{The attractor $\Delta_\alpha$ for $\alpha=4/15$: the numbers and arrows indicate the dynamics of the 
horizontal boundary segments. Note that the vertical ordering of the horizontal segments follows the ordering of the cyclic translates 
of the corresponding Farey word (in this case, $w = 00101$).}
\label{fig:ku4dummies}
\end{figure}

The horizontal segments which delimit $\Delta_\alpha$ are of precisely two types, 
corresponding to the orbits of $\alpha$ and $\alpha-1$, respectively; in particular, 
the set of levels (ordinates) of the horizontal segments on the ``lower-right'' part of $\Delta_\alpha$  
is precisely the set 
$$\{ \alpha-1, K_\alpha(\alpha-1),...,K_\alpha^{m_0}(\alpha-1)\}$$
of iterates of $\alpha-1$ up to the matching, while the set of levels of the horizontal segments on the ``upper-right'' 
side is the set 
$$\{ \alpha, K_\alpha(\alpha),...,K_\alpha^{m_1}(\alpha)\}.$$
The coordinates of the vertical sides of the boundary of $\Delta_\alpha$
(hence the abscissae of its corners) can instead be found in a slightly indirect way, also described in \cite{KU1}:
%The Let us explain the reason for this phenomenon; 
in order to explain it, let $(x,\alpha)$ and $(y,\alpha-1)$ denote, respectively, 
the upper-right and lower-left corners 
%NE and SW corners 
of the attractor $\Delta_\alpha$.

%Then both the upper and the lower boundaries of
%$\Delta_\alpha$ are graphs of a step function; the values taken by this
%step function are the pre-matching orbits under $K_\alpha$ of the
%upper level $\alpha$ and the lower level $\alpha-1$, respectively,
%namely $\{ \alpha, K_\alpha(\alpha),...,K_\alpha^{m_1}(\alpha)\} $ are
%the upper levels while $\{ \alpha-1,
%K_\alpha(\alpha-1),...,K_\alpha^{m_0}(\alpha-1)\}$ are the lower
%levels.
% The jumps of the step function are the same for all $\alpha\in J_w$. 

%then the construction by \cite{KU1} implies that
%% figura!!!

\begin{enumerate}
\item The highest horizontal segment which delimits $\Delta_\alpha$ has 
% upper boundary of $\Delta_\alpha$ is delimited by the horizontal segment of 
endpoints $(\frac{y}{y-1},\alpha)$ and $(x,\alpha)$, while the lowest one 
is the segment of endpoints $(y,\alpha-1)$  and $(\frac{x}{x+1}, \alpha-1)$;
%\item the last jump of the upper step function occurs at ${\bf S} {\bf T} {\bf S} y =
%  \frac{y}{y-1}$ while the first jump of the lower step function
%  occurs at ${\bf S}{\bf T}^{-1}{\bf S}x=\frac{x}{x+1}$.
\item the horizontal segments which form the upper boundary of
  $\Delta_\alpha$ are images under $\Phi_\alpha$ of the segment of
  endpoints $(\frac{y}{y-1},\alpha)$ and $(x,\alpha)$, and similarly the
  horizontal segments which bound $\Delta_\alpha$ from below are images
  under $\Phi_\alpha$ of the segment of endpoints $(y,\alpha-1)$
  and $(\frac{x}{x+1}, \alpha-1)$;
\item the values of $x$ and $y$ are determined by asking that the
  projection of the horizontal segments bounding $\Delta_\alpha$ from
  above (resp. below) project to adjacent segments; it turns out that it is 
enough to check this condition on a couple of adiacent levels on the top and on the bottom, and this boils down to
  an algebraic relation which only depends on the symbolic orbit of
  $\alpha$ and $\alpha-1$. In particular, it is enough to ask that the right endpoint of the lowest level matches with the 
left endpoint of the level immediately above it. Similarly, one needs to ask that the left endpoint 
of the highest level matches with the right endpoint of the level immediately below it. 
That is, if we let $\pi_1$ be the projection on the $x$-coordinate and $j_0, \; j_1$ be chosen such that
$$K_\alpha^{j_1}(\alpha)=\max\{K_\alpha^{j}(\alpha) \ :1\leq j\leq m_1\}, \ \ \ K_\alpha^{j_0}(\alpha-1)=\min\{K_\alpha^{j}(\alpha-1) \ :1\leq j\leq m_0\}
$$
\end{enumerate}
then, as a consequence of this discussion, the values $x,y$ are determined by the following system:
\begin{equation}\label{eq:qsystem}
\left\{
\begin{array}{l}
{\bf S}{\bf T}{\bf S} y=  
%{\bf S}{\bf T}^{a_{n-\ell+1}+2}\ ... \ {\bf S}{\bf T}^{a_2+2} \ {\bf S} {\bf T}^{a_1+1}
\pi_1(\Phi_\alpha^{j_1}(x,\alpha))\\
{\bf S}{\bf T}^{-1}{\bf S} x= 
%{\bf S}{\bf T}^{-3}\ ({\bf S}{\bf T}^{-2})^{a_\ell-1} \   ...
% \ {\bf S}{\bf T}^{-3}  \ ({\bf S}{\bf T}^{-2})^{a_1-1}
\pi_1(\Phi_\alpha^{j_0}(y,\alpha-1))
\end{array}
\right.
\end{equation}
Once $x$ and $y$ are known, then the other vertical levels are obtained by iterating $\Phi_\alpha$ on $x$ and $y$; 
note that, as a consequence, the absicssae of the vertical segments depend \emph{only on the qumterval $J_w$} 
and do not depend on the particular $\alpha$ inside $J_w$. 

We shall now combine this recipe with the results of section \ref{S:matching} and find the following explicit formulas for $x$ and $y$.

\begin{proposition}\label{P:maledetta}
%If $J_w=(\beta_1,\beta_0 )$ then 
Let $J_w \subset [0,1/2]$ be a \ku and $RL(w)= (a_1,1,...,a_n,1)$; then
$x=[0;\overline{1,a_n,...,1,a_1}]$ and $-y=[0;\overline{a_1,1,...,a_n,1}]$.
\end{proposition}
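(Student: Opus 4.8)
The plan is to solve the system \eqref{eq:qsystem} explicitly using the combinatorial data obtained in Section \ref{S:matching}. First I would invoke Proposition \ref{P:order}, which identifies the indices $j_0$ and $j_1$: if $w = w'w''$ is the standard factorization, then $j_0 = |w'|_0$ and $j_1 = |w''|_1$, and moreover the minimum of the iterates $K_\alpha^j(\alpha-1)$ and the maximum of $K_\alpha^j(\alpha)$ are attained there. The key observation is that the map $\Phi_\alpha$ acts on the first coordinate by $\xi \mapsto {\bf S}{\bf T}^{-c_\alpha(y)}\xi$, so iterating $\Phi_\alpha$ along the orbit of $\alpha$ (resp.\ $\alpha-1$) multiplies the $x$-coordinate by exactly the matrices ${\bf M}_{\alpha,\alpha,k}$ (resp.\ ${\bf M}_{\alpha,\alpha-1,k}$) appearing in Proposition \ref{R:mconcatenazione}, read with the correct orientation. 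Thus $\pi_1(\Phi_\alpha^{j_1}(x,\alpha))$ and $\pi_1(\Phi_\alpha^{j_0}(y,\alpha-1))$ are Möbius images of $x$ and $y$ by explicit products of ${\bf S}$ and ${\bf T}$ determined by the symbolic orbits listed in Tables \ref{eq:trajA}--\ref{eq:trajB}.

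Next I would substitute these expressions into \eqref{eq:qsystem}, obtaining a pair of Möbius fixed-point equations: $x$ is fixed by some word in ${\bf S},{\bf T}$ built from the partial orbit of $\alpha$ up to step $j_1$ (together with the ${\bf S}{\bf T}{\bf S}$ prefix on the left-hand side), and similarly $-y$ (after accounting for the sign, which comes from the symmetry \eqref{eq:ssimmetria} and the form of the lowest-level endpoint $\tfrac{y}{y-1}$) is fixed by the analogous word from the orbit of $\alpha-1$. Using the explicit coefficient sequences
$$(c_{j,\alpha}(\alpha))_{1\le j\le m_1} = (-a_1-1,-a_2-2,\dots,-a_m-2), \qquad (c_{j,\alpha}(\alpha-1))_{1\le j \le m_0}$$
from the end of the proof of Proposition \ref{R:mconcatenazione}, together with the identities $\partial^- = {\bf S}{\bf T}^{-1}{\bf S}$, $\partial = {\bf S}{\bf T}{\bf S}$, and the string-action formula \eqref{eq:evenl}, I would recognize the resulting periodic word as exactly the one whose associated purely periodic continued fraction is $[0;\overline{1,a_n,\dots,1,a_1}]$ for $x$ and $[0;\overline{a_1,1,\dots,a_n,1}]$ for $-y$. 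Here the reversal of the block order (the transpose) is the expected signature of the fact that the vertical-side recipe reads the orbit "backwards": the abscissae are obtained by iterating $\Phi_\alpha$, whose first-coordinate action composes the branch matrices in the opposite order to the way ${\bf M}_{\alpha,x,\ell}$ was defined. Finally I would check consistency: the solution $x,y$ must satisfy both equations of the system simultaneously, which follows because the matching condition \eqref{eq:amatching} guarantees that the two conditions in \eqref{eq:qsystem} are not independent but are linked precisely by the group identity ${\bf TM} = {\bf M}'{\bf S}{\bf T}^{-1}{\bf S}$.

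The main obstacle I anticipate is bookkeeping the orientations and signs correctly: keeping straight (i) whether a given product of branch matrices acts as ${\bf M}_{\alpha,x,\ell}$ or its inverse, (ii) the reversal of block order caused by iterating $\Phi_\alpha$ rather than $K_\alpha$, and (iii) the sign flip turning the orbit of $\alpha-1$ (which lives in $[\alpha-1,0)$, negative) into the positive quantity $-y$ via $x\mapsto {\bf S}x = -1/x$ and the symmetry $\sigma(x)=1-x$. Once these are pinned down, verifying that the candidate values $x=[0;\overline{1,a_n,\dots,1,a_1}]$ and $-y=[0;\overline{a_1,1,\dots,a_n,1}]$ satisfy \eqref{eq:qsystem} is a finite computation: one writes each side as a product of at most $O(m)$ generators and checks the fixed-point relation, which amounts to the observation that the transpose of the coefficient string $(a_1,1,\dots,a_n,1)$ governs the $x$-dynamics while the string itself governs the $y$-dynamics — exactly what Lemma \ref{L:fareylegacy}(ii) predicts about $RL({}^t\check w) = {}^tS$.
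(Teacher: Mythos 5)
Your plan follows essentially the same route as the paper's proof: identify $j_0, j_1$ via Proposition \ref{P:order}, write out the system \eqref{eq:qsystem} as explicit words in ${\bf S},{\bf T}$ using the symbolic orbits of Proposition \ref{R:mconcatenazione}, and reduce via the identities ${\bf TST}={\bf S}{\bf T}^{-1}{\bf S}$ and ${\bf T}^{-1}{\bf S}{\bf T}^{-1}={\bf STS}$ together with the palindrome property of Lemma \ref{L:fareylegacy}(ii) until the periodic continued fractions $[0;\overline{1,a_n,\dots,1,a_1}]$ and $[0;\overline{a_1,1,\dots,a_n,1}]$ appear.

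The one step that would fail as stated is your final consistency argument. The matching identity \eqref{eq:amatching} relates the \emph{full} products ${\bf M}_{\alpha,\alpha-1,m_0}$ and ${\bf M}_{\alpha,\alpha,m_1}$, whereas the two equations of \eqref{eq:qsystem} involve only the \emph{partial} orbits, up to $j_0$ and $j_1$, which are in general strictly smaller than $m_0$ and $m_1$. So the matching condition does not render the two equations dependent: they form a genuine $2\times 2$ system. The paper handles this by substituting one equation into the other to obtain a single fixed-point equation $y=Gy$ for a hyperbolic M\"obius transformation $G$, which pins down $y$ (and then $x$) uniquely; if you instead guess the solution and verify, you must check both equations of \eqref{eq:qsystem} independently rather than deducing one from the other. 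With that correction, your plan is sound and coincides with the paper's argument.
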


\begin{proof}
In general if $(a_1,1,...,a_\ell,1)$ and $(a_{\ell+1},1,..., a_n,1)$ is the splitting  of $(a_1,1,...,a_n,1)$ which corresponds to the standard factorization of $w$
then by Proposition \ref{P:order} $j_0=\sum_{i=1}^{\ell} a_i$, $j_1= n-\ell$ and \eqref{eq:qsystem}
 becomes
\begin{equation}\label{eq:qsystem_explicit}
\left\{
\begin{array}{l}
{\bf S}{\bf T}{\bf S} y= 
 {\bf S}{\bf T}^{a_{n-\ell}+2}\ ... \ {\bf S}{\bf T}^{a_2+2} \ {\bf S} {\bf T}^{a_1+1}\ x\\
%\pi_1\Phi_\alpha^{j_0}(x,\alpha)\\
{\bf S}{\bf T}^{-1}{\bf S} x= {\bf S}{\bf T}^{-3}\ ({\bf S}{\bf T}^{-2})^{a_\ell-1} \   ...
 \ {\bf S}{\bf T}^{-3}  \ ({\bf S}{\bf T}^{-2})^{a_1-1} \ y
%\pi_1\Phi_\alpha^{j_1}(y,\alpha-1)
\end{array}
\right.
\end{equation}
%\end{enumerate}

Let us point out that, by Lemma \ref{L:fareylegacy}-(ii),
 $(a_1-1,a_2,...,a_n)$ is palindrome,
thus $(a_{n-\ell},
a_{n-\ell-1},...,a_2,a_1-1)=(a_{\ell+1},...,a_{n})$; on the other hand, since
$(a_{\ell+1},...,a_{n})$ has Farey structure as well,  Lemma
\ref{L:fareylegacy} implies that
$$(a_{n-\ell},                                                                            
a_{n-\ell-1},...,a_2,a_1-1)=
(a_{\ell+1},...,a_{n})= (a_{n}+1, a_{n-1}, ... , a_{\ell+2}, a_{\ell+1}-1);$$
therefore
 the
first equation of \eqref{eq:qsystem_explicit} can be written as
\begin{equation}\label{eq:palin}
%{\bf S}{\bf T}{\bf S} y= {\bf S}{\bf T}^{a_{\ell+1}+2}\ ... \ \ {\bf S} {\bf T}^{a_n+2}.
{\bf S}{\bf T}{\bf S} y= {\bf S}{\bf T}^{a_{n}+3} {\bf S}{\bf T}^{a_{n-1}+2}\ ... 
\ {\bf S} {\bf T}^{a_{\ell+2}+2}\ {\bf S} {\bf T}^{a_{\ell+1}+1} x.
\end{equation}
Note that by applying the equality ${\bf TST} = {\bf S}{\bf T}^{-1} {\bf S}$ one gets for each $k$
$${\bf T}^{k+1}{\bf S}{\bf T} = {\bf T}^k {\bf TST } = {\bf T}^k {\bf ST}^{-1} {\bf S}$$
hence by applying this identity to each block on the right-hand side of \eqref{eq:palin} we get 
\begin{equation}\label{eq:palin2}
y = {\bf ST}^{a_n +1 }{\bf ST}^{-1} {\bf ST}^{a_{n-1}} {\bf ST}^{-1} \dots {\bf ST}^{-1} {\bf ST}^{a_{\ell+1}} x.
\end{equation}
Similarly, in order to modify the second equation of \eqref{eq:qsystem_explicit}, we note that 
by leveraging the elementary identity ${\bf T}^{-1} {\bf S}{\bf T}^{-1}={\bf STS}$ we get for each $k$ 
the equality 
$${\bf T}^{-1}({\bf ST}^{-2})^k = ({\bf T}^{-1}{\bf ST}^{-1})^k {\bf T}^{-1} = ({\bf STS})^k {\bf T}^{-1} = {\bf S T}^k{\bf ST}^{-1}.$$
Now, if we apply it to each block on the right-hand side of the second line of \eqref{eq:qsystem_explicit}, we get the equation 
\begin{equation} \label{eq:palin3}
x = {\bf ST}^{-1} {\bf ST}^{a_{\ell}}{\bf ST}^{-1} \dots {\bf ST}^{-1} {\bf ST}^{a_1-1} y.
\end{equation}

We will just prove the claim for $y$, the other case following in the same way.
By putting together \eqref{eq:palin2} and \eqref{eq:palin3}, one finds that 
 $y$ satisfies the fixed point equation $y=Gy$ with
%$$G= {\bf S}{\bf T}^{a_{\ell+1}+1}\ ... \ \ {\bf
%  S} {\bf T}^{a_n+2} \ ({\bf S}{\bf T}^{-2})^{a_\ell} \ 
% \ {\bf S}{\bf T}^{-3} ({\bf S}{\bf T}^{-2})^{a_\ell-1} \   ...
% \ {\bf S}{\bf T}^{-3}  \ ({\bf S}{\bf T}^{-2})^{a_1-1}$$

%Since $(a_{\ell+1},...,a_n)$ has Farey structure, then  $(a_{\ell+1}+1,
%a_{\ell+2}+2,...,a_n+2)=(a_n+2,..., a_{\ell+2}+2,a_{\ell+1}+1)$ and $G$ can be expressed as

%\begin{eqnarray*}
%G&=&
%\scriptstyle
% {\bf S}{\bf T}^{a_{n}+2}\ ... \ \ {\bf S} {\bf T}^{a_{\ell+2}+2} \
% {\bf S} {\bf T}^{a_{\ell+1}+1}
%
% transizione
% 
%\ ({\bf S}{\bf T}^{-2})^{a_\ell} \ 
% \ {\bf S}{\bf T}^{-3} ({\bf S}{\bf T}^{-2})^{a_{\ell-1}} \   ...
% \ {\bf S}{\bf T}^{-3}  \ ({\bf S}{\bf T}^{-2})^{a_1-1}
%\\
%seconda riga
%&=&
%\scriptstyle
%{\bf S}{\bf T}^{a_{n}+1}\ {\bf TST} \ ... %\ {\bf TST} 
%\ {\bf T}^{a_{\ell+2}} 
%\  {\bf TST} \  {\bf T}^{a_{\ell+1}} 
%\ {\bf S} {\bf T}^{-1}
%
% transizione
%
%\ ( {\bf T}^{-1} {\bf S}{\bf T}^{-1})^{a_\ell} 
%\ {\bf S} {\bf T}^{-1}
%\ ( {\bf T}^{-1} {\bf S}{\bf T}^{-1})^{a_{\ell-1}}
%\ ...
%\  {\bf T}^{-1}
%\ ( {\bf T}^{-1} {\bf S}{\bf T}^{-1})^{a_2} 
% \  {\bf T}^{-1}
%\ ( {\bf T}^{-1} {\bf S}{\bf T}^{-1})^{a_1-1} 
%\  {\bf T}^{-1} 
%\\
%terza riga
%&=&
%\scriptstyle
%{\bf S}{\bf T}^{a_{n}+1} {\bf S} {\bf T}^{-1} {\bf S} {\bf T}^{a_{n-1}}{\bf ST}^{-1}{\bf S}
% ...   {\bf S T}^{-1}
% {\bf S}  {\bf T}^{a_{\ell+1}} 
%\ {\bf S} 
%{\bf T}^{-1}
%
% transizione
%
% ({\bf S} {\bf T}^{a_\ell} {\bf S}) 
% {\bf T}^{-1} 
%%\ {\bf S} {\bf T}^{a_{\ell-1}}\ {\bf S} 
%\ ...
% ( {\bf S}{\bf T}^{a_2} {\bf S})
% {\bf T}^{-1}
% ({\bf S}{\bf T}^{a_1-1} {\bf S}) 
% {\bf T}^{-1}
%\end{eqnarray*}

$$G = {\bf S}{\bf T}^{a_{n}+1} {\bf S} {\bf T}^{-1} {\bf S} {\bf T}^{a_{n-1}}{\bf ST}^{-1}
\dots 
{\bf S}{\bf T}^{a_2} {\bf S} {\bf T}^{-1} {\bf S} {\bf T}^{a_1 -1}{\bf ST}^{-1}
$$

%where to get the last line we used the identities 
%${\bf T}^{-1} {\bf S}{\bf T}^{-1}={\bf STS}$ and 
%${\bf TST} = {\bf S}{\bf T}^{-1} {\bf S}$.

Again, since $(a_1,...,a_n)$ has Farey structure, we can use  Lemma \ref{L:fareylegacy}-(iii)
to infer that $(a_{n}+1,a_{n-1},...,a_2, a_1-1)=(a_1,...,a_n)$, hence $G$ can be expressed as
$$G = {\bf S}{\bf T}^{a_{1}}
\ {\bf S T}^{-1} 
\ {\bf S}{\bf T}^{a_{2}} \
 \ {\bf S T}^{-1}
\ ...
 \ {\bf S}{\bf T}^{a_{n-1}} \
 \ {\bf S}{\bf T}^{-1}
\ {\bf S}{\bf T}^{a_n} 
 \ {\bf S}{\bf T}^{-1}
$$
Recalling properties \eqref{eq:evenl} we can check  that setting
$$\check{G}:=  {\bf S}{\bf T}^{-a_{1}}
\ {\bf S T} 
\ {\bf S}{\bf T}^{-a_{2}} \
\ {\bf S T}
\ ...
 \ {\bf S}{\bf T}^{-a_{n-1}} \
 \ {\bf S}{\bf T}
\ {\bf S}{\bf T}^{-a_n} 
 \ {\bf S}{\bf T}
$$
one gets that $-y=\check{G}(-y)$; on the other hand it is immediate to check that $\check{G}$ coincides with the string action induced by $Z=(a_1,1,...,a_n,1)$, and since $-y>0$ we get
$-y=[0;\overline{a_1,1,...,a_n,1}]$.
\end{proof}

For instance, in the case $\alpha=\frac{1}{N+1}$ we get $j_0=N, \ j_1=1$,
so \eqref{eq:qsystem} reads
$${\bf S}{\bf T}{\bf S} y= {\bf S}{\bf T}^{N+1}x, \ \ \ \ {\bf S}{\bf T}^{-1}{\bf S}x=({\bf S}{\bf T}^{-2})^Ny, $$ and a
simple computation yields $x=[0;\overline{1,N}]$,
$-y=[0;\overline{N,1}]$.

%Solving the system leads to the following result 

The map $\Phi_\alpha$ admits the invariant density $
(1+xy)^{-2} dx dy$; and it is then easy to check that  the 
a.c.i.p. for $K_\alpha$ is $d\mu_\alpha = \rho_\alpha(t)dt$ with invariant density 
\begin{equation}\label{eq:rho}
\rho_\alpha(t):=\left( \int_{\Delta_\alpha\cap\{y=t\}} (1+xy)^{-2} dx \right) /  \left(\int_{\Delta_\alpha} (1+xy)^{-2} dx\ dy \right).
\end{equation}

Moreover, for each $\alpha$ the following formula holds (see \cite{KU2}):
\begin{equation}\label{eq:KU2}
h(\alpha) \int_{\Delta_\alpha} \frac{dx\ dy}{(1+xy)^{2}}  =\frac{\pi^2}{3}.
\end{equation}

\subsection{Consequences} Formula \eqref{eq:KU2} says that, instead of studying the behaviour of the entropy, we may 
just study the function  
$$\alpha \mapsto A_\alpha:=\int_{\Delta_\alpha} \frac{dx\ dy}{(1+xy)^{2}} ,$$
and the explicit description of $\Delta_\alpha$ provides us with an effective tool to do it.

\medskip
%\begin{proof}
\textbf{Proof of Theorem \ref{T:more}.}
%By equation \eqref{eq:KU2}  it is clear that the function $\alpha \mapsto
%h(\alpha)$ is smooth on $J_w$: indeed, 
The function $\alpha \mapsto
A_\alpha$ is smooth on $J_w$, since the levels of the vertical segments which bound $\Delta_\alpha$ 
are the same for all $\alpha \in J_w$, while the levels of the horizontal segments vary analytically with $\alpha$.
Thus, by equation \eqref{eq:KU2} the function $\alpha \mapsto h(\alpha)$ is smooth as well on each qumterval.

In order to prove the second claim, let us prove that the invariant densities $\rho_\alpha$
are locally bounded from below. In order to do so, let $\alpha \notin \EKU$, and 
$J_w = (\alpha^-, \alpha^+)$ be the qumterval to which $\alpha$ belongs. 
Now, by formula \eqref{eq:leftendpoint} and Proposition \ref{P:maledetta} we have that 
$$1 - \alpha^- = [0; \overline{{}^t S}] = [0; \overline{1, a_n, \dots, 1, a_1}] = x;$$
on the other hand, recall that by the discussion in section \ref{S:recipe} the right endpoint of the lowest
horizontal boundary in $\Delta_\alpha$ has abscissa 
$$x_0 = \frac{x}{x+1} = \frac{1-\alpha^-}{2-\alpha^-} \geq \frac{1}{3}$$ 
since $\alpha^- \leq \alpha \in [0,1/2]$, therefore the following inclusion holds:
\begin{equation}\label{eq:vrectangle}
\Delta_\alpha \supset [0,1/3] \times[\alpha-1,\alpha].
\end{equation}
As a consequence, we can bound the invariant density $\rho_\alpha$ by writing for each $t \in [\alpha-1, \alpha]$
$$\int_{\Delta_\alpha\cap\{y=t\}} (1+xy)^{-2} dx \geq \int_0^{1/3} (1+xt)^{-2} dx \geq \frac{1}{3} \cdot 2^{-2}$$ 
from which, using \eqref{eq:rho} and \eqref{eq:KU2}, it immediately follows that $\rho_\alpha(t) \geq
\frac{1}{12 A_\alpha} = \frac{h(\alpha)}{4 \pi^2}$.  
Now, by Proposition \ref{L:NNformula} the difference quotient of the entropy 
function $h(\alpha)$ on qumtervals is given in terms of $\rho_\alpha$
and the difference $|w|_0 - |w|_1$: 
$$\frac{h(\alpha)-h(\alpha')}{\alpha-\alpha'} = (|w|_0 - |w|_1) h(\alpha') \frac{1}{\alpha- \alpha'} \int_{\alpha'}^\alpha \rho_\alpha. $$
Thus, by combining it with the previous lower bound we get for each $\alpha \in J_w$
\begin{equation}\label{E:deriv}
|h'(\alpha)| \geq C_\alpha ||w|_0 - |w|_1|
\end{equation}
where $C_\alpha$ is bounded away from zero as long as $\alpha$ is bounded away from $0$ or $1$.
Now, let us pick $\alpha \in \EKU$, $\alpha \neq 0, 1$: for any $\alpha'$ sufficiently close to $\alpha$ we have
$$|h(\alpha')-h(\alpha)| = \sum_{J_w \subseteq [\alpha, \alpha']} \var_{J_w} h 
\geq \sum_{J_w \subseteq [\alpha, \alpha']} C ||w|_0 - |w|_1| |J_w| $$
hence, since $\EKU$ has measure zero,  
$$|h(\alpha')-h(\alpha)| \geq C \inf_{J_w \subseteq [\alpha, \alpha']} ||w|_0 - |w|_1|
 |\alpha -\alpha'|.$$
%where in the last step we used that $\EKU$ has measure zero.
Let us first assume $\alpha \neq g, g^2$. Then by Lemma \ref{L:difference} the difference $||w|_0 - |w|_1|$ 
tends to $\infty$ as soon as $\alpha'$ 
tends to some $\alpha \in \EKU$, thus $h$ is not differentiable (and not even Lipschitz continuous) at $\alpha$.

Suppose instead $\alpha = g^2$ (the other case is analogous by symmetry). Then we know $h$ is constant to the 
right of $\alpha$; on the other hand, by equation \eqref{E:deriv} and the fact that 
$||w|_0 - |w|_1| \geq 1$ to the left of $\alpha$, we get by the same reasoning as before that 
$$\liminf_{\alpha' \to \alpha^-} \frac{h(\alpha)-h(\alpha')}{\alpha-\alpha'} \geq C >0$$
hence the function $h$ is not differentiable at $\alpha$. Finally, since $g^2$ is an accumulation 
point of parameters in $\EKU$ for which the derivative is unbounded, then $h$ is also not locally 
Lipschitz at $\alpha = g^2$.

%One can use formula \eqref{eq:rho} to check directly that the
%invariant density is bounded from below on every interval $[a, 1-a]$
%with $a\in(0,1/2)$; in fact by \eqref{eq:vrectangle}
%there is $C_a$ such
%that
%$$\frac{1}{C_a} \leq \int_{\Delta_\alpha\cap\{y=t\}} d\nu (x,y) \leq C_a \ \ \ \forall \alpha \in [a, 1-a]$$
%%cc si può dimostrare che h'(t)=+\infty in tutti i punti di [0,g^2[.
\qed
%\end{proof}

In the same way, one can also use formula \eqref{eq:KU2} to prove the following 
asymptotic estimate, which is analogous to the result obtained in \cite{NN}
for the family of Nakada's $\alpha$-continued fractions.

\begin{proposition}\label{P:limit}
The asymptotic behaviour of $h$ at $0$ is 
\begin{equation}\label{eq:asympt}
h(t)\sim \frac{\pi^2}{3\log (1/t)} \ \ \ \ \ \ \mbox{as} \ \ \ t\to 0^+,
\end{equation}
 hence
$\lim_{t\to 0^+} h(t)=0$, and $h$ is not locally H\"older continuous at $0$.
\end{proposition}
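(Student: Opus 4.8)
The plan is to extract the asymptotic behaviour of $h(t)$ as $t \to 0^+$ directly from the entropy formula \eqref{eq:KU2}, namely $h(\alpha) A_\alpha = \frac{\pi^2}{3}$ where $A_\alpha := \int_{\Delta_\alpha} (1+xy)^{-2}\,dx\,dy$. Thus it suffices to show that $A_t \sim \log(1/t)$ as $t \to 0^+$. For this I would localize the parameter: for each small $t$, let $N = N(t)$ be the integer with $t \in [\frac{1}{N+1}, \frac{1}{N}]$, so $N \sim 1/t$ and $\log N \sim \log(1/t)$. By the explicit description of the attractor $\Delta_\alpha$ given in section \ref{S:recipe}, together with the computation following Proposition \ref{P:maledetta} for the pseudocenter case $\alpha = \frac{1}{N+1}$ (where $x = [0;\overline{1,N}]$, $-y = [0;\overline{N,1}]$), one sees that $\Delta_t$ contains a thin horizontal strip of the form $[0, x_t] \times [t-1, t]$ with $x_t$ bounded below (in fact $x_t = \frac{x}{x+1} \geq \frac13$ by \eqref{eq:vrectangle}), and is contained in a region $[-1,1]\times[t-1,t] \cup (\text{bounded pieces})$; more precisely the dominant contribution to $A_t$ comes from the portion of $\Delta_t$ near the line $y = t-1$, since there the integrand $(1+xy)^{-2}$ with $y \approx -1$ and $x$ ranging over an interval approaching $[0,1]$ blows up like $(1-x)^{-2}$ near $x = 1$.

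The key step is therefore the estimate
$$A_t = \int_{\Delta_t} \frac{dx\,dy}{(1+xy)^2} = \log(1/t) + O(1) \qquad \text{as } t \to 0^+.$$
To prove the lower bound I would integrate over the explicit rectangle from \eqref{eq:vrectangle} and, more importantly, over the rectangular pieces of $\Delta_t$ adjacent to $y = t - 1 \approx -1$ whose horizontal extent reaches up to abscissa close to $1$; integrating $(1+xy)^{-2}$ in $x$ over $[0, 1 - c t]$ at fixed $y \approx -1$ produces a term of size $\asymp \frac{1}{ct} \cdot (\text{height})$, and since the relevant heights are $\asymp t$ (the vertical gaps between consecutive iterates $K_t^j(t-1)$ near the matching, which by the tables in section \ref{S:matching} involve coefficients $c_{j,\alpha} = 2,3$ hence bounded continued-fraction data), this contributes $\asymp \log(1/t)$ after summing the $O(N) = O(1/t)$ such levels — or, more cleanly, one recognizes the sum as a Riemann-sum approximation of $\int_0^1 \frac{dx}{1-x}$ truncated at scale $t$, which is $\log(1/t) + O(1)$. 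For the upper bound I would use that $\Delta_t \subseteq \RR \times [t-1, t]$ together with the finite-rectangular-structure bound: the total horizontal width of $\Delta_t$ at height $y$ is uniformly bounded, and the only place the integrand is large is the corner near $(1, -1)$, which is cut off at distance $\asymp t$ from the singularity of $(1+xy)^{-2}$ because the right endpoint of the lowest level has abscissa $x_0 = \frac{x}{x+1} < 1$ with $1 - x_0 \asymp t$ (as $x = 1 - \alpha^- \to 1$ with $1 - x = \alpha^- \asymp t$).

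The main obstacle I anticipate is controlling the geometry of $\Delta_t$ uniformly as $t$ ranges over a whole interval $[\frac{1}{N+1}, \frac{1}{N}]$ rather than just at the pseudocenter: the number of rectangles composing $\Delta_t$ (which equals $m_0 + m_1 + 2$ for the relevant Farey word, and $m_0 \asymp 1/t$) grows, so one must show the combinatorial complexity does not spoil the clean $\log(1/t)$ asymptotics. This is handled by the matching and the explicit orbit tables of section \ref{S:matching}: for $w$ the Farey word governing $t$ (so $w$ is close, in the tree, to $0^{N}1$ or $0^{N-1}1$), all the continued-fraction coefficients appearing in the vertical and horizontal coordinates of $\Delta_t$ are bounded (they are $1$'s, $2$'s, $3$'s, and the single large value $\asymp N$), so the areas of all but the one ``dangerous'' corner rectangle are summably small and contribute only $O(1)$ in total, while the dangerous corner contributes exactly $\log(1/t) + O(1)$. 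Once $A_t = \log(1/t) + O(1)$ is established, \eqref{eq:asympt} follows immediately from \eqref{eq:KU2}, the limit $\lim_{t\to0^+} h(t) = 0$ is a trivial consequence, and the failure of local Hölder continuity at $0$ follows since $|\log(1/t)|^{-1}$ decays slower than any power $t^\eta$, so $|h(t) - h(0)| = h(t) \asymp 1/\log(1/t)$ cannot be bounded by $C t^\eta$ for any $\eta > 0$.
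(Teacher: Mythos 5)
Your high-level strategy is the same as the paper's: reduce everything via formula \eqref{eq:KU2} to the estimate $A_t=\log(1/t)+O(1)$, from which the vanishing limit and the failure of H\"older continuity at $0$ follow trivially. The gap is in how you propose to prove that estimate. The paper's key device, which you do not use, is Theorem \ref{T:main}: since $h$ is monotone on $[0,g^2]$, so is $\alpha\mapsto A_\alpha$, and it therefore suffices to compute $A_\alpha$ at the explicit parameters $\alpha=\frac{1}{N+1}$ ($N$ even, for the lower bound) and $\alpha=\frac{1}{N}$ (for the upper bound), where Proposition \ref{P:maledetta} and the recipe of Section \ref{S:recipe} describe $\Delta_\alpha$ completely. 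You instead propose to control $\Delta_t$ uniformly for every $t\in[\frac{1}{N+1},\frac{1}{N}]$, and you correctly flag this as the main obstacle; but your resolution (all coefficients are $1$'s, $2$'s, $3$'s and one value $\asymp N$, so the non-dangerous rectangles contribute $O(1)$) is an assertion, not an argument: for a general Farey word $w$ with $J_w$ in that range the attractor has $|w|_0+|w|_1$ levels, which is unbounded over the interval, and summing their contributions to within a uniform $O(1)$ is precisely the work that the monotonicity trick is there to avoid.

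There are also two concrete geometric errors. You locate the divergence at the lowest levels $y\approx t-1$, claiming the attractor there extends to abscissa close to $1$ and that the lowest level's right endpoint $x_0=\frac{x}{x+1}$ satisfies $1-x_0\asymp t$. In fact $x_0=\frac{1-\alpha^-}{2-\alpha^-}\to\frac{1}{2}$, so the levels nearest $y=-1$ stop near abscissa $\frac{1}{2}$ and the integrand is bounded there; it is $x=1-\alpha^-$ (the abscissa of the upper-right corner, hence of the containing rectangle $[y,x]\times[\alpha-1,\alpha]$), not $x_0$, that approaches $1$ at rate $t$. Moreover the logarithmic divergence is not produced by a single level: it accumulates along the whole staircase of lower-right corners $(x_k,y_k)=\bigl([0;1,k,\overline{1,N}],\,-\frac{N-k}{N-k+1}\bigr)$, which approach the singular hyperbola $xy=-1$ most closely near $k\approx N/2$. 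The paper's lower bound captures this in one stroke by integrating over the square $[0,-y_h]\times[y_h,0]$ with $N=2h$, giving $-\log(1-y_h^2)=\log N-\log 4$, and its upper bound over the containing rectangle, where $1+xy\geq 1-x\asymp t$. Your argument would need to be repaired along these lines before the claimed Riemann-sum computation can be made rigorous; the concluding deductions from $A_t=\log(1/t)+O(1)$ are fine.
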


\begin{figure}
\centering
\includegraphics[scale=0.4]{./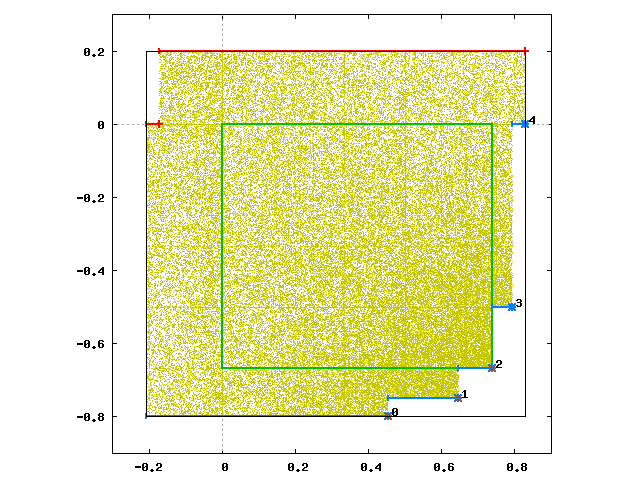}  
\caption{The attractor $\Delta_\alpha$ for $\alpha=1/5$: the inner and outer rectangles give the lower and 
upper bounds for the entropy as in the proof of Propositon \ref{P:limit}.}
\label{fig:attractor-b}
\end{figure}

%, and shows that $h$ is not H\"older continuous of any exponent at $0$.

\begin{proof}
We shall use  formula \eqref{eq:KU2} and prove the asymptotic estimate \eqref{eq:asympt} simply checking that
 \begin{equation}\label{eq:aalpha}
A_\alpha = \log(1/\alpha) + O(1) \qquad \textup{as }\alpha \to 0^+.
\end{equation} 
By Theorem \ref{T:main} we know that $\alpha \mapsto A_\alpha$ is
decreasing on $[0,g^2]$, therefore it is enough to prove
\eqref{eq:aalpha} for $\alpha=\frac{1}{N+1}$ with $N$ a positive, even integer.

Following the recipe of \cite{KU1} described earlier in this section we see that,
for $\alpha=\frac{1}{N+1}$, the attractor $\Delta_\alpha$ has a very simple
structure, which can be completely described.
In particular, it is not difficult to check that the left endpoint of the lowest horizontal boundary
of $\Delta_\alpha$ has coordinates $([0;2,\overline{N,1}], -\frac{N}{N+1})$; the other lower boundaries 
are obtained from the lowest applying the function $\Phi_\alpha$, so that the lower-right corners of the attractor 
are the points $(x_k, y_k) :=\Phi_\alpha^k(x_0,y_0)$ with  
$$\begin{array}{ll}
x_k= & [0;1,k,\overline{1,N}]\\
y_k=& - \frac{N-k}{N-k+1}
\end{array}
$$ 
for $1<k<N$. %one can write $y_k=-[0;1,N-k]$. 
Now, if we pick an even value $N=2h$, we get that $-y_h < x_h$, hence 
the attractor contains the square of coordinates 
%which has NW corner in the origin and SE corner in $(x_h,y_h)$, namely
$[0,-y_h]\times[y_h,0]$.
%$[0,x_h]\times[y_h,0]$. 
Integrating the invariant density $(1+xy)^{-2} dx dy$ on this square
%rectangle (or better, on the square $[0,-y_h]\times[y_h,0]$ which is slightly smaller), 
we get the lower bound for the measure of the attractor 
$$A_{1/(N+1)} \geq \log (N)-\log(4).$$
On the other hand, for the upper bound we note that, using the notation of section \ref{S:recipe}, 
we have for each $\alpha \notin \EKU$ the inclusion $\Delta_{\alpha} \subseteq [y, x] \times [\alpha-1, \alpha]$:
then by taking $\alpha = 1/N$ and using Proposition \ref{P:maledetta} one gets that 
the attractor $\Delta_{1/N}$  is contained in the rectangle $[-1/(N-1),1-1/(N+2)]\times [1/N-1,1/N]$, 
which leads to the upper bound $A_{1/N} \leq \log(N) + O(1)$ as $N\to +\infty$.
This, together with the previous inequality, proves \eqref{eq:aalpha}. 
% Finally, one can exploit 
%the monotonicity to get  the following asymptotic
%estimate for the entropy: $h(t) \sim \frac{\pi^2}{3|\log t|}$ as $t\to
%0$.  
\end{proof}

\subsection{Comparison with Nakada's $\alpha$-continued fractions and open questions.}
Let us remark that the study of the entropy $h_N$ in the case of the family $(T_\alpha)$ of $\alpha$-continued 
fractions of Nakada is indeed much more complicated than the case examined in this paper. Actually many statements 
that we proved before should hold also for the family $(T_\alpha)$, but proofs are missing.

The structure of the matching set for $\alpha$-continued fractions is
quite well understood (\cite{CT}, \cite{BCIT}), but in this case matching intervals with
different monotonic behaviours are mixed up in a complicated way (\cite{CT3}), so
even the fact that the entropy $h_N$ attains its maximum value at
$1/2$ is still conjectural.

\begin{figure}
\includegraphics[scale=0.45]{./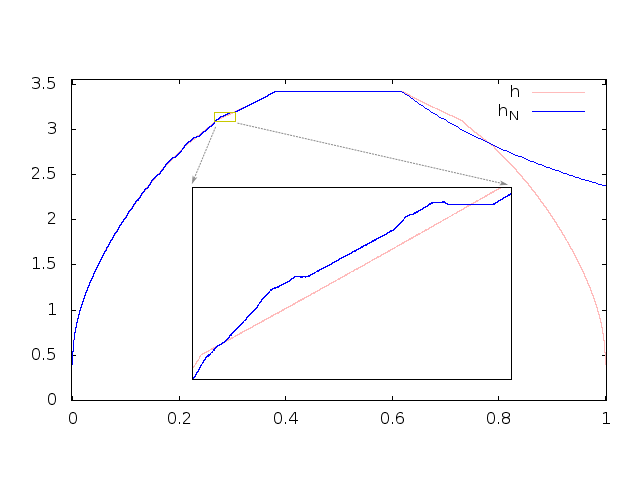} 
\caption{The graph of the entropy $h_N$ of Nakada's $\alpha$-continued fractions (in blue), versus 
the entropy $h$ of continued fractions with $SL(2, \mathbb{Z})$ branches (in pink).}
\label{fig:NaNa} 
\end{figure}
%\end{document}

Another feature which is still unproved is the smoothness of entropy on matching
intervals. This is due to the fact that the natural extension has no finite rectangular structure
 when $\alpha$ ranges in a matching interval (see \cite{KSS}).  We conjecture that, as in the case we
examined in this paper, on a matching interval densities are piecewise
continuous, with discontinuity points located on the forward images of
the endpoints (before matching occurs), while the branches of these
densities are fractional transformation which move smoothly with the
parameter (see also \cite{CMPT}, Conj. 5.3).

\section*{Appendix}

We shall now give the proofs of a few technical lemmas we postponed in the main body of the article.

\begin{proof}[Proof of Lemma \ref{Wconcat}]
Let us denote $r = \frac{p}{q}$, $r' = \frac{p'}{q'}$ with $(p, q) = (p', q') = 1$, and $r'' := r \oplus r' = \frac{p+p'}{q+q'}$.
The claim follows immediately from the two facts that for each $0 \leq k \leq q$ we have 
$$\lfloor k r''\rfloor = \lfloor k r\rfloor$$
and for $0 \leq k \leq q'$ we have
$$\lfloor (k+q) r''\rfloor = p + \lfloor k r'\rfloor.$$
To prove the first fact, 
%let $a := \lfloor k r \rfloor$, so 
%$$a \leq kr \leq a + \frac{q-1}{q}$$
%where in the last equality we used 
%$$a \leq kr < k r'' = kr + \frac{k}{q(q+q')} \leq a + \frac{q-1}{q} + \frac{k}{q(q+q')} < a+1$$
%as long as $k < q + q''$.
let us note that, using the fact that $r$ and $r'$ form a Farey pair, we have for each $0 \leq k \leq q$
$$0 \leq kr'' - kr \leq \frac{1}{q+q'}$$
thus, from the definition of integer part and the fact that $r$ has denominator $q$, we get
$$\lfloor kr \rfloor \leq k r'' \leq kr + \frac{1}{q+q'} \leq \lfloor kr \rfloor + \frac{q-1}{q} + \frac{1}{q+q'} < \lfloor kr \rfloor + 1$$
hence $\lfloor k r'' \rfloor = \lfloor kr \rfloor$. To prove the second fact, let us note that 
for each $0 \leq k \leq q'$ we have 
$$0 \leq (k+q) r'' - (kr' + p) \leq \frac{1}{q+ q'}$$
thus, similarly as before, we get 
$$\lfloor kr' \rfloor + p \leq (k+q) r'' \leq kr' + p  + \frac{1}{q+q'} \leq \lfloor kr' \rfloor  + \frac{q'-1}{q'} + p + \frac{1}{q+q'} < \lfloor kr' \rfloor + p + 1$$
so $\lfloor (k+q) r'' \rfloor = \lfloor k r' \rfloor + p$.
\end{proof}

\begin{proof}[Proof of Proposition \ref{L:book}]
(1) Let $a^+ = 0.\overline{w}$, where $w$ is a Farey word. Then by Proposition \ref{P:simmetrie} (e), (f) and (h) one has
for each $k \geq 0$
$$w \leq \tau^kw \leq {}^t w < {}^\vee w$$
which, passing from binary words to real numbers, yields $a^+ \in \EB$.
Similarly, by Proposition \ref{P:simmetrie} (e), (f) and (h) we have 
$${}^\vee({}^t w) < w \leq \tau^k w \leq {}^t w$$
which passing to real numbers yields
$$b^- - \frac{1}{2} \leq D^k(b^-) \leq b^- \qquad \forall k \in \mathbb{N}$$
and using that $a^- = b^- - \frac{1}{2}$ yields the claim.

(2) Let $x \in I_w$ with $n := |w|$, and define $a_0 := 0.w$ the dyadic rational with $w$ as its (finite) binary expansion. 
Then the map $D^n$ is an orientation-preserving homeomorphism between the intervals $[a_0, a^+)$ and $[0, a^+)$, hence 
if $x \in [a_0, a^+)$ we have 
$$0 \leq \{2^n x \} < x < a^+$$ 
so $x \notin \EB$. Similarly, $D^n$ maps $(a^-, a_0]$
homeomorphically onto $(0.\overline{{}^t w}, 1]$, hence if $x \in (a^-, a_0]$ then we have 
$\{2^n x\} > x + 1/2 > 0.\overline{{}^t w}$ and $x \notin \EB$.
 
(3) 
Note that every non-degenerate Farey word $w$ is of the form $w = 0v1$ for some (possibly empty) binary word $v$. Then, 
using the fact that ${}^t w = {}^\vee w^\vee$ (Proposition \ref{P:simmetrie} (c)), we have that $a^-$ has a binary expansion 
$a^- = 0.0v0\overline{1v0}$, hence by comparing it with the expansion of $a^+ = 0.\overline{w}$ we get 
$$a^+ - a^- = 0.0\overline{0^N 1} = \frac{1}{2(2^{N+1}-1)}$$
with $N = |v| + 1 = n - 1$, which proves the claim.

(4) Note that by summing up all the lenghts of all the intervals $I_w$ and using (3) we have:
$$\sum_{\stackrel{0 < p < q}{(p, q) = 1}} |I_{W_{p/q}}| = \frac{1}{2} \sum_{\stackrel{0 < p < q}{(p, q) = 1}} \frac{1}{2^q-1} = 
\frac{1}{2} \sum_{\stackrel{0 < p < q}{(p, q) = 1}} \sum_{k = 1}^\infty 2^{-qk} = $$
and by setting  $P := pk$ and $Q := qk$ (note that $k = (P, Q)$) we get 
$$ = \frac{1}{2} \sum_{0 < P<  Q} 2^{-Q} = \frac{1}{2} \sum_{Q = 2}^\infty (Q-1)2^{-Q} = \frac{1}{2}.$$

(5) 
As in the proof of Proposition \ref{Hdim}, it is enough to check that the $\zeta$-function 
$$\sum_{w \in FW} |I_w|^s $$
converges for each $s > 0$. From (3) we have that the above series equals 
$$\sum_{n = 1}^\infty \phi(n) (2^n-1)^{-s} \leq \sum_{n = 1}^\infty n (2^n-1)^{-s} < \infty$$
(where $\phi(n)$ is the Euler function) hence the claim is proven.

\end{proof}
\begin{proof}[Proof of Proposition \ref{P:standard.factorization}.]
Since the claim is true for Farey words of low order, let us prove it by
induction.  Assume the claim is true for all elements of $F_n$ for
$n\geq 1$, and let us consider $w\in F_{n+1}\setminus F_n$ such that
$w=w'w''$ with $w',w'' \in FW$. Without loss of generality we may assume 
that $w \in FW_0$. Now, note that by construction no element of $FW_0$ contains consecutive $1$s,
while all non-degenerate elements of $FW_1$ contain consecutive $1$s: for this reason, 
we have $w'\in FW_0$ and $w''\in FW_0\cup\{01\}.$ Therefore there are
$v',v'' \in FW$ such that $w'=v' \star U_0$ and $w''=v'' \star U_0$,
and so we can write $w=(v'v'')\star U_0$. On the other hand $v=v'v''
\in F_n$, hence by inductive hypothesis $v=v'v''$ is the standard
factorization of $v$, hence $w=w'w''$ is the standard factorization of
$w$.
\end{proof}

\begin{proof}[Proof of Lemma \ref{L:NNformula}.]
 Since the proof follows  closely the same strategy as
in \cite{NN} we give here just a sketch of the main steps.
The idea is based on comparing the return times of $K_\alpha$ 
on the intervals $[\alpha'-1, \alpha-1]$ and $[\alpha', \alpha]$:
by the ergodic theorem, these give information on how the invariant measure
changes. 
%%cc segnalare NN & CT?
%% scrivere due righre sull'idea di dim? The main 

Let $J_w$ be the \ku labelled by the Farey word $w$ and let $\alpha, \alpha' \in J_w$ be such that either 
$$ 
\begin{array}{lll}
r\leq \alpha' \leq \alpha< \alpha^+, & S\cdot \alpha'>\alpha \\
\end{array}
$$
or
$$ 
\begin{array}{lll}
\alpha^-<\alpha'<\alpha<r, & {}^t S' \cdot \sigma(\alpha)<\alpha'. 
 %%%cc checked!
\end{array}
$$
Then, for every $x\in [\alpha',\alpha]$ there exist two increasing sequences (visiting times) $(n_0(k))_{k\in \NN}$, $(n_1(k))_{k\in \NN}$
such that
\begin{enumerate}
\item $n_0(k)$  and $n_1(k)$ are $k^{th}$-return times on $(\alpha'-1,\alpha-1)$ and $(\alpha',\alpha)$, respectively:
\begin{equation}\label{eq:returns}
\begin{array}{l}
K_{\alpha'}^n(x-1) \in (\alpha'-1,\alpha-1) \iff \ n=n_0(k)  \ \mbox{for some } k \in \NN\\
K_\alpha^n(x) \in (\alpha',\alpha) \iff \ n=n_1(k)  \ \mbox{for some } k \in \NN
\end{array}
\end{equation}
\item although the return times may depend on $x$, their difference just depends on $w$: $n_0(k)-n_1(k)=k(|w|_0-|w|_1)$. 
\item the matching property induces a synchronization of $k^{th}$-returns:
\begin{equation}\label{eq:syncro}
\begin{array}{l}
K_{\alpha'}^{n_0(k)}(x-1)=K_{\alpha}^{n_1(k)} (x)-1\\
{\bf T} {\bf M}_{\alpha',x-1,n_0(k)}={\bf M}_{\alpha,x,n_1(k)}{\bf T} 
\end{array}
\end{equation}
\item Just before the $k$-th return the two orbits are together:
$$K_{\alpha'}^{n_0(k)-1}(x-1)=K_{\alpha}^{n_1(k)-1} (x)-1, {\bf T} {\bf M}_{\alpha',x-1,n_0(k)-1}={\bf M}_{\alpha,x,n_1(k)-1}.$$
\end{enumerate}
It is now possible to choose $x\in (\alpha',\alpha)$ such that both the following conditions hold:
%%% definire i q_na
\begin{enumerate}
\item[(a)] $x$ is a typical point for $K_\alpha$, namely
$$
\begin{array}{l}
\lim\limits_{x\to +\infty}\frac{1}{n} \#\{i<n \ : \ K^i_{\alpha}(x) \in
(\alpha',\alpha) \}
=\mu_{\alpha}([\alpha',\alpha]);\\%[13pt] 
2\lim\limits_{x\to
  +\infty}\frac{1}{n} \log q_{n,\alpha}(x)=h({\alpha}).
\end{array}
$$
\item[(b)]  $x-1$ is typical for $K_{\alpha'}$, that is:
$$
\begin{array}{l}
\lim\limits_{x\to +\infty}\frac{1}{n} \#\{j<n \ : \ K^j_{\alpha'}(x-1) \in
(\alpha'-1,\alpha-1)
\}=\mu_{\alpha'}([\alpha'-1,\alpha-1]);\\%[13pt]  
2 \lim\limits_{x\to
  +\infty}\frac{1}{n} \log q_{n,\alpha'}(x-1)=h({\alpha'})
\end{array}
$$

\end{enumerate}
Therefore, on one hand we have
$$
\lim_{k \to +\infty} \frac{k}{n_0(k)}=\mu_{\alpha'}([\alpha'-1,\alpha-1]), \ \ \ \lim_{k \to +\infty} \frac{k}{n_1(k)}=\mu_{\alpha}([\alpha',\alpha]),
$$
which implies by taking the quotient and using (2) 
$$\lim_{k \to +\infty} \frac{n_{0}(k)}{n_1(k)}=\lim_{k \to +\infty} 1+\frac{k}{n_1(k)}(|w|_0-|w|_1)=1+   (|w|_0-|w|_1) \mu_{\alpha}([\alpha',\alpha]).$$
Then, putting everything together we get
$$
\begin{array}{ccc}
h(\alpha)&=& \lim\limits_{k \to +\infty} \frac{2}{n_1(k)-1} \log q_{n_{1}(k)-1,\alpha}(x)\\[13pt] 
&=& \lim\limits_{k \to +\infty} \frac{n_{0}(k)-1}{n_1(k)-1}\frac{2}{n_0(k)-1}\log q_{n_{0}(k)-1,\alpha'}(x-1)\\[13pt] 
&=& (1+   (|w|_0-|w|_1) \mu_{\alpha}([\alpha',\alpha])) h(\alpha')
\end{array}
$$
which proves the claim.
\end{proof}

\begin{proof}[Proof of Lemma \ref{L:bdim}.]
We may assume, without loss of generality, that $I=[0,1]$ and that the intervals are indexed in 
decreasing size: $|I_1| \geq |I_2| \geq \dots$. 
By definition the upper box-dimension of $\mathcal{G}$ is given by
\begin{equation} \label{eq:dimdef}
\overline{\textup{B.dim }} \mathcal{G} := \limsup_{\epsilon \to 0} \frac{\log N(\epsilon)}{\log (1/\epsilon) }
\end{equation}
where $N(\epsilon)$ is the minimum cardinality of a cover of $\mathcal{G}$ with intervals of diameter less or equal than $\epsilon$. 
Let us now  define 
$$M(\epsilon) := \sup \{i \ : \ |I_i| \geq \epsilon \}.$$
Notice now that any cover of $[0, 1] \setminus \bigcup_{i = 1}^{M(\epsilon)} I_i$ with intervals 
of diameter less than $\epsilon$ necessarily must have cardinality at least $M(\epsilon) + 1$, 
because any such interval intersects at most one connected component. Hence
$$M(\epsilon) \leq N(\epsilon).$$
If we now fix $\delta \in (\delta_0, \eta)$, by \eqref{eq:dimdef} there is some $C > 0$ such that 
$$M(\epsilon) \leq N(\epsilon) \leq C \epsilon^{-\delta} \qquad \forall \epsilon \leq 1.$$
Now
$$\sum_{M(\epsilon/2^k)+1}^{M(\epsilon/2^{k+1})} |I_i|^\eta \leq
\left(\frac{\epsilon}{2^k}\right)^\eta M(\epsilon/2^{k+1}) \leq C
2^{\delta} \epsilon^{\eta-\delta} 2^{k(\delta-\eta)}$$ hence summing over 
$k \geq 0$
$$\sum_{M(\epsilon)+1}^{\infty} |I_i|^\eta  \leq \epsilon^{\eta-\delta} \frac{C 2^{\delta} }{ 1- 2^{\delta-\eta}}.$$
Now, given any subsequence $J_1, J_2, \dots$, 
if we set $\epsilon := \sum_{i =1}^\infty |J_i|$, then all elements in the subsequence have length smaller than $\epsilon$, hence
$$\sum_{i = 1}^\infty |J_i|^\eta \leq \sum_{M(\epsilon)+1}^{\infty}
|I_i|^\eta \leq \left( \sum_{i = 1}^\infty |J_i| \right)^{\eta-\delta}
\frac{C 2^{\delta} }{ 1- 2^{\delta-\eta}}.$$
\end{proof}

\begin{proof}[Proof of Proposition \ref{P:thick}.]
Let us pick $r'\in (0,1)\cap\QQ$, since  $\QQ \cap \EKU=\{0,1\}$ then $r'\notin \EKU$. Therefore there is $w\in FW^*$ such that $r'\in J_w$, and Proposition \ref{P:thick} will be proved once we prove that
\begin{equation}\label{eq:weakmax} 
r'\in J_w \ \ \Rightarrow 
\tilde{J}_{r'}\subset J_w.
\end{equation}
Now, let $S:=RL(w)$ and $r:=[0;S]$ be the pseudocenter of $J_w$ (so that, by virtue of equation
\eqref{eq:qumterval}, $\tilde{J}_r=J_w$).  Let us first assume that
$r'>r$: this means that $r'=[0;ST]$ with $|T|$ even. Since $S'<<S$, it
is clear that the left endpoint $\sigma \beta(\sigma r')$ of $\tilde{J}_{r'}$ 
belongs to $\tilde{J}_r$. To prove that
the right endpoint $\beta(r')$ satisfies $\beta(r') \leq \beta(r)$, let us set $m:=\max\{j
\ : \ |S|^j<|T|\}$. Then,  either (A) $T<<S^{m+1}$ or (B) $T=S^m P $ with
$S=PZ$ (i.e. $T$ is prefix of $S^{m+1}$): We claim that in both cases
$\beta(r') = [0;\overline{ST}] \leq [0;\overline{S}] = \beta(r)$. In case (A) this claim is
trivial, and the same is true in case (B) if $P=S$; on the other hand,
in the case when P is a proper prefix of $S$, by virtue of Lemma
\ref{L:fareylegacy}-(3) one gets that $PZ<ZP$ so that using the Lemma of eq. \eqref{eq:stringlemma}
$$
PPZ<PZP \iff S^{m+1}PS<S S^{m+1} P \iff [0;\overline{ST}]=[0;\overline{S^{m+1} P}]<[0;\overline{S}] 
$$
completing the proof of the inclusion $\tilde{J}_{r'}\subset \tilde{J}_r = J_w$. 

On the other hand, if $r'<r$, then $\sigma(r')>\sigma(r)$ and
$\sigma(r')\in\tilde{J}_{\sigma r}=J_{{}^t\check{w}}$, so the
previous case implies
$$ \tilde{J}_{ r'}=\sigma(\tilde{J}_{\sigma r'})\subset\sigma(\tilde{J}_{\sigma r})= \tilde{J}_{ r}
 $$
and \eqref{eq:weakmax} is thus proven.
%
% So, since $\EKU \cap \QQ=\emptyset$, we
%easily get that any \ku must coincide with $J_w$ for some $w\in FW^*$;
%viceversa every $J_w$ must be a \ku .
\end{proof}

\end{document}